\documentclass[14pt, a4paper]{article}  % Usamos la clase básica article.
\usepackage[utf8]{inputenc}  % Codificación de caracteres en UTF-8.
\usepackage{amsmath}         % Paquete para matemáticas avanzadas.
\usepackage{amsfonts}        % Fuentes matemáticas.
\usepackage{amssymb}         % Símbolos matemáticos adicionales.
\usepackage{graphicx}        % Para incluir imágenes y gráficos. 
\usepackage{hyperref}        % Para los hipervínculos.
\usepackage{geometry}        % Para personalizar los márgenes.
\usepackage{fancyhdr}        % Para personalizar los encabezados.
\usepackage{times}           % Para usar la fuente Times New Roman.
\usepackage{amsthm} 
\usepackage[T1]{fontenc}
\usepackage{adjustbox}
\usepackage{mathrsfs}
\usepackage[all]{xy}

\usepackage{amsmath}
\newcounter{property}

\providecommand{\corchete}[1]{\left[#1\right]}

\providecommand{\llave}[1]{\left\lbrace #1\right\rbrace }
\providecommand{\pare}[1]{\left( #1\right) }
\providecommand{\norm}[1]{\left\lVert#1\right\rVert}

\providecommand{\re}[1]{\text{Re}\pare{#1}}
\providecommand{\im}[1]{\text{Im}\pare{#1}}

\providecommand{\abs}[1]{\left\lvert#1\right\rvert}
\providecommand{\Sum}[1]{\displaystyle\sum#1}

\newcommand{\var}{\varphi}

\newcommand{\tien}{\rightarrow}

\newcommand{\MW}{M_{B}\pare{V,W}}
\newcommand{\mw}{M}

\newcommand{\R}{\mathbb{R}}

\newcommand{\K}{\mathbb{K}}

\newcommand{\N}{\mathbb{N}}
\newcommand{\BK}{\text{BK}}

\newcommand{\BKn}{\text{BKN}}
\newcommand{\BKa}{\text{BKN}}

\newcommand{\BB}{\mathscr{B}}

\newcommand{\C}{\mathbb{C}}

\newcommand{\B}{\textup{B}}

\newcommand{\F}{\mathcal{F}}

\newcommand{\la}{\left\langle}
\newcommand{\ra}{\right\rangle}
\newcommand{\vs}{\vskip 1em}
\newcommand{\qs}{\setlength{\belowdisplayskip}{0pt}}

	\providecommand{\ral}[1]{\la#1\ra}

\newcommand{\V}{\mathcal{V}}
\newcommand{\W}{\mathcal{W}}

\newcommand{\J}{\mathrm{J}}

\newcommand{\G}{\textup{Gen}\hspace{-.1cm}}
\newcommand{\Mt}{M'}

\newtheorem{corollary}{Corollary}[section]
\newtheorem{proposition}{Proposition}[section]
\newtheorem{theorem}{Theorem}[section]
\newtheorem{lemma}{Lemma}[section]
\newtheorem{remark}{Remark}[section]

\newtheorem{example}{Example}[section]  % Numerar los ejemplos por sección
\newtheorem{definition}{Definition}[section]

\title{$B$-Multiplier Spaces}  % Aquí defines el título del artículo.
\author{
	Rafael Correa-Morales\\
	  \normalsize{Centro de Investigación en Matemáticas}\\
	\normalsize{\sf rafael.correa@cimat.mx}\\
	\quad \\
	{  Fernando Galaz-Fontes}\\
	\normalsize{Centro de Investigación en Matemáticas}\\
	\normalsize{\sf galaz@cimat.mx}\\
}

\date{}  % Deja la fecha vacía si no deseas incluirla o personalízala aquí.

\begin{document}
	
	\maketitle

\begin{abstract}
	We develop a general framework for $B$-multiplier spaces; these are vector spaces $M = M(V,W)$ obtained from a bilinear operator $B \colon M \times V \longrightarrow W$, where $V$ and $W$ are Banach spaces. We focus on their normability and completeness, mainly in the setting of spaces consisting of functions with values in a Banach space $X$, particularly sequences. Our approach relies on the underlying Banach spaces satisfying the $\BK$ property, that is, having continuous evaluations. Classical multiplier spaces arise when $B$ is a pointwise product of scalar functions and we make the point for the case when $V$ or $W$ consists of vector functions. Special attention is given to the sequence spaces $\Sigma \ell_{\infty}(X)$ (bounded partial sums), $\Sigma c(X)$ (summable), and $\ell_u(X)$ (unconditionally summable). Given a $\BK$ scalar sequence space $V$, we introduce the multiplier space $M_{\Sigma}(V,X)$ and establish conditions under which it determines a closed subspace of bounded linear operators from $V$ into $X$. The notion of associate space is precised for $\BK$-spaces, linking this construction with classical K\"othe duality. We consider what we named strong vectorialization $Y(X)$ and weak vectorialization $Y_w(X)$ of a Banach sequence ideal $Y$. The weak vectorialization is obtained as a multiplier space and employed to describe classical sequence spaces as $\ell_{p,w}(X)$, $1 \leq p \leq \infty$. We introduce the $b$-ideal part of a space and show that the $b$-ideal part of $\Sigma c(X)$ is $\ell_u(X)$ and that of $\Sigma \ell_{\infty}(X)$ is $\ell_{1,w}(X)$. We also study the sequence space $bv(X)$ (bounded variation), proving that $M(\Sigma c(X),\Sigma c(X)) = bv(\mathbb K)$.
\end{abstract}

\

	\noindent \textbf{Mathematics Subject Classification 2020}: 46A45, 46B45. \vs
	
	\noindent \textbf{Keywords}: Multiplier; bilinear operator; vector-valued functions; BK-space;   order ideal; sequence spaces; summability; scalarization; Köthe duality.

	\section{Preliminaries and Introduction}\label{pop1}
	Let $D$ be a non-empty set and $F\pare{D,\K}$ the vector space of functions $f:D\tien \K$, where  $\K=\R$ is the field of real numbers, or $\K=\C$ is the field of complex numbers.  Given two vector subspaces  $V,W\subseteq  F\pare{D,\mathbb{K}}$,  let
	\begin{equation}
	M\pare{V,W}=\llave{f\in F\pare{D,\mathbb{K}}:f\cdot v\in W,\ \forall v\in V},
	\end{equation}
	where $f\cdot v : D \tien \K$ is  defined pointwise by  $f\cdot v\pare{d}: = f\pare{d}v\pare{d},\forall d\in D$.
	Then $M\pare{V,W}$ is a vector subspace of $F\pare{D,\mathbb{K}}$,  and   its  elements are known as the multipliers of $V$ into $W$.   In this paper, we consider a bilinear operator $B:M\times V\tien W$ instead of the pointwise product between functions and study the extended concept of multiplier that arises. This approach will enable us to give a unified treatment of various types of Banach spaces.

Let  $M$, $V$ and $W$ be vector spaces. We will say that $M$ is a \emph{multiplier space of $V$ into $W$}, if there exists a bilinear operator \begin{equation}\label{piz6}
B:M\times V\tien W.
\end{equation} In this case, to express that $f\in M$ we will say that $f$ is a \emph{$B$-multiplier.}  For each  $f\in M$, we define the function $B_{f}:V\tien W$ by \begin{equation}\label{piz5}
B_{f}v:=B\pare{f,v}.
\end{equation}
Clearly, $B_f$ is a linear operator. We will call an operator of this type a \emph{$B$-multiplication operator of $V$ into $W$}.

Additionally, suppose that $V$ and $W$ are normed spaces and let us consider the space of bounded linear operators $\mathcal{L}\pare{V,W}$. We are interested in determining whether $B_{f}\in \mathcal{L}\pare{V,W},\forall f\in M$.  This leads us to introduce the function $\norm{\cdot}_{M}:M\to [0,\infty]$
defined by
\begin{equation}\label{piz1}
\norm{f}_{M}:= 
\norm{B_{f}}=\sup\llave{\norm{B\pare{f,v}}_{W}:\norm{v}_{V}\leq 1}\in [0,\infty],\qquad \forall f\in M. 
\end{equation}
Therefore, for $f \in M$, we have   \begin{equation}\label{piz9}
\sup\llave{\norm{B\pare{f,v}}_{W}:\norm{v}_{V}\leq 1}<\infty\quad \text{ if and only if }\quad  B_{f}\in \mathcal{L}\pare{V,W}.  
\end{equation}

	The following inequality, which we will call the \emph{$M$-inequality},    is reminiscent of Hölder's inequality for $L^{p}$-spaces. Its proof is direct, as so is that of the lemma that follows.
\begin{proposition}\label{piz2_2} \label{q8_4} Let $V$ and $W$ be normed spaces, and  $B:M\times V \tien W$ be a bilinear operator. Then
	\begin{equation}\label{piz12}
	\norm{B\pare{f,v}}_{W}\leq \norm{f}_{M}\norm{v}_{V},\qquad \forall f\in M,\qquad  \forall v\in V.
	\end{equation} 
\end{proposition}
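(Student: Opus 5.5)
The plan is to prove the $M$-inequality \eqref{piz12} by scaling $v$ into the unit ball and then appealing to the supremum in \eqref{piz1}. The convention $0\cdot\infty=0$ in $[0,\infty]$ will be used throughout.

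First I fix $f\in M$ and $v\in V$ and split into three cases.

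\emph{Case $v=0$.} Bilinearity gives $B\pare{f,0}=B\pare{f,0+0}=2B\pare{f,0}$, so $B\pare{f,0}=0$. The left-hand side of \eqref{piz12} is then $0$, and the inequality holds regardless of the value of $\norm{f}_{M}$.

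\emph{Case $v\neq 0$ and $\norm{f}_{M}=\infty$.} The right-hand side equals $\infty$, so there is nothing to prove.

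\emph{Main case: $v\neq 0$ and $\norm{f}_{M}<\infty$.} I set $u:=v/\norm{v}_{V}$, so that $\norm{u}_{V}=1$. By the definition \eqref{piz1} of $\norm{f}_{M}$ as a supremum over the closed unit ball,
\[
\norm{B\pare{f,u}}_{W}\leq\norm{f}_{M}.
\]
Linearity of $B$ in its second variable gives $B\pare{f,v}=\norm{v}_{V}\,B\pare{f,u}$. Taking norms and using the bound above yields
\[
\norm{B\pare{f,v}}_{W}=\norm{v}_{V}\norm{B\pare{f,u}}_{W}\leq\norm{f}_{M}\norm{v}_{V}.
\]
Equivalently, by \eqref{piz9}, $B_f\in\mathcal{L}\pare{V,W}$ in this case, and the claim is just the operator-norm inequality $\norm{B_f v}\leq\norm{B_f}\norm{v}$.

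There is no real obstacle here. The only points needing care are the conventions in the extended half-line $[0,\infty]$, which is why the cases $v=0$ and $\norm{f}_{M}=\infty$ are separated out before the scaling argument.
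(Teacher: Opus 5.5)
Your proof is correct and is essentially the paper's approach. The paper only remarks that the proof is direct, and your argument (normalize $v$ into the unit ball, then use the definition of $\norm{f}_{M}$ as a supremum, with the cases $v=0$ and $\norm{f}_{M}=\infty$ handled separately) is exactly that direct argument.
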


\begin{lemma}\label{piz7_0}
	Let $V$ and $W$ be normed spaces, and $B:M\times V \tien W$ be a bilinear operator. Then: 
	\begin{enumerate}
		\item[i.] $\norm{\alpha f}_{M}=\abs{\alpha}\norm{f}_{M},\qquad \forall f\in M,\quad \forall \alpha\in \K$.
		\item[ii.] $\norm{f+g}_{M}\leq \norm{f}_{M}+\norm{g}_{M},\qquad \forall f,g\in M$.
	\end{enumerate}
	Therefore,  $\norm{\cdot}_{M}$ is a seminorm on $M$ if, and only if $B_{f}\in \mathcal{L}\pare{V,W},  \forall f\in M.$
\end{lemma}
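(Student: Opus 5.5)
Both items follow from the bilinearity of $B$ together with the definition \eqref{piz1}, where we take suprema in $[0,\infty]$. Throughout we use the conventions $0\cdot\infty=0$ and $t+\infty=\infty$.

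For item i, we first note that linearity in the first variable gives $B(\alpha f,v)=\alpha B(f,v)$. Hence $\norm{B(\alpha f,v)}_W=\abs{\alpha}\norm{B(f,v)}_W$ for every $v$ with $\norm{v}_V\le 1$. If $\alpha\neq 0$, taking the supremum over the unit ball of $V$ gives $\norm{\alpha f}_M=\abs{\alpha}\norm{f}_M$, and this holds even when $\norm{f}_M=\infty$. If $\alpha=0$, then $B(0,v)=0$, so $\norm{0}_M=0$. This agrees with the convention $0\cdot\infty=0$, which is the only point in item i that needs care.

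For item ii, we use $B(f+g,v)=B(f,v)+B(g,v)$ and the triangle inequality in $W$. For $\norm{v}_V\le 1$, Proposition \ref{piz2_2} (or simply the definition of the supremum) gives
\[
\norm{B(f+g,v)}_W\le \norm{B(f,v)}_W+\norm{B(g,v)}_W\le \norm{f}_M+\norm{g}_M .
\]
Taking the supremum over $v$ yields the inequality, and it holds trivially if either right-hand term is infinite.

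For the final equivalence, a seminorm must by definition take finite values. If $\norm{\cdot}_M$ is a seminorm, then $\norm{f}_M<\infty$ for all $f\in M$, and \eqref{piz9} gives $B_f\in\mathcal L(V,W)$. Conversely, if every $B_f$ is bounded, then by \eqref{piz9} the function $\norm{\cdot}_M$ is real valued. Items i and ii then show that it is absolutely homogeneous and subadditive, and nonnegativity is automatic, so it is a seminorm. There is no real obstacle in this argument; the only point requiring attention is handling the value $\infty$ consistently in i and ii.
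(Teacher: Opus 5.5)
Your proposal is correct and matches what the paper intends: the paper gives no written proof and calls the argument direct. Your verification is the intended one. Homogeneity and subadditivity of the supremum follow from bilinearity of $B$ and the triangle inequality in $W$, with the value $\infty$ handled as you do, and the final equivalence follows from (\ref{piz9}) together with the requirement that a seminorm be finite-valued.
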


In the case that  $\norm{\cdot}_{M}$ is a norm, we will call it the \emph{operator norm}, and we will say that the multiplier space $M$ is \emph{normed (or $B$-normed, if convenient)}.  If, in addition, the operator norm is complete, we will say that  $M$ is \emph{complete (or $B$-complete, if convenient)}. \vs

 Let $M$ and $V$ be normed spaces and $B:M\times V \tien W$ be a bilinear operator. Recall that $B$ is \emph{bounded} if there exists $d \in [0,\infty)$ such that  
\begin{equation}\label{lola34}
	\sup\llave{\norm{B\pare{f,v}} : v \in \B_{V}}
	\leq d\norm{f}_{M} < \infty,
	\qquad \forall f \in M.
\end{equation}

In our development, we begin with a bilinear operator $B : M \times V \tien W$, and we usually assume that $V$ and $W$ are normed spaces. However, we do not initially assume that $M$ carries a norm.  In the case where $M$ is $B$-normed, the $M$-inequality indicates that the bilinear operator 
$B$ is bounded. 

 Suppose that for the bilinear operator $B:M\times V\tien W$ we have that  $B_{f}\in \mathcal{L}\pare{V,W},  \forall f\in M.$ This defines a linear operator $R:M\tien \mathcal{L}\pare{V,W}$ given by \begin{equation}
Rf:=B_{f},
\end{equation} which we will call the \emph{representation operator}. Notice that  $R$ is injective if, and only if

\begin{equation} 
	\text{for each $f\in M$ such that $B\pare{f,v}=0,\forall v\in V$, it holds that $f=0$.}  
	\tag{Property I}\label{lola76_0}
\end{equation}

\begin{proposition}\label{piz4_2}
	Let $V$ and $W$ be normed spaces, and $B:M\times V \tien W$ be a bilinear operator. Then  $M$ is  $B$-normed if, and only if
	\begin{enumerate}
		\item[i.] $B_{f}\in \mathcal{L}\pare{V,W},  \forall f\in M.$
		\item[ii.] The representation operator $R:M\tien \mathcal{L}\pare{V,W}$ is injective. 
	\end{enumerate}   In this case,   $R$ is a linear isometry, that is  $\norm{f}_{M}=\norm{B_{f}}_{\mathcal{L}\pare{V,W}},\forall f\in M$.
\end{proposition}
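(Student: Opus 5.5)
The plan is to prove the two implications separately, using Lemma \ref{piz7_0} and the identity $\norm{f}_{M}=\norm{B_f}$ from the definition \eqref{piz1}. Throughout, the only real content is translating "norm" into the finiteness condition \eqref{piz9} together with \ref{lola76_0}.

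\textbf{($\Rightarrow$)} Assume $\norm{\cdot}_{M}$ is a norm on $M$.
\begin{enumerate}
\item[(a)] A norm takes only finite values, so $\norm{B_f}<\infty$ for every $f\in M$. By \eqref{piz9} this gives $B_f\in\mathcal{L}\pare{V,W}$, which is condition i.
\item[(b)] For condition ii, suppose $Rf=B_f=0$. Then $\norm{f}_{M}=\norm{B_f}=0$.
\item[(c)] Since $\norm{\cdot}_M$ is a norm, $\norm{f}_M=0$ forces $f=0$. Hence $R$ is injective, equivalently \ref{lola76_0} holds.
\end{enumerate}

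\textbf{($\Leftarrow$)} Assume i and ii hold.
\begin{enumerate}
\item[(a)] By i and the final assertion of Lemma \ref{piz7_0}, $\norm{\cdot}_{M}$ is a finite-valued seminorm on $M$.
\item[(b)] It remains to check definiteness. If $\norm{f}_{M}=0$, then $\norm{B(f,v)}_W=0$ for all $v$ in the unit ball of $V$, so $B(f,v)=0$ there.
\item[(c)] By linearity of $B_f$ and scaling, $B(f,v)=0$ for all $v\in V$, so $Rf=0$.
\item[(d)] Injectivity of $R$ then gives $f=0$, so $\norm{\cdot}_M$ is a norm.
\end{enumerate}

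\textbf{Isometry.} In this situation $\norm{f}_{M}=\norm{B_f}_{\mathcal{L}\pare{V,W}}=\norm{Rf}$ holds by definition. Linearity of $R$ follows from bilinearity of $B$, since $B_{\alpha f+g}=\alpha B_f+B_g$. Hence $R$ is a linear isometry.

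The only delicate point is the passage in step (c) of ($\Leftarrow$), from vanishing on the unit ball to vanishing on all of $V$. This is routine: write $v=\norm{v}\,(v/\norm{v})$ for $v\neq 0$ and use linearity of $B_f$.
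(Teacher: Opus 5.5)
Your proof is correct and follows the paper's argument essentially step by step. For the forward direction you use finiteness of the norm together with (\ref{piz9}) to get condition i, and $\|f\|_M=\|B_f\|=0$ for injectivity; for the converse you use Lemma~\ref{piz7_0} to get a seminorm and then injectivity of $R$. Your only additions are details the paper leaves implicit: the scaling argument extending $B(f,\cdot)=0$ from the unit ball to all of $V$, and the explicit observation that $R$ is linear.
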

\begin{proof}
	Suppose that $\norm{\cdot}_{M}$ is a norm. Let $f\in M$. Then $\norm{f}_{M}<\infty$ and by   (\ref{piz9}), it follows that $B_{f}\in \mathcal{L}\pare{V,W}$.  On the other hand, let  $f\in M$ such that $B_{f}=0$. Since $B\pare{f,v}=B_{f}v=0$, $\forall v\in V$, it follows that $\norm{f}_{M}=0$. Using that   $\norm{\cdot}_{M}$ is a norm, we conclude that  $f=0$.  
	
	Now, suppose that conditions i and ii are satisfied. Since $B_{f}\in \mathcal{L}\pare{V,W},  \forall f\in M$, by lemma \ref{piz7_0} it follows that $\norm{\cdot}_{M}$ is a seminorm. Let $f\in M$ such that $\norm{f}_{M}=0$. Then $B\pare{f,v}=0,\forall v\in V$. Therefore, $Rf=B_{f}=0$, and since $R$ is injective, we conclude that  $f=0$.  
\end{proof}

In general, the discussion of the continuity of $B$-multiplication operators, the normability  of the space of $B$-multipliers and its completeness,   are the points which  interests us in this paper. The following example illustrates a situation that can occur. To present it we give first some definitions. \vs 

	 Throughout we will employ the following standard definitions. As usual,   the \emph{unit ball} of a normed space $V$ is denoted by $\B_{V}$.   Let  $\pare{V,\norm{\cdot}_{V}}$ and $\pare{W,\norm{\cdot}_{W}}$ be normed spaces such that  $W\subseteq V$. To express that there exists  $c\in \pare{0,\infty}$ such that $\norm{v}_{V}\leq c \norm{v}_{W},\forall v\in W$, we write $W \hookrightarrow_{c} V$. When  $c=1$,  we use  $W \hookrightarrow V$ and say that $W$ is \emph{continuously embedded} in $V$.  To indicate that  $V=W$ and $\norm{\cdot}_{V}=\norm{\cdot}_{W}$ we simply write $V\equiv W$.  \vs

\begin{example}
	Let  $M, V$ and $W$ be normed spaces, and let $B:M\times V\tien W$ be a bounded bilinear operator.
	In this case, all $B$-multiplication operators from $V$ to $W$ are bounded. Now suppose that $M$ is $B$-normed, and let $M_{B}$ denote the space $M$ equipped with its operator norm. From  (\ref{lola34}) it follows that $M \hookrightarrow_{d} M_{B}$. Hence, if $M$ is complete, then $M_{B}$ is complete if and only if the original norm in $M$ and the operator norm are equivalent. In terms of $B$, this means that there exists a constant $c>0$ such that
	\begin{equation}
			\|f\|_{M} \leq c \, \sup \{ \| B(f,v) \|_{W} : v \in B_{V} \}, \qquad \qquad \forall f \in M.
	\end{equation}
\end{example}

In important cases, from a single bilinear operator, we can  derive a wide variety of multiplier spaces. For example, let  $\F$, $\V$ and $\W$  be vector spaces, and suppose we are given a bilinear operator \begin{equation}\label{efren1}
	B:\F\times \V\tien \W.
\end{equation}  For subspaces $V\subseteq \V$ and  $W\subseteq \W$, we define
\begin{equation} 
	\MW:=\llave{f\in \F:B\pare{f,v}\in W,\ \forall v\in V}.
\end{equation}   
Then,  $\MW$ is a vector subspace of $\F$ and the restriction   
\begin{equation}
	\text{$B_{V, W}:=B:\MW\times V\tien W$}
\end{equation}
is a bilinear operator. Thus, $M_{B}\pare{V,W}$ is the vector space of  $B_{V,W}$-multipliers of $V$ in $W$. Additionally, suppose that $V$ and $W$ are normed spaces. If this is the case, when referring to  $M_{B}\pare{V,W}$ as a normed space, we will always mean that it has corresponding operator norm  $\norm{\cdot}_{M}$.  \vs

We next give two simple results that show how the associated multiplier spaces behave when the involved spaces are varied. 

\begin{lemma}\label{q10_0_b} \label{piz26}
		Let $V,V'\subseteq \V$ and $W,W'\subseteq \W$ be vector spaces.
	\begin{enumerate}
		\item[i.] If $W'\subseteq W$, then $M_{B}\pare{V,W'}\subseteq M_{B}\pare{V,W}$. 
		\item[ii.] If $V\subseteq V'$, then $M_{B}\pare{V',W}\subseteq M_{B}\pare{V,W}$. 
	\end{enumerate} 
\end{lemma}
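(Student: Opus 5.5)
Both inclusions follow by unwinding the definition
\[
M_{B}\pare{V,W}=\llave{f\in \F: B\pare{f,v}\in W,\ \forall v\in V}
\]
and using elementary containment of sets; no norms or boundedness are involved. The plan is to fix an arbitrary element of the smaller multiplier space and check the defining condition of the larger one.

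For part i, I will take $f\in M_{B}\pare{V,W'}$. By definition, $B\pare{f,v}\in W'$ for every $v\in V$. Since $W'\subseteq W$, this gives $B\pare{f,v}\in W$ for every $v\in V$, so $f\in M_{B}\pare{V,W}$. In words: enlarging the target space weakens the membership requirement.

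For part ii, I will take $f\in M_{B}\pare{V',W}$. Then $B\pare{f,v'}\in W$ for all $v'\in V'$. Given any $v\in V$, the hypothesis $V\subseteq V'$ gives $v\in V'$, hence $B\pare{f,v}\in W$. Therefore $f\in M_{B}\pare{V,W}$. In words: shrinking the domain space weakens the membership requirement.

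I do not expect any step to be an obstacle. The only care needed is to keep track of the direction of each inclusion: the multiplier space is monotone increasing in $W$ and monotone decreasing in $V$. If desired, one can also note that $B$ restricted to the smaller space remains bilinear, so the resulting $B$-multiplier structures are compatible. However, the statement only asks for the set inclusions, so I will not pursue this.
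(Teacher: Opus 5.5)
Your proof is correct and is the intended argument: the paper states this lemma without proof, as immediate from the definition of $M_{B}(V,W)$, and your element-chasing is exactly that unwinding. Your observation that the multiplier space is increasing in $W$ and decreasing in $V$ is the right way to remember both parts.
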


Next, we distinguish a class of bilinear operators that take into account the multiplicative structure of bounded linear operators. To this end, we will assume that $\W=\V$ in (\ref{efren1}) and take $\F$ to be an algebra.

\begin{definition}
	Let  $\mathcal{F}$ be an algebra,   $\mathcal{V}$ a vector space, and $B:\F\times \mathcal{V}\tien   \mathcal{V}$ a bilinear operator. We say that   $B$ is \emph{multiplicative}  if \begin{equation}\label{lola7}
		B\pare{g  f,v}=B\pare{g,B\pare{f,v}},\qquad \forall f, g\in \F,\qquad \forall v\in V.
	\end{equation}
	Naturally, in the above expression, $gf$ denotes the product of $g$ and $f$ in $\F$.
\end{definition} 
Note that a bilinear operator $B$ is multiplicative if and only if
\begin{equation}\label{lola1}
	B_{gf}=B_{g}\circ B_{f},\qquad \forall f,g\in \F.
\end{equation}

Let  $\mathcal{F}$ be an algebra,  $\mathcal{V}$ a vector space, and $B:\F\times \mathcal{V}\tien   \mathcal{V}$ a multiplicative bilinear operator. Let  $V,W,Z\subseteq \mathcal{V}$ be vector spaces. Observe that if $f\in M_{B}\pare{V,W}$, $g\in M_{B}\pare{W,Z}$ and $v\in V$, then  $B\pare{f,v}\in W$, and hence $B\pare{gf,v}=B\pare{g, B\pare{f,v}}\in Z$. Since  $gf\in \F$, we conclude that  $ gf\in M\pare{V,Z}$.  Therefore, 
\begin{equation}\label{piz25_1}
	M_{B}\pare{W,Z} M_{B}\pare{V,W}  \subseteq M_{B}\pare{V,Z}.
\end{equation}

	Suppose now that $V, W, Z$ are normed spaces and $M_{B}\pare{V,W}$, $M_{B}\pare{W,Z}$, $M_{B}\pare{V,Z}$ are also normed. Next, we denote by $R_{V,W}$, $R_{W,Z}$ and $R_{V,Z}$  the representation operators of $M_{B}\pare{V,W}$, $M_{B}\pare{W,Z}$ and $M_{B}\pare{V,Z}$, respectively. Given   $f\in M_{B}\pare{V,W}$ and $g\in M_{B}\pare{W,Z}$, it follows that $R_{V,Z}\pare{gf}=\pare{B_{V,Z}}_{gf}=\pare{B_{W,Z}}_{g}\circ \pare{B_{V,W}}_{f}=R_{W,Z}\pare{g}\circ R_{V,W}\pare{f}.$ Thus,
	\begin{equation}\label{lola68}
		R_{V,Z}\pare{gf}=R_{W,Z}\pare{g}\circ R_{V,W}\pare{f}.
	\end{equation}
	
	Now, from  (\ref{lola7}) and the $M$-inequality, we obtain that \begin{equation}
		\norm{B\pare{gf,v}}_{Z}\leq \norm{g}_{M_{B}\pare{W,Z}}\norm{B\pare{f,v}}_{W} 
		\leq \norm{g}_{M_{B}\pare{W,Z}} \norm{f}_{M_{B}\pare{V,W}},\qquad \forall v\in \B_{V}.
	\end{equation}
	Therefore, \begin{equation}\label{d345}
		\norm{g  f}_{M_{B}\pare{V,Z}}\leq \norm{g}_{M_{B}\pare{W,Z}} \norm{f}_{M_{B}\pare{V,W}},\qquad \forall f\in M_{B}\pare{V, W},\quad g\in M_{B}\pare{W,Z}.
	\end{equation}

	Next, consider the case $V=W=Z$ and define $M_{B}\pare{V}:=M_{B}\pare{V,V}$. From  (\ref{piz25_1}) it follows that $M_{B}\pare{V}$ is a subalgebra of $\F$, and from (\ref{lola1}), the representation operator $R:M_{B}\pare{V}\tien \mathcal{L}\pare{V}$  satisfies \begin{equation}
		R\pare{gf}=Rg\circ Rf,\qquad   \forall f,g\in M_{B}\pare{V}.
	\end{equation} 
	That is, $R$ is an algebra homomorphism. From (\ref{piz25_1}) and (\ref{d345}) we finally obtain the following result.\vs

	\begin{proposition}\label{naty24}
		Let  $\mathcal{F}$ be an algebra,  $\mathcal{V}$ a vector space, $B:\F\times \mathcal{V}\tien   \mathcal{V}$ a bilinear operator, and $V\subseteq \mathcal{V}$ a Banach space. If $B$ is multiplicative and  $M_{B}\pare{V}$ is normed, then $M_{B}\pare{V}$ is a normed algebra, and the representation operator $R:M_{B}\pare{V}\tien \mathcal{L}\pare{V}$ is an algebra isomorphism onto its range. Moreover, if $\F$ has an identity element, then $M_{B}\pare{V}$ also has an identity element; and if $\F$ is commutative, then $M_{B}\pare{V}$ is commutative. 
	\end{proposition}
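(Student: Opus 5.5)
The plan is to assemble the pieces already established just before the statement. First, by (\ref{piz25_1}) with $V=W=Z$, $M_{B}\pare{V}$ is closed under the product of $\F$. Since it is a vector subspace of $\F$, it is a subalgebra. Because $M_{B}\pare{V}$ is assumed normed, the operator norm $\norm{\cdot}_{M}$ is a norm on it. Inequality (\ref{d345}), again with $V=W=Z$, gives submultiplicativity:
\[
\norm{gf}_{M}\leq \norm{g}_{M}\norm{f}_{M},\qquad \forall f,g\in M_{B}\pare{V}.
\]
Hence $M_{B}\pare{V}$ is a normed algebra.

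Next, I would treat the representation operator $R$. By Proposition \ref{piz4_2}, since $M_{B}\pare{V}$ is normed, $R$ is well defined into $\mathcal{L}\pare{V}$, injective, linear, and isometric. Multiplicativity of $B$, through (\ref{lola1}) or equivalently (\ref{lola68}), gives $R\pare{gf}=Rg\circ Rf$. Thus $R$ is an injective algebra homomorphism, and so it is an algebra isomorphism (indeed isometric) onto its range $R\pare{M_{B}\pare{V}}$. That range is a subalgebra of $\mathcal{L}\pare{V}$, being the image of an algebra under a homomorphism.

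For the identity, let $e$ be the identity of $\F$. This is the step needing the most care, because we must show $e\in M_{B}\pare{V}$, that is, $B\pare{e,v}\in V$ for all $v\in V$. Multiplicativity gives
\[
B\pare{e,v}=B\pare{ee,v}=B\pare{e,B\pare{e,v}},
\]
so $B_{e}$ is idempotent, but this alone does not force $B_{e}=\mathrm{id}$.

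The approach I would take is as follows. If $M_{B}\pare{V}\neq\{0\}$, pick $f\neq 0$ in it and write $B\pare{e,B\pare{f,v}}=B\pare{f,v}\in V$. This only shows that $B_{e}$ fixes the range of $B_{f}$, not that $e$ maps all of $V$ into $V$. So I expect to use the natural reading that $B_{e}$ acts as the identity on $\mathcal{V}$, the standard unital-module convention implicit in the setting. Then $B\pare{e,v}=v\in V$, so $e\in M_{B}\pare{V}$, and $ef=fe=f$ shows $e$ is the identity of $M_{B}\pare{V}$. I would flag this as the main obstacle: if the convention is not available, the claim should be read as requiring $e\in M_{B}\pare{V}$.

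Commutativity is immediate: a subalgebra of a commutative algebra is commutative, since its product is the restriction of the product of $\F$.
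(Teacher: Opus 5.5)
Your treatment of the normed-algebra structure, of $R$, and of commutativity matches the paper's argument. The paper proves the proposition by citing (\ref{piz25_1}), (\ref{d345}) and (\ref{lola1}) together with Proposition~\ref{piz4_2}. Your hesitation about the identity is well founded: the paper gives no argument for that clause, and it is false under the stated hypotheses.

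Here is a counterexample.
\begin{itemize}
\item Take $\mathcal{F}=\mathbb{K}[x]/(x^{2})$, which is commutative with identity $1$, and $\mathcal{V}=\mathbb{K}^{3}$ with basis $u_{1},u_{2},u_{3}$.
\item Let $E$ be the projection onto $\mathrm{span}\{u_{1},u_{2}\}$ along $u_{3}$. Let $N$ be given by $Nu_{2}=u_{1}$ and $Nu_{1}=Nu_{3}=0$.
\item Since $E^{2}=E$, $EN=NE=N$ and $N^{2}=0$, the map $B(a+bx,v):=(aE+bN)v$ is bilinear and multiplicative.
\item Take $V=\mathrm{span}\{u_{1},u_{2}+u_{3}\}$. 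Then $(aE+bN)u_{1}=au_{1}\in V$, while $(aE+bN)(u_{2}+u_{3})=au_{2}+bu_{1}$ lies in $V$ if and only if $a=0$. Hence $M_{B}(V)=\mathbb{K}x$.
\item This space is normed: $V$ is finite-dimensional, and $R$ is injective because $N(u_{2}+u_{3})=u_{1}\neq 0$.
\item Its product is identically zero since $x^{2}=0$. So it has no identity: an identity $\epsilon$ would give $x=\epsilon x=0$.
\end{itemize}

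So the clause needs the extra hypothesis $e\in M_{B}(V)$. This is guaranteed, for instance, by $B(e,v)=v$ for all $v\in V$. That condition holds in every case the paper actually uses: $\mathit{1}\cdot v=v$ for the pointwise product, and $\widetilde{I}s=s$ for $B_{\mathcal{L}}$. Under that hypothesis your argument is complete. As you note, $e\in M_{B}(V)$ alone suffices: the product of $M_{B}(V)$ is the restriction of that of $\mathcal{F}$, so $ef=fe=f$ holds there.
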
 
	
 			\begin{remark}
 		If $\|\cdot\|_M$ is just a seminorm, note that  $M_B(V)$ is a subalgebra and $R\colon M_B(V) \tien \mathcal{L}(V)$ is an algebra homomorphism. 
 	\end{remark}

	Next, we will prove a type of reciprocal for Proposition \ref{naty24} : namely, if the range of $R$  is a subalgebra of $\mathcal{L}\pare{V}$, then it is possible to define an operation on  $M_{B}\pare{V}$ that makes it an algebra.
	
	Let us assume that the range of $R$ is a subalgebra of 
	$\mathcal{L}\pare{V}$. This is equivalent to saying that for each $f,g\in M_{B}\pare{V}$, there exists $h\in  M_{B}\pare{V}$ such that $Rf\circ Rg=Rh.$ This naturally takes us to define the operation $\#:M_{B}\pare{V}\times M_{B}\pare{V}\tien M_{B}\pare{V}$ by  
	\begin{equation}\label{naty38}
		f\# g:=R^{-1}\pare{Rf\circ Rg}. 
	\end{equation}
	Using  (\ref{naty38}) together with the linearity and injectivity of $R$, we can directly establish the   properties to conclude that $M_{B}\pare{V}$ is a subalgebra of $\mathcal{L}\pare{V}$. For example, let  $f,g,h\in M_{B}\pare{V}$, then \begin{align*}
		\pare{f\# g}\# h&=R^{-1}\pare{R\pare{f\#g}\circ Rh}=R^{-1}\pare{\pare{Rf\circ Rg}\circ Rh}\\[.3 cm]
		&=R^{-1}\pare{Rf\circ \pare{Rg\circ Rh}}=R^{-1}\pare{Rf\circ R\pare{g\# h}}=f\#\pare{g\# h}.
	\end{align*}
	Thus, $\#$ is associative.  Additionally,
	\begin{equation}
		\norm{f\# g}_{M_{B}\pare{V}}=\norm{R\pare{f\# g}}_{\mathcal{L}\pare{V}}=\norm{Rf\circ Rg}_{\mathcal{L}\pare{V}} \leq \norm{f}_{M_{B}\pare{V}}\norm{g}_{M_{B}\pare{V}}.
	\end{equation}
	From above, we immediately obtain the following result:
	
	\begin{proposition}\label{naty39} Let  $B:\F\times \mathcal{V}\tien   \mathcal{V}$ be a bilinear operator and   $V\subseteq \V$ a Banach space such that $M_{B}\pare{V}$ is normed. If the range of the representation operator  $R:M_{B}\pare{V}\tien \mathcal{L}\pare{V}$ is a subalgebra of $\mathcal{L}\pare{V}$, then    $M_{B}\pare{V}$, with the operation  $\#$ defined in equation (\ref{naty38}),  is a normed algebra, and     $R$ is an algebra isomorphism onto its range.  
	\end{proposition}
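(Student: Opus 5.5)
The idea is to transport the algebra structure of $\mathcal{L}\pare{V}$ back to $M_{B}\pare{V}$ through the representation operator. Since $M_{B}\pare{V}$ is normed, Proposition \ref{piz4_2} gives that $R:M_{B}\pare{V}\tien \mathcal{L}\pare{V}$ is a linear isometry, in particular injective. Hence $R$ is a linear bijection onto its range $\mathcal{R}:=R\pare{M_{B}\pare{V}}$. By hypothesis, $\mathcal{R}$ is closed under composition, so for $f,g\in M_{B}\pare{V}$ we have $Rf\circ Rg\in\mathcal{R}$. Therefore $f\# g=R^{-1}\pare{Rf\circ Rg}$ is a well-defined element of $M_{B}\pare{V}$, and the identity
\begin{equation*}
R\pare{f\# g}=Rf\circ Rg
\end{equation*}
holds by construction.

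Next I verify the algebra axioms by pulling each identity back through $R^{-1}$. Associativity was already checked in the discussion preceding the statement. For bilinearity, take $f_1,f_2,g\in M_{B}\pare{V}$ and $\alpha\in\K$. Then
\begin{equation*}
R\pare{\pare{\alpha f_1+f_2}\# g}=\pare{\alpha Rf_1+Rf_2}\circ Rg=\alpha R\pare{f_1\# g}+R\pare{f_2\# g}=R\pare{\alpha\pare{f_1\# g}+f_2\# g},
\end{equation*}
and injectivity of $R$ gives $\pare{\alpha f_1+f_2}\# g=\alpha\pare{f_1\# g}+f_2\# g$. Linearity in the second variable is proved the same way. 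These steps only use linearity of $R$ and the bilinearity of composition in $\mathcal{L}\pare{V}$.

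For the normed algebra property I use the isometry of Proposition \ref{piz4_2} together with submultiplicativity of the operator norm:
\begin{equation*}
\norm{f\# g}_{M}=\norm{Rf\circ Rg}\leq\norm{Rf}\norm{Rg}=\norm{f}_{M}\norm{g}_{M}.
\end{equation*}
Finally, $R$ is a linear bijection onto $\mathcal{R}$ satisfying $R\pare{f\# g}=Rf\circ Rg$, so it is an algebra isomorphism from $\pare{M_{B}\pare{V},\#}$ onto the subalgebra $\mathcal{R}$, and moreover an isometric one.

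There is no genuine obstacle. The only point needing care is the well-definedness of $\#$: it requires both that $Rf\circ Rg$ lies in the range, which is the subalgebra hypothesis, and that its preimage is unique, which is injectivity from normedness. Note also that completeness of $V$ plays no role in the argument.
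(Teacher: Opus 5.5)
Your proof is correct and follows the paper's argument essentially verbatim: you define $f\# g=R^{-1}(Rf\circ Rg)$, pull associativity and bilinearity back through the injective linear map $R$, and obtain submultiplicativity from the isometry $\norm{f}_{M}=\norm{Rf}$ of Proposition \ref{piz4_2}. Your remark that the completeness of $V$ is never used is also accurate, since the paper's argument does not use it either.
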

	
	Having presented the preliminaries, we next describe the remaining contents of this paper.
	
		In Section~\ref{pop2}  we study conditions under which a multiplier space $M$ is normed with
		respect to the natural function $\norm{\cdot}_{M}$. Our results  deal  mainly with BK-spaces, that is  Banach spaces consisting of functions and where the evaluations are continuous. The analysis is based on pointwise convergence in the underlying  spaces and on continuity properties of the associated multiplication operators; see, for example, Corollary~\ref{piz11_b_0} .  
The developments in Section~\ref{pop3},  examine the completeness of multiplier spaces with respect to the operator norm $\norm{\cdot}_{M}$. The discussion relies on the pointwise behavior of Cauchy sequences in $M$, on the stability of the underlying bilinear action under pointwise limits, and on the assumption that $M$ has continuous evaluations. See, for example, Theorem~\ref{d265_b_0} . Definition \ref{d1} introduces a property under which  $M$ is a normed space and has continuous evaluations.
	
	We consider a pointwise bilinear operator $B_{P}$ induced by a bounded bilinear
	mapping $P$ between Banach spaces. For suitable function spaces $V$ and $W$, we
	study the associated multiplier space $M_{B_{P}}(V,W)$. This setting, developed
	in Section~\ref{pop4}, extends the classical theory of scalar pointwise multipliers to a
	vector-valued framework and provides a unified approach for the developments of sections \ref{pop5} and \ref{pop6} .   Particularly, Theorem~\ref{lola28_3} states that if $V \subseteq F(D, X_{2})$ and $W \subseteq F(D, X_{3})$ are $\BK$-spaces and $P$ does not $\pare{V,W}$-vanish at any point, then $M_{B_{P}}(V, W)$ is a $\BK$-space.

	We next take $P$ as the scalar product $P_{X} \colon \K \times X \to X$ and use it to obtain a pointwise bilinear operator $\BB$ acting between spaces of functions. The associated multiplier spaces $M(V,W)$ arise from this induced action and describe the pointwise multiplication of scalar-valued functions on vector-valued functions. This perspective, considered in Section~\ref{pop5}, allows us to recover classical results  by taking $X = \K$. Assuming that $V \subset F(D,X)$ is a BKN-space,   Theorem~\ref{lola23} shows that the multiplier space $M(V,V)$ is a commutative Banach algebra with unity and  consists of bounded functions. Employing multiplier ideas, if   $U \subseteq F(D,X)$ is a BK-space under two norms, we conclude that these norms are equivalent. This result, with a different proof,  was already given by A.~Wilansky in \cite[p.~55]{Wil}.

   Let $B \colon \mathcal F \times \mathcal V \to \mathcal W$ be a   bilinear mapping and let
   $B' \colon \mathcal V \times \mathcal F \to \mathcal W$ be given by $B'(v,f) := B(f,v)$.
   The multiplier spaces associated with $B'$ are the main object of Section~\ref{pop6}.
   In the pointwise setting, this construction leads to multiplier spaces consisting of vector-valued functions,  denoted by $M'(V,W)$. From Theorem \ref{piz27_1_0} we can appreciate that for $M'(V,W)$  we have a situation similar to that of $M(V,W)$.

  Throughout sections~\ref{pop7} and \ref{pop8}, we work with b-ideals and v-ideals, which  are extensions of the notions of order ideal and Banach ideal (defined for spaces of scalar-valued functions) to spaces of vector-valued functions. These extensions were  studied by M.~Väth  in \cite{V}, where the name ideal* is used for a Banach b-ideal and   a Banach v-ideal is simply called ideal. We refer to \cite{V}  and the references therein for more interesting information  on these topics.  In Section~\ref{pop7}, for a given space $U \subseteq F(D,X)$, we define its $b$-ideal part as the multiplier space $M'(B(D,X),U)$ and show that it is the largest b-ideal contained in $U$. 
   Next, in Section~\ref{pop8} , we analyze what we call the strong vectorialization $Y(X)$ associated with an order ideal $Y \subseteq S(\K)$. As can be seen from Theorem \ref{piz24}, this construction yields $\BK$ v-ideals, including classical examples such as $\ell_p(X)$, $\ell_\infty(X)$, and $c_0(X)$. (See \cite[p.~32]{DJT}, where these spaces are denoted by $\ell_{p}^{strong}(X)$, $\ell_{\infty}^{strong}(X)$, and $c_0^{strong}(X)$, respectively.)
   
   We examine the sequences spaces $\Sigma \ell_{\infty}(X)$, $\Sigma c(X)$, and $\ell_{u}(X)$,
   which arise as particular instances of a general construction $\Sigma E$ we do for 
   Banach spaces $E \subseteq S(X)$, where $S(X)$ denotes the space of sequences in $X$. In Section~\ref{pop9}, we study some of their fundamental
   properties through the behavior of canonical projection functions. In particular,
   we show that the $b$-ideal part of $\Sigma c(X)$ is equal to the space of unconditionally summable sequences $\ell_{u}(X)$. This is a reformulation of   a  well known result called the Bounded Multiplier Test \cite[p. 5, 22]{DJT}.

We introduce the multiplier space $M_{\Sigma}(V,X)$ associated with a BKN-space
$V \subseteq S(\K)$ and a Banach space $X$. As a vector space, $M_{\Sigma}(V,X)$
coincides with the  initial multiplier space $\Mt(V,\Sigma c(X))$, but the original bilinear action is replaced by a
summation-based representation  taking values in $X$ rather than in $\Sigma c(X)$,
which induces a natural renorming of the original multiplier space. Section~\ref{pop10} presents
conditions under which $M_{\Sigma}(V,X)$ is a $\BK$-space, together with norm
equivalence results with $\Mt(V,\Sigma c(X))$. The representation operator
$R_{V,X}$ plays here a central role, and its   isomorphism property is
characterized in terms of $V$ having a Schauder basis. 
Finally, $M_{\Sigma}(V,\K)$ is used to define the analogue of the associate space for order ideals, thus connecting this construction with the classical theory of Köthe duality (see \cite{CDS} and the references therein).

Given a Banach space $U \subseteq F(D,X)$, in the last section we introduce the weak vectorialization $U_{w}(D,X)\subseteq F\pare{D,X}$, which arises naturally from the theory of $B$-multiplier spaces when the underlying bilinear operator is given by scalarization. This construction
provides a unified framework for several classical Banach spaces of
vector-valued functions, and is obtained via dual pairings. The general
theory is then specialized to sequence spaces, leading to the weak
vectorialization $Y_{w}\pare{X}$ of a Banach ideal $Y$ and recovering well-known spaces such as
$\ell_{p,w}(X)$ and $c_{0,w}(X)$. (See \cite[p.~32]{DJT}, where these spaces are denoted by $\ell_{p}^{weak}(X)$ and $c_0^{weak}(X)$, respectively.) We show that the $b$-ideal part
of $\Sigma \ell_{\infty}(X)$ coincides   with the space of weakly
absolutely summable sequences $\ell_{1,w}(X)$. This is a reformulation of a well-known result due to  Bessaga and Pełczyński \cite[Lemma~2]{BP}. We establish that the space $Y_w(X)$ is always a $b$-ideal and that the space $\ell_{1,w} (c_0)$ is a $b$-ideal that is not a $v$-ideal.  We also study the space $bv\pare{X}$ consisting of vector sequences of bounded variation and present the weak vectorialization of $bv\pare{\K}$. Finally, we show that $M\pare{\Sigma \ell_{\infty}\pare{X},\Sigma \ell_{\infty}\pare{X}} = bv\pare{\K}=M\pare{\Sigma c\pare{X},\Sigma c\pare{X}}.$

Summing up, we see that sections \ref{pop1} to \ref{pop3} give the general theory for B-multipliers. Thereafter,  sections~\ref{pop4} through \ref{pop8} mainly deal with pointwise multipliers, whereas sections~\ref{pop10} and \ref{pop11} provide examples of $B$-multiplier Banach spaces in which the bilinear operator $B$ is not induced by a pointwise bilinear operator. Other examples are presented in subsections \ref{lalo2} and \ref{lalo3} .

\section{Normed Multiplier Spaces}\label{pop2}

Motivated by pointwise multiplication, we now   consider a class of bilinear operators $B$ and provide conditions on $V$, $W$, and $B$ under which the multiplier space $M$ is $B$-normed. These conditions are based on the assumption that pointwise convergence can be considered in the involved vector spaces. Therefore, we start by additionally requiring    that
\begin{equation}  W\subseteq F\pare{D,X},
\end{equation} 
where  $D$ is a non-empty set and $X$  is a Banach space.

		Before going on, we  recall a property  which will be fundamental for  this work.   For each $a\in D$, we define the operator $\delta_{a}:F\pare{D,X}\tien X$ by
	\begin{equation}
	\delta_{a}\pare{f}:=f\pare{a}.
	\end{equation}
	Note that $\delta_{a}$ is a linear operator, which we will call the \emph{evaluation operator at $a$}. We say that a normed space  $V\subseteq F\pare{D,X}$ has  \emph{continuous evaluations} if for each $a\in D$, the operator    $\delta_{a}:V\tien X$ is continuous. If in addition $V$ is complete, we say that $V$ is a \emph{\BK-space}. 	Banach spaces with continuous evaluations have been extensively studied, focusing however on scalar sequence spaces that are locally convex and Fréchet. For example, see \cite[Ch. 4]{Wil}, where additionally we can find  points close to our work.

\begin{theorem}\label{bobo}\label{piz21}
	Let $V$ and $W$ be Banach spaces, and let $B:M\times V\tien W$ be a bilinear operator. If $W\subseteq F\pare{D,X}$ is a $\BK$-space, then every $B$-multiplication operator is continuous if and only if 
	% Importamos el paquete adjustbox
	\begin{align} 
		&\text{for each sequence $\llave{v_{k}}_{k}\subseteq V$ and $v\in V$ such that $v_{k}\tien v$, we have} \nonumber\\[.3 cm]
		&\qquad \qquad \qquad \text{$B\pare{f,v_{k}} \tien B\pare{f,v}$ pointwise, $\forall f\in M$}.
		\tag{Property II} \label{lola86_0}
	\end{align}  
\end{theorem}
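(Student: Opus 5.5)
The forward implication is immediate. Suppose every $B_f$ is continuous and let $v_k \tien v$ in $V$. Then $B(f,v_k)=B_f v_k \tien B_f v = B(f,v)$ in the norm of $W$. Since $W$ is a $\BK$-space, each evaluation $\delta_a:W\tien X$ is continuous. Hence $B(f,v_k)(a)\tien B(f,v)(a)$ for every $a\in D$, which is \ref{lola86_0}.

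For the converse, the plan is to invoke the closed graph theorem for the linear operator $B_f:V\tien W$. This is legitimate because $V$ and $W$ are both Banach spaces. Fix $f\in M$ and take a sequence with $v_k\tien v$ in $V$ and $B_f v_k\tien w$ in $W$; we must show $w=B_f v$.

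First, by continuity of the evaluations in the $\BK$-space $W$, norm convergence $B_f v_k\tien w$ gives $B_f v_k(a)\tien w(a)$ for all $a\in D$. Second, \ref{lola86_0} gives $B_f v_k(a)\tien B_f v(a)$ for all $a\in D$. Limits in the Banach space $X$ are unique, so $w(a)=B_f v(a)$ for every $a$. Hence $w=B_f v$ as elements of $F(D,X)$, and so also in $W$.

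The graph of $B_f$ is therefore closed, and the closed graph theorem yields $B_f\in\mathcal{L}(V,W)$. Since $f\in M$ was arbitrary, every $B$-multiplication operator is continuous.

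The only genuinely substantive step is recognizing that the closed graph theorem applies. This is exactly where completeness of both $V$ and $W$ is used. Otherwise the identification of the two pointwise limits is routine.
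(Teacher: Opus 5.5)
Your proposal is correct and follows the paper's proof essentially step for step. The forward direction combines continuity of $B_f$ with the continuous evaluations of the $\BK$-space $W$, and the converse applies the closed graph theorem, identifying the norm limit $w$ with $B_f v$ by comparing the two pointwise limits.
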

\begin{proof}
	First, suppose that $B_{f}\in \mathcal{L}\pare{V,W},\forall f\in V$.  Fix  $f\in M$, and let $\llave{v_{k}}_{k}\subseteq V$ and $v\in V$ be such that $v_{k}\tien v$. Then  $B\pare{f,v_{k}}=B_{f}v_{k}\tien B_{f}v=B\pare{f,v}.$ Now, since $W$ is a $\BK$-space, it follows that $B\pare{f,v_{k}} \tien  B\pare{f,v}$ pointwise.
	
	Now, suppose that II holds. Fix $f\in M$, and consider the multiplication operator    $B_{f}:V\tien W$. We will show that the graph of  $B_{f}$, denoted by $G$, is closed in $V\times W$. Thus,  by the closed graph theorem, we will have proven that $B_{f}$ is continuous. Let $\llave{v_{k}}_{k}\subseteq V$, $v\in V$ and $w\in W$ be such that $v_{k}\tien v$ and $B_{f}v_{k}\tien w$. Then, from (\ref{lola86_0}),   $B\pare{f,v_{k}}\pare{a}\tien B\pare{f,v}\pare{a}$, $\forall a\in D$. On the other hand, since $B_{f}v_{k}\tien w$ and $W$ has continuous evaluations, it follows that $B\pare{f,v_{k}}\pare{a}\tien w\pare{a},\forall a\in D$. Therefore, $B\pare{f,v}=w\in W$.  Hence, $G$ is closed.	  
\end{proof}

Under the conditions of Theorem \ref{bobo} , note that \ref{lola86_0} holds if and only if  $\|\cdot\|_M$ is a seminorm on $M$. The following results are clear.

\begin{corollary}\label{piz11_b_0}   Let $V$ and $W$ be Banach spaces with $W \subseteq  F(D,X)$, and let $B:M\times V\tien   W$ be a bilinear operator. If $W$ is a $\BK$-space, then $M$ is  $B$-normed if and only if $B$ satisfies properties I and II.
\end{corollary}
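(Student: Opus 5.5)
The corollary follows by combining Proposition \ref{piz4_2} with Theorem \ref{bobo}. By Proposition \ref{piz4_2}, $M$ is $B$-normed if and only if two conditions hold: (i) $B_f\in\mathcal{L}(V,W)$ for every $f\in M$, and (ii) the representation operator $R\colon M\tien\mathcal{L}(V,W)$ is injective. The plan is to show that each of these conditions matches one of the two properties in the statement.

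For (i), the hypotheses of Theorem \ref{bobo} are exactly ours: $V$ and $W$ are Banach spaces and $W\subseteq F(D,X)$ is a $\BK$-space. That theorem says every $B$-multiplication operator is continuous if and only if \ref{lola86_0} holds. So condition (i) is equivalent to Property II.

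For (ii), it was observed right after the definition of $R$ that $R$ is injective if and only if \ref{lola76_0} holds. One subtlety is that $R$ is only defined once (i) is known. I would handle this by noting that \ref{lola76_0} is a statement purely about $B$, namely that $B(f,\cdot)=0$ forces $f=0$. It is therefore meaningful regardless of boundedness, and under (i) it coincides with the injectivity of $R$.

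Putting these together, $M$ is $B$-normed if and only if (i) and (ii) hold, which is equivalent to Property II and Property I holding. The direction "$B$-normed implies I and II" uses the first half of the proof of Proposition \ref{piz4_2}: a norm is finite, which gives (i) and hence II, and it vanishes only at $0$, which gives I. No step here is a real obstacle. The only care needed is the order of logic in the converse direction: first use II to obtain (i) via Theorem \ref{bobo}, so that $R$ is defined, and then read I as the injectivity of $R$.
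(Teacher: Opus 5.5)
Your proposal is correct and matches the paper's argument. The paper just calls the corollary ``clear'' after Theorem~\ref{bobo}; the intended proof combines Proposition~\ref{piz4_2} (normed if and only if every $B_f$ is bounded and $R$ is injective), Theorem~\ref{bobo} (every $B_f$ is bounded if and only if Property II holds), and the earlier observation that injectivity of $R$ is Property I, which is what you do, and your care that $R$ is defined only once boundedness is known is a correct refinement.
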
 

\begin{lemma}\label{final11}
	Let $W \subseteq F(D,X)$ be a normed space. Then $W \subseteq B(D,X)$ with continuous inclusion if and only if there exists some real number $C >0$ such that 
	$\| f(a) \|_X \leq C \|f\|_W, \forall a \in D, \forall f \in W.$ 
	Clearly, in this case $W$ has continuous evaluations.
\end{lemma}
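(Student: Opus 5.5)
The plan is to read ``$W \subseteq B(D,X)$ with continuous inclusion'' as saying that the inclusion map $\iota : W \tien B(D,X)$ is a bounded linear operator. Here $B(D,X)$ is the space of bounded functions $g:D\tien X$ with the supremum norm $\norm{g}_{\infty}=\sup_{a\in D}\norm{g(a)}_X$. With this reading, both directions come down to the fact that a linear map between normed spaces is continuous exactly when it is bounded. No completeness is assumed, so the proof must stay elementary and avoid the closed graph theorem.

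\emph{Forward direction.} Suppose $W\subseteq B(D,X)$ and $\iota$ is continuous. Since $\iota$ is linear, continuity at $0$ gives a constant $C>0$ with $\norm{f}_{\infty}\le C\norm{f}_W$ for all $f\in W$. Because $\norm{f(a)}_X\le \norm{f}_{\infty}$ for every $a\in D$, we get $\norm{f(a)}_X\le C\norm{f}_W$ for all $a\in D$ and all $f\in W$.

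\emph{Converse.} Suppose such a $C$ exists. Fix $f\in W$. Then $\sup_{a\in D}\norm{f(a)}_X\le C\norm{f}_W<\infty$, so $f$ is bounded and $W\subseteq B(D,X)$. The same estimate reads $\norm{\iota f}_\infty\le C\norm{f}_W$, so $\iota$ is a bounded linear operator and hence continuous.

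\emph{Final remark.} For fixed $a\in D$, the evaluation satisfies $\norm{\delta_a f}_X=\norm{f(a)}_X\le C\norm{f}_W$. Thus each $\delta_a:W\tien X$ is bounded, and $W$ has continuous evaluations. Equivalently, $\delta_a$ restricted to $W$ factors as $\delta_a\circ\iota$, and evaluation is $1$-Lipschitz on $B(D,X)$.

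There is no real obstacle here. The only point needing care is to state explicitly that $B(D,X)$ carries the sup norm, and that ``continuous inclusion'' means boundedness of $\iota$ with some constant $C$, not necessarily $C=1$ as in the paper's $\hookrightarrow$ notation. With that settled, the argument is a two-line estimate in each direction.
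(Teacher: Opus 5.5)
Your argument is correct and is the routine verification the paper intends: the paper gives no written proof and simply calls the lemma clear, and its content is exactly your two estimates $\norm{f}_{\infty}\le C\norm{f}_W$ and $\norm{f(a)}_X\le \norm{f}_{\infty}$. You are also right to read ``continuous inclusion'' as boundedness with some constant $C$, rather than the paper's $\hookrightarrow$ notation, which fixes the constant at $1$.
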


	The next example presents a wide class of BK-spaces. More examples will appear in sections \ref{lalo1} , \ref{sec9} and \ref{sec10} .

	  \begin{example}\label{lup1}
	Let $D$ be a non-empty set and $X$ a Banach space. We denote by   $B\pare{D,X}$ the space of bounded functions $f:D\tien X$. Then   $B\pare{D,X}$ with the \emph{supremum norm}  is a $\BK$-space.  When  $X=\K$, the function $\mathit{1}:D\tien \K$   defined by $\mathit{1}\pare{a}:=1,\forall a\in D$,   is bounded. In fact, $B\pare{D,\K}$ is a Banach algebra with $\mathit{1}$ as its unit.
	
	 In the case $D=\N$, we will use $S(X) := F\pare{\N, X}$ to denote the space of sequences in $X$, and $\ell_{\infty}\pare{X} := B\pare{\N, X}$.   We also introduce the space of sequences in $X$ that converge
	\begin{equation}
		c(X) := \Big\{ \llave{x_{n}}_{n} \subseteq X : \exists\, x \in X \text{ such that } \lim_{n \to \infty} \|x_n - x\| = 0 \Big\}.
	\end{equation}
	
	Similarly, we consider $c_0(X)$ as the space of all null sequences in $X$, namely,
	\begin{equation}
			c_0(X) := \llave{x = \llave{x_{n}}_{n} \in S(X) : \lim_{n \to \infty} x_n = 0}.
	\end{equation}
	
	Note that $c(X)$ and $c_0(X)$ are Banach subspaces of $\ell_{\infty}\pare{X}$, and therefore they are $\BK$-spaces.
	
\end{example} 

  For $1 \leq p < \infty$, we take
\begin{equation}
	\ell_p(X) := \Big\{ \llave{x_{n}}_{n} \subseteq X : \sum_{n=1}^{\infty} \|x_n\|^p < \infty \Big\}, \qquad
	\|\llave{x_{n}}_{n}\|_{\ell_p(X)} = \pare{ \sum_{n=1}^{\infty} \|x_n\|^p }^{1/p}.
\end{equation}
The map $\|\cdot\|_{\ell_p(X)}$ defines a norm on $\ell_p(X)$, and equipped with this norm, $\ell_p(X)$ forms a Banach space.  Note that  $\ell_{p}\pare{X}$, for $1 \leq p \leq \infty$, is a $\BK$-space. In fact, we have
\begin{equation}
	\|x_m \|_{X} \leq \norm{\llave{x_{n}}_{n}}_{\ell_p(X)},\qquad  \forall m \in \N.
\end{equation}

\section[Completion of multiplier spaces]{Completeness of Multiplier Spaces}\label{q21}\label{pop3}

Let $B: \F \times \V \to \W$ be a bilinear operator, and let $V \subseteq \V$, $W \subseteq \W$ be Banach spaces. Having established in Theorem \ref{bobo} a criterion for the multiplier space $M_{B}\pare{V,W}$ to be normed, we now assume that this is the case and study its completeness. In addition to being vector spaces, we will consider $\F$ and $\W$ as function spaces such that
\begin{equation}\label{piz13} 
\text{$\F\subseteq F\pare{D_{1}, X_{1}}$\qquad \text{and} \qquad  $\W\subseteq F\pare{D_{3},X_{3}}$},
\end{equation} where $D_{1},D_{3}$ are non-empty sets and $X_{1},X_{3}$ are Banach spaces. 

Our analysis of the completeness of $M_{B}\pare{V,W} \subseteq F\pare{D_{1},X_{1}}$ will rely on $M_B(V,W)$ having continuous evaluations  (in particular, it is a normed space). The following condition will then prove to be important.

\begin{definition}\label{d1} Suppose that $D_{1} = D_{3} = D$ in (\ref{piz13}). Let $V \subseteq \V$ and $W \subseteq \W$ be normed spaces. We say that $B$ \emph{does not  $\pare{V,W}$-vanish at any point}  if, for each $a\in D$, there exists $c_{a}\in (0,\infty)$ such that for each $f\in M_{B}\pare{V,W}$, there exists $v_{f}\in \B_{V}$  such that
	\begin{equation}\label{piz17} 
		\norm{f\pare{a}}_{X_{1}}\leq c_{a} \norm{B\pare{f,v_{f}}\pare{a}}_{X_{3}}.
	\end{equation}
\end{definition}

\begin{lemma}\label{d339_0}
	Suppose that $D_{1} = D_{3} = D$ in (\ref{piz13}). Let $V \subseteq \V$ and $W \subseteq \W$ be normed spaces such that $W$ has continuous evaluation  and  \ref{lola86_0} is satisfied. If $B$ does not  $\pare{V,W}$-vanish at any point, 
	then    $M_{B}\pare{V,W}$ is normed and  has continuous evaluations. 
\end{lemma}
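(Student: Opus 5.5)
The plan is to show first that $\norm{\cdot}_{M}$ is a seminorm on $M_{B}\pare{V,W}$, then that it is definite, and finally to extract from the same estimate the continuity of each $\delta_{a}$.

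\emph{Seminorm.} We want every multiplication operator $B_{f}$ to be bounded. The argument of Theorem \ref{bobo} applies: \ref{lola86_0} together with continuity of evaluations in $W$ shows that the graph of $B_{f}$ is closed. That theorem is stated for Banach $V,W$, so if completeness is not available here, we instead read \ref{lola86_0} as the standing hypothesis guaranteeing $B_{f}\in\mathcal{L}\pare{V,W}$, as in the remark following Theorem \ref{bobo}. By Lemma \ref{piz7_0}, $\norm{\cdot}_{M}$ is then a seminorm with finite values.

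\emph{Key estimate.} Fix $a\in D$ and let $f\in M_{B}\pare{V,W}$. Take $c_{a}$ and $v_{f}\in\B_{V}$ from Definition \ref{d1}. Since $W$ has continuous evaluations, $\delta_{a}:W\tien X_{3}$ is bounded. Using (\ref{piz17}), then the boundedness of $\delta_{a}$, then the $M$-inequality (Proposition \ref{piz2_2}) with $\norm{v_{f}}_{V}\leq 1$, we get
\begin{equation*}
\norm{f\pare{a}}_{X_{1}}\leq c_{a}\norm{B\pare{f,v_{f}}\pare{a}}_{X_{3}}\leq c_{a}\norm{\delta_{a}}\,\norm{B\pare{f,v_{f}}}_{W}\leq c_{a}\norm{\delta_{a}}\,\norm{f}_{M}.
\end{equation*}
The constant $c_{a}\norm{\delta_{a}}$ depends only on $a$, not on $f$.

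\emph{Conclusion.} If $\norm{f}_{M}=0$, the estimate gives $f\pare{a}=0$ for every $a\in D$. Since $M_{B}\pare{V,W}\subseteq\F\subseteq F\pare{D,X_{1}}$, this means $f=0$, so $\norm{\cdot}_{M}$ is a norm. The same estimate says $\delta_{a}:M_{B}\pare{V,W}\tien X_{1}$ is bounded with norm at most $c_{a}\norm{\delta_{a}}$, which is continuity of evaluations.

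The only delicate point is the first step, namely securing finiteness of $\norm{f}_{M}$ from \ref{lola86_0} when $V$ and $W$ are only normed. Once that is settled, the remaining steps are a direct chaining of inequalities.
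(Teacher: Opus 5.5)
Your argument is correct and is essentially the paper's proof. The paper verifies \ref{lola76_0} from (\ref{piz17}), gets boundedness of every $B_{f}$ from \ref{lola86_0} through Corollary~\ref{piz11_b_0}, and then runs exactly your chain of inequalities with $d=\norm{\delta_{a}}$; reading definiteness directly off that chain instead of going through \ref{lola76_0} is an inessential variation.

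On the point you flag, use the standing assumption of Section~\ref{pop3} that $V$ and $W$ are Banach, which the paper's appeal to Corollary~\ref{piz11_b_0} also needs, and drop your fallback reading. That reading is not legitimate, because for merely normed spaces \ref{lola86_0} does not force $\norm{f}_{M}<\infty$. For example, $V=W=c_{00}\pare{\K}$ with the supremum norm, $B$ the pointwise product and $f=\{n\}_{n}$ satisfy all the hypotheses, yet $\norm{f}_{M}=\infty$.
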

\begin{proof} 
	Let  $f\in M_{B}\pare{V,W}$ such that $B\pare{f,v}=0,\forall v\in V$. Applying (\ref{piz17}), we deduce that $f=0$, which proves that property I holds. Since all multiplication operators are bounded, by Corollary \ref{piz11_b_0}  we conclude that    $M_{B}\pare{V,W}$ is normed.
	
	Next, let $a\in D$. Since we are assuming $W$  has continuous evaluations, there exists $d\in \pare{0,\infty}$ such that $\norm{w\pare{a}}_{X_{3}}\leq d\norm{w}_{W}, \forall w\in W. $
	Now, using  (\ref{piz17}) and the $M$-inequality, we obtain \setlength{\belowdisplayskip}{-8pt} 
	\begin{align*}
	\norm{f\pare{a}}_{X_{1}}& \leq c_{a} \norm{B\pare{f,v_{f}}\pare{a}}_{X_{3}}\\[.3 cm]
	& \leq c_{a}d\sup\llave{\norm{B\pare{f,v}}_{W}:v\in \B_{V}}= c_{a}d\norm{f}_{M},\qquad  \forall f\in M_{B}\pare{V,W}.
	\end{align*} 	 
\end{proof}

\begin{theorem} \label{d265_b_0} \label{d248_0_b}   \label{q15_b} Let  $\F\subseteq F\pare{D_{1}, X_{1}}$, $\W\subseteq F\pare{D_{3},X_{3}}$ and  $\V$ be vector spaces. Let $B:\F\times \V\tien   \W$ be a bilinear operator and let $V\subseteq \V$, $W\subseteq \W$ be Banach spaces such that $W$ and   $M_{B}\pare{V,W}$ have continuous evaluations. Then,  $M_{B}\pare{V,W}$ is a Banach space if, and only if 
		\begin{align} 
	&\text{for every Cauchy sequence $\llave{f_{k}}_{k}\subseteq M_{B}\pare{V,W}$ and $f\in F\pare{D_{1}, X_{1}}$ such that $f_{k}\tien f$ pointwise,} \nonumber \\[.3 cm]
	&\qquad \qquad \qquad \text{then $f\in \F$ and $B\pare{f_{k},v} \tien B\pare{f,v}$ pointwise,\quad $\forall v\in V$}. 
	\tag{Property III} \label{lola85_0}
	\end{align} 
\end{theorem}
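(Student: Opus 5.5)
The two directions are handled separately. In both, continuous evaluations on $M_{B}\pare{V,W}$ and on $W$ are what turn norm convergence into pointwise convergence, and Property III supplies the missing closure.

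\emph{Necessity.} Assume $M_{B}\pare{V,W}$ is a Banach space. Let $\llave{f_{k}}_{k}$ be Cauchy in $M_{B}\pare{V,W}$ with $f_{k}\tien f$ pointwise, $f\in F\pare{D_{1},X_{1}}$.
\begin{enumerate}
\item[1.] By completeness there is $g\in M_{B}\pare{V,W}\subseteq \F$ with $\norm{f_{k}-g}_{M}\tien 0$.
\item[2.] Since $M_{B}\pare{V,W}$ has continuous evaluations, $f_{k}\pare{a}\tien g\pare{a}$ for every $a\in D_{1}$. Uniqueness of pointwise limits in $X_{1}$ gives $f=g\in \F$.
\item[3.] Fix $v\in V$. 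By the $M$-inequality (Proposition \ref{piz2_2}),
\[
\norm{B\pare{f_{k},v}-B\pare{f,v}}_{W}\leq \norm{f_{k}-f}_{M}\norm{v}_{V}\tien 0 .
\]
\item[4.] Since $W$ has continuous evaluations, this norm convergence gives $B\pare{f_{k},v}\tien B\pare{f,v}$ pointwise on $D_{3}$. Hence Property III holds.
\end{enumerate}

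\emph{Sufficiency.} Assume Property III and let $\llave{f_{k}}_{k}$ be Cauchy in $M_{B}\pare{V,W}$.
\begin{enumerate}
\item[1.] For each $a\in D_{1}$, continuity of $\delta_{a}$ on $M_{B}\pare{V,W}$ makes $\llave{f_{k}\pare{a}}_{k}$ Cauchy in the Banach space $X_{1}$. Define $f\pare{a}:=\lim_{k}f_{k}\pare{a}$, so $f_{k}\tien f$ pointwise.
\item[2.] Property III gives $f\in\F$ and $B\pare{f_{k},v}\tien B\pare{f,v}$ pointwise for every $v\in V$.
\item[3.] Fix $v\in V$. By the $M$-inequality $\llave{B\pare{f_{k},v}}_{k}$ is Cauchy in $W$. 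As $W$ is complete, it converges in norm to some $w_{v}\in W$.
\item[4.] Continuous evaluations on $W$ give $B\pare{f_{k},v}\tien w_{v}$ pointwise. Comparing with step 2, $B\pare{f,v}=w_{v}\in W$ for every $v\in V$, so $f\in M_{B}\pare{V,W}$.
\end{enumerate}

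\emph{Norm convergence.} It remains to show $\norm{f_{k}-f}_{M}\tien 0$, and this is the step needing care, because the convergence must be uniform over $v\in \B_{V}$.
\begin{enumerate}
\item[1.] Given $\varepsilon>0$, choose $N$ with $\norm{f_{k}-f_{m}}_{M}\leq\varepsilon$ for all $k,m\geq N$.
\item[2.] Fix $v\in \B_{V}$ and $k\geq N$. Then $\norm{B\pare{f_{k},v}-B\pare{f_{m},v}}_{W}\leq\varepsilon$ for every $m\geq N$.
\item[3.] As $m\tien\infty$, $B\pare{f_{m},v}\tien B\pare{f,v}$ in the norm of $W$, since $w_{v}=B\pare{f,v}$. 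Continuity of the norm gives $\norm{B\pare{f_{k}-f,v}}_{W}\leq\varepsilon$.
\item[4.] The bound does not depend on $v$. Taking the supremum over $v\in \B_{V}$ gives $\norm{f_{k}-f}_{M}\leq\varepsilon$ for all $k\geq N$.
\end{enumerate}
Thus $f_{k}\tien f$ in $M_{B}\pare{V,W}$, and $M_{B}\pare{V,W}$ is a Banach space.

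The main obstacle is identifying the norm limit $w_{v}$ with $B\pare{f,v}$. This is exactly where Property III, together with continuous evaluations on $W$, is used. Once that identification is made, the uniform estimate is routine.
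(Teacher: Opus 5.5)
Your proof is correct and matches the paper's argument essentially step for step. For necessity you identify the pointwise limit with the norm limit via continuous evaluations. For sufficiency you build the pointwise limit, use Property III together with the completeness and continuous evaluations of $W$ to show $B(f,v)=w_{v}\in W$, and then let $m\to\infty$ in the Cauchy estimate, uniformly over $v\in\B_{V}$, to get $\norm{f_{k}-f}_{M}\leq\varepsilon$.
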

\begin{proof} 
	First, assume that the multiplier space $M_{B}\pare{V,W}$ is complete. Let $\llave{f_{k}}_{k}\subseteq M_{B}\pare{V,W}$ be a Cauchy sequence and $f\in F\pare{D_{1},X_{1}}$ such that $f_{k}\tien f$ pointwise. Then, there exists $g\in M_{B}\pare{V,W}$ such that  $f_{k}\tien g$ in $M_{B}\pare{V,W}$. Since   $M_{B}\pare{V,W}$ has continuous evaluations and $f_{k}\tien f$ pointwise, it follows that $f=g\in M_{B}\pare{V,W}\subseteq \F$.
 	 
 	  Fix $v\in V$. Applying the $M$-inequality, we conclude that the linear operator $B\pare{\cdot,v}:M\tien W$ is continuous, and hence $B\pare{f_{k},v}\tien B\pare{f,v}$ in $W$. Since $W$ has continuous evaluations, this implies that $B\pare{f_{k},v} \tien B\pare{f,v}$ pointwise.
	
	Now, assume that \ref{lola85_0}  holds.	Let  $\llave{f_{k}}_{k}\subseteq  M_{B}\pare{V,W}$ be a Cauchy sequence and let $a\in D_{1}$. Since    $M_{B}\pare{V,W}$ has continuous evaluations,   $\llave{f_{k}\pare{a}}_{k}$ is a Cauchy sequence in $X_{1}$, and thus converges to some $x_{a}\in X_{1}$. 	Define the function $f:D_{1}\tien X_{1}$ by  $f\pare{a}:=x_{a}.$ 
	Since  $\llave{f_{k}}_{k}\subseteq M_{B}\pare{V,W}$ is a Cauchy sequence and $f_{k}\tien f$ pointwise, by property  III, we conclude that $f\in \F$.
	
		Let  $v\in V$. Since  $M_{B}\pare{V,W}$ is normed, by the $M$-inequality, the operator $B\pare{\cdot,v}$ is linear and continuous. Therefore, $\llave{B\pare{f_{k},v}}_{k}\subseteq W$ is a Cauchy  sequence, and hence it converges to some $w\in W$. Since $W$ has continuous evaluations, it follows that $B\pare{f_{k},v} \tien w$ pointwise. On the other hand, since \ref{lola85_0} holds, we have that $B\pare{f_{k},v}\tien B\pare{f,v}$ pointwise. Thus,   $B\pare{f,v}=w\in W$, and therefore $f\in M_{B}\pare{V,W}$ and $B\pare{f_{k},v}\tien B\pare{f,v}$ in $W$. 	Next, we show that $f_{k}\tien f$ in $M_{B}\pare{V,W}$. Given  $\epsilon>0$, there exists $K\in \N$ such that
	\begin{equation}
	\norm{B\pare{f_{i},v}-B\pare{f_{k},v}}_{W}\leq\norm{f_{i}-f_{k}}_{\mw}<\epsilon,\qquad \forall i,k\geq K,\quad \forall v\in \B_{V}.
	\end{equation}
	Fixing  $k\geq K$, and taking $i\tien \infty$, we obtain 
	\begin{equation}
	\norm{B\pare{f-f_{k},v}}_{W}=\norm{B\pare{f,v}-B\pare{f_{k},v}}_{W}\leq\epsilon,\qquad   \forall v\in \B_{V}.
	\end{equation}
	It follows that $\norm{f-f_{k}}_{\mw}\leq \epsilon,\forall k\geq K$.  
\end{proof}

When   $\F=F\pare{D_{1},X_{1}}$  in (\ref{piz13}), \ref{lola85_0} simplifies  and from the previous theorem  we obtain the following.

\begin{corollary}\label{d189_4} Let   $B:F\pare{D_{1},X_{1}}\times \V\tien \W$ be a bilinear operator, and let  $V\subseteq \V$, $W\subseteq \W$  be Banach spaces such that $W$ and $M_{B}\pare{V,W}$ have continuous evaluations. Then $M_{B}\pare{V,W}$ is a Banach space if, and only if, for every Cauchy sequence $\llave{f_{k}}_{k} \subseteq M_{B}\pare{V,W}$ and $f\in F\pare{D_{1},X_{1}}$ the following holds:  \begin{equation}\label{lola9}
	\text{if $f_{k}\tien f$ pointwise, then  $B\pare{f_{k},v} \tien B\pare{f,v}$  pointwise}, \quad \forall v\in V.
	\end{equation} 
\end{corollary}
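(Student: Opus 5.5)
The plan is to derive Corollary \ref{d189_4} directly from Theorem \ref{d265_b_0}, specialized to $\F=F\pare{D_{1},X_{1}}$. All of that theorem's hypotheses hold here. We have $\F=F\pare{D_{1},X_{1}}\subseteq F\pare{D_{1},X_{1}}$ and $\W\subseteq F\pare{D_{3},X_{3}}$. Moreover, $V$ and $W$ are Banach spaces, and $W$ and $M_{B}\pare{V,W}$ have continuous evaluations. Hence $M_{B}\pare{V,W}$ is a Banach space if and only if \ref{lola85_0} holds. So it only remains to show that, in this setting, \ref{lola85_0} is equivalent to the condition stated in (\ref{lola9}) for all Cauchy sequences.

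The key observation is that the membership requirement ``$f\in\F$'' in \ref{lola85_0} becomes vacuous. Any pointwise limit $f$ of $\llave{f_{k}}_{k}$ is by definition an element of $F\pare{D_{1},X_{1}}=\F$. Consequently, for a Cauchy sequence $\llave{f_{k}}_{k}\subseteq M_{B}\pare{V,W}$ and $f\in F\pare{D_{1},X_{1}}$ with $f_{k}\tien f$ pointwise, the conclusion of \ref{lola85_0} reduces to: $B\pare{f_{k},v}\tien B\pare{f,v}$ pointwise for every $v\in V$. This is exactly (\ref{lola9}).

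I will write out both directions explicitly. If \ref{lola85_0} holds, then dropping its first conclusion gives (\ref{lola9}). Conversely, if (\ref{lola9}) holds, then the pointwise convergence conclusion is given, and $f\in\F$ holds trivially, so \ref{lola85_0} holds. One small point to check is that $B\pare{f,v}$ makes sense for such an $f$. It does, because $B$ is defined on all of $F\pare{D_{1},X_{1}}\times\V$, so no issue arises there.

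I do not anticipate any real obstacle. The only subtle point is making sure that the implicit quantifiers match: ``for every Cauchy sequence and every $f$'' must be read the same way in \ref{lola85_0} and in the corollary. Once that is checked, the corollary is an immediate restatement of Theorem \ref{d265_b_0}.
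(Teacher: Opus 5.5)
Your proposal is correct and follows the paper's own argument exactly. You specialize Theorem \ref{d265_b_0} to $\F=F\pare{D_{1},X_{1}}$, where the requirement $f\in\F$ in \ref{lola85_0} holds automatically, so \ref{lola85_0} reduces to condition (\ref{lola9}).
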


In what follows, it will be convenient to have  the next  definitions.	Let  $V$ and $W$ be normed spaces. A linear operator $T:V\tien W$ is a \emph{normed isomorphism}  if it is bijective and both $T, T^{-1}$ are continuous. In this case, we say that $V$ and $W$ are \emph{norm-isomorphic}. Moreover, if additionally $\norm{Tv}=\norm{v}$, $\forall v\in V,$ then  $V$ and $W$ are said to be \emph{isometrically isomorphic}, or equivalently, $T$ is an \emph{isometric isomorphism}.
 
 \subsection{A  Multiplier Space not Necessarily Consisting of Functions}\label{lalo2}

Let $D$   be a non-empty set,  $U$, $\V$ and $\W$  vector spaces and  $X_{3}$ a  Banach space. We will now analyze the situation in which we have a bilinear operator \begin{equation}\label{lola57}
B_{1}:U\times \V\tien \W\subseteq F\pare{D,X_{3}},
\end{equation}  where we do not assume that $U$ consists of functions, as we did in (\ref{piz13}). We now start by  introducing the set
\begin{equation}
\F:=\llave{f\in F\pare{D,U}:f\pare{a}=u,\forall a\in D,\text{ for some }u\in U},		\end{equation}
which is a vector subspace of $F\pare{D,U}$. Let us denote the elements of $\F$ by  $f_{u}$, where $f_{u}\pare{a}=u\in U,\forall a\in D$.  We then have an isomorphism   $J:U\tien \F$ given by
\begin{equation}
Ju:=f_{u}.
\end{equation} 
Next, we define the function $B_{D}:\F\times \V\tien \W$ by
\begin{equation}
B_{D}\pare{f_{u},v}:=B_{1}\pare{u,v},\qquad \forall f_{u}\in \F,\qquad v\in \V.
\end{equation}
Note that $B_{D}$ is a bilinear operator, and it satisfies the hypotheses of Lemma \ref{d339_0}. Let  $V\subseteq \V$ and $W\subseteq \W$ be normed spaces. First, observe that \begin{equation}
J\pare{M_{B_{1}}\pare{V,W}}=M_{B_{D}}\pare{V,W}.
\end{equation}
Thus, the correspondence $J:M_{B_{1}}\pare{V,W}\tien M_{B_{D}}\pare{V,W}$ is an isomorphism. Moreover, given $u\in M_{B_{1}}\pare{V,W}$ we have
\begin{align}\label{lola55}
\norm{u}_{M_{B_{1}}\pare{V,W}}&=\sup\llave{\norm{B_{1}\pare{u,v}}_{W}:v\in \B_{V}}\nonumber\\[.3 cm]
&=\sup\llave{\norm{B_{D}\pare{f_{u},v}}_{W}:v\in \B_{V}}=\norm{f_{u}}_{M_{B_{D}}\pare{V,W}}.
\end{align}
Hence, the $B_{1}$-multiplication operators are bounded if and only if the $B_{D}$-multiplication operators are bounded.  

Considering the above isomorphism $J$  and using Lemma \ref{d339_0} with Theorem \ref{d265_b_0}, we next obtain a condition for the completeness of the multiplier space $M_{B_{1}}\pare{V,W}$.

\begin{corollary}\label{lola56_2}
	Suppose that $\W\subseteq F\pare{D,X_{3}}$. Let  $V\subseteq \V$ and  $W\subseteq \W$ be Banach spaces such that $W$ has continuous evaluations and the   $\pare{B_{1}}_{V,W}$-multiplication operators are bounded. Suppose that the following properties hold:
	\begin{enumerate}
		\item[i.]  Given  $a\in D$, there exists $c_{a}\in (0,\infty)$ such that for each $u\in M_{B_{1}}\pare{V,W}$, there exists $v_{u}\in \B_{V}$  such that $\norm{u}_{U}\leq c_{a} \norm{B_{1}\pare{u,v_{u}}\pare{a}}_{X_{3}}.$
		\item[ii.] If  $\llave{u_{k}}_{k}\subseteq M_{B_{1}}\pare{V,W}$ is a Cauchy sequence, $u\in U$ and  $u_{k}\tien u$ in $U$, then  $B_{1}\pare{u_{k},v} \tien B_{1}\pare{u,v}$ pointwise , $\forall v\in V$.
	\end{enumerate}  
	Then    $M_{B_{1}}\pare{V,W}$ is a Banach space. 
\end{corollary}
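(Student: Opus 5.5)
The plan is to transfer the problem to the function-valued multiplier space $M_{B_{D}}\pare{V,W}\subseteq \F\subseteq F\pare{D,U}$ through the isomorphism $J$. By (\ref{lola55}), $J:M_{B_{1}}\pare{V,W}\tien M_{B_{D}}\pare{V,W}$ preserves the operator norm. Hence it suffices to show that $M_{B_{D}}\pare{V,W}$ is a Banach space, and for this we will use Lemma \ref{d339_0} followed by Theorem \ref{d265_b_0}, with $D_{1}=D_{3}=D$, $X_{1}=U$ (normed by $\norm{\cdot}_{U}$) and $\F$ the constant functions.

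\textbf{Step 1: normability and continuous evaluations.} Since the $\pare{B_{1}}_{V,W}$-multiplication operators are bounded, the $B_{D}$-multiplication operators are bounded as well. Because $W$ has continuous evaluations, Theorem \ref{bobo} then gives Property II. Hypothesis i says exactly that $B_{D}$ does not $\pare{V,W}$-vanish at any point, since $\norm{f_{u}\pare{a}}_{U}=\norm{u}_{U}$ and $B_{D}\pare{f_{u},v}=B_{1}\pare{u,v}$. Lemma \ref{d339_0} therefore shows that $M_{B_{D}}\pare{V,W}$ is normed and has continuous evaluations. In particular we get $\norm{u}_{U}\leq c_{a}d\norm{u}_{M_{B_{1}}}$.

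\textbf{Step 2: Property III.} Let $\llave{f_{u_{k}}}_{k}$ be Cauchy in $M_{B_{D}}\pare{V,W}$ with $f_{u_{k}}\tien f$ pointwise for some $f\in F\pare{D,U}$. We must show that $f\in\F$ and that $B_{D}\pare{f_{u_{k}},v}\tien B_{D}\pare{f,v}$ pointwise. Fix any $a_{0}\in D$ and set $u:=f\pare{a_{0}}$. Then $u_{k}=f_{u_{k}}\pare{a}\tien f\pare{a}$ for every $a\in D$, so $f\pare{a}=u$ for all $a$, that is $f=f_{u}\in\F$. (Note that $U$ need not be complete: the limit $f$ is given to us in $F\pare{D,U}$, so no completeness of $U$ is required.) Since $u_{k}\tien u$ in $U$ and $\llave{u_{k}}_{k}$ is Cauchy in $M_{B_{1}}\pare{V,W}$ by (\ref{lola55}), hypothesis ii gives $B_{1}\pare{u_{k},v}\tien B_{1}\pare{u,v}$ pointwise, which is exactly $B_{D}\pare{f_{u_{k}},v}\tien B_{D}\pare{f_{u},v}$ pointwise.

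Theorem \ref{d265_b_0} now yields completeness of $M_{B_{D}}\pare{V,W}$, and the isometric isomorphism $J$ transfers it to $M_{B_{1}}\pare{V,W}$. The main obstacle is the bookkeeping: checking that Definition \ref{d1} and Theorem \ref{d265_b_0} apply with $X_{1}=U$. The paper states these results for a Banach space $X_{1}$, while $U$ is only assumed normed here. I would check that completeness of $X_{1}$ is used in the proof of Theorem \ref{d265_b_0} only to produce the pointwise limit of a Cauchy sequence. That limit can be supplied directly: Step 1 shows $\llave{u_{k}}_{k}$ is Cauchy in $U$. If $U$ is not complete, I would either assume it is, or embed $U$ in its completion and run the argument there, using hypothesis ii, which is stated only for limits $u\in U$. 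This step deserves care.
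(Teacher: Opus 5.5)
Steps 1 and 2 are the paper's argument. You transport the problem through the isometry $J$ of (\ref{lola55}), read hypothesis i as the ``does not $(V,W)$-vanish'' condition of Lemma \ref{d339_0}, and verify Property III using hypothesis ii together with the fact that a pointwise limit of constant functions is constant. You then conclude with Theorem \ref{d265_b_0}. This is correct provided $U$ is a Banach space. The paper tacitly assumes this when it applies Theorem \ref{d265_b_0} with $X_1=U$, since the setting of (\ref{piz13}) requires $X_1$ to be Banach. In the application, Theorem \ref{efren10}, one has $U=\mathcal{L}(X,Y)$.

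Your last paragraph should be settled by the first option only. You are right that verifying Property III needs no completeness. However, the sufficiency half of Theorem \ref{d265_b_0} uses completeness of $X_1$ to produce the pointwise limit, and a Cauchy sequence $\{u_k\}_k$ in $U$ does not ``supply'' that limit.

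The completion route cannot be repaired, because without completeness of $U$ the statement is false. Take $D=\{a\}$, an infinite-dimensional Banach space $X_3$, $W=\W=F(D,X_3)\cong X_3$, and $\V=V=\K$. Let $U$ be a dense, non-closed subspace of $X_3$ with the induced norm, and set $B_1(u,\lambda)(a):=\lambda u$. Every hypothesis holds with $c_a=1$ and $v_u=1$, yet $M_{B_1}(V,W)=U$ isometrically, which is not complete. In addition, $B_1$ is not defined on $\widehat{U}\times\V$, and any limit lying in $\widehat{U}\setminus U$ could not belong to $M_{B_1}(V,W)\subseteq U$.

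So add ``$U$ is a Banach space'' as a standing hypothesis and drop the hedge. With that, your proof is complete.
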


  \subsection{A Multiplier Space Consisting of Bounded Linear Operators}\label{lalo3}
  
	 We begin this section by introducing some notation that will be used throughout this and the subsequent sections. Given $g \in F\pare{D, \K}$ and $x \in X$, the \emph{pointwise product of $g$ and $x$} is the function $gx: D \tien X$ defined by
 	
 	\begin{equation}
 			gx\pare{a} := g\pare{a} x, \quad \forall a \in D.
 	\end{equation} More generally, given  $f \in F\pare{D, X}$, the \emph{pointwise product of $g$ and $f$} is the function $g \cdot f: D \tien X$ defined by
 	
 \begin{equation}
 		g \cdot f\pare{a} := g\pare{a} f\pare{a}, \quad \forall a \in D.
 \end{equation}

	For each $a \in D$, we define $e_a : D \to \R$ by
	\begin{equation}
	e_{a}\pare{d}: = \left \{ \begin{matrix} 1 & \mbox{if }d=a,
	\\ 0 & \mbox{if }d\neq a\end{matrix}\right., \qquad \forall d\in D.
	\end{equation}
	Let $c_{00}(D, X)$ denote the vector space generated by $\llave{e_{a}x: a\in D,\ x\in X}\subseteq F\pare{D,X}$. Note that $f \in c_{00}(D, X)$ if and only if there exists a finite set $A \subseteq D$ such that $f(d) = 0$ for all $d \in D \setminus A$. Thus, $c_{00}(D, X) \subseteq B(D, X)$. From now on, we will use $c_{00}(X) := c_{00}(\N, X)$.   Given $n \in \mathbb{N}$, observe that $e_{n} \in S\pare{\K}$ is the \emph{$n$-th canonical sequence}, which takes the value 1 in the $n$-th entry and 0 in all other entries.
	
	\begin{example}
	We have $c_{00}(X) \subseteq c_0(X) \subseteq c(X) \subseteq \ell_\infty(X)$ and $c_{00}(X) \subseteq \ell_p(X)$ for $1 \leq p < \infty$. 
	\end{example}

  Let $X$ and $Y$ be Banach spaces with $X,Y\neq \llave{0}$, and let $T\in \mathcal{L}\pare{X,Y}$. For each sequence $s\in S\pare{X}$, we define    
	\begin{equation}\label{lola162}
	\widetilde{T}s:=\llave{T\pare{s\pare{n}}}_{n}\in S\pare{Y}.
	\end{equation}  This defines a operator $\widetilde{T}: S(X) \to S(Y)$, which we will refer to as the \emph{sequences operator induced} by $T$.

 Note that if $T$ is an isomorphism, then $\widetilde{T}$ is an isomorphism and $\widetilde{T}^{-1} = \widetilde{T^{-1}}$.

%Let  $X$ and $Y$  be Banach spaces, and consider the corresponding spaces of vector sequences $S\pare{X}$ and $S\pare{Y}$.

Let us now define the function $	B_{\mathcal{L}}:\mathcal{L}\pare{X,Y}\times S\pare{X}\tien S\pare{Y}$ by
\begin{equation}
B_{\mathcal{L}}  \pare{T,s}:=\widetilde{T}s,
\end{equation}
where  $\widetilde{T}$ is the sequences operator induced by $T$. Note that $B_{\mathcal{L}}$ is a bilinear operator as in (\ref{lola57}), with  $U=\mathcal{L}\pare{V,W}$, $\V=S\pare{X}$ and $\W=S\pare{Y}$. \vs

	To show that $M_{\mathcal{L}} (V,W)$ is a Banach space  we need to establish before the following lemma. For each $n \in \mathbb{N}$, we define the function $Q_{n}: X \tien c_{00}\pare{X}$ by
\begin{equation}
	Q_{n}x := e_{n}x.
\end{equation} 
Clearly, $Q_{n}$ is an injective linear operator.

\begin{lemma}\label{d278_1}
	Let  $V\subseteq S\pare{X}$ be a $\BK$-space. If  $c_{00}\pare{X}\subseteq V$, then $Q_{n}\pare{X}$ is closed in $V$, and the  linear operator $Q_{n}:X\tien Q_{n}\pare{X}\subseteq V$  is a normed isomorphism,  $\forall n\in \N$.
\end{lemma}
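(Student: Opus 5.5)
The plan is to show that $Q_{n}$ is bounded below and bounded above as a map into $V$. From this, closedness of $Q_{n}(X)$ follows because $X$ is complete. Fix $n\in\N$. Since $c_{00}(X)\subseteq V$, we have $Q_{n}(X)\subseteq V$, so $Q_{n}\colon X\tien V$ is a well-defined injective linear operator.

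\emph{Lower bound.} This comes from continuous evaluations. Because $V$ is a $\BK$-space, $\delta_{n}\colon V\tien X$ is continuous, so there is $d_{n}>0$ with
\[
\norm{x}_{X}=\norm{\delta_{n}(Q_{n}x)}_{X}\leq d_{n}\norm{Q_{n}x}_{V},\qquad \forall x\in X .
\]
Hence $Q_{n}^{-1}\colon Q_{n}(X)\tien X$ is continuous.

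\emph{Closedness of $Q_{n}(X)$.} Let $\llave{Q_{n}x_{k}}_{k}$ converge in $V$ to some $v\in V$. By the lower bound, $\llave{x_{k}}_{k}$ is Cauchy in $X$, so $x_{k}\tien x$ for some $x\in X$. Evaluations are continuous, so for every $m\in\N$ we get $v(m)=\lim_{k}(Q_{n}x_{k})(m)=\lim_k e_n(m)x_k$. This equals $x$ when $m=n$ and $0$ otherwise. Thus $v=e_{n}x=Q_{n}x\in Q_{n}(X)$, and $Q_{n}(X)$ is closed, hence itself a Banach space.

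\emph{Continuity of $Q_{n}$.} This is the main step. I will use the closed graph theorem for $Q_{n}\colon X\tien V$, with both spaces Banach. Suppose $x_{k}\tien x$ in $X$ and $Q_{n}x_{k}\tien w$ in $V$. Continuity of evaluations gives $w(m)=\lim_{k}e_{n}(m)x_{k}=e_{n}(m)x$ for all $m$, so $w=Q_{n}x$. The graph is therefore closed and $Q_{n}$ is bounded.

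Alternatively, once $Q_{n}(X)$ is known to be Banach, the bounded inverse $Q_{n}^{-1}$ is a continuous bijection between Banach spaces. Its inverse $Q_{n}$ is then continuous by the open mapping theorem. Either route shows that $Q_{n}\colon X\tien Q_{n}(X)$ is a normed isomorphism. The only delicate point is that no norm relation between $X$ and $V$ is assumed, so an upper bound on $\norm{Q_n x}_V$ cannot be written directly and must come from completeness through one of these standard theorems.
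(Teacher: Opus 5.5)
Your proof is correct and follows essentially the same route as the paper's. Closedness of $Q_{n}(X)$ comes from continuous evaluations, and continuity of $Q_{n}$ from the closed graph theorem. The only small difference is that you get continuity of $Q_{n}^{-1}$ directly, since it is the restriction of $\delta_{n}$ to $Q_{n}(X)$, whereas the paper invokes the bounded inverse theorem for that step.
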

\begin{proof}
	Fix  $n\in \N$.  
	First, let us show that $Q_{n}\pare{X}$ is closed in $V$. Let  $\llave{x_{k}}_{k}\subseteq X$ and $v\in V$ such that $Q_{n}x_{k}\tien v$. Since  $V$ has continuous evaluations, evaluating at $n$  yields that the sequence converges to $v\pare{n}$. It then follows that
	\begin{equation}\label{d277}
		v\pare{m} = \lim_{k\tien \infty}Q_{n}x_{k}\pare{m}= \left \{ \begin{matrix} \ \lim_{k\tien \infty}x_{k} & \mbox{if }m=n,
			\\ \lim_{k\tien \infty}0 & \mbox{if }m\neq n\end{matrix}\right.= e_{n}x\pare{m}=Q_{n}x\pare{m}.
	\end{equation}
	Thus,  $Q_{n}\pare{X}\subseteq V$ is closed, and hence is a Banach subspace.
	
	To show that $Q_{n}$ is continuous, by the closed graph theorem, it suffices to verify that the graph of $Q_{n}$ is closed. Let $\llave{x_{k}}_{k}\subseteq X$, $x\in X$ and $v\in V$ such that $x_{k}\tien x$ and $Q_{n}x_{k}\tien v$. From   (\ref{d277}), it directly follows that $v=Q_{n}x$. Hence, the graph of $Q_{n}$ is closed.
	
	Finally, since  $Q_{n}$ is bounded and bijective, the bounded inverse operator theorem implies that $Q_{n}$ is a normed isomorphism.
\end{proof}

The following result shows that linear operators appear naturally as multipliers (see, for instance, \cite{BCLS} or \cite{CPSW}).
 
\begin{theorem}\label{efren10} Let  $V\subseteq S\pare{X}$ and  $W\subseteq S\pare{Y}$ be BK-spaces. If $c_{00}\pare{X}\subseteq V$, then  the (multiplier) space  
	\begin{equation}
		M_{\mathcal{L}}\pare{V,W}=\llave{T\in \mathcal{L}\pare{X,Y}:\widetilde{T}v\in W, \ \forall v\in V}.
	\end{equation}  is a Banach space with the (operator) norm  
	\begin{equation}
	\norm{T}_{M_{\mathcal{L}}\pare{V,W}} = \sup\llave{\norm{\llave{T\pare{s\pare{n}}}_{n}}_{W}:s\in \B_{V}},\qquad \forall T\in M_{\mathcal{L}}\pare{V,W}.
	\end{equation}  In particular, all induced sequences  operators are
	continuous. 
\end{theorem}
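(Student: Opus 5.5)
The plan is to place the statement inside the framework of subsection \ref{lalo2} and apply Corollary \ref{lola56_2}. Take $U=\mathcal{L}\pare{X,Y}$, $\V=S\pare{X}$, $\W=S\pare{Y}\subseteq F\pare{\N,Y}$, $D=\N$, $X_{3}=Y$ and $B_{1}=B_{\mathcal{L}}$. The space $W$ is a BK-space, so it has continuous evaluations. Three things must then be checked: (a) every $(B_{\mathcal{L}})_{V,W}$-multiplication operator $\widetilde{T}:V\tien W$ is bounded; (b) condition i of Corollary \ref{lola56_2}; (c) condition ii of that corollary. Corollary \ref{lola56_2} then gives completeness. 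Normedness, namely property I, follows from (b) exactly as in Lemma \ref{d339_0}, transported through the isometry $J$ of (\ref{lola55}). The formula for the norm is just the definition (\ref{piz1}).

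For (a), I will use Theorem \ref{bobo} (equivalently, the closed graph theorem directly). Fix $T\in M_{\mathcal{L}}\pare{V,W}$ and suppose $s_{k}\tien s$ in $V$. Since $V$ has continuous evaluations, $s_{k}\pare{n}\tien s\pare{n}$ in $X$ for every $n$. Since $T$ is continuous, $T\pare{s_{k}\pare{n}}\tien T\pare{s\pare{n}}$. So $\widetilde{T}s_{k}\tien \widetilde{T}s$ pointwise, which is Property II. Hence $\widetilde{T}\in\mathcal{L}\pare{V,W}$. This also yields the final claim that all induced sequence operators are continuous.

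For (b), fix $n\in\N$. Let $q_{n}:=\norm{Q_{n}}_{\mathcal{L}\pare{X,V}}$, which is finite by Lemma \ref{d278_1} since $c_{00}\pare{X}\subseteq V$. Let $d_{n}$ be the norm of $\delta_{n}:W\tien Y$. Given $T\in M_{\mathcal{L}}\pare{V,W}$, the operator norm is a supremum and need not be attained, so I choose $x_{T}\in\B_{X}$ with $\norm{T}\leq 2\norm{Tx_{T}}$ (any $x_T$ works if $T=0$). Put $v_{T}:=Q_{n}x_{T}/q_{n}\in\B_{V}$. Then $B_{\mathcal{L}}\pare{T,v_{T}}\pare{n}=Tx_{T}/q_{n}$, so
\[
\norm{T}\leq 2q_{n}\norm{B_{\mathcal{L}}\pare{T,v_{T}}\pare{n}}_{Y}.
\]
This is condition i with $c_{n}=2q_{n}$. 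It also gives property I, and through the $M$-inequality the bound $\norm{T}_{\mathcal{L}(X,Y)}\leq 2q_{n}d_{n}\norm{T}_{M_{\mathcal{L}}(V,W)}$.

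For (c), suppose $T_{k}\tien T$ in $\mathcal{L}\pare{X,Y}$. Then for every $v\in V$ and every $n$, $T_{k}v\pare{n}\tien Tv\pare{n}$, which is pointwise convergence of $B_{\mathcal{L}}\pare{T_{k},v}$ to $B_{\mathcal{L}}\pare{T,v}$. The Cauchy hypothesis is not even needed here.

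The only step needing care is (b), where attainment of $\norm{T}$ fails in general. The factor $2$ handles this, and the constant $c_{n}$ stays independent of $T$. The remaining subtlety is the hidden use of completeness of $\mathcal{L}\pare{X,Y}$ inside Corollary \ref{lola56_2}. There, a Cauchy sequence in $M_{\mathcal{L}}\pare{V,W}$ is Cauchy in $\mathcal{L}\pare{X,Y}$ by the estimate from (b), so it converges to some $T\in\mathcal{L}\pare{X,Y}$. Conditions ii and (a), fed into Theorem \ref{d265_b_0}, then show that $T\in M_{\mathcal{L}}\pare{V,W}$ and that the convergence holds in the operator norm of $M_{\mathcal{L}}\pare{V,W}$.
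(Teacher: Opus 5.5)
Your proposal is correct and is essentially the paper's proof. Both verify the hypotheses of Corollary \ref{lola56_2}: Theorem \ref{bobo} gives boundedness of $\widetilde{T}$, Lemma \ref{d278_1} with $c_n = 2\norm{Q_n}$ and a vector $x$ satisfying $\norm{T}\leq 2\norm{Tx}$ gives condition i, and the trivial convergence $T_k x\tien Tx$ gives condition ii. Your normalization $v_T = Q_n x_T/q_n$ replaces the paper's $x e_n/\norm{x e_n}$; this is a harmless, slightly cleaner variant that avoids dividing by zero when $T=0$.
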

\begin{proof}To obtain the conclusion, we will verify the conditions of Corollary \ref{lola56_2}.
	
	Let  $T\in 	M_{\mathcal{L}}\pare{V,W}$, $\llave{v_{k}}\subseteq V$ and $v\in V$ such that $v_{k}\tien v$. Given  $n\in \N$, since  $V$ has continuous evaluations, it follows that $v_{k}\pare{n}\tien v\pare{n}$, and thus  $B_{\mathcal{L}}  \pare{T,v_{k}}\pare{n}=\widetilde{T}v_{k}\pare{n}=T\pare{v_{k}\pare{n}}\tien T\pare{v\pare{n}}=\widetilde{T}v\pare{n}=	B_{\mathcal{L}}  \pare{T,v}\pare{n}$. 
	Applying Theorem \ref{bobo}, we conclude that the multiplication operators of $\pare{B_{\mathcal{L}}}_{V,W}$ are bounded.
	
	Fix  $n\in \N$.  Take $c_{n}=2\norm{Q_{n}}<\infty$, by Lemma \ref{d278_1}.  For  $T\in 	M_{\mathcal{L}}\pare{V,W}$, choose  $x\in \B_{X}$ such that $\norm{T}\leq 2\norm{Tx}_{Y}$. Taking  $v_{T}:=\frac{xe_{n}}{\norm{xe_{n}}}$, we have $v_{T}\in \B_{V}$ and $\norm{xe_{n}}B_{\mathcal{L}}\pare{T,v_{T}}\pare{n}=B_{\mathcal{L}}\pare{T,xe_{n}}\pare{n}=\widetilde{T}\pare{xe_{n}}\pare{n}=Tx$. Thus, \begin{equation}
		\norm{T}\leq 2\norm{Tx}_{Y} = 2\norm{xe_{n}} \norm{B_{\mathcal{L}}\pare{T,v_{T}}\pare{n}}_{Y}\leq c_{n} \norm{B_{\mathcal{L}}\pare{T,v_{T}}\pare{n}}_{Y}.
	\end{equation} This proves condition i of Corollary \ref{lola56_2}.
	
	Now, note that if $\llave{T_{k}}_{k}\subseteq 	\mathcal{L}\pare{X,Y}$ and there exists $T\in \mathcal{L}\pare{X,Y}$ such that $T_{k}\tien T$, then  $T_{k}x\tien Tx$, $\forall x\in X$. Therefore,  $\widetilde{T}_{k}v\tien \widetilde{T}v$ pointwise, $\forall v\in V$. In this case, property ii of Corollary \ref{lola56_2} is always satisfied.   
\end{proof}

		Consider $X=Y$ and recall that the space $\mathcal{L}(X): = \mathcal{L}(X,X)$ is an algebra under composition. 
	\begin{proposition}
		The bilinear operator $B_{\mathcal{L}}:\mathcal{L}(X)\times S(X)\to S(X)$ is multiplicative.
	\end{proposition}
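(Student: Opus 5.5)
The claim follows by unwinding definitions and checking the identity (\ref{lola7}) coordinatewise, using that the algebra product in $\mathcal{L}(X)$ is composition. Fix $S,T\in\mathcal{L}(X)$ and $s\in S(X)$. We must show
\begin{equation*}
B_{\mathcal{L}}\pare{S\circ T,s}=B_{\mathcal{L}}\pare{S,B_{\mathcal{L}}\pare{T,s}},
\end{equation*}
which by (\ref{lola1}) is equivalent to $\widetilde{S\circ T}=\widetilde{S}\circ\widetilde{T}$ as operators on $S(X)$.

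Two sequences in $S(X)$ are equal exactly when they agree at every $n\in\N$. So fix $n$ and evaluate both sides using the definition (\ref{lola162}) of the induced sequences operator. On the left,
\begin{equation*}
\widetilde{S\circ T}s\pare{n}=\pare{S\circ T}\pare{s\pare{n}}=S\pare{T\pare{s\pare{n}}}.
\end{equation*}
On the right, $\widetilde{T}s$ is the sequence $\llave{T\pare{s\pare{m}}}_{m}$. Applying $\widetilde{S}$ to it and evaluating at $n$ gives
\begin{equation*}
\widetilde{S}\pare{\widetilde{T}s}\pare{n}=S\pare{\widetilde{T}s\pare{n}}=S\pare{T\pare{s\pare{n}}}.
\end{equation*}
The two values coincide for every $n$, hence the sequences are equal.

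Bilinearity of $B_{\mathcal{L}}$ was already noted in the paper, so only this identity needs checking. There is no real obstacle; the only care needed is to read the product $gf$ in (\ref{lola7}) as $S\circ T$ (apply $T$ first), so that the order of composition matches the order in $B\pare{g,B\pare{f,v}}$.
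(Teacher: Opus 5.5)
Your argument is correct and is essentially the paper's proof: both reduce multiplicativity to $\widetilde{S\circ T}=\widetilde{S}\circ\widetilde{T}$ and check this coordinatewise from the definition of the induced sequences operator. One cosmetic point: naming an operator $S$ clashes with the paper's $S(X)$ and its sum operator $S$, so letters such as $T,H$ (as the paper uses) would read more cleanly.
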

	\begin{proof} In this case, $B= B_{\mathcal{L}}$ is given by $B(T,s):=\widetilde{T}s$.
		For $H, T\in \mathcal{L}(X)$, we have that
		\begin{equation}
			B_{TH}(s) = \widetilde{T\circ H}(s) = \{T\circ H(s(n))\}_{n}
			=B_{T}\big(\{H(s(n))\}_{n}\big) = B_{T}\circ B_{H}(s)=B_{T} B_{H}(s), \qquad \forall s\in S(X).
		\end{equation}
		Hence, $B$ is multiplicative.
	\end{proof}

	\begin{example}
		Let $1 \leq p,q \leq \infty$. Since $c_{00}(X)\subseteq \ell_{p}(X)$ and the spaces $\ell_{p}(X)$, $\ell_{q}(Y)$ are $BK$-spaces, it follows that the multiplier space $M_{\mathcal{L}}(\ell_{p}(X),\ell_{q}(Y))$ is a Banach space and $M_\mathcal{L} \pare{\ell_p (X)}$ is a Banach algebra.
	\end{example}

	\section{Pointwise Bilinear Operator Induced by $P$}\label{holadenuevo}\label{pop4}
	
	In this section, we present a general framework to obtain different multiplier spaces of functions corresponding to  a bounded bilinear mapping $P$ between Banach spaces. This construction extends the classical notion of scalar-valued pointwise multipliers to a broader vector-valued setting and allows us to  treat several   classes of multiplier spaces in a unified manner, as well as to establish their normability and completeness under mild assumptions.\\

	Let $X_{1}, X_{2}, X_{3}$ be Banach spaces, and suppose we have a bounded bilinear operator 
	\begin{equation}
	P: X_{1} \times X_{2} \to X_{3}.
	\end{equation}
	Given a non-empty set $D$, we define $B_{P}: F(D, X_{1}) \times F(D, X_{2}) \to F(D, X_{3})$ pointwisely, that is
	\begin{equation}
	B_{P}(f, v)(a) := P(f(a), v(a)), \qquad \forall a \in D.
	\end{equation}
	Then, $B_{P}$ is a bilinear operator, which we will call the \emph{pointwise bilinear operator induced by $P$}. From Section \ref{q21}, if $V \subseteq F(D, X_{2})$ and $W \subseteq F(D, X_{3})$ are vector spaces, we have that 
	\begin{equation}
	M_{B_{P}}(V, W) = \{f \in F(D, X_{1}) : B_{P}(f, v) \in W, \forall v \in V \}.
	\end{equation}
	This type of multiplier function $f$ will be referred to as a \emph{pointwise multiplier}. In the case where $X_{1} = X_{2} = X_{3} = \K$ and $D=\N$ or $D=\C$, they have been extensively studied (see \cite{MP}, \cite[$\S$ 14]{B}, \cite{CDS}, \cite[Ch. I, $\S$ 5]{S}), \cite{JM}, \cite{KLM1}, \cite{KLM2}. \vs
	
	The following result indicates a fundamental property of pointwise multipliers.
	
	\begin{proposition}\label{lola11_1} 
		If $V \subseteq F(D, X_{2})$ and $W \subseteq F(D, X_{3})$ are $\BK$-spaces, then all $B_{P}$-multiplication operators from $V$ into $W$ are continuous.
	\end{proposition}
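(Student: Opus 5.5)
The plan is to apply Theorem \ref{bobo} directly. In the present setting the multiplier space is $M=M_{B_{P}}(V,W)$, the bilinear operator is the restriction $(B_{P})_{V,W}$, and $V$ and $W$ are Banach spaces, with $W\subseteq F(D,X_{3})$ a $\BK$-space. So every $B_{P}$-multiplication operator is continuous as soon as \ref{lola86_0} holds. The whole task is therefore to verify \ref{lola86_0}.

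Fix $f\in M_{B_{P}}(V,W)$, a sequence $\llave{v_{k}}_{k}\subseteq V$ and $v\in V$ with $v_{k}\tien v$ in $V$. Fix also $a\in D$.
\begin{enumerate}
	\item[1.] Since $V$ is a $\BK$-space, the evaluation $\delta_{a}:V\tien X_{2}$ is continuous, so $v_{k}(a)\tien v(a)$ in $X_{2}$.
	\item[2.] The bilinear operator $P$ is bounded, so there is $\norm{P}<\infty$ with $\norm{P(x_{1},x_{2})}_{X_{3}}\leq \norm{P}\norm{x_{1}}_{X_{1}}\norm{x_{2}}_{X_{2}}$. In particular, for the fixed vector $f(a)\in X_{1}$ the map $P(f(a),\cdot):X_{2}\tien X_{3}$ is a bounded linear operator.
	\item[3.] Combining the two steps,
	\begin{equation*}
		\norm{B_{P}(f,v_{k})(a)-B_{P}(f,v)(a)}_{X_{3}}=\norm{P\pare{f(a),v_{k}(a)-v(a)}}_{X_{3}}\leq \norm{P}\norm{f(a)}_{X_{1}}\norm{v_{k}(a)-v(a)}_{X_{2}}\tien 0 .
	\end{equation*}
\end{enumerate}
Since $a\in D$ was arbitrary, $B_{P}(f,v_{k})\tien B_{P}(f,v)$ pointwise. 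This is exactly \ref{lola86_0}, and Theorem \ref{bobo} then yields that $(B_{P})_{f}\in\mathcal{L}(V,W)$ for every $f\in M_{B_{P}}(V,W)$.

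There is no real obstacle here. The only point to be careful about is that Theorem \ref{bobo} needs $V$ and $W$ to be Banach spaces and $W$ to have continuous evaluations, and both follow from the $\BK$ hypothesis. The continuity of evaluations on $V$, also part of the $\BK$ hypothesis, is what converts norm convergence in $V$ into the pointwise convergence used in step 1. By the remark following Theorem \ref{bobo}, this also shows that $\norm{\cdot}_{M}$ is a seminorm on $M_{B_{P}}(V,W)$.
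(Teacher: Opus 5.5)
Your proof is correct and is the paper's own argument: reduce via Theorem~\ref{bobo} to Property II, then obtain $P(f(a),v_k(a))\to P(f(a),v(a))$ from the continuity of evaluations on $V$ and the boundedness of $P$. Your estimate with $\norm{P}\norm{f(a)}_{X_1}\norm{v_k(a)-v(a)}_{X_2}$ simply spells out the step the paper states in one line.
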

	\begin{proof} 
		By Theorem \ref{piz21}, it suffices to verify  \ref{lola86_0}. Let $f \in M_{B_{P}}(V, W)$, $v \in V$, and $\{v_{k}\}_{k} \subseteq V$ be such that $v_{k} \to v$, and fix $a \in D$. Using the fact that $V$ has continuous evaluations and that $P$ is bounded, it follows that $B_{P}(f, v_{k})(a) = P(f(a), v_{k}(a)) \to P(f(a), v(a)) = B_{P}(f, v)(a)$.
	\end{proof}

	Next, we will analyze the normability and completeness of pointwise multiplier spaces.

\begin{corollary}\label{lola22_2_0} \label{lola13_5_0}
	Let $V \subseteq F(D, X_{2})$ and $W \subseteq F(D, X_{3})$ be $\BK$-spaces. The following statements are equivalent:
	\begin{enumerate}
		\item[i.] $M_{B_{P}}(V, W)$ is normed.
		\item[ii.] If $f \in M_{B_{P}}(V, W)$ and $P(f(a), v(a)) = 0$,  $\forall v \in V$,  $ \forall a \in D$, then  $f = 0$.
	\end{enumerate}  
\end{corollary}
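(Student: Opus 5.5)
The plan is to reduce the statement to Corollary \ref{piz11_b_0}, using the structure of $B_P$. That corollary applies because $W\subseteq F(D,X_3)$ is a $\BK$-space. It says that $M_{B_P}(V,W)$ is $B_P$-normed if and only if $B_P$ (restricted to $M_{B_P}(V,W)\times V$) satisfies \ref{lola76_0} and \ref{lola86_0}.

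First, I take care of \ref{lola86_0}. It holds automatically in this setting, and this is exactly what the proof of Proposition \ref{lola11_1} verifies. There, continuity of the evaluations on $V$ and boundedness of $P$ give $B_P(f,v_k)(a)\to B_P(f,v)(a)$ for every $a\in D$ whenever $v_k\to v$ in $V$. Consequently, by Corollary \ref{piz11_b_0}, normability of $M_{B_P}(V,W)$ is equivalent to \ref{lola76_0} alone. Equivalently, by Proposition \ref{piz4_2}, it is equivalent to injectivity of the representation operator $R$, since all multiplication operators are already bounded by Proposition \ref{lola11_1}.

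Second, I translate \ref{lola76_0} into condition ii. For $f\in M_{B_P}(V,W)$, the statement $B_P(f,v)=0$ for all $v\in V$ means $B_P(f,v)(a)=0$ for all $a\in D$ and all $v\in V$. By definition of $B_P$, this is $P(f(a),v(a))=0$ for all $v\in V$ and all $a\in D$. So \ref{lola76_0} reads verbatim as condition ii, and i $\Leftrightarrow$ ii follows.

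There is no real obstacle. The only point requiring care is to note that \ref{lola86_0} holds without extra hypotheses, which is precisely the content of Proposition \ref{lola11_1}. Completeness of $V$ is needed only through Theorem \ref{bobo}, which is used inside Corollary \ref{piz11_b_0}, and the $\BK$ hypotheses on $V$ and $W$ supply it.
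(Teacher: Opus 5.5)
Your proposal is correct and follows essentially the same route as the paper: Proposition \ref{lola11_1} makes every $B_P$-multiplication operator bounded, so normability reduces to Property I, and for $B_P$ this is literally condition ii. The paper proves the two implications directly instead of citing Corollary \ref{piz11_b_0}, but the content is the same.
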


\begin{proof}
	Suppose $M_{B_{P}}(V, W)$ is $B_{P}$-normed. Take $f \in M_{B_{P}}(V, W)$ such that $P(f(a), v(a)) = 0$ for all $v \in V$ and all $a \in D$. Then $B_{P}(f, v) = 0$ for all $v \in V$, and applying \ref{lola76_0}, we deduce that $f = 0$.
	
	Now assume ii. By Proposition \ref{lola11_1}, it remains to verify \ref{lola76_0}. Let $f \in M_{B_{P}}(V, W)$ such that $B_{P}(f, v) = 0$ for all $v \in V$. Then $P(f(a), v(a)) = 0$ for all $v \in V$ and all $a \in D$. Finally, applying ii, we conclude that $f = 0$.
\end{proof} 

%piz17

\begin{definition}
	Let $V \subseteq F(D, X_{2})$ and $W \subseteq F(D, X_{3})$ be $\BK$-spaces. We say that $P$ does not  $\pare{V,W}$-vanish at any point  if $B_{P}$ not  $\pare{V,W}$-vanish at any point. That is,  if for each $a\in D$, there exists $c_{a}\in (0,\infty)$ such that for each $f\in M_{B_{P}}\pare{V,W}$, there exists $v_{f}\in \B_{V}$  such that
	\begin{equation} 
		\norm{f\pare{a}}_{X_{1}}\leq c_{a} \norm{P\pare{f\pare{a},v_{f}\pare{a}}}_{X_{3}}.
	\end{equation}
\end{definition}

\begin{remark}
	Let $\{f_{k}\}_{k} \subseteq F(D, X_{1})$, $f \in F(D, X_{1})$, and $v \in V$ such that $f_{k} \to f$ pointwise. Since $P$ is bounded, it follows that 
	\begin{equation}\label{lola47}
		B_{P}(f_{k}, v)(a) = P(f_{k}(a), v(a)) \to P(f(a), v(a)) = B_{P}(f, v)(a), \quad \forall a \in D.
	\end{equation}
	Thus, the pointwise bilinear operator $B_{P}$ always satisfies the property indicated in (\ref{lola9}). 
\end{remark}

The following result is proven using Lemma \ref{d339_0}, (\ref{lola47}), and Theorem \ref{d265_b_0}.
\begin{theorem}\label{lola28_3}\label{lola35_2}
	Let $V \subseteq F(D, X_{2})$ and $W \subseteq F(D, X_{3})$ be $\BK$-spaces. If $P$ does not $\pare{V,W}$-vanish at any point, then $M_{B_{P}}(V, W)$ is a $\BK$-space.
\end{theorem}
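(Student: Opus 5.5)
The plan is to assemble the conclusion from three earlier results, as the sentence preceding the statement suggests: Lemma \ref{d339_0} gives normability and continuous evaluations, and Theorem \ref{d265_b_0}, in the form of Corollary \ref{d189_4}, gives completeness.

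First I place $B_P$ in the setting of Section \ref{q21}. Take $\F=F(D,X_1)$, $\V=F(D,X_2)$ and $\W=F(D,X_3)$, with $D_1=D_3=D$. By Proposition \ref{lola11_1}, since $V$ and $W$ are $\BK$-spaces, every $B_P$-multiplication operator from $V$ into $W$ is continuous. By Theorem \ref{piz21}, this is equivalent to \ref{lola86_0}. $W$ has continuous evaluations because it is a $\BK$-space. By definition, the hypothesis that $P$ does not $(V,W)$-vanish at any point means exactly that $B_P$ does not $(V,W)$-vanish at any point in the sense of Definition \ref{d1}. Lemma \ref{d339_0} therefore applies, and $M_{B_P}(V,W)$ is normed with continuous evaluations.

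Next, $V$ and $W$ are Banach spaces, $W$ and $M_{B_P}(V,W)$ have continuous evaluations, and $\F$ is all of $F(D,X_1)$. So Corollary \ref{d189_4} reduces completeness to the following condition. For every Cauchy sequence $\{f_k\}_k\subseteq M_{B_P}(V,W)$ and every $f\in F(D,X_1)$ with $f_k\to f$ pointwise, we need $B_P(f_k,v)\to B_P(f,v)$ pointwise for each $v\in V$. By the Remark containing (\ref{lola47}), this holds for any pointwise convergent sequence, Cauchy or not. It uses only the boundedness of $P$ in its first variable: $\norm{P(f_k(a)-f(a),v(a))}\le\norm{P}\norm{f_k(a)-f(a)}\,\norm{v(a)}\to0$. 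Hence $M_{B_P}(V,W)$ is a Banach space with continuous evaluations, that is, a $\BK$-space.

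I expect no real obstacle. The only point needing care is checking that the hypotheses of Lemma \ref{d339_0} and Corollary \ref{d189_4} are literally met. In particular, the limit $f$ produced in the proof of Theorem \ref{d265_b_0} must lie in $\F$, which is automatic because $\F=F(D,X_1)$. Also, the vanishing condition for $P$ must translate verbatim into (\ref{piz17}) for $B_P$, which holds because $B_P(f,v_f)(a)=P(f(a),v_f(a))$.
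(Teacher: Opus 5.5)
Your proposal is correct and is essentially the paper's own argument. The paper says the theorem follows from Lemma~\ref{d339_0}, the pointwise convergence property (\ref{lola47}), and Theorem~\ref{d265_b_0}, which is exactly your chain. Property II comes from Proposition~\ref{lola11_1}, and completeness is obtained through the specialization Corollary~\ref{d189_4}, which applies because $\F = F(D,X_1)$.
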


\section{Spaces of Classical Scalar Pointwise Multipliers}\label{d170}\label{pop5}  
We now consider the fundamental example of pointwise multipliers. Let $D$ be a non-empty set, $X$ a Banach space, and consider the \emph{scalar multiplication} $P_{X}:\K\times X\to X$, defined by  
\begin{equation}\label{lola17}  
P_{X}(b,x):=bx.  
\end{equation}  
Then $P_{X}$ is a bounded bilinear operator and therefore we are  within the framework of Section \ref{holadenuevo}, with $X_{1}=\K$ and $X_{2}=X_{3}=X$. Now consider the induced pointwise bilinear operator $\BB:=B_{P_{X}}$. That is,  
\begin{equation*}  
\BB:F(D,\K)\times F(D,X)\to F(D,X),   
\end{equation*}  
is defined by
\begin{equation}\label{d302_0}  
\BB(f,v)=f\cdot v, \qquad\forall f\in F(D,\K),\qquad v\in F(D,X).  
\end{equation}  

The pointwise bilinear operator $\BB$, gives us now a space of scalar multipliers, which we will denote by $M(V, W)$, instead of the previously used notation $M_{\BB}(V, W)$. \\

The following result is a direct consequence of Proposition \ref{lola11_1}.  

\begin{corollary}\label{d327_1} \label{d329_2}  
	If $V,W\subseteq F(D,X)$ are $\BK$-spaces, then all $\mathscr{B}_{V,W}$-multiplication operators from $V$ to $W$ are continuous.  
\end{corollary}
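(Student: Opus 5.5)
This is an immediate specialization of Proposition \ref{lola11_1} to the scalar multiplication $P_X$. The plan is to check that the hypotheses of that proposition hold here and then read off the conclusion, adding a direct verification of \ref{lola86_0} to make the argument self-contained.

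First, take $X_{1}=\K$ and $X_{2}=X_{3}=X$, and let $P=P_{X}$ be given by (\ref{lola17}). $P_{X}$ is bilinear, and $\norm{P_X(b,x)}=\abs{b}\norm{x}$, so it is bounded with norm at most one. By (\ref{d302_0}), $\BB$ is exactly the pointwise bilinear operator $B_{P_X}$ of Section \ref{holadenuevo}. Hence $M(V,W)=M_{B_{P_X}}(V,W)$, and the $\BB_{V,W}$-multiplication operators are exactly the $B_{P_X}$-multiplication operators.

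Since $V,W\subseteq F(D,X)$ are $\BK$-spaces, Proposition \ref{lola11_1} applies verbatim and yields continuity of every $\BB_f:V\tien W$ with $f\in M(V,W)$.

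For a direct check via Theorem \ref{piz21}, fix $f\in M(V,W)$, a point $a\in D$, and $v_k\tien v$ in $V$. Continuity of $\delta_a$ on $V$ gives $v_k(a)\tien v(a)$ in $X$. Then
\[
\norm{f(a)v_k(a)-f(a)v(a)}\leq\abs{f(a)}\norm{v_k(a)-v(a)}\tien 0 .
\]
This is \ref{lola86_0}. Since $W$ is a $\BK$-space, the closed graph argument of Theorem \ref{piz21} then gives boundedness of $\BB_f$.

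There is no real obstacle here; the only point needing care is that the identification $\BB=B_{P_X}$ is exact, so that the multiplier spaces and multiplication operators coincide.
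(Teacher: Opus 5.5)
Your proposal is correct and follows the paper's route: the paper obtains this corollary as a direct consequence of Proposition~\ref{lola11_1} applied to $P=P_X$ with $X_1=\K$ and $X_2=X_3=X$. That is exactly your identification $\BB=B_{P_X}$, and your extra direct check of \ref{lola86_0} via Theorem~\ref{piz21} just unwinds the proof of that proposition.
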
  

To analyze the completeness of $M(V, W)$, we first recall the following definition and introduce a convenient notation. Let $V \subseteq F(D, X)$ be a  vector space. The space $V$ \emph{does not vanish at any point} if for every $a \in D$, there exists $v \in V$ such that $v(a) \neq 0$. If, in addition, $V$ is a \BK-space, we will say that $V$ is a \emph{\BKn-space}. 

\begin{example} Let $V \subseteq S(\K)$ be a vector space. If $\{e_n\}_{n} \subseteq V$, then $V$ does not vanish at any point. On the other side, if $\llave{1}_{n}\in V$, then $V$ does not vanish at any point.
\end{example}

\begin{lemma}\label{efren6}  
	If $V \subseteq F(D,X)$ is a $\BKn$-space, then $P_X$ does not $(V,W)$-vanish at any point, for every $\BK$-space $W\subseteq F\pare{D,X}$.  
\end{lemma}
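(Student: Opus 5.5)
Fix $a\in D$. The plan is to find one vector in $V$ that works for every multiplier at the point $a$, and to read the constant $c_a$ off its value at $a$. We have $X_1=\K$, $X_2=X_3=X$ and $P_X(b,x)=bx$. The condition to verify is that there exists $c_a\in(0,\infty)$ such that for every $f\in M(V,W)$ there is $v_f\in \B_V$ with
\begin{equation*}
\abs{f(a)}\leq c_a\norm{f(a)v_f(a)}_X .
\end{equation*}

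Since $V$ is a $\BKn$-space, $V$ does not vanish at any point, so there exists $v\in V$ with $v(a)\neq 0$. In particular $v\neq 0$, and we may normalize: set $u:=v/\norm{v}_V\in \B_V$. Then $u(a)=v(a)/\norm{v}_V\neq 0$.

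Define
\begin{equation*}
c_a:=\frac{1}{\norm{u(a)}_X}=\frac{\norm{v}_V}{\norm{v(a)}_X}\in(0,\infty).
\end{equation*}
This constant depends only on $a$ (through the fixed choice of $v$), not on $f$. For any $f\in M(V,W)$, take $v_f:=u$, the same element for all $f$. Then
\begin{equation*}
c_a\norm{P_X(f(a),v_f(a))}_X=c_a\abs{f(a)}\,\norm{u(a)}_X=\abs{f(a)},
\end{equation*}
so the required inequality holds, in fact with equality.

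The only point needing care is that $c_a$ must be independent of $f$. This is guaranteed because the witness $v_f$ can be chosen uniformly in $f$. Neither the $\BK$ property of $W$ nor completeness is actually used beyond what the definition requires. Hence $P_X$ does not $(V,W)$-vanish at any point, for every $\BK$-space $W\subseteq F(D,X)$.
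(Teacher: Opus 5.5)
Your proof is correct and follows the paper's argument: choose one $v\in V$ with $v(a)\neq 0$ and use it as the witness for every $f\in M(V,W)$. Your normalization $u=v/\norm{v}_{V}\in\B_{V}$ with $c_a=\norm{v}_{V}/\norm{v(a)}_X$ is slightly more careful than the paper. The paper takes $c_a=1/\norm{v(a)}$ and does not check that the witness lies in $\B_{V}$, as the definition requires.
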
  
\begin{proof}  
	Let $a\in D$. Since $V$ does not vanish at some point, there exists $v\in V$ such that $v(a)\neq 0$. Taking $c_{a}:=\frac{1}{\norm{v(a)}}\in (0,\infty)$, it follows that $\abs{f(a)} =c_{a}\abs{f(a)}\norm{v(a)}_{X}=c_{a}\norm{P_{X}(f(a),v(a))}_{X}, \forall f\in M(V,W).$  
	Thus, $V$ does not $P_{X}$-vanish at some point.  
\end{proof}

The following result is obtained by applying the previous lemma and Theorem \ref{lola28_3}.  The case $X = \K$ is well known. For example, see \cite[Thm 4.3.15]{Wil}.

\begin{theorem}\label{q31_3_0_0}\label{d318_1_0_0} \label{d328_0_0_0}  
	Let $V,W\subseteq F(D,X)$. If $V$ is a $\BKn$-space   and $W$ is a $\BK$-space, then $M(V,W)$ is a $\BK$-space. Furthermore, if $c_{00}(D,X) \subseteq W$, then $c_{00}(D,\K) \subseteq M(V,W)$. 
\end{theorem}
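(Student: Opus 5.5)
The first assertion follows by combining two earlier results. Since $V$ is a $\BKn$-space and $W$ is a $\BK$-space, Lemma \ref{efren6} shows that $P_X$ does not $(V,W)$-vanish at any point. We are then in the setting of Section \ref{pop4}, with $X_1=\K$, $X_2=X_3=X$, $P=P_X$ and $\BB=B_{P_X}$. Theorem \ref{lola28_3} therefore applies directly and gives that $M(V,W)=M_{\BB}(V,W)$ is a $\BK$-space.

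For completeness we note what Theorem \ref{lola28_3} packages. Lemma \ref{d339_0}, using Corollary \ref{d327_1} for Property II, gives that $M(V,W)$ is normed and has continuous evaluations. The remark containing (\ref{lola47}) gives the pointwise convergence condition (\ref{lola9}). Corollary \ref{d189_4} then yields completeness, since here $\F=F(D,\K)$. No further work is needed for this part.

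For the second assertion, assume $c_{00}(D,X)\subseteq W$. Since $c_{00}(D,\K)$ is spanned by the functions $e_a$, $a\in D$, and $M(V,W)$ is a vector space, it suffices to show that $e_a\in M(V,W)$ for each $a\in D$. Fix $a\in D$ and $v\in V$. Then $\BB(e_a,v)=e_a\cdot v$ takes the value $v(a)$ at $a$ and vanishes elsewhere, so $e_a\cdot v=e_a\,v(a)$. This function belongs to $c_{00}(D,X)$, hence to $W$. Thus $e_a\in M(V,W)$, and by linearity $c_{00}(D,\K)\subseteq M(V,W)$.

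There is no real obstacle here. The only point needing care is checking that the hypotheses of Theorem \ref{lola28_3} are met exactly, namely that both $V$ and $W$ are $\BK$-spaces and that Lemma \ref{efren6} supplies the non-vanishing condition, so that the argument does not silently require $W$ to be a $\BKn$-space.
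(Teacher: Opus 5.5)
Your proposal is correct and matches the paper's proof, which obtains the $\BK$ property by combining Lemma~\ref{efren6} with Theorem~\ref{lola28_3}. The paper leaves the inclusion $c_{00}(D,\K)\subseteq M(V,W)$ implicit, and your observation that $e_a\cdot v=e_a\,v(a)\in c_{00}(D,X)\subseteq W$ is the intended verification.
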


In the following proposition, we present a situation in which the condition of $V$ not vanishing at some point is both necessary and sufficient for $M(V,W)$ to be a $\BK$-space.  

\begin{proposition}\label{d343_0_new_0}  
	Let $V,W\subseteq F(D,X)$ be $\BK$-spaces such that $\{e_{a}\}_{a\in D}\subseteq M(V,W)$. Then $M(V,W)$ is a $\BK$-space if and only if $V$ does not vanish at some point.  
\end{proposition}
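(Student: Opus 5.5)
The sufficiency direction follows from Theorem \ref{q31_3_0_0}. If $V$ does not vanish at any point, then $V$ is a $\BKn$-space. Since $W$ is a $\BK$-space, that theorem gives directly that $M(V,W)$ is a $\BK$-space; the hypothesis $\{e_a\}_{a\in D}\subseteq M(V,W)$ is not even needed here.

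For necessity I argue by contraposition. Suppose there is a point $a\in D$ with $v(a)=0$ for every $v\in V$. The hypothesis gives $e_a\in M(V,W)$. For any $v\in V$ we have $\BB(e_a,v)=e_a\cdot v$, and this function vanishes off $a$ by definition of $e_a$. At $a$ it equals $v(a)=0$. Hence $\BB(e_a,v)=0$ for all $v\in V$, so $\norm{e_a}_{M}=\sup\llave{\norm{e_a\cdot v}_W : v\in \B_V}=0$. But $e_a\neq 0$ as a function, so $\norm{\cdot}_M$ fails to be a norm on $M(V,W)$. Equivalently, \ref{lola76_0} fails, and by Proposition \ref{piz4_2} (or Corollary \ref{lola22_2_0}) $M(V,W)$ is not normed. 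In particular it is not a $\BK$-space, since a $\BK$-space must be normed with its operator norm.

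There is an alternative way to phrase the contradiction, using continuous evaluations: $\abs{e_a(a)}=1$, while any bound $\abs{f(a)}\le C\norm{f}_M$ would force $1\le 0$. The main point to be careful about is the interpretation: ``$M(V,W)$ is a $\BK$-space'' refers to the operator norm $\norm{\cdot}_M$, as stipulated in the paper. With that understanding, the failure of definiteness at $e_a$ is the whole obstacle, and the argument is short. I also read ``does not vanish at some point'' as the paper's defined notion ``does not vanish at any point.''
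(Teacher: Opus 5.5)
Your proof is correct and takes the same approach as the paper. Sufficiency comes from Theorem \ref{q31_3_0_0}. For necessity, you observe that $e_a\cdot v=0$ for every $v\in V$ when $V$ vanishes at $a$, so injectivity of the representation operator (Proposition \ref{piz4_2}) would force $e_a=0$, which is impossible.
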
  
\begin{proof}  
	In the previous corollary, we saw that if $V$ does not vanish at some point, then $M(V,W)$ is a $\BK$-space.  
	
	Now, assume that $M(V,W)$ is a $\BK$-space. Fix $a\in D$ and suppose that $v(a)=0$ for all $v\in V$. Then $\BB_{V,W}(e_{a},v)=0$ for all $v\in V$. Using Proposition \ref{piz4_2}, it follows that $e_{a}=0$, which is not possible.  
\end{proof}

 The following property of $\BB$ allows to algebraically operate with the scalar multiplier spaces $M\pare{V,W}$, $M\pare{W,Z}$, and $M\pare{V,Z}$  (For example, see \cite{B}).

\begin{proposition}
	The bilinear operator $\BB:F\pare{D,\K}\times F\pare{D,X}\to F\pare{D,X}$ is multiplicative.
\end{proposition}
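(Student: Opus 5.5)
We need to show that $\BB(gf,v)=\BB(g,\BB(f,v))$ for all $f,g\in F\pare{D,\K}$ and all $v\in F\pare{D,X}$. Here $F\pare{D,\K}$ is regarded as an algebra under the pointwise product $(gf)(a):=g(a)f(a)$. The plan is to verify the defining identity (\ref{lola7}) directly, by evaluating both sides at an arbitrary point of $D$. Equivalently, by (\ref{lola1}), we check that $\BB_{gf}=\BB_{g}\circ\BB_{f}$.

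Fix $f,g\in F\pare{D,\K}$, $v\in F\pare{D,X}$ and $a\in D$. By the definition (\ref{d302_0}) of $\BB$, which is the pointwise operator induced by $P_X$ in (\ref{lola17}), we have
\begin{equation*}
\BB(gf,v)(a)=P_X\pare{(gf)(a),v(a)}=\pare{g(a)f(a)}v(a).
\end{equation*}
On the other side, applying the definition twice gives
\begin{equation*}
\BB\pare{g,\BB(f,v)}(a)=g(a)\,\BB(f,v)(a)=g(a)\pare{f(a)v(a)}.
\end{equation*}

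These two expressions agree by the associativity axiom of scalar multiplication in the vector space $X$, that is, $(\alpha\beta)x=\alpha(\beta x)$ for $\alpha,\beta\in\K$ and $x\in X$. Since $a\in D$ was arbitrary, the two functions coincide, and so $\BB$ is multiplicative. No step here is a real obstacle; the only point needing care is to state explicitly that the algebra structure on $F\pare{D,\K}$ is the pointwise product, with identity $\mathit{1}$. With that structure fixed, Proposition \ref{naty24} applies afterwards to give, for example, that $M(V)$ is a commutative algebra with unit.
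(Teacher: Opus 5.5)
Your proof is correct and is essentially the paper's argument. The paper also notes that $F(D,\K)$ is an algebra under the pointwise product and writes $\BB(g\cdot f,v)=(g\cdot f)\cdot v=g\cdot(f\cdot v)=\BB(g,\BB(f,v))$; your evaluation at each $a\in D$ simply makes explicit the associativity of scalar multiplication that justifies the middle equality.
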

\begin{proof}
	First, note that $F\pare{D,\K}$ is an algebra under the usual operations for scalar-valued functions. Let $f,g\in F\pare{D,\K}$ and $v\in F\pare{D,X}$. Then, $\BB\pare{g\cdot f,v} = \pare{g\cdot f}\cdot v = g\cdot\pare{f\cdot v} = \BB\pare{g,\BB\pare{f,v}}$.
\end{proof}

  Let $V,W,Z\subseteq F\pare{D,X}$ be $\BKn$-spaces. Since $\BB$ is a multiplicative bilinear operator, by (\ref{piz25_1}) it follows that $M\pare{V,W}\cdot M\pare{W,Z}\subseteq M\pare{V,Z}$. Denoting by  $R_{V,W}$, $R_{W,Z}$, and $R_{V,Z}$ the corresponding representation operators. For $f\in M\pare{V,W}$ and $g\in M\pare{W,Z}$, by (\ref{lola68}) we have
\begin{equation}\label{lola69}
R_{V,Z}\pare{g\cdot f} = R_{W,Z}\pare{g}\circ R_{V,W}\pare{f}.
\end{equation}
 Now, using (\ref{d345}), for each $f\in M\pare{V,W}$ and $g\in M\pare{W,Z}$, we obtain that 
\begin{equation}\label{lola32}
\norm{g\cdot f}_{M\pare{V,Z}} \leq \norm{f}_{M\pare{V,W}}\norm{g}_{M\pare{W,Z}}.
\end{equation}

The result we present below is well known in the context of certain spaces of analytic functions, in which case $D=X=\K$ (see \cite[Prop. 7.1]{CMR}).

\begin{theorem}\label{lola23} \label{piz25_2} \label{lola98_0} 
	If $V\subseteq F\pare{D,X}$ is a $\BK$-space that does not vanish at any point, then:
	\begin{enumerate}
		\item[i.]   $M\pare{V}$ is a commutative Banach algebra with $\mathit{1}$ as the identity and $\norm{\mathit{1}}_{M\pare{V}}=1$. 
		
		\item[ii.] $M\pare{V}\hookrightarrow B\pare{D,\K}$.
	\end{enumerate} 
\end{theorem}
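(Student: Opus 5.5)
By Theorem~\ref{q31_3_0_0} with $W=V$, the space $M(V)=M(V,V)$ is a $\BK$-space, so in particular it is a Banach space with the operator norm $\norm{f}_{M(V)}=\norm{\BB_f}_{\mathcal{L}(V)}$. The map $\BB$ is multiplicative and $F(D,\K)$ is a commutative algebra with identity $\mathit{1}$. Proposition~\ref{naty24} then shows that $M(V)$ is a commutative normed algebra. Since it is complete, it is a commutative Banach algebra. It remains to check that $\mathit{1}\in M(V)$ and compute its norm. Because $\mathit{1}\cdot v=v\in V$ for every $v\in V$, we have $\mathit{1}\in M(V)$ and $\BB_{\mathit{1}}=I_V$. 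Hence $\norm{\mathit{1}}_{M(V)}=\norm{I_V}=1$, using $V\neq\{0\}$, which follows from $V$ not vanishing at any point (and $D\neq\emptyset$). This gives part i.

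For part ii, I will prove $\abs{f(a)}\le\norm{f}_{M(V)}$ for all $a\in D$ and all $f\in M(V)$. Lemma~\ref{final11} then gives $M(V)\subseteq B(D,\K)$ with $\norm{f}_\infty\le\norm{f}_{M(V)}$, that is, $M(V)\hookrightarrow B(D,\K)$ with constant $1$. The plan is a spectral-radius argument. Fix $a\in D$ and choose $v\in V$ with $v(a)\neq0$. Let $d_a$ be the norm of the continuous evaluation $\delta_a:V\to X$. For every $n\in\N$, the power $f^n$ lies in $M(V)$ because $M(V)$ is an algebra. The multiplicative inequality (\ref{lola32}) gives $\norm{f^n}_{M(V)}\le\norm{f}_{M(V)}^n$. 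Combining these with the $M$-inequality, we get
\begin{equation*}
\abs{f(a)}^n\norm{v(a)}_X=\norm{(f^n\cdot v)(a)}_X\le d_a\norm{f^n\cdot v}_V\le d_a\norm{f}_{M(V)}^n\norm{v}_V .
\end{equation*}
Take $n$-th roots and let $n\to\infty$. The constant $\bigl(d_a\norm{v}_V/\norm{v(a)}_X\bigr)^{1/n}$ tends to $1$, so $\abs{f(a)}\le\norm{f}_{M(V)}$.

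I expect the main obstacle to be the sharp constant $1$ in part ii. The naive bound $\abs{f(a)}\le c_a\norm{f}_{M(V)}$ obtained from Lemma~\ref{d339_0} depends on $a$ and yields neither boundedness nor the embedding. The power trick above removes that dependence, because the constant $c_a$ is absorbed by the $n$-th root. An alternative route is to note that $\BB_f$ has eigenvalue-like behaviour at $f(a)$ via the character $f\mapsto f(a)$ of the Banach algebra $M(V)$, whose norm is at most $1$. The direct estimate avoids needing that theory.
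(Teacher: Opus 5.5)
Your proposal is correct and follows essentially the paper's proof. Part i comes from Theorem~\ref{q31_3_0_0} together with the submultiplicativity (\ref{lola32}) and the computation $\norm{\mathit{1}}_{M(V)}=1$, and part ii uses the same $n$-th power and $n$-th root argument. The only cosmetic difference is that you bound $\abs{f(a)}^n$ through the evaluation $\delta_a$ on $V$ and a fixed $v$ with $v(a)\neq 0$, whereas the paper directly invokes the continuous evaluations of the $\BK$-space $M(V)$, whose constant $c_a$ arises in exactly that way (Lemma~\ref{efren6} and Lemma~\ref{d339_0}).
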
  
\begin{proof}
	i. From (\ref{lola32}), it follows that $M\pare{V}$ is an algebra and clearly its product is commutative. Moreover, from Theorem \ref{q31_3_0_0}, we have that the multiplier space $M\pare{V}$ with its operator norm is a $\BK$-space, and $\mathit{1} \in M\pare{V}$. Additionally, $\norm{\mathit{1}}_{M\pare{V}} = \sup \llave{\norm{\mathit{1}\cdot v}_{V}:v \in \B_{V}} = \sup \llave{\norm{ v}_{V}:v \in \B_{V}} = 1$.
	
	ii. Let $f \in M\pare{V}$, $a \in D$, and $n \in \N$. Since $M\pare{V}$ is a $\BK$-space and $f^n \in M\pare{V}$, there exists $c_a \in \pare{0,\infty}$ such that  
\begin{equation}
	\abs{f\pare{a}}^n = \abs{f^n\pare{a}} \leq c_a \norm{f^n}_{M\pare{V}} \leq c_a \norm{f}_{M\pare{V}}^n.
\end{equation}
	Taking the $n$-th root in the inequality above, and then taking $n \to \infty$, it follows that $\abs{f\pare{a}} \leq \norm{f}_{M\pare{V}}$. Thus, $\norm{f}_{B\pare{D,\K}} \leq \norm{f}_{M\pare{V}}$.
\end{proof}

In the case when $W \neq V$ we have the following.

\begin{proposition}
	Let $V, W \subseteq F\pare{D,X}$ be BK-spaces such that $c_{00} (D,X) \subseteq  V \cap W$. If there exists $x\in X\setminus\llave{0}$ such that
\begin{equation}\label{roro}
	c:=\sup\llave{\frac{\norm{e_{a}x}_{V}}{\norm{e_{a}x}_{W}}:a\in D}<\infty,
\end{equation}
	then   $M(V,W)\hookrightarrow_{c}B\pare{D,\K}$.
\end{proposition}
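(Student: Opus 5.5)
Fix $f\in M(V,W)$ and $a\in D$. We want the pointwise bound $\abs{f(a)}\le c\norm{f}_{M(V,W)}$. Taking the supremum over $a$ then gives $f\in B(D,\K)$ and $\norm{f}_{B(D,\K)}\le c\norm{f}_{M(V,W)}$, which is the embedding $M(V,W)\hookrightarrow_{c} B(D,\K)$.

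First we check that the operator norm on $M(V,W)$ is finite. By Corollary \ref{d327_1}, every $\BB_{V,W}$-multiplication operator is continuous, so $\norm{f}_{M(V,W)}<\infty$ and the $M$-inequality of Proposition \ref{piz2_2} applies. Here we do not need $M(V,W)$ to be normed; finiteness of $\norm{\cdot}_M$ is all we use. In fact, since $c_{00}(D,X)\subseteq V$ and $x\neq 0$, the space $V$ does not vanish at any point, so Theorem \ref{q31_3_0_0} even gives that $M(V,W)$ is a $\BK$-space.

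The key step is to test the $M$-inequality on the single vector $e_a x\in c_{00}(D,X)\subseteq V$. We have $\BB(f,e_ax)=f\cdot(e_ax)=f(a)\,e_a x$, because the product vanishes off $a$. Hence
\begin{equation*}
\abs{f(a)}\,\norm{e_ax}_{W}=\norm{f\cdot(e_ax)}_{W}\le \norm{f}_{M(V,W)}\norm{e_ax}_{V}.
\end{equation*}
Since $e_a x\neq 0$ and $W$ is normed, we have $\norm{e_ax}_W>0$. Dividing by it and using (\ref{roro}) gives $\abs{f(a)}\le \frac{\norm{e_ax}_V}{\norm{e_ax}_W}\norm{f}_{M(V,W)}\le c\norm{f}_{M(V,W)}$.

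There is no serious obstacle. The only points needing care are the identity $f\cdot(e_ax)=f(a)e_ax$, the nonvanishing of $\norm{e_ax}_W$, and the finiteness of $\norm{f}_{M}$, which comes from Corollary \ref{d327_1}.
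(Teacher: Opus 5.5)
Your proof is correct and follows the paper's argument: apply the $M$-inequality to the single test vector $e_a x$, use $f\cdot(e_a x)=f(a)\,e_a x$, and divide by $\norm{e_a x}_W$ before invoking (\ref{roro}). The paper leaves implicit two points you check explicitly, namely that $\norm{f}_{M}<\infty$ (via Corollary \ref{d327_1}) and that $\norm{e_ax}_W>0$.
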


\begin{proof} 
	Let $f\in M\pare{V,W}$ and $a\in D$. Then
	$
	\abs{f\pare{a}}\norm{e_{a}x}_{W}
	=
	\norm{f\cdot\pare{e_{a}x}}_{W}
	=
	\norm{\BB\pare{f,e_{a}x}}_{W}
	\leq
	\norm{f}_{M}\norm{e_{a}x}_{V},
	$
	and therefore
	$
	\abs{f\pare{a}}\leq c\norm{f}_{M}.
	$
\end{proof}
\begin{remark}
When $X = \K$ and $D = \N$, note that (\ref{roro}) takes the form
$
c=\sup\llave{\frac{\norm{e_{n}}_{V}}{\norm{e_{n}}_{W}}:n\in \N}.
$
\end{remark}

\begin{corollary}
	Let $V, W \subseteq F\pare{D,X}$ be BK-spaces such that $c_{00} (D,X) \subseteq  V \cap W$. If $W\hookrightarrow_{c}V$, then  $M(V,W)\hookrightarrow_{c}B\pare{D,\K}$.
\end{corollary}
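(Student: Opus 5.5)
The corollary follows from the preceding proposition once we show that condition (\ref{roro}) holds with the same constant $c$. Fix any $x\in X\setminus\llave{0}$. Then for every $a\in D$ the function $e_{a}x$ lies in $c_{00}(D,X)\subseteq V\cap W$. The hypothesis $W\hookrightarrow_{c}V$ means $\norm{w}_{V}\leq c\norm{w}_{W}$ for all $w\in W$. Applying this to $w=e_{a}x$ gives
\begin{equation*}
\frac{\norm{e_{a}x}_{V}}{\norm{e_{a}x}_{W}}\leq c,\qquad \forall a\in D .
\end{equation*}
The quotient is well defined because $e_{a}x\neq 0$, so $\norm{e_{a}x}_{W}>0$.

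Taking the supremum over $a\in D$, the quantity appearing in (\ref{roro}) is at most $c$, in particular finite. The previous proposition then yields $M(V,W)\hookrightarrow_{c'}B\pare{D,\K}$, where $c'\leq c$ is that supremum.

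It remains to pass from the constant $c'$ to $c$. This is immediate, because $\abs{f(a)}\leq c'\norm{f}_{M}\leq c\norm{f}_{M}$. Alternatively, one can repeat the one-line estimate from that proof directly:
\begin{equation*}
\abs{f(a)}\norm{e_{a}x}_{W}=\norm{f\cdot(e_{a}x)}_{W}\leq\norm{f}_{M}\norm{e_{a}x}_{V}\leq c\norm{f}_{M}\norm{e_{a}x}_{W}.
\end{equation*}
Dividing by $\norm{e_{a}x}_{W}>0$ gives $\abs{f(a)}\leq c\norm{f}_{M}$.

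The argument has no real obstacle. The only points needing care are that $X\neq\llave{0}$, so that some nonzero $x$ exists (the proposition assumes such an $x$ exists), and that $e_{a}x\neq 0$, so that the division is legitimate.
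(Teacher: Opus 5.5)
Your proposal is correct and follows the route the paper intends. The paper states this corollary without proof, as a direct consequence of the preceding proposition: applying $W\hookrightarrow_{c}V$ to $e_{a}x\in c_{00}(D,X)\subseteq W$ bounds the supremum in (\ref{roro}) by $c$.

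You were right to flag the need for a nonzero $x\in X$. The statement tacitly assumes $X\neq\{0\}$, as does the proposition. This assumption is genuinely needed: for $X=\{0\}$ one gets $M(V,W)=F(D,\K)$ with zero seminorm, so the conclusion fails.
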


\subsection{Equivalence of Complete Norms in $\BK$  Spaces of Functions}

We will now use multipliers to establish that the property of having continuous evaluations allows us to conclude the equivalence of two complete norms defined in the same function space. To do this, we will first show that if $V, W \subseteq F\pare{D, X}$ have continuous evaluations and $V \subseteq W$, then the inclusion map $i: V \to W$ is continuous. This was already proven in a different way by A. Wilansky \cite[p. 55]{Wil}.

\begin{proposition}\label{d326_0}   
	Let $V, W \subseteq F\pare{D, X}$ be Banach spaces. Then 
	\begin{equation}\label{lola211}
	\text{$V \subseteq W$ if and only if $\mathit{1} \in M\pare{V, W}$.}
	\end{equation} 
	In this case, if $V$ and $W$ have continuous evaluations, then the inclusion map $i: V \to W$ is continuous. In fact, $V \hookrightarrow_{\norm{\mathit{1}}_{M\pare{V, W}}} W$.  	Moreover, if $V \neq \{0\}$, we have $\norm{\mathit{1}}_{M\pare{V, W}} > 0$.
\end{proposition}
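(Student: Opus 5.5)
The equivalence (\ref{lola211}) follows directly from the definitions. Since $\mathit{1}\cdot v = v$ for every $v\in F\pare{D,X}$, we have $\BB\pare{\mathit{1},v}=v$. Hence $\mathit{1}\in M\pare{V,W}$ means exactly that $v\in W$ for all $v\in V$, that is, $V\subseteq W$.

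For the continuity assertion, assume $V\subseteq W$ and that both spaces are Banach spaces with continuous evaluations, so both are $\BK$-spaces. By Corollary \ref{d327_1}, every $\BB_{V,W}$-multiplication operator is continuous. In particular $\BB_{\mathit{1}}:V\tien W$ is bounded. This operator is precisely the inclusion map $i$, because $\BB_{\mathit{1}}v=\mathit{1}\cdot v=v$. Its operator norm is finite and equals $\norm{\mathit{1}}_{M\pare{V,W}}$ by definition (\ref{piz1}). The $M$-inequality (Proposition \ref{piz2_2}) then gives
\[
\norm{v}_{W}=\norm{\BB\pare{\mathit{1},v}}_{W}\leq \norm{\mathit{1}}_{M\pare{V,W}}\norm{v}_{V},\qquad \forall v\in V,
\]
which is $V\hookrightarrow_{\norm{\mathit{1}}_{M\pare{V,W}}} W$.

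For the positivity claim, suppose $V\neq\llave{0}$ and pick $v\in V$ with $v\neq 0$. Since $\norm{\cdot}_{W}$ is a norm, $\norm{\mathit{1}\cdot v}_{W}=\norm{v}_{W}>0$. Normalizing $v$ to lie in $\B_{V}$ shows that the supremum defining $\norm{\mathit{1}}_{M\pare{V,W}}$ is strictly positive. This step does not require Property I or normability of $M\pare{V,W}$; it only uses that $\mathit{1}\cdot v=v$ is a nonzero element of $W$.

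The only substantive step is the boundedness of $\BB_{\mathit{1}}$. This is where the $\BK$ hypothesis enters, through Corollary \ref{d327_1}, which rests on Theorem \ref{bobo} and ultimately on the closed graph theorem: a limit in $V$ and a limit in $W$ must agree pointwise because both spaces have continuous evaluations. The remaining steps are direct from the definitions.
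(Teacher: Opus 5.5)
Your proof is correct and follows the same route as the paper: observe that $\mathit{1}\cdot v=v$, then apply the $M$-inequality with $f=\mathit{1}$. Two steps the paper leaves implicit are supplied by you: the explicit appeal to Corollary~\ref{d327_1} to guarantee $\norm{\mathit{1}}_{M\pare{V,W}}<\infty$ (without which the $M$-inequality gives no continuity), and the short normalization argument for $\norm{\mathit{1}}_{M\pare{V,W}}>0$.
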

\begin{proof} 
	Clearly, (\ref{lola211}) is satisfied. Let $v \in V$. Applying the $M$-inequality (Proposition \ref{piz2_2}), it follows that $\norm{v}_{W} = \norm{\mathit{1} \cdot v}_{W} \leq \norm{\mathit{1}}_{M\pare{V, W}} \norm{v}_{V}$.
\end{proof}

\begin{example}\label{lola189_0}
	If $V \subseteq F\pare{D, \K}$ is a $\BK$-space such that $V \subseteq B\pare{D, \K}$, then the inclusion map $i: V \to B\pare{D, \K}$ is continuous.
\end{example}

\begin{example} 
	Let $A\subseteq F\pare{D,\K}$ be a $\BK$-space that does not vanish at any point. If $A$ is a Banach algebra with respect to the pointwise product, then $A \hookrightarrow_{\norm{\mathit{1}}_{M\pare{A, B\pare{D,\K}}}} B\pare{D,\K}$.
\end{example}
\begin{proof}
	Since $A$ is a $\BKa$-space, by Theorem \ref{lola98_0}, it follows that $M\pare{A} \subseteq B\pare{D,\K}$. Now, observing that $A \subseteq M\pare{A}$ because $A$ is an algebra,  and applying Proposition \ref{d326_0}, we obtain the conclusion.
\end{proof}

% In the case where $V$ and $W$ are $\BK$-spaces with $V \subseteq W$, it will always be convenient to keep in mind that the inclusion will be continuous.

\begin{theorem}\label{piz22_1_0} 
	Let $V \subseteq F\pare{D, X}$ be a vector space. If the normed spaces  $V_{1} = (V, \norm{\cdot}_{1})$ and $V_{2} = (V, \norm{\cdot}_{2})$ are complete and have continuous evaluations, then the norms $\norm{\cdot}_{1}$ and $\norm{\cdot}_{2}$ are equivalent. 
\end{theorem}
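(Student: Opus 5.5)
The plan is to apply Proposition \ref{d326_0} twice, once in each direction, using the constant function $\mathit{1}$ as a multiplier. Since $V_{1}$ and $V_{2}$ have the same underlying vector space $V$, we have trivially $V_{1}\subseteq V_{2}$ and $V_{2}\subseteq V_{1}$. By (\ref{lola211}) this means $\mathit{1}\in M\pare{V_{1},V_{2}}$ and $\mathit{1}\in M\pare{V_{2},V_{1}}$.

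The first step is to check that $\norm{\mathit{1}}_{M\pare{V_{1},V_{2}}}$ is finite. Both $V_{1}$ and $V_{2}$ are $\BK$-spaces, since by hypothesis they are complete and have continuous evaluations. Corollary \ref{d327_1} (equivalently, Proposition \ref{lola11_1} with $P=P_{X}$) then says that every $\BB_{V_{1},V_{2}}$-multiplication operator is continuous. In particular $\BB_{\mathit{1}}$, which is the identity map $V_{1}\to V_{2}$, is bounded, so $\norm{\mathit{1}}_{M\pare{V_{1},V_{2}}}<\infty$. Note that we never need $M\pare{V_{1},V_{2}}$ itself to be normed, only that this single seminorm value is finite; so no nonvanishing hypothesis on $V$ is required. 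If one prefers, this step can be seen directly through the closed graph argument of Theorem \ref{bobo}: if $v_{k}\tien v$ in $V_{1}$ and $v_{k}\tien w$ in $V_{2}$, then continuity of the evaluations in both norms gives $v=w$ pointwise, so the identity has closed graph.

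Next, the $M$-inequality (Proposition \ref{piz2_2}) gives
\begin{equation*}
\norm{v}_{2}=\norm{\mathit{1}\cdot v}_{2}\leq \norm{\mathit{1}}_{M\pare{V_{1},V_{2}}}\norm{v}_{1},\qquad \forall v\in V.
\end{equation*}
Exchanging the roles of $V_{1}$ and $V_{2}$ gives $\norm{v}_{1}\leq \norm{\mathit{1}}_{M\pare{V_{2},V_{1}}}\norm{v}_{2}$. Together these two inequalities are exactly the equivalence of the norms.

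The only point needing care is the finiteness of the constant in the first step, that is, the continuity of the identity. This rests entirely on completeness together with continuous evaluations, through the closed graph theorem already packaged in Theorem \ref{bobo}. The case $V=\llave{0}$ is trivial.
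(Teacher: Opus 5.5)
Your proposal is correct and takes essentially the paper's route: the theorem follows by applying Proposition \ref{d326_0} in both directions. The finiteness of $\norm{\mathit{1}}_{M\pare{V_{1},V_{2}}}$ comes from the continuity of multiplication operators between $\BK$-spaces (Corollary \ref{d327_1}, via the closed graph argument of Theorem \ref{bobo}), the bound comes from the $M$-inequality, and you correctly note that only this single value must be finite, so no nonvanishing hypothesis on $V$ is needed.
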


In \cite[p. 414]{AN} it is proven that every infinite-dimensional Banach space has a complete norm that is not equivalent to the original one. Hence, if $D$ is infinite and $X\neq\llave{0}$, there always exists in $B\pare{D, X}$ a complete norm that lacks continuous evaluations.

\begin{example} 	
	Let $E$ be a normed space. Then its dual normed space $E^{*}$ is a $\BK$-space, and so any norm on $E^{*}$ under which it is a $\BK$-space is equivalent to its dual norm.
\end{example}

\section{Vector Pointwise Multipliers}\label{pop6}

We now will associate with each bilinear operator $B$  another  bilinear operator  $B'$.

\begin{definition} 	Let $\F$, $\V$ and $\W$ be vector spaces, and let $B:\F\times \V\tien \W$ be a bilinear operator. The  \emph{reflected operator of $B$} is the function    $B':\V\times \F\tien \W$ defined by
	\begin{equation}
		B'\pare{v,f}:=B\pare{f,v}.
	\end{equation}  
\end{definition}
Note that $B'$ is also a bilinear operator, and moreover, $B^{''}:=\pare{B'}'=B$.

Let  $B:\F\times \V\tien \W$ be a bilinear operator, and consider its reflected operator $B':\V\times \F\tien \W$. Given vector subspaces $F\subseteq \F$ and  $W\subseteq \W$, we have
\begin{equation}
	M_{B'}\pare{F,W}=\llave{v\in \V:B\pare{f,v}\in W,\forall f\in F}.
\end{equation}

	Let us continue the discussion of Section~\ref{holadenuevo} and consider    the pointwise bilinear operator $B_{P}: F(D, X_{1}) \times F(D, X_{2}) \to F(D, X_{3})$ induced by $P$. Observe that the reflected operator $P' : X_{2} \times X_{1} \to X_{3}$ is also bounded, and $\pare{B_{P}}'$ is precisely the bilinear operator induced by $P'$, i.e., $\pare{B_{P}}' = B_{P'}$. Thus, the reflected bilinear operator of $B_{P}$ is also pointwise, and consequently, the results we established in Section  \ref{holadenuevo} for $B_{P}$   also hold for $B_{P'}$.   \vs

	Now, let us return to the scalar multiplication $P_{X}:\K \times X \to X$ and its induced pointwise linear operator $\BB: F(D,\K) \times F(D,X) \tien F(D,X)$. Working with its reflected operator, we will obtain  multipliers that are vector-valued  and derive results analogous to those established for $\BB$, where the multipliers were scalar-valued. So, let us consider its reflected operator $\BB'$, that is,
	\begin{equation} \BB':F(D,X)\times F(D,\K)\to F(D,X),  
	\end{equation}  
	and  
	\begin{equation}\label{d302_0}  
		\BB'(v,f)=f\cdot v, \qquad\forall f\in F(D,\K),\qquad v\in F(D,X).  
	\end{equation}  
	
	From the pointwise bilinear operator $\BB'$, we  now obtain a space of  multipliers, which we will denote by $M'(V, W)$, instead of the previously used notation $M_{\BB'}(V, W)$.

\begin{lemma}\label{fin1}
	If $V \subseteq F\pare{D, \K}$ is a $\BKn$-space, then $P_X'$ does not $(V,W)$-vanish at any point, for every $\BK$-space $W\subseteq F\pare{D, X}$.   
\end{lemma}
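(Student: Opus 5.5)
The plan is to mirror the proof of Lemma \ref{efren6}, swapping the roles of the scalar and vector factors. In the reflected setting we apply the definition of ``does not $(V,W)$-vanish at any point'' with $X_{1}=X$, $X_{2}=\K$, $X_{3}=X$, where $P_X'(x,b)=bx$. So for each $a\in D$ we must produce $c_{a}\in(0,\infty)$ such that every $f\in M'(V,W)$ admits some $v_{f}\in\B_{V}$ with
\begin{equation*}
\norm{f(a)}_{X}\leq c_{a}\norm{P_X'\pare{f(a),v_{f}(a)}}_{X}=c_{a}\abs{v_{f}(a)}\norm{f(a)}_{X}.
\end{equation*}

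First I fix $a\in D$. Because $V\subseteq F(D,\K)$ does not vanish at any point, there is $v\in V$ with $v(a)\neq 0$; in particular $v\neq 0$, so $\norm{v}_{V}>0$. The one point needing care, unlike in Lemma \ref{efren6}, is that the definition requires $v_{f}\in\B_{V}$, so I will normalize. I set $v_{f}:=v/\norm{v}_{V}\in\B_{V}$, which is the same for every $f$, and take $c_{a}:=\norm{v}_{V}/\abs{v(a)}\in(0,\infty)$.

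Then for every $f\in M'(V,W)$,
\begin{equation*}
c_{a}\norm{P_X'\pare{f(a),v_{f}(a)}}_{X}=\frac{\norm{v}_{V}}{\abs{v(a)}}\cdot\frac{\abs{v(a)}}{\norm{v}_{V}}\norm{f(a)}_{X}=\norm{f(a)}_{X}.
\end{equation*}
This gives the required inequality with equality, for every $\BK$-space $W\subseteq F(D,X)$. There is no genuine obstacle; the argument never uses any property of $W$, nor completeness of $V$, beyond the fact that $V$ does not vanish at any point.
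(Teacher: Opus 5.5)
Your proof is correct and follows the same idea as the paper: fix $a$, pick a single $v\in V$ with $v(a)\neq 0$, and use it for every $f\in M'(V,W)$. Your version is in fact more complete than the paper's. The paper only observes that $P_X'(f(a),v(a))=v(a)f(a)\neq 0$ whenever $f(a)\neq 0$. You instead normalize $v$ into $\B_{V}$ and exhibit the uniform constant $c_{a}=\norm{v}_{V}/\abs{v(a)}$ that the definition actually requires.
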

\begin{proof}  
	Let $a \in D$. Since $V$ does not vanish at some point, there exists $v \in V$ such that $v(a) \neq 0$. Let $f \in M'(V,W)$ be such that $f(a) \neq 0$. It follows that
	$P_{X}'(f(a),v(a)) = v(a)f(a) \neq 0$.
	Thus, $V$ does not $(V,W)$-vanish at any point.
\end{proof}

It follows from Theorem \ref{lola28_3} and Lemma \ref{fin1} that  spaces of vector multipliers have similar  properties  to that of spaces of scalar multipliers.  In particular, the following results hold. We omit the first two proofs.

\begin{theorem}\label{piz27_1_0} Let $V \subseteq F\pare{D, \K}$ and $W \subseteq F\pare{D, X}$. If $V$ is a $\BKa$-space  and $W$ is a $\BK$-space, then $\Mt\pare{V, W}$ is a $\BK$-space.
\end{theorem}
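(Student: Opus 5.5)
The plan is to deduce the statement from Theorem \ref{lola28_3}, applied to the reflected pointwise operator $\BB'=B_{P_X'}$. Here $P_X'\colon X\times\K\tien X$, $P_X'(x,b)=bx$, is a bounded bilinear operator. In the notation of Section \ref{holadenuevo} this means $X_1=X$, $X_2=\K$ and $X_3=X$. By the discussion at the start of this section, $\pare{B_{P_X}}'=B_{P_X'}$, so $\Mt(V,W)=M_{B_{P_X'}}(V,W)$ is exactly a pointwise multiplier space of the type treated there, with $V\subseteq F(D,X_2)=F(D,\K)$ and $W\subseteq F(D,X_3)=F(D,X)$ both $\BK$-spaces.

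The only hypothesis of Theorem \ref{lola28_3} left to check is that $P_X'$ does not $(V,W)$-vanish at any point. This is the content of Lemma \ref{fin1}, but I will verify it quantitatively, because the lemma's proof only shows nonvanishing and does not exhibit the constant. Fix $a\in D$. Since $V$ is a $\BKn$-space, there is $v\in V$ with $v(a)\neq 0$; after normalizing, we may take $v\in\B_V$ (the case $\norm{v}_V=0$ cannot occur because $v\neq 0$). Put $c_a:=1/\abs{v(a)}\in(0,\infty)$. For every $f\in\Mt(V,W)$ we choose $v_f:=v$, which gives
\begin{equation*}
\norm{f(a)}_X=c_a\abs{v(a)}\norm{f(a)}_X=c_a\norm{P_X'\pare{f(a),v_f(a)}}_X .
\end{equation*}
Here $c_a$ and $v_f$ are uniform in $f$, which is exactly what the definition requires.

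With this, Theorem \ref{lola28_3} gives that $\Mt(V,W)$ is a $\BK$-space. Unwinding that theorem shows why each ingredient is needed:
\begin{enumerate}
\item[i.] Proposition \ref{lola11_1} shows that the multiplication operators are bounded.
\item[ii.] Lemma \ref{d339_0} shows that $\Mt(V,W)$ is normed and has continuous evaluations, via $\norm{f(a)}_X\le c_a d_a\norm{f}_{\Mt}$.
\item[iii.] Completeness comes from Theorem \ref{d265_b_0}, or equivalently Corollary \ref{d189_4} since here $\F=F(D,X)$, combined with the observation (\ref{lola47}) that pointwise convergence $f_k\tien f$ forces $f_k\cdot v\tien f\cdot v$ pointwise.
\end{enumerate}

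The main obstacle is the vanishing check, and the point to get right is that the constant $c_a$ must be independent of $f$. Choosing one fixed $v$ with $v(a)\neq0$ settles this; everything else is a direct citation.
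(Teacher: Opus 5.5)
Your proposal is correct and follows the paper's route. The paper also obtains this from Theorem~\ref{lola28_3} applied to $B_{P_X'}$, with the non-vanishing condition supplied by Lemma~\ref{fin1}. Your normalization of $v$ into $\B_V$ and the explicit, $f$-independent constant $c_a=1/\abs{v(a)}$ fill in the quantitative detail that the paper's proof of Lemma~\ref{fin1} leaves implicit.
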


\begin{proposition} \label{d343_nu_0} 
	Let $V \subseteq F\pare{D, \K}$ and $W \subseteq F\pare{D, X}$ be $\BK$-spaces such that for each $a \in D$, there exists $x \in X$, $x \neq 0$, with $e_{a}x \in \Mt\pare{V, W}$. Then, $\Mt\pare{V, W}$ is a $\BK$-space if and only if $V$ does not vanish at some point.
\end{proposition}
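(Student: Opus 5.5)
The plan mirrors the proof of Proposition \ref{d343_0_new_0}, with the roles of multiplier and multiplicand interchanged via the reflected operator $\BB'$.

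For the sufficiency, assume $V$ does not vanish at any point. Then $V$ is a $\BKa$-space, and Theorem \ref{piz27_1_0} (equivalently, Lemma \ref{fin1} together with Theorem \ref{lola28_3}) shows directly that $\Mt\pare{V,W}$ is a $\BK$-space. The hypothesis on the elements $e_a x$ is not needed for this direction.

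For the necessity, assume $\Mt\pare{V,W}$ is a $\BK$-space; in particular it is $\BB'$-normed. I argue by contradiction. Suppose there is $a\in D$ with $v(a)=0$ for every $v\in V$. By hypothesis there is $x\in X\setminus\llave{0}$ with $e_a x\in \Mt\pare{V,W}$. For every $v\in V$ and every $d\in D$,
\begin{equation*}
\BB'\pare{e_a x, v}(d)=v(d)e_a(d)x .
\end{equation*}
This vanishes for $d\neq a$ because $e_a(d)=0$, and for $d=a$ because $v(a)=0$. Hence $\BB'\pare{e_a x,v}=0$ for all $v\in V$, so the representation operator sends $e_a x$ to $0$. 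By Proposition \ref{piz4_2}, or more precisely \ref{lola76_0}, this forces $e_a x=0$. But $\pare{e_a x}(a)=x\neq 0$, a contradiction. Therefore $V$ does not vanish at any point.

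The only delicate point is reading ``does not vanish at some point'' as ``does not vanish at any point'', consistent with the rest of the paper. The hypothesis $x\neq 0$ is exactly what makes $e_a x$ a nonzero element of $\Mt\pare{V,W}$ that witnesses the failure of \ref{lola76_0}. No substantial obstacle is expected.
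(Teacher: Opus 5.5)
Your proof is correct and is the argument the paper intends. The paper omits this proof as the analogue of Proposition~\ref{d343_0_new_0}: sufficiency comes from Theorem~\ref{piz27_1_0}, and for necessity, $\BB'(e_a x, v)=0$ for all $v\in V$ would force $e_a x=0$ by Property~I, contradicting $x\neq 0$.
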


\begin{proposition}\label{final2}
	Let $V \subseteq F\pare{D,\K}$ and $W \subseteq F\pare{D,X}$ be BK-spaces such that $\llave{e_{a}}_{a\in D}\subseteq V$ and $c_{00} (D,X) \subseteq  W$. If
	\begin{equation}
		c:=\sup\llave{\frac{\norm{x}_{X}\norm{e_{a}}_{V}}{\norm{e_{a}x}_{W}}:  a\in D,\ x\in X\setminus\llave{0}}<\infty,
	\end{equation}
	then $M'(V,W)\subseteq B\pare{D,X}$ and $M'(V,W)\hookrightarrow_{c}B\pare{D,X}$.
\end{proposition}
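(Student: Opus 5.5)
The plan is to follow the proof of the scalar analogue, now with the reflected operator $\BB'$. Recall that $f\in M'(V,W)$ means $f\in F(D,X)$ and $v\cdot f\in W$ for every $v\in V$. The operator norm is
\[
\norm{f}_{M'}=\sup\llave{\norm{v\cdot f}_{W}: v\in\B_{V}}.
\]
By Theorem \ref{piz27_1_0}, or directly by Corollary \ref{piz11_b_0} together with Proposition \ref{lola11_1}, this is a norm as soon as $V$ does not vanish at any point. The hypothesis $\llave{e_{a}}_{a\in D}\subseteq V$ guarantees exactly this, so the $M$-inequality (Proposition \ref{piz2_2}) is available for $\BB'$.

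Fix $f\in M'(V,W)$ and $a\in D$. If $f(a)=0$ there is nothing to prove, so assume $x:=f(a)\neq 0$. I will test the multiplier against $v:=e_{a}\in V$. Pointwise, $e_{a}\cdot f=e_{a}f(a)=e_{a}x$. Applying the $M$-inequality gives
\[
\norm{e_{a}x}_{W}=\norm{\BB'(f,e_{a})}_{W}\leq \norm{f}_{M'}\norm{e_{a}}_{V}.
\]
The definition of $c$ with this particular $a$ and $x$ yields $\norm{x}_{X}\norm{e_{a}}_{V}\leq c\,\norm{e_{a}x}_{W}$, where $\norm{e_{a}x}_{W}>0$ because $e_{a}x\neq0$.

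The obstacle is that the $M$-inequality bounds $\norm{e_{a}x}_{W}$ from above by $\norm{e_{a}}_{V}$, whereas $c$ bounds $\norm{x}\norm{e_{a}}_{V}$ from above by $\norm{e_{a}x}_{W}$. Combining the two directly gives
\[
\norm{x}_{X}\norm{e_{a}}_{V}\leq c\,\norm{e_{a}x}_{W}\leq c\,\norm{f}_{M'}\norm{e_{a}}_{V}.
\]
Dividing by $\norm{e_{a}}_{V}>0$ (since $e_{a}\neq0$) gives $\norm{f(a)}_{X}\leq c\norm{f}_{M'}$. So the chain closes, and the only care needed is to confirm that the direction of the ratio in $c$ matches this use.

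Taking the supremum over $a\in D$ shows $f\in B(D,X)$ with $\norm{f}_{B(D,X)}\leq c\norm{f}_{M'}$, which is precisely $M'(V,W)\hookrightarrow_{c}B(D,X)$. The hypothesis $c_{00}(D,X)\subseteq W$ is not used in this argument; it only ensures that $e_{a}x\in W$, so that the quantity defining $c$ makes sense.
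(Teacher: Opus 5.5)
Your proposal is correct and follows essentially the same route as the paper: test $f$ against $e_{a}\in V$, use the $M'$-inequality to get $\norm{e_{a}f(a)}_{W}\leq \norm{f}_{M'}\norm{e_{a}}_{V}$, and combine with the definition of $c$ to obtain $\norm{f(a)}_{X}\leq c\norm{f}_{M'}$. The two inequalities chain in the right direction, so the ``obstacle'' you flag is not actually one.
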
 
\begin{proof}  Let $f\in M'(V,W)$ and $a\in D$. Then
	\begin{equation}
		\norm{e_{a}f\pare{a}}_{W}
		=
		\norm{e_{a}\cdot f}_{W}
		=
		\norm{\BB'\pare{f,e_{a}}}_{W}
		\leq
		\norm{f}_{M'}\norm{e_{a}}_{V}.
	\end{equation}
	Using the hypothesis, it follows that
	$
	\norm{f\pare{a}}_{X}\leq c\norm{f}_{M'}.
	$
\end{proof}

\subsection{Schur Multiplier Space}

We now apply our results to the space of pointwise multipliers on (infinite) matrix spaces.

An \emph{(infinite) matrix} is a function $A:\N\times \N\tien \K$.
We use the standard notation $A=\pare{a_{ij}}$ to refer to such a matrix, and denote by $Mat$ the set of all matrices of this type. Hence, $Mat=F\pare{\N\times \N,\K}$.

\begin{definition}
	Let $A=\pare{a_{ij}}$ and $C=\pare{c_{ij}}\in Mat $. The \emph{Schur product} of the matrices $A$ and $C$ is the matrix $A \cdot C\in Mat $ defined by
	\begin{equation}
		A\cdot C:=\pare{a_{ij}c_{ij}}.
	\end{equation}
\end{definition}

Within the framework of Theorem \ref{d318_1_0_0}, let us take $D=\N\times \N$ and $X=\K$. Next, we consider the operator $B \ast: Mat  \times Mat \to Mat $ defined by  
\begin{equation}
	\text{$B \ast$}\pare{A,C}:=A\cdot C.
\end{equation}
Clearly, $\text{$B \ast$}$ is bilinear. By Theorem \ref{d318_1_0_0}, the following result holds.

\begin{theorem}
	Let $M_{1}\subseteq Mat $ be a $\BK$-space that does not vanish at some point. If $M_{2}\subseteq Mat $ is also a $\BK$-space, then the Schur multiplier space $M_{\text{$B \ast$}}\pare{M_{1},M_{2}}=\llave{A\in Mat :A\cdot  C\in M_{2},\forall C\in M_{1}}$
	endowed with its operator norm, is a $\BK$-space. Moreover, if $Mat _{00}\subseteq M_{2}$, then
	\begin{equation}
		Mat _{00}\subseteq M_{\text{$B \ast$}}\pare{M_{1},M_{2}}.
	\end{equation}
\end{theorem}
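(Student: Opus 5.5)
This theorem is Theorem \ref{d318_1_0_0} specialized to $D=\N\times\N$ and $X=\K$. The plan is to identify the objects and then invoke that result. The underlying set is $Mat=F\pare{\N\times\N,\K}$. The Schur product $B\ast$ is exactly the pointwise operator $\BB=B_{P_{\K}}$ induced by scalar multiplication $P_{\K}:\K\times\K\tien\K$, since $\pare{A\cdot C}\pare{i,j}=a_{ij}c_{ij}=P_{\K}\pare{A\pare{i,j},C\pare{i,j}}$. Consequently
\[
M_{B\ast}\pare{M_{1},M_{2}}=M\pare{M_{1},M_{2}},
\]
and the two operator norms coincide, since both equal $\sup\llave{\norm{A\cdot C}_{M_{2}}:C\in\B_{M_{1}}}$.

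For the first assertion, note that $M_{1}$ is a $\BK$-space not vanishing at any point of $\N\times\N$, so it is a $\BKn$-space. The space $M_{2}$ is a $\BK$-space. Theorem \ref{q31_3_0_0} then gives directly that $M\pare{M_{1},M_{2}}$, with its operator norm, is a $\BK$-space. Internally, this rests on Lemma \ref{efren6}: $P_{\K}$ does not $\pare{M_{1},M_{2}}$-vanish at any point. For each $(i,j)$ one takes $C$ with $c_{ij}\ne0$ and normalizes it into $\B_{M_{1}}$, which gives a pointwise lower bound; Theorem \ref{lola28_3} then supplies completeness and continuous evaluations.

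For the second assertion, I interpret $Mat_{00}$ as $c_{00}\pare{\N\times\N,\K}$, the finitely supported matrices. The hypothesis $Mat_{00}\subseteq M_{2}$ is then $c_{00}(D,X)\subseteq W$ with $X=\K$. The final clause of Theorem \ref{q31_3_0_0} yields $c_{00}(D,\K)\subseteq M(V,W)$, which here reads $Mat_{00}\subseteq M_{B\ast}\pare{M_{1},M_{2}}$.

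To see this directly: if $A$ is finitely supported on a set $F$, then for any $C\in M_{1}$ we have $A\cdot C=\sum_{(i,j)\in F}a_{ij}c_{ij}e_{(i,j)}$. This is again finitely supported, hence lies in $M_{2}$.

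The only delicate point is bookkeeping rather than mathematics. One must confirm that the notation $Mat_{00}$ and the phrase ``does not vanish at some point'' match the paper's definitions of $c_{00}(D,\K)$ and of a $\BKn$-space. Once these are identified, the proof is a direct citation of Theorem \ref{d318_1_0_0}.
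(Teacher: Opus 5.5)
Your proposal is correct and follows the paper's route: the paper obtains this theorem by specializing Theorem~\ref{d318_1_0_0} to $D=\N\times\N$ and $X=\K$. There the Schur product $B\ast$ is exactly the pointwise operator $\BB$, so $M_{B\ast}(M_{1},M_{2})=M(M_{1},M_{2})$ with the same operator norm. Your explicit check of $Mat_{00}\subseteq M_{B\ast}(M_{1},M_{2})$ and your normalization of $C$ into $\B_{M_{1}}$ (a detail the paper's Lemma~\ref{efren6} glosses over) are harmless refinements of that same argument.
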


\section{The b-ideal Part}\label{pop7}

 	Let $D$ be a non-empty set. Given $f \in F(D, \K)$ we use the standard notation  $|f|(a) := |f(a)|,  a \in D$. Here, $|f(a)|$  denote the modulus  of $f(a)$.   Recall a vector subspace $I \subseteq F(D,\K)$ is called an \emph{order ideal} if for  $f \in F(D,\K)$ and $g \in I$ such that $0 \le \abs{f} \le \abs{g}$, we have $f \in I$.\

%\begin{definition}\label{lola115}
%	
%\end{definition}

The following result is well known and motivates a condition for the definition of a $b$-ideal, which generalizes the notion of an order ideal to the vector-valued setting.  

	\begin{lemma}\label{d231_1*_1}
		Let $I \subseteq F(D, \K)$ be a vector space. Then $I$ is an order ideal if, and only if $B(D,\K) \subseteq M(I)$, that is \begin{equation}\label{tu}
			f \cdot g \in I,\qquad  \forall f \in B(D, \K),\qquad  \forall g \in I.
		\end{equation}  
	\end{lemma}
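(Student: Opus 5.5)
The statement has two directions, and I will prove each directly from the definitions. Recall that $M(I)=M(I,I)$ is the multiplier space attached to the pointwise bilinear operator $\BB$ with $X=\K$.

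For the implication ``order ideal $\Rightarrow B(D,\K)\subseteq M(I)$'', fix $f\in B(D,\K)$ and $g\in I$, and set $c:=\sup_{a\in D}\abs{f(a)}<\infty$. If $c=0$, then $f\cdot g=0\in I$, since $I$ is a vector space. Otherwise $cg\in I$, and pointwise we have
$\abs{f\cdot g}(a)=\abs{f(a)}\abs{g(a)}\le c\abs{g(a)}=\abs{cg}(a)$ for every $a\in D$. The order ideal property then gives $f\cdot g\in I$. Hence $f\in M(I)$.

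For the converse, assume $B(D,\K)\subseteq M(I)$, and let $f\in F(D,\K)$ and $g\in I$ satisfy $\abs{f}\le\abs{g}$. The natural move is to write $f=h\cdot g$ with $h$ bounded. Define $h(a):=f(a)/g(a)$ when $g(a)\neq 0$, and $h(a):=0$ when $g(a)=0$. Then $\abs{h(a)}\le 1$ for all $a$, so $h\in B(D,\K)$. We also have $h\cdot g=f$ pointwise: where $g(a)\neq0$ this holds by construction, and where $g(a)=0$ the inequality $\abs{f(a)}\le\abs{g(a)}=0$ forces $f(a)=0=h(a)g(a)$. Therefore $f=h\cdot g\in I$ by \eqref{tu}, so $I$ is an order ideal.

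The only delicate point is the zero set of $g$ in the converse. It is handled by the convention $h=0$ there, together with the observation that the domination $\abs{f}\le\abs{g}$ forces $f$ to vanish on that set. Nothing beyond the definitions is needed; in particular no topology on $I$ enters.
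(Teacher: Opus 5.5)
Your proof is correct and complete. The paper gives no proof of this lemma (it cites it as well known), and your argument is the standard one: domination $|f\cdot g|\le |cg|$ with $c=\sup_{a\in D}|f(a)|$ in one direction, and the factorization $f=h\cdot g$ with $h=f/g$ on the support of $g$ and $h=0$ elsewhere in the other.
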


	\begin{definition}\label{lola108}
		Let $U$ be a subspace of $F(D, X)$. Then $U$ is a \emph{$b$-ideal} if $B(D,\K) \subseteq M(U)$, that is \begin{equation}
			f \cdot g \in U,\qquad  \forall f \in B(D, \K),\qquad  \forall g \in U.
		\end{equation} 
	\end{definition}

	The following propositions extend to the vector-valued case the well-known result on pointwise multipliers for spaces $V$ and $W$ consisting of scalar functions (For example, see \cite[Prop. 2]{MP} and the references therein.) 
	
\begin{proposition}\label{d274_2}\label{lola100_0} \label{d273_3}\label{lola83_0} 
	Let $V,W\subseteq F\pare{D,X}$ be vector spaces.  If one of the spaces $V$ or $W$ is a  $b$-ideal, then $M\pare{V,W}$ is an order ideal.  
\end{proposition}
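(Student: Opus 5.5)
By Lemma \ref{d231_1*_1}, it suffices to show $B(D,\K)\subseteq M(M(V,W))$, that is, $h\cdot f\in M(V,W)$ for every $h\in B(D,\K)$ and $f\in M(V,W)$. Unwinding the definition of $M(V,W)$, this amounts to checking that $(h\cdot f)\cdot v\in W$ for every $v\in V$. The whole argument rests on the multiplicativity of $\BB$, which in pointwise form reads
\[
(h\cdot f)\cdot v=h\cdot(f\cdot v)=f\cdot(h\cdot v).
\]
This holds because scalar multiplication in $F(D,\K)$ is commutative and associative, and the products act pointwise on $F(D,X)$. Before using it, we note that $M(V,W)$ is a vector subspace of $F(D,\K)$, which was observed in Section~\ref{pop1}.

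We then split into two cases.

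\emph{Case 1: $W$ is a $b$-ideal.} Fix $v\in V$. Since $f\in M(V,W)$, we have $f\cdot v\in W$. The $b$-ideal property of $W$, applied with $h\in B(D,\K)$, gives $h\cdot(f\cdot v)\in W$. By the identity above this element equals $(h\cdot f)\cdot v$, so $(h\cdot f)\cdot v\in W$.

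\emph{Case 2: $V$ is a $b$-ideal.} Fix $v\in V$. The $b$-ideal property of $V$ gives $h\cdot v\in V$. Since $f$ multiplies $V$ into $W$, it follows that $f\cdot(h\cdot v)\in W$. By the identity above this equals $(h\cdot f)\cdot v$, so again $(h\cdot f)\cdot v\in W$.

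In either case $h\cdot f\in M(V,W)$. This shows $B(D,\K)\subseteq M(M(V,W))$, and Lemma \ref{d231_1*_1} then gives that $M(V,W)$ is an order ideal.

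I expect no real obstacle. The only point needing care is to invoke Lemma \ref{d231_1*_1} correctly: $M(V,W)$ lives in $F(D,\K)$, so the relevant multiplier space is the scalar one, $M(M(V,W))$, and not the vector-valued $M'$. In Case 2 one must also use the commutativity of the pointwise product to move $h$ next to $v$.
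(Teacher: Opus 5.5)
Your proof is correct and is essentially the paper's argument. Both show $h\cdot f\in M(V,W)$ for every $h\in B(D,\K)$ via $(h\cdot f)\cdot v=h\cdot(f\cdot v)$, then conclude with the characterization of order ideals $I$ by $B(D,\K)\subseteq M(I)$; you simply cite that characterization explicitly and write out the case where $V$ is the $b$-ideal (using $(h\cdot f)\cdot v=f\cdot(h\cdot v)$), which the paper dismisses as similar.
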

\begin{proof}
	   Let $g\in B\pare{D,\K}$ and $f\in M\pare{V,W}$. Since $W$ is a $b$-ideal, it follows that $(g\cdot f)\cdot v = g\cdot \pare{f\cdot v} \in W$, $\forall v\in V.$ 
	Thus, $g\cdot f \in M\pare{V,W}, \forall g\in B\pare{D,\K}$. The case where $V$ is a  $b$-ideal    can be proven similarly. 
\end{proof}

In the case of vector multipliers, the above result takes the following form. Its proof is similar.

\begin{proposition}\label{lola99_0}   Let $V \subseteq F\pare{D, \K}$ and $W \subseteq F\pare{D, X}$ be vector spaces. If one of the spaces $V$ or $W$ is a $b$-ideal, then $M'\pare{V, W}$ is a $b$-ideal.  
\end{proposition}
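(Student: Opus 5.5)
The plan is to argue directly from the definitions, mirroring the proof of Proposition \ref{lola100_0}, with the two hypotheses handled separately. Recall that $M'(V,W)=\llave{h\in F(D,X): f\cdot h\in W,\ \forall f\in V}$. By Definition \ref{lola108}, showing that $M'(V,W)$ is a $b$-ideal amounts to the following. Fix $g\in B(D,\K)$ and $h\in M'(V,W)$. We must check that $g\cdot h\in M'(V,W)$, that is, $f\cdot(g\cdot h)\in W$ for every $f\in V$. The only algebraic tool needed is that pointwise multiplication is associative and commutative in the scalar factors:
\begin{equation*}
f\cdot(g\cdot h)=(f\cdot g)\cdot h=g\cdot(f\cdot h),\qquad f\in F(D,\K),\ g\in F(D,\K),\ h\in F(D,X).
\end{equation*}
This follows at once by evaluating at each $a\in D$. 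It is the same identity that underlies the multiplicativity of $\BB$.

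\emph{Case 1: $W$ is a $b$-ideal.} Since $h\in M'(V,W)$, we have $f\cdot h\in W$ for each $f\in V$. Because $g\in B(D,\K)$ and $W$ is a $b$-ideal, Definition \ref{lola108} gives $g\cdot(f\cdot h)\in W$. By the identity above, this is $f\cdot(g\cdot h)$, so $g\cdot h\in M'(V,W)$.

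\emph{Case 2: $V$ is a $b$-ideal.} Here $V\subseteq F(D,\K)$, so being a $b$-ideal means being an order ideal, by Lemma \ref{d231_1*_1}; in either form, $g\cdot f\in V$ for every $f\in V$. Since $h\in M'(V,W)$, applying the defining property to $g\cdot f\in V$ gives $(g\cdot f)\cdot h\in W$. Using commutativity of scalar functions and the identity above, $(g\cdot f)\cdot h=f\cdot(g\cdot h)$. Hence $g\cdot h\in M'(V,W)$.

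In both cases $B(D,\K)\subseteq M(M'(V,W))$, which is exactly the $b$-ideal property. There is no genuine obstacle here. The only point needing care is Case 2: the $b$-ideal property of $V$ must be applied to the \emph{scalar} factor, moving $g$ onto $f$ rather than onto $h$. This works only because scalar multiplication of functions commutes, so the regrouping is legitimate.
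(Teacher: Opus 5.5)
Your proof is correct and follows essentially the same route as the paper, which only says the argument is ``similar'' to that of Proposition \ref{lola100_0}: regroup $f\cdot(g\cdot h)$ as $g\cdot(f\cdot h)$ when $W$ is a $b$-ideal, or as $(g\cdot f)\cdot h$ when $V$ is. Your explicit treatment of the case where $V$ is a $b$-ideal fills in the part the paper leaves to the reader.
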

%\begin{proof} 	i. Let $g \in B\pare{D, \K}$, $f \in \Mt\pare{V, W}$, and $v \in V$. Since $W$ is b-invariant, it follows that $v \cdot (g \cdot f) = g \cdot (v \cdot f) \in W, \forall v \in V$. Hence, $g \cdot f \in \Mt\pare{V, W}, \forall g \in B\pare{D, \K}$. The case where $V$ is $b$-invariant can be proved in a similar manner.
%	
%	ii. This follows directly from  Theorem \ref{piz27_1_0} and i. 
%\end{proof}
	
	Given a vector space $U \subseteq F\pare{D, X}$, we define the set
	\begin{equation}\label{d331_0}
		\text{Ip}\hspace{-2pt}\pare{U}:=M'\pare{B\pare{D,\K}, U}.
	\end{equation}
	Clearly, $\text{Ip}\hspace{-2pt}\pare{U}$ is a vector space. Since $\mathit{1} \in B\pare{D, \K}$, it follows that $\text{Ip}\hspace{-2pt}\pare{U} \subseteq U$. By Proposition \ref{lola99_0}, $\text{Ip}\hspace{-2pt}\pare{U}$ is  a $b$-ideal and, furthermore, it is the largest $b$-ideal  space contained in $U$. For this reason, we will refer to this space as the \emph{$b$-ideal  part of $U$}.
	
		\begin{theorem}\label{d334} \label{d335_0} \label{d310_0} 	If $U \subseteq F\pare{D,X}$ is a $\BK$-space, then  $\text{Ip}\hspace{-2pt}\pare{U}$ is a $b$-ideal $\BK$-space. Moreover, \begin{equation}\label{efren35} \norm{g\cdot f}_{U}\leq \norm{g}_{B\pare{D,\K}}\norm{f}_{\text{Ip}\hspace{-0pt}\pare{U}},\qquad g\in B\pare{D,\K}, \qquad \forall f\in \textnormal{Ip}\hspace{-2pt}\pare{U}. \end{equation} In particular, $\textnormal{Ip}\hspace{-2pt}\pare{U} \hookrightarrow U$. Furthermore, if $c_{00}\pare{D,X} \subseteq U$, then $c_{00}\pare{D,X} \subseteq \textnormal{Ip}\hspace{-2pt}\pare{U}$.  
	\end{theorem}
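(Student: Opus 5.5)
The plan is to realize $\text{Ip}\hspace{-2pt}\pare{U}=M'\pare{B\pare{D,\K},U}$ as an instance of Theorem \ref{piz27_1_0}, with $V=B\pare{D,\K}$ and $W=U$. By Example \ref{lup1}, $B\pare{D,\K}$ with the supremum norm is a $\BK$-space. It does not vanish at any point, since $\mathit{1}\in B\pare{D,\K}$ and $\mathit{1}(a)=1\neq 0$ for every $a\in D$. So $B\pare{D,\K}$ is a $\BKn$-space, and $U$ is a $\BK$-space by hypothesis. Theorem \ref{piz27_1_0} then yields that $\text{Ip}\hspace{-2pt}\pare{U}$, with its operator norm
\[
\norm{f}_{\text{Ip}\hspace{-0pt}\pare{U}}=\sup\llave{\norm{g\cdot f}_{U}: g\in \B_{B\pare{D,\K}}},
\]
is a $\BK$-space. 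The $b$-ideal property was already observed after (\ref{d331_0}) as a consequence of Proposition \ref{lola99_0}, because $B\pare{D,\K}$ is itself a $b$-ideal ($B\pare{D,\K}$ is an algebra). I would restate this briefly: for $h\in B\pare{D,\K}$, $f\in \text{Ip}\hspace{-2pt}\pare{U}$ and $g\in B\pare{D,\K}$, we have $g\cdot(h\cdot f)=(gh)\cdot f\in U$.

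The inequality (\ref{efren35}) is exactly the $M$-inequality of Proposition \ref{piz2_2} for the bilinear operator $\BB'$ restricted to $\text{Ip}\hspace{-2pt}\pare{U}\times B\pare{D,\K}\tien U$. Indeed $\BB'(f,g)=g\cdot f$, so $\norm{g\cdot f}_U\le \norm{f}_{\text{Ip}(U)}\norm{g}_{B(D,\K)}$. Taking $g=\mathit{1}$, with $\norm{\mathit{1}}_{B\pare{D,\K}}=1$, gives $\norm{f}_U\le\norm{f}_{\text{Ip}(U)}$, which is the continuous embedding $\text{Ip}\hspace{-2pt}\pare{U}\hookrightarrow U$. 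The set inclusion $\text{Ip}\hspace{-2pt}\pare{U}\subseteq U$ was noted earlier.

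For the last claim, I would take $f\in c_{00}\pare{D,X}$ with support in a finite set $A$, and let $g\in B\pare{D,\K}$. Then $g\cdot f=\sum_{a\in A} g(a)\,e_a f(a)$, which is a finite linear combination of functions $e_a x$. Hence $g\cdot f\in c_{00}\pare{D,X}\subseteq U$, and so $f\in \text{Ip}\hspace{-2pt}\pare{U}$.

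The only step needing any care is confirming that the hypotheses of Theorem \ref{piz27_1_0} are met, in particular that $B\pare{D,\K}$ is a $\BKn$-space. This is immediate from the presence of $\mathit{1}$, so I expect no real obstacle; everything else is direct verification.
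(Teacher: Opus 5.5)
Your proposal is correct and follows the paper's own argument. Like the paper, you get the $\BK$ property from Theorem \ref{piz27_1_0} with $V=B(D,\K)$ and $W=U$, the $b$-ideal property from Proposition \ref{lola99_0}, inequality \eqref{efren35} from the $M'$-inequality, and the $c_{00}(D,X)$ inclusion by direct inspection. You also spell out details the paper leaves implicit: that $\mathit{1}$ makes $B(D,\K)$ a $\BKn$-space, and that taking $g=\mathit{1}$ gives $\textnormal{Ip}(U)\hookrightarrow U$.
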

	\begin{proof}  
		By Theorem \ref{piz27_1_0}, it follows that $\text{Ip}\hspace{-2pt}\pare{U}$ is a $\BK$-space. Using the $M'$-inequality, we obtain (\ref{efren35}). Furthermore, if $c_{00}\pare{D,X} \subseteq U$, it is clear that $c_{00}\pare{D,X} \subseteq \text{Ip}\hspace{-2pt}\pare{U}$.
	\end{proof}
	
	\begin{example} $\textnormal{Ip}\hspace{-2pt}\pare{ c\pare{\K}}\equiv c_{0}\pare{\K}$. 
	\end{example}

	\begin{theorem}\label{lola166}
		If $V, W \subseteq F\pare{D, X}$ are $\BK$-spaces, where $V$ does not vanish at some point, then  
		\begin{equation}
			\textnormal{Ip}\hspace{-2pt}\pare{M\pare{V, W}} \equiv M\pare{V, \textnormal{Ip}\hspace{-2pt}\pare{W}}.
		\end{equation}
	\end{theorem}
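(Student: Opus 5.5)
The plan is to show that the two spaces coincide as sets and that their operator norms are given by the same iterated supremum, the suprema merely being taken in opposite order. Note first that both sides are well defined. By Theorem \ref{q31_3_0_0}, $M\pare{V,W}\subseteq F\pare{D,\K}$ is a $\BK$-space, because $V$ is a $\BKn$-space and $W$ is a $\BK$-space. Then Theorem \ref{d334}, applied with $X=\K$, shows that $\textnormal{Ip}\hspace{-2pt}\pare{M\pare{V,W}}=M'\pare{B\pare{D,\K},M\pare{V,W}}$ is a $\BK$-space. On the other side, Theorem \ref{d334} applied to $W$ gives that $\textnormal{Ip}\hspace{-2pt}\pare{W}$ is a $\BK$-space. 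Invoking Theorem \ref{q31_3_0_0} again, now with $\textnormal{Ip}\hspace{-2pt}\pare{W}$ in the role of $W$, shows that $M\pare{V,\textnormal{Ip}\hspace{-2pt}\pare{W}}$ is a $\BK$-space. In particular all the quantities below are finite norms.

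For the set equality I unwind the definitions using only associativity and commutativity of the pointwise product, $g\cdot\pare{f\cdot v}=\pare{g\cdot f}\cdot v$ for $g,f\in F\pare{D,\K}$ and $v\in F\pare{D,X}$. On the left, $f\in \textnormal{Ip}\hspace{-2pt}\pare{M\pare{V,W}}$ means that $g\cdot f\in M\pare{V,W}$ for all $g\in B\pare{D,\K}$, that is,
\begin{equation*}
\pare{g\cdot f}\cdot v\in W,\qquad \forall g\in B\pare{D,\K},\quad \forall v\in V.
\end{equation*}
On the right, $f\in M\pare{V,\textnormal{Ip}\hspace{-2pt}\pare{W}}$ means that $f\cdot v\in \textnormal{Ip}\hspace{-2pt}\pare{W}$ for every $v\in V$, that is, $g\cdot\pare{f\cdot v}\in W$ for all $g\in B\pare{D,\K}$ and all $v\in V$. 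The two conditions are identical, so the two sets agree.

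For the norms I compute both operator norms directly from the definitions:
\begin{equation*}
\norm{f}_{\textnormal{Ip}\pare{M\pare{V,W}}}=\sup_{g\in \B_{B\pare{D,\K}}}\norm{g\cdot f}_{M\pare{V,W}}=\sup_{g\in \B_{B\pare{D,\K}}}\ \sup_{v\in \B_{V}}\norm{\pare{g\cdot f}\cdot v}_{W},
\end{equation*}
and
\begin{equation*}
\norm{f}_{M\pare{V,\textnormal{Ip}\pare{W}}}=\sup_{v\in \B_{V}}\norm{f\cdot v}_{\textnormal{Ip}\pare{W}}=\sup_{v\in \B_{V}}\ \sup_{g\in \B_{B\pare{D,\K}}}\norm{g\cdot\pare{f\cdot v}}_{W}.
\end{equation*}
Since iterated suprema of a nonnegative quantity over a product set may be interchanged, the two norms coincide, which yields $\equiv$.

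The only step needing care is the bookkeeping of which space carries which operator norm. In particular, the norm on $\textnormal{Ip}\hspace{-2pt}\pare{W}$ used inside the second formula must be its own operator norm as $M'\pare{B\pare{D,\K},W}$, and not the norm of $W$. Once that is kept straight, I do not expect any genuine obstacle.
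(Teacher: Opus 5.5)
Your proposal is correct and follows essentially the paper's proof. The set equality comes from $g\cdot(f\cdot v)=(g\cdot f)\cdot v$, and the norm equality from writing both operator norms as the supremum of $\norm{g\cdot f\cdot v}_{W}$ over $(g,v)\in \B_{B(D,\K)}\times\B_{V}$ and interchanging the order. The only minor difference is in the inclusion $M(V,\textnormal{Ip}(W))\subseteq \textnormal{Ip}(M(V,W))$: the paper gets it from the maximality of the $b$-ideal part, since $M(V,\textnormal{Ip}(W))$ is an order ideal contained in $M(V,W)$, whereas you unwind both inclusions directly, which is equally valid.
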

	\begin{proof}  Since $M\pare{V, \text{Ip}\hspace{-2pt}\pare{W}} \subseteq M\pare{V, W}$, we obtain $M\pare{V, \text{Ip}\hspace{-2pt}\pare{W}} \subseteq \text{Ip}\hspace{-2pt}\pare{M\pare{V, W}}$.   Let $f \in \text{Ip}\hspace{-2pt}\pare{M\pare{V, W}}$ and $v \in V$. Then,  
		$ 
		g \cdot \pare{f \cdot v} = \pare{g \cdot f} \cdot v \in W,  \forall g \in B\pare{D, \K}.
		$  
		Then $f \cdot v \in \text{Ip}\hspace{-2pt}\pare{W}$, and therefore $f \in M\pare{V, \text{Ip}\hspace{-2pt}\pare{W}}$. This proves that $\text{Ip}\hspace{-2pt}\pare{M\pare{V, W}} \subseteq M\pare{V, \text{Ip}\hspace{-2pt}\pare{W}}$.  
		
		For $f \in \text{Ip}\hspace{-2pt}\pare{M\pare{V, W}}$, using (\ref{efren35}), we have  
		\begin{align*}
			\norm{f}_{\text{Ip}\hspace{-0pt}\pare{M\pare{V, W}}} &= \sup \llave{\norm{g \cdot f}_{M\pare{V, W}} : g \in \B_{B\pare{D, \K}}} = \sup \llave{\norm{g \cdot f \cdot v}_{W} : g \in \B_{B\pare{D, \K}}, \, v \in \B_{V}} \\[.3 cm]
			&= \sup \llave{\norm{f \cdot v}_{\text{Ip}\hspace{-0pt}\pare{W}} : v \in \B_{V}} = \norm{f}_{M\pare{V, \text{Ip}\hspace{-0pt}\pare{W}}}.\qedhere
		\end{align*}
	\end{proof}

	\subsection{Banach $b$-ideal Space}
		Recall a normed space $Y \subseteq F(D,\K)$ is called a \emph{normed ideal} if $Y$ is an order ideal  and for each $f ,g \in Y$  satisfying $0 \le |g| \le |f|$, we have  $\|g\|_Y \le \|f\|_Y$. If, in addition, $Y$ is a Banach space, then $Y$ is called a Banach ideal.
	
	The following result is well known.  
	\begin{lemma}\label{lol2_1}
		Let $Y \subseteq F(D,\K)$ be a normed space which is a  $b$-ideal. Then $Y$ is a normed ideal if and only if
		\begin{equation}\label{lol}
			\|b \cdot f\|_{Y} \leq \|b\|_{B\pare{D,\K}} \norm{f}_{Y},\qquad \forall f\in Y,\qquad \forall b\in B\pare{D,\K}.
		\end{equation}
	\end{lemma}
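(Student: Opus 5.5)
The plan is to prove the two implications separately. Throughout, $Y \subseteq F(D,\K)$ is a normed $b$-ideal, so $b\cdot f\in Y$ for all $b\in B(D,\K)$ and $f\in Y$, and both sides of (\ref{lol}) make sense.

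\emph{Inequality (\ref{lol}) implies normed ideal.} Take $f,g\in Y$ with $0\le\abs{g}\le\abs{f}$. I define $b:D\tien\K$ by $b(a):=g(a)/f(a)$ when $f(a)\neq 0$ and $b(a):=0$ otherwise. Since $\abs{g}\le\abs{f}$, $g(a)=0$ whenever $f(a)=0$, so $g=b\cdot f$ pointwise. Also $\abs{b(a)}\le 1$ for all $a$, hence $b\in B(D,\K)$ with $\norm{b}_{B(D,\K)}\le 1$. Applying (\ref{lol}) gives $\norm{g}_Y=\norm{b\cdot f}_Y\le\norm{f}_Y$. The order ideal property also follows this way: if $f\in Y$ and $h\in F(D,\K)$ with $\abs{h}\le\abs{f}$, then the same $b$ gives $h=b\cdot f\in Y$ because $Y$ is a $b$-ideal (equivalently, one can cite Lemma \ref{d231_1*_1}). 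So $Y$ is a normed ideal.

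\emph{Normed ideal implies (\ref{lol}).} Let $f\in Y$ and $b\in B(D,\K)$, and set $\beta:=\norm{b}_{B(D,\K)}$. If $\beta=0$, then $b\cdot f=0$ and there is nothing to prove. Otherwise $\abs{b\cdot f}=\abs{b}\abs{f}\le\beta\abs{f}=\abs{\beta f}$ pointwise. Both $b\cdot f$ (by the $b$-ideal property) and $\beta f$ lie in $Y$, so monotonicity of the norm gives $\norm{b\cdot f}_Y\le\norm{\beta f}_Y=\beta\norm{f}_Y$. This is exactly (\ref{lol}).

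The argument is elementary. The only point needing care is the first implication, where the quotient $g/f$ must be defined at the zeros of $f$. Setting it to $0$ there works precisely because $\abs{g}\le\abs{f}$ forces $g$ to vanish wherever $f$ does.
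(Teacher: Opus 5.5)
Your proof is correct and complete in both directions. The paper states this lemma as well known and gives no proof, so there is no proof to compare yours against. Your argument is the standard one. In one direction you write $g=b\cdot f$ with $b:=g/f$ off the zero set of $f$ and $b:=0$ on it. In the other you use the pointwise bound $\abs{b\cdot f}\le\abs{\norm{b}_{B(D,\K)}f}$ together with monotonicity of the norm. The paper uses the same two devices elsewhere: the quotient sequence in Example~\ref{d316_2} and the monotonicity estimate in the proof of Theorem~\ref{d190_0}(ii).

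One wording point. In the paper, ``normed $b$-ideal'' (Definition~\ref{lola108_new}) already includes inequality~(\ref{lol}), so your opening sentence as written assumes what you are proving. It should say ``a normed space which is a $b$-ideal''. Your argument only uses that weaker hypothesis, so nothing else needs to change.
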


 This motivates the following definition.
	
	\begin{definition}\label{lola108_new}
		Let $U\subseteq F(D, X)$ be a  normed space. Then $U$ is a normed $b$-ideal, if $U$ is a $b$-ideal and \begin{equation}
			\|b \cdot f\|_{U} \leq \|b\|_{B\pare{D,\K}} \norm{f}_{U},\qquad \forall f\in U,\qquad \forall b\in B\pare{D,\K}.
		\end{equation} 
	\end{definition} 
	
	To indicate that $U \subseteq F(D,X)$ is a complete normed b-ideal, we will simply say that $U$ is a \emph{Banach $b$-ideal}. If additionally  $U$ is a $BK$-space, we will say that $U$ is a \emph{BK b-ideal}.  (In \cite[p. 814]{V}   a Banach b-ideal is called an ideal* space.)

The  following result is clear. 

\begin{proposition}\label{sam1}  
	Let $V \subseteq F\pare{D, \K}$ and $W \subseteq F\pare{D, X}$ be normed spaces. If $W$ is a normed $b$-ideal, then $M'\pare{V,W}$ is a normed $b$-ideal.
\end{proposition}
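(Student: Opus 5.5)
The statement splits into two parts: that $M'(V,W)$ is a $b$-ideal, and the norm inequality. The first part is already available from Proposition \ref{lola99_0}, since a normed $b$-ideal is in particular a $b$-ideal. So the whole plan reduces to verifying, for $b\in B(D,\K)$ and $f\in M'(V,W)$, that
\begin{equation*}
\norm{b\cdot f}_{M'}\leq \norm{b}_{B(D,\K)}\norm{f}_{M'}.
\end{equation*}
Here $\norm{\cdot}_{M'}$ is the function $\norm{f}_{M'}=\sup\llave{\norm{v\cdot f}_{W}:v\in \B_{V}}$ defined in (\ref{piz1}), which may take the value $\infty$. If $\norm{f}_{M'}=\infty$ there is nothing to prove, so I will assume it is finite. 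Note that the statement does not claim $M'(V,W)$ is normed beyond what $\norm{\cdot}_{M'}$ provides; the inequality is the content.

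The key step is a pointwise commutation. Fix $v\in \B_{V}$. Since multiplication of scalar functions with a vector function is pointwise, associative and commutative in the scalar factors,
\begin{equation*}
\BB'(b\cdot f,v)=v\cdot(b\cdot f)=b\cdot(v\cdot f).
\end{equation*}
Because $f\in M'(V,W)$, we have $v\cdot f\in W$. Since $W$ is a normed $b$-ideal, Definition \ref{lola108_new} gives
\begin{equation*}
\norm{b\cdot(v\cdot f)}_{W}\leq \norm{b}_{B(D,\K)}\norm{v\cdot f}_{W}.
\end{equation*}
The $M'$-inequality (Proposition \ref{piz2_2} applied to $\BB'$) then bounds the right-hand side by $\norm{b}_{B(D,\K)}\norm{f}_{M'}\norm{v}_{V}\leq \norm{b}_{B(D,\K)}\norm{f}_{M'}$. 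Taking the supremum over $v\in\B_{V}$ yields the desired inequality.

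Membership of $b\cdot f$ in $M'(V,W)$ comes out of the same computation: $b\cdot(v\cdot f)\in W$ for every $v\in V$. Hence the plan also reproves Proposition \ref{lola99_0} in this case. There is no real obstacle. The only points needing care are the bookkeeping of the reflected operator $\BB'(v,f)=f\cdot v$ and the remark that $\norm{\cdot}_{M'}$ is taken as the operator norm, possibly infinite-valued unless $M'(V,W)$ is normed. The argument is identical in either case.
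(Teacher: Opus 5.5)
Your proof is correct and is the direct computation the paper intends when it calls this proposition clear (the paper gives no written proof). The $b$-ideal property comes from Proposition~\ref{lola99_0}, and the inequality comes from $v\cdot(b\cdot f)=b\cdot(v\cdot f)$, the normed $b$-ideal inequality in $W$, and a supremum over $v\in\B_{V}$. One correction to your remark: being a normed $b$-ideal does presuppose that $M'(V,W)$ is normed, and this cannot follow from the stated hypotheses. For $V=\{0\}$ and $X\neq\{0\}$ the operator norm vanishes identically on $M'(V,W)=F(D,X)$, so normedness must be read as an implicit standing assumption, under which your inequality is the full content.
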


	\begin{corollary}\label{yaya2}	If $U \subseteq F\pare{D,X}$ is a $\BK$-space, then  $\text{Ip}\hspace{-2pt}\pare{U}$ is a   $\BK $ b-space. Hence, \begin{equation}\label{yaya1}  \norm{g\cdot f}_{\text{Ip}\hspace{-0pt}\pare{U}}\leq \norm{g}_{B\pare{D,\K}}\norm{f}_{\text{Ip}\hspace{-0pt}\pare{U}},\qquad g\in B\pare{D,\K}, \qquad \forall f\in \textnormal{Ip}\hspace{-2pt}\pare{U}. \end{equation}  
\end{corollary}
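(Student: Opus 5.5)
The plan is to obtain the BK property from Theorem \ref{d334} and then prove the inequality (\ref{yaya1}) directly from the definition of the operator norm on $\text{Ip}\hspace{-2pt}\pare{U}=M'\pare{B\pare{D,\K},U}$. Theorem \ref{d334} already shows that $\text{Ip}\hspace{-2pt}\pare{U}$ is a $b$-ideal $\BK$-space. So it remains to see that it is a \emph{normed} $b$-ideal in the sense of Definition \ref{lola108_new}, i.e. that (\ref{yaya1}) holds. Together with completeness and continuous evaluations, this makes it a BK $b$-ideal.

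First I record what the operator norm is. For $f\in \text{Ip}\hspace{-2pt}\pare{U}$ we have
\begin{equation*}
\norm{f}_{\text{Ip}\hspace{-0pt}\pare{U}}=\sup\llave{\norm{h\cdot f}_{U}: h\in \B_{B\pare{D,\K}}}.
\end{equation*}
Fix $g\in B\pare{D,\K}$ and $f\in\text{Ip}\hspace{-2pt}\pare{U}$. Since $\text{Ip}\hspace{-2pt}\pare{U}$ is a $b$-ideal, $g\cdot f\in \text{Ip}\hspace{-2pt}\pare{U}$. Alternatively, this follows from the algebra property: $h\cdot(g\cdot f)=(hg)\cdot f\in U$ because $hg\in B\pare{D,\K}$.

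Next I compute the norm of $g\cdot f$ by the same recipe. For $h\in \B_{B\pare{D,\K}}$, we have $h\cdot(g\cdot f)=(hg)\cdot f$ and $\norm{hg}_{B\pare{D,\K}}\le \norm{g}_{B\pare{D,\K}}$. If $g=0$ the claim is trivial. Otherwise $hg/\norm{g}_{B\pare{D,\K}}\in \B_{B\pare{D,\K}}$, and the $M'$-inequality (Proposition \ref{piz2_2} applied to $\BB'$) gives
\begin{equation*}
\norm{h\cdot(g\cdot f)}_{U}=\norm{g}_{B\pare{D,\K}}\norm{\tfrac{hg}{\norm{g}_{B\pare{D,\K}}}\cdot f}_{U}\le \norm{g}_{B\pare{D,\K}}\norm{f}_{\text{Ip}\hspace{-0pt}\pare{U}}.
\end{equation*}
Taking the supremum over $h\in\B_{B\pare{D,\K}}$ yields (\ref{yaya1}).

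There is no serious obstacle. The only point requiring care is to work with the operator norm of $\text{Ip}\hspace{-2pt}\pare{U}$ rather than the norm of $U$. This is exactly what distinguishes (\ref{yaya1}) from (\ref{efren35}), and it is handled by the submultiplicativity of the sup norm on $B\pare{D,\K}$.
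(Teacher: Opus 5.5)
Your proposal is correct and is essentially the paper's proof. Theorem \ref{d334} supplies the BK $b$-ideal structure, and for $h$ in the unit ball of $B(D,\K)$ you write $h\cdot(g\cdot f)=(hg)\cdot f$, bound its $U$-norm by $\norm{hg}_{B(D,\K)}\norm{f}_{\textnormal{Ip}(U)}\leq\norm{g}_{B(D,\K)}\norm{f}_{\textnormal{Ip}(U)}$ via (\ref{efren35}), and take the supremum over $h$. Your normalization of $hg$ by $\norm{g}_{B(D,\K)}$ is the only cosmetic difference, and it is unnecessary since (\ref{efren35}) applies to $hg$ directly.
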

\begin{proof}
By Theorem~(\ref{d334}), it only remains to verify (\ref{yaya1}). Let $g,h\in B\pare{D,\K}$ and $f\in \textnormal{Ip}\hspace{-2pt}\pare{U}$. Using (\ref{efren35}), we obtain
\begin{equation}
	\norm{h\cdot (g\cdot f)}_{U}
	=
	\norm{(h\cdot g)\cdot f}_{U}
	\leq
	\norm{h\cdot g}_{B\pare{D,\K}}\norm{f}_{\text{Ip}\hspace{-0pt}\pare{U}}
	\leq
	\norm{ g}_{B\pare{D,\K}}\norm{f}_{\text{Ip}\hspace{-0pt}\pare{U}}.
\end{equation}
Taking the supremum over $h\in B\pare{D,\K}$, it follows that
$
\norm{g\cdot f}_{\text{Ip}\hspace{-0pt}\pare{U}}
\leq
\norm{g}_{B\pare{D,\K}}\norm{f}_{\text{Ip}\hspace{-0pt}\pare{U}}.
$
\end{proof}

The following result is obtained directly by applying Theorem~\ref{piz22_1_0} and Corollary \ref{yaya2} .

\begin{proposition}\label{final8} 
If $U \subseteq F\pare{D, \K}$ is a BK-space which is a $b$-ideal, then the norm on $U$ is equivalent to that of $\text{Ip}\hspace{-0pt}\pare{U}$, which is a BK $b$-ideal. Therefore, there exists $c\in(0,\infty)$ such that
	\begin{equation}\label{yaya3}
		\| b \cdot u \|_U \leq c \|b\|_{B(D,\K)} \|u\|_U,
		\qquad
		b\in \ell_{\infty}\pare{\K},
		\qquad
		\forall u\in U.
	\end{equation}
\end{proposition}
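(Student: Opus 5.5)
The plan is to show that, as sets, $\textnormal{Ip}\pare{U}=U$, and then to compare the two norms with Theorem~\ref{piz22_1_0}. Since $\mathit{1}\in B\pare{D,\K}$, we already know $\textnormal{Ip}\pare{U}\subseteq U$. Conversely, if $u\in U$ and $U$ is a $b$-ideal, then $b\cdot u\in U$ for every $b\in B\pare{D,\K}$. Hence $u\in M'\pare{B\pare{D,\K},U}=\textnormal{Ip}\pare{U}$, and so $U=\textnormal{Ip}\pare{U}$ as vector spaces.

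By Theorem~\ref{d334} and Corollary~\ref{yaya2}, $\textnormal{Ip}\pare{U}$ with its operator norm
\[
\norm{u}_{\textnormal{Ip}\pare{U}}=\sup\llave{\norm{b\cdot u}_U : b\in \B_{B\pare{D,\K}}}
\]
is a BK $b$-ideal. In particular it is complete and has continuous evaluations. We therefore have one vector space $U$ carrying two norms, $\norm{\cdot}_U$ and $\norm{\cdot}_{\textnormal{Ip}\pare{U}}$, and both make it a Banach space with continuous evaluations. Theorem~\ref{piz22_1_0} then says the two norms are equivalent. The inequality $\norm{u}_U\le\norm{u}_{\textnormal{Ip}\pare{U}}$ is already explicit from $\textnormal{Ip}\pare{U}\hookrightarrow U$. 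The reverse estimate $\norm{u}_{\textnormal{Ip}\pare{U}}\le c\norm{u}_U$ is the nontrivial half, and it is exactly what Theorem~\ref{piz22_1_0} supplies.

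To get (\ref{yaya3}), take $b\in B\pare{D,\K}$ and $u\in U$. Applying (\ref{efren35}) and then the norm equivalence gives
\[
\norm{b\cdot u}_U\le \norm{b}_{B\pare{D,\K}}\norm{u}_{\textnormal{Ip}\pare{U}}\le c\norm{b}_{B\pare{D,\K}}\norm{u}_U .
\]
(In the statement, $b\in\ell_\infty\pare{\K}$ should be read as $b\in B\pare{D,\K}$.)

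The only substantive ingredient is the automatic continuity coming from Theorem~\ref{piz22_1_0}, which holds because both norms have continuous evaluations. Everything else is a direct check, so I expect no real obstacle beyond confirming that $\textnormal{Ip}\pare{U}$ with its operator norm is indeed complete with continuous evaluations, and that is given by Theorem~\ref{d334}.
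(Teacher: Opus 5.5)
Your proposal is correct and is essentially the paper's argument. The paper obtains this proposition directly from Theorem~\ref{piz22_1_0} and Corollary~\ref{yaya2}, which is exactly your route once you observe that $U=\textnormal{Ip}\pare{U}$ as sets because $U$ is a $b$-ideal. Your derivation of (\ref{yaya3}) from (\ref{efren35}) plus the norm equivalence matches the intended reasoning, as does your reading of $b\in\ell_{\infty}\pare{\K}$ as $b\in B\pare{D,\K}$.
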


	  \begin{corollary}\label{lola84_0}
	  	Let $V,W\subseteq F\pare{D,X}$ be $\BK$-spaces such that $V$ does not vanish at some point. If $V$ is a Banach $b$-ideal, then
	  	\begin{equation}
	  		M\pare{V,W}\equiv M\pare{V,\text{Ip}\hspace{-0pt}\pare{W}}.
	  	\end{equation} 
	  	In particular, if $V\subseteq F\pare{D,X}$ is a $BKN$-space, then
	  	$M(\textnormal{Ip}\hspace{-2pt}\pare{V}, W) = M(\textnormal{Ip}\hspace{-2pt}\pare{V}, \textnormal{Ip}\hspace{-2pt}\pare{W})$ and therefore \begin{equation}
	  		M(V,W) \subseteq  M\pare{\textnormal{Ip}\hspace{-2pt}\pare{V}, \textnormal{Ip}\hspace{-2pt}\pare{W}}.
	  	\end{equation}
	  \end{corollary}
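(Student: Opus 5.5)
The plan for the first assertion is to prove the two inclusions separately and then compare the norms. Since $\text{Ip}(W)\subseteq W$, Lemma~\ref{q10_0_b}(i) gives $M(V,\text{Ip}(W))\subseteq M(V,W)$ at once. For the reverse inclusion, take $f\in M(V,W)$ and $v\in V$; we must show $f\cdot v\in \text{Ip}(W)$, that is, $g\cdot(f\cdot v)\in W$ for every $g\in B(D,\K)$. Because $V$ is a $b$-ideal, $g\cdot v\in V$. Hence $g\cdot(f\cdot v)=f\cdot(g\cdot v)\in W$, so $f\in M(V,\text{Ip}(W))$ and the two spaces coincide as sets.

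For the norms, one inequality is easy. By Theorem~\ref{d334}, $\text{Ip}(W)\hookrightarrow W$, so $\norm{f\cdot v}_W\le\norm{f\cdot v}_{\text{Ip}(W)}$, and therefore $\norm{f}_{M(V,W)}\le \norm{f}_{M(V,\text{Ip}(W))}$. For the other inequality, use the definition of the norm of $\text{Ip}(W)$ as an $M'$-operator norm:
\[
\norm{f\cdot v}_{\text{Ip}(W)}=\sup\llave{\norm{g\cdot f\cdot v}_W : g\in \B_{B(D,\K)}}=\sup\llave{\norm{f\cdot(g\cdot v)}_W : g\in \B_{B(D,\K)}}.
\]
Since $V$ is a normed $b$-ideal, $\norm{g\cdot v}_V\le\norm{v}_V\le 1$ whenever $v\in\B_V$. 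The $M$-inequality then bounds the right-hand side by $\norm{f}_{M(V,W)}$, and taking the supremum over $v\in\B_V$ gives equality of norms. The main point is this use of the norm inequality $\norm{g\cdot v}_V\le\norm{g}_\infty\norm{v}_V$; completeness of $V$ is not really needed beyond making the spaces $\BK$, so that the norms are genuine norms by Theorem~\ref{q31_3_0_0}.

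For the ``in particular'' part, let $V$ be a $\BKn$-space. Then $\text{Ip}(V)$ is a $\BK$ $b$-ideal by Corollary~\ref{yaya2}, and hence a Banach $b$-ideal. It still does not vanish at any point, and this is the step needing care. If that is granted, the first part applied to $\text{Ip}(V)$ gives $M(\text{Ip}(V),W)=M(\text{Ip}(V),\text{Ip}(W))$. Finally, $\text{Ip}(V)\subseteq V$, so Lemma~\ref{q10_0_b}(ii) yields
\[
M(V,W)\subseteq M(\text{Ip}(V),W)=M(\text{Ip}(V),\text{Ip}(W)).
\]

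The non-vanishing of $\text{Ip}(V)$ is not automatic: a vector $v$ with $v(a)\neq0$ need not lie in $\text{Ip}(V)$. I would check whether it is actually required, since the set equality in the first part did not use non-vanishing. Non-vanishing only serves to make $M(\text{Ip}(V),\cdot)$ normed; for the set-theoretic statement it is dispensable. If normability is wanted, I would argue it directly or add the hypothesis, for instance $c_{00}(D,X)\subseteq V$, which gives $c_{00}(D,X)\subseteq\text{Ip}(V)$ by Theorem~\ref{d334}.
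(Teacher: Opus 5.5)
Your proof is correct and is essentially the paper's own argument. The paper gets the set equality by citing Proposition~\ref{d274_2} ($M(V,W)$ is an order ideal, hence equal to its own $b$-ideal part) together with Theorem~\ref{lola166}; these unpack to exactly your identity $g\cdot(f\cdot v)=f\cdot(g\cdot v)$ with $g\cdot v\in V$, and the paper then does the same norm computation using $\norm{b\cdot v}_{V}\le\norm{b}_{B(D,\K)}\norm{v}_{V}$. Your caution about the ``in particular'' clause is justified: the paper does not argue it, and $\text{Ip}(V)$ can vanish (for $V$ the constant scalar sequences with the sup norm, $\text{Ip}(V)=\{0\}$). As you note, though, the set-level argument only needs $\text{Ip}(V)$ to be a $b$-ideal, which it always is, so the stated equality and the inclusion via Lemma~\ref{q10_0_b}(ii) hold as claimed.
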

	  \begin{proof}
	  	By Proposition \ref{d274_2} and Theorem \ref{lola166}, it follows that 
	  	$M\pare{V,W}=\text{Ip}\hspace{-0pt}\pare{M\pare{V,W}}= M\pare{V,\text{Ip}\hspace{-0pt}\pare{W}}$.
	  	Since $V$ is a Banach $b$-ideal, we obtain that \setlength{\belowdisplayskip}{-4pt}
	  	\begin{align*} 
	  		\norm{f}_{M\pare{V,\text{Ip}\hspace{-0pt}\pare{W}}}
	  		&=\sup\llave{\norm{f\cdot v}_{\text{Ip}\hspace{-0pt}\pare{W}}:v\in \B_{V}}
	  		=\sup\llave{\norm{b\cdot\pare{f\cdot v}}_{W}:v\in \B_{V}, b\in \B_{B\pare{D,\K}}}\\[.3 cm] 
	  		&=\sup\llave{\norm{f\cdot h}_{W}:h\in \B_{V}}
	  		=\norm{f}_{M\pare{V,W}},\qquad 
	  		\forall f\in M\pare{V,\text{Ip}\hspace{-0pt}\pare{W}}.\qedhere
	  	\end{align*}
	  \end{proof} 
	  
	  \begin{remark}
	  	Let $V,W\subseteq F\pare{D,X}$ be $\BK$-spaces, where in addition $V$ is a $b$-ideal  that does not vanish at some point. Let $f\in M\pare{V,W}$ and let $B_{f}:V\tien W$ be the corresponding multiplication operator. Consider the inclusion $i:\text{Ip}\hspace{-0pt}\pare{W}\tien W$ and denote by $T$ the operator $B_{f}$, but regarded as taking values in $\text{Ip}\hspace{-0pt}\pare{W}$. Corollary \ref{lola84_0} then indicates that $B_{f}=i\circ T$. In other words, any multiplication operator from $V$ into $W$ factors through $\text{Ip}\hspace{-0pt}\pare{W}$, or equivalently, the following diagram is commutative:
	  	\begin{equation}\label{lola121}
	  		\xymatrix{
	  			V \ar[r]^{\ B_{f}\quad} \ar@{-->}[rd]_{T\ } &   \ W    \\ 
	  			\quad & \ \text{Ip}\hspace{-0pt}\pare{W} \ar@{-->}[u]_{\ i} .
	  		}
	  	\end{equation}
	  \end{remark}

The following result is obtained by directly applying Corollary \ref{lola84_0} and Theorems \ref{lola166} and \ref{lola23} .
\begin{corollary}
	Let $U\subseteq F\pare{D,X}$ be a $BK$-space that does not vanish at some point. Then $U$ is a $b$-ideal if and only if $M(U) = B\pare{D,\K}$. Moreover, $U$ is a Banach $b$-ideal if and only if $M(U) \equiv B\pare{D,\K}$.
\end{corollary}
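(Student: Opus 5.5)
We prove the two equivalences separately, in each case the easy direction first.

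\emph{First equivalence.} If $M(U)=B\pare{D,\K}$, then $B\pare{D,\K}\subseteq M(U)$, which is Definition \ref{lola108} of a $b$-ideal. Conversely, suppose $U$ is a $b$-ideal, so that $B\pare{D,\K}\subseteq M(U)$. Since $U$ is a $\BK$-space that does not vanish at any point, Theorem \ref{lola23}(ii) gives $M(U)\hookrightarrow B\pare{D,\K}$, and in particular $M(U)\subseteq B\pare{D,\K}$. Hence $M(U)=B\pare{D,\K}$.

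\emph{Second equivalence, forward direction.} Assume $U$ is a Banach $b$-ideal. By the first part, $M(U)=B\pare{D,\K}$ as sets, and Theorem \ref{lola23}(ii) gives $\norm{f}_{B\pare{D,\K}}\leq\norm{f}_{M(U)}$. For the reverse inequality, take $b\in B\pare{D,\K}$. The defining inequality of a normed $b$-ideal (Definition \ref{lola108_new}) gives
\[
\norm{b\cdot u}_U\leq\norm{b}_{B\pare{D,\K}}\norm{u}_U,\qquad \forall u\in U.
\]
Taking the supremum over $u\in\B_U$ yields $\norm{b}_{M(U)}\leq\norm{b}_{B\pare{D,\K}}$. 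So the norms coincide, i.e.\ $M(U)\equiv B\pare{D,\K}$.

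\emph{Second equivalence, converse.} Assume $M(U)\equiv B\pare{D,\K}$. Then $U$ is a $b$-ideal by the first part. By the $M$-inequality (Proposition \ref{piz2_2}),
\[
\norm{b\cdot u}_U\leq\norm{b}_{M(U)}\norm{u}_U=\norm{b}_{B\pare{D,\K}}\norm{u}_U .
\]
This is exactly the normed $b$-ideal condition. Since $U$ is complete, $U$ is a Banach $b$-ideal.

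Corollary \ref{lola84_0} and Theorem \ref{lola166} could be used to phrase $M(U)$ in terms of $\textnormal{Ip}\hspace{-2pt}\pare{U}$, but they are not needed. There is no real obstacle in this argument. The only point requiring care is that the inclusion $M(U)\subseteq B\pare{D,\K}$ relies on the non-vanishing hypothesis through Theorem \ref{lola23}.
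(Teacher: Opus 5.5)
Your proof is correct, but it takes a more direct route than the paper. The paper only says the corollary follows from Corollary~\ref{lola84_0} and Theorems~\ref{lola166} and~\ref{lola23}. That is, it passes through the $b$-ideal part, using identifications such as $M(U)\equiv M(U,\textnormal{Ip}(U))\equiv\textnormal{Ip}(M(U))$ for a Banach $b$-ideal $U$ to see that the operator norm of $M(U)$ behaves like an ideal norm.

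You bypass this machinery entirely. You read $B\pare{D,\K}\subseteq M(U)$ straight off Definition~\ref{lola108}. You read the bound $\norm{b}_{M(U)}\le\norm{b}_{B\pare{D,\K}}$ straight off Definition~\ref{lola108_new}, which is exactly the computation inside the proof of Corollary~\ref{lola84_0}. You also handle the converse of the second equivalence explicitly with the $M$-inequality, a step the paper leaves implicit.

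Both arguments rest on the same nontrivial input, Theorem~\ref{lola23}(ii), which gives $M(U)\subseteq B\pare{D,\K}$ together with $\norm{f}_{B\pare{D,\K}}\le\norm{f}_{M(U)}$. Your version is shorter and shows that the non-vanishing hypothesis enters only through that theorem. The paper's citation mainly places the statement within the $\textnormal{Ip}$ framework of that section; it adds nothing the direct argument lacks.
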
 

Corresponding to  Theorem \ref{lola166} and Corollary \ref{lola84_0} for scalar multipliers, the following results also hold for vector-valued multipliers.

\begin{theorem}\label{lola216} 
	If $V\subseteq F\pare{D,\K}$ and $W\subseteq F\pare{D,X}$ are $\BK$-spaces, and moreover $V$ does not vanish at some point, then 
	\begin{equation}
		\textnormal{Ip}\hspace{-2pt}\pare{\Mt\pare{V,W}}\equiv \Mt\pare{V,\textnormal{Ip}\hspace{-2pt}\pare{W}}.
	\end{equation}
\end{theorem}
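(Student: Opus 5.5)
The plan is to follow the proof of Theorem \ref{lola166}, replacing the scalar bilinear operator $\BB$ by its reflection $\BB'$. By Theorem \ref{piz27_1_0}, $\Mt\pare{V,W}$ is a $\BK$-space, because $V$ is a $\BKn$-space and $W$ is a $\BK$-space. By Theorem \ref{d334}, $\textnormal{Ip}\hspace{-2pt}\pare{W}$ is a $\BK$-space. Applying Theorem \ref{piz27_1_0} again, $\Mt\pare{V,\textnormal{Ip}\hspace{-2pt}\pare{W}}$ is a $\BK$-space. Hence both sides of the claimed identity are well-defined normed spaces of functions in $F\pare{D,X}$. It remains to show that they coincide as sets and that their norms agree.

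\textbf{Set equality.} For the inclusion $\Mt\pare{V,\textnormal{Ip}\hspace{-2pt}\pare{W}}\subseteq \textnormal{Ip}\hspace{-2pt}\pare{\Mt\pare{V,W}}$, I will use $\textnormal{Ip}\hspace{-2pt}\pare{W}\subseteq W$ together with Lemma \ref{piz26}. This gives $\Mt\pare{V,\textnormal{Ip}\hspace{-2pt}\pare{W}}\subseteq \Mt\pare{V,W}$. The left side is a $b$-ideal by Proposition \ref{lola99_0}. Since $\textnormal{Ip}$ of a space is its largest $b$-ideal subspace, the inclusion follows.

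For the reverse inclusion, take $f\in \textnormal{Ip}\hspace{-2pt}\pare{\Mt\pare{V,W}}$ and $v\in V$. For every $g\in B\pare{D,\K}$, commutativity and associativity of pointwise products give
\[
g\cdot\pare{v\cdot f}=v\cdot\pare{g\cdot f}\in W,
\]
because $g\cdot f\in \Mt\pare{V,W}$. Thus $v\cdot f\in \textnormal{Ip}\hspace{-2pt}\pare{W}$, and therefore $f\in \Mt\pare{V,\textnormal{Ip}\hspace{-2pt}\pare{W}}$.

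\textbf{Norm equality.} I will unfold the definitions of the operator norms and interchange the two suprema:
\begin{align*}
\norm{f}_{\textnormal{Ip}\hspace{-2pt}\pare{\Mt\pare{V,W}}}&=\sup\llave{\norm{g\cdot f}_{\Mt\pare{V,W}}:g\in \B_{B\pare{D,\K}}}
=\sup\llave{\norm{v\cdot g\cdot f}_{W}:g\in \B_{B\pare{D,\K}},\ v\in \B_{V}}\\
&=\sup\llave{\norm{v\cdot f}_{\textnormal{Ip}\hspace{-2pt}\pare{W}}:v\in \B_{V}}=\norm{f}_{\Mt\pare{V,\textnormal{Ip}\hspace{-2pt}\pare{W}}}.
\end{align*}
This is the only step that needs any care, and it is minor. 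It requires that the norm of $\textnormal{Ip}\hspace{-2pt}\pare{W}=\Mt\pare{B\pare{D,\K},W}$ is exactly the operator norm $\sup_{g}\norm{g\cdot h}_W$ over $g\in \B_{B\pare{D,\K}}$, which holds by definition. It also requires that both suprema run over the same product set, so they can be exchanged freely. Since everything is a supremum of the same nonnegative quantities, no estimate is lost, and the identity $\equiv$ follows.
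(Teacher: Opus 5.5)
Your proof is correct and takes essentially the same route as the paper. The paper gives no separate proof of this theorem, presenting it as the vector-valued analogue of Theorem~\ref{lola166}, and you have carried that argument over with $\BB'$ in place of $\BB$: monotonicity of $\Mt$ plus maximality of $\textnormal{Ip}$ for one inclusion, commuting the scalar factors $g$ and $v$ for the other, and interchanging the suprema over $\B_{B\pare{D,\K}}\times\B_{V}$ for the norm equality.
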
	

%\begin{corollary}
%	Let $V\subseteq F\pare{D,\K}$ and $W\subseteq F\pare{D,X}$ be $\BK$-spaces, and assume moreover that $V$ does not vanish at some point. If $V$ or $W$ is $b$-invariant, then 
%	\begin{equation}
%		\textnormal{Ip}\hspace{-2pt}\pare{\Mt\pare{V,W}}= \Mt\pare{V,W}.
%	\end{equation}
%\end{corollary} 

\begin{corollary}\label{d332_3}
	Let $Y \subseteq F\pare{D,\K}$ and $W \subseteq F\pare{D,X}$ be $\BK$-spaces, such that $Y$ does not vanish at some point. If $Y$ is a Banach ideal, then  
	\begin{equation}
		\Mt\pare{Y,W}\equiv \Mt\pare{Y,\textnormal{Ip}\hspace{-2pt}\pare{W}}.
	\end{equation} 
	 In particular, if $V\subseteq F\pare{D,\K}$ is a $BKN$-space, then
	$M'(\textnormal{Ip}\hspace{-2pt}\pare{V}, W) = M'(\textnormal{Ip}\hspace{-2pt}\pare{V}, \textnormal{Ip}\hspace{-2pt}\pare{W})$ and therefore \begin{equation}
		M'(V,W) \subseteq  M'\pare{\textnormal{Ip}\hspace{-2pt}\pare{V}, \textnormal{Ip}\hspace{-2pt}\pare{W}}.
	\end{equation}
	In this case, the corresponding diagram (\ref{lola121}) is also commutative.
\end{corollary}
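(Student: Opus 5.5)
The plan is to mirror the proof of Corollary \ref{lola84_0}, using Theorem \ref{lola216} in place of Theorem \ref{lola166}. The argument has three steps.

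\textbf{Step 1 (equality of sets).} Since $Y$ is a Banach ideal, it is in particular a $b$-ideal by Lemma \ref{d231_1*_1}. Proposition \ref{lola99_0} then gives that $\Mt\pare{Y,W}$ is a $b$-ideal, so $\textnormal{Ip}\hspace{-2pt}\pare{\Mt\pare{Y,W}}=\Mt\pare{Y,W}$ as sets, because a $b$-ideal coincides with its $b$-ideal part. Combining this with Theorem \ref{lola216}, whose hypotheses hold because $Y$ and $W$ are $\BK$-spaces and $Y$ does not vanish at any point, we get
\[
\Mt\pare{Y,W}=\textnormal{Ip}\hspace{-2pt}\pare{\Mt\pare{Y,W}}=\Mt\pare{Y,\textnormal{Ip}\hspace{-2pt}\pare{W}}.
\]

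\textbf{Step 2 (equality of norms).} For $f\in\Mt\pare{Y,\textnormal{Ip}\hspace{-2pt}\pare{W}}$ we compute
\[
\norm{f}_{\Mt\pare{Y,\textnormal{Ip}\hspace{-0pt}\pare{W}}}=\sup\llave{\norm{b\cdot(v\cdot f)}_{W}: v\in\B_{Y},\ b\in\B_{B\pare{D,\K}}}=\sup\llave{\norm{h\cdot f}_{W}: h\in\B_{Y}}=\norm{f}_{\Mt\pare{Y,W}}.
\]
The middle equality is the only real point. On one hand, $h=b\cdot v$ lies in $\B_{Y}$ because $Y$ is a normed ideal: $\abs{b v}\le\abs{v}$ gives $\norm{b\cdot v}_{Y}\le\norm{v}_{Y}\le 1$, which shows the left supremum is at most the right one. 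On the other hand, taking $b=\mathit{1}$ gives the reverse inequality.

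\textbf{Step 3 (the ``in particular'').} Let $V\subseteq F\pare{D,\K}$ be a $\BKa$-space. By Theorem \ref{d334} and Corollary \ref{yaya2}, $\textnormal{Ip}\hspace{-2pt}\pare{V}$ is a $\BK$ $b$-ideal, hence a Banach ideal by Lemma \ref{lol2_1}. I expect the main obstacle to be checking that $\textnormal{Ip}\hspace{-2pt}\pare{V}$ still does not vanish at any point, which is needed to apply the first part. This follows because $V$ does not vanish at any point: given $a\in D$, pick $v\in V$ with $v(a)\neq0$. Then $e_{a}v$ is a scalar multiple of $e_a$, and $e_{a}v\in\textnormal{Ip}\hspace{-2pt}\pare{V}$ provided $c_{00}$-type elements belong to $V$. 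If this is not available, I would instead try $v$ itself. Since $\textnormal{Ip}\hspace{-2pt}\pare{V}$ is only a subspace of $V$, this step may require the extra assumption that $V$ is a $b$-ideal or contains $\{e_a\}$; I would take the former reading, under which $\textnormal{Ip}\hspace{-2pt}\pare{V}=V$ as sets.

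With $\textnormal{Ip}\hspace{-2pt}\pare{V}$ in hand, the first part gives $\Mt\pare{\textnormal{Ip}\hspace{-2pt}\pare{V},W}=\Mt\pare{\textnormal{Ip}\hspace{-2pt}\pare{V},\textnormal{Ip}\hspace{-2pt}\pare{W}}$. Lemma \ref{piz26}(ii) applied to $\textnormal{Ip}\hspace{-2pt}\pare{V}\subseteq V$ gives $\Mt\pare{V,W}\subseteq\Mt\pare{\textnormal{Ip}\hspace{-2pt}\pare{V},W}$, which yields the final inclusion. Finally, commutativity of the diagram (\ref{lola121}) is immediate from the set equality in Step 1: each $B'_f$ takes values in $\textnormal{Ip}\hspace{-2pt}\pare{W}$, and the inclusion $i$ is contractive by Theorem \ref{d334}.
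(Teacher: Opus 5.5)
Your Steps 1 and 2 follow the paper's route. The corollary is obtained like Corollary~\ref{lola84_0}, with Proposition~\ref{lola99_0} and Theorem~\ref{lola216} replacing Proposition~\ref{d274_2} and Theorem~\ref{lola166}, and your norm computation is the one written there. The gap is in Step~3, and the obstacle you flag is genuine. To apply the first part with $Y=\textnormal{Ip}\hspace{-2pt}\pare{V}$, you need $\textnormal{Ip}\hspace{-2pt}\pare{V}$ not to vanish at any point, and this fails for some $\BKn$-spaces $V$. For example, let $D$ have at least two points and let $V$ be the span of $\mathit{1}$. Then $V$ is a $\BKn$-space. However, $\textnormal{Ip}\hspace{-2pt}\pare{V}=\llave{0}$, because $b\cdot\pare{c\mathit{1}}=cb\in V$ for every $b\in B\pare{D,\K}$ forces $c=0$. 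Assuming $V$ is a $b$-ideal therefore proves a weaker statement than the one asked. Trying $v$ itself does not help either, since $v$ need not belong to $\textnormal{Ip}\hspace{-2pt}\pare{V}$.

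The missing observation is that the ``in particular'' claims only a set equality. That equality follows from a direct algebraic argument that uses only the fact that $\textnormal{Ip}\hspace{-2pt}\pare{V}$ is a $b$-ideal; no $\BK$ or non-vanishing hypothesis is needed. Let $f\in\Mt\pare{\textnormal{Ip}\hspace{-2pt}\pare{V},W}$, $v\in\textnormal{Ip}\hspace{-2pt}\pare{V}$ and $b\in B\pare{D,\K}$. Then $b\cdot v\in\textnormal{Ip}\hspace{-2pt}\pare{V}$, so $b\cdot\pare{v\cdot f}=\pare{b\cdot v}\cdot f\in W$. Hence $v\cdot f\in\textnormal{Ip}\hspace{-2pt}\pare{W}$, that is, $f\in\Mt\pare{\textnormal{Ip}\hspace{-2pt}\pare{V},\textnormal{Ip}\hspace{-2pt}\pare{W}}$. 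The reverse inclusion is Lemma~\ref{piz26}(i). Lemma~\ref{piz26}(ii) then gives $\Mt\pare{V,W}\subseteq\Mt\pare{\textnormal{Ip}\hspace{-2pt}\pare{V},\textnormal{Ip}\hspace{-2pt}\pare{W}}$, as you wrote. In the example above, $\Mt\pare{\llave{0},W}=F\pare{D,X}=\Mt\pare{\llave{0},\textnormal{Ip}\hspace{-2pt}\pare{W}}$. This is consistent with the statement asserting only $=$, not $\equiv$, in this part.
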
 

\begin{example}\label{d282_1} \label{d216_1}
	If $Y\subseteq S\pare{\K}$ is a Banach $b$-ideal that does not vanish at some point, then  
	$\Mt\pare{Y, c\pare{X}}\equiv \Mt\pare{Y,c_{0}\pare{X}}.$
\end{example}

 	\section{v-Ideal}\label{lalo1}\label{pop8}
 	
 		Let $D$ be a non-empty set and $X$ a Banach space. We define the \emph{norm scalarization operator}
 	$\J:F(D,X)\tien F(D,\K)$  by
 	\begin{equation}\label{q20_0}
 		\J g(a):=\norm{g(a)},\qquad a\in D.
 	\end{equation} 
 	Let us now take $D = \N$. Then the norm scalarization of $s \in S(X)$ is
 	\begin{equation}
 		\J s=\llave{\norm{s(n)}}_{n}\in  S(\K).
 	\end{equation}  
 	
 	The concept of order ideal can also be extended to  the vector-valued case as follows. (See \cite[p.~800]{V}.)
 
 \begin{definition} 
 	Let $U$ be a vector subspace of $F\pare{D,X}$. Then $U$ is a \emph{v-ideal} if for each $f\in F\pare{D,X}$ and $g\in U$ such that $0\leq \J f\leq \J g$, it follows that $f\in U$. 
 \end{definition}
 
The proof of the following result is straightforward.
 
 \begin{lemma}\label{d230_1}
 	If $U\subseteq F\pare{D,X}$ is a v-ideal, then it is a   $b$-ideal.
 \end{lemma}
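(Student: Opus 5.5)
We must show: if $U$ is a v-ideal, then $b\cdot g\in U$ for every $b\in B(D,\K)$ and $g\in U$. The plan is to reduce to the case $\norm{b}_{B(D,\K)}\le 1$ and then apply the v-ideal condition directly with $f:=b\cdot g$.

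First, fix $g\in U$ and $b\in B(D,\K)$. If $b=0$ there is nothing to prove, since $0\cdot g=0\in U$ because $U$ is a vector subspace. Otherwise, set $\lambda:=\norm{b}_{B(D,\K)}>0$ and $\tilde b:=b/\lambda$, so that $\abs{\tilde b(a)}\le 1$ for all $a\in D$. Then define $f:=\tilde b\cdot g\in F(D,X)$ and compute its norm scalarization pointwise:
\begin{equation*}
\J f(a)=\norm{\tilde b(a)g(a)}=\abs{\tilde b(a)}\norm{g(a)}\le \norm{g(a)}=\J g(a),\qquad \forall a\in D.
\end{equation*}
Hence $0\le \J f\le \J g$. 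The definition of v-ideal then yields $f\in U$. Since $U$ is a vector space, $b\cdot g=\lambda f\in U$.

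Because $g\in U$ and $b\in B(D,\K)$ were arbitrary, we get $f\cdot g\in U$ for all $f\in B(D,\K)$ and $g\in U$, that is, $B(D,\K)\subseteq M(U)$. So $U$ is a $b$-ideal in the sense of Definition \ref{lola108}.

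There is no real obstacle in this argument. The only point needing care is the normalization by $\norm{b}_{B(D,\K)}$, which is required because the v-ideal condition only compares $\J f$ with $\J g$ without any constant. Linearity of $U$ then removes the normalization.
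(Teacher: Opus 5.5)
Your argument is correct and is the straightforward verification the paper has in mind (it omits the proof). You normalize $b$ by its sup norm so that $\J(\tilde b\cdot g)\le \J g$, apply the v-ideal property, and rescale using that $U$ is a vector subspace. The same normalization also gives the normed version stated right after, since $\norm{\tilde b\cdot g}\le\norm{g}$ yields $\norm{b\cdot g}\le \norm{b}_{B(D,\K)}\norm{g}$.
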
 
 
 Instead of Proposition~\ref{lola99_0} for $b$-ideals, we now have the following result.

 \begin{proposition}   Let $V \subseteq F\pare{D, \K}$ and $W \subseteq F\pare{D, X}$ be vector spaces. If  $W$ is a v-ideal, then $M'\pare{V, W}$ is a v-ideal.  
 \end{proposition}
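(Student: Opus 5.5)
To show that $\Mt\pare{V,W}$ is a v-ideal, I would verify the definition directly. Let $f \in F\pare{D,X}$ and $g \in \Mt\pare{V,W}$ with $0 \le \J f \le \J g$. The goal is $f \in \Mt\pare{V,W}$, which means showing $\BB'(f,v) = v\cdot f \in W$ for every $v \in V$. Since $g \in \Mt\pare{V,W}$, we already know $v\cdot g \in W$ for each $v \in V$.

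Fix $v \in V$ and compare the norm scalarizations pointwise. For every $a \in D$,
\begin{equation*}
\J(v\cdot f)(a) = \abs{v(a)}\,\norm{f(a)}_{X} \le \abs{v(a)}\,\norm{g(a)}_{X} = \J(v\cdot g)(a),
\end{equation*}
using only that $0 \le \J f \le \J g$ and that $\abs{v(a)} \ge 0$. Thus $0 \le \J(v\cdot f) \le \J(v\cdot g)$ with $v\cdot g \in W$ and $v\cdot f \in F\pare{D,X}$. Because $W$ is a v-ideal, it follows that $v\cdot f \in W$. As $v$ was arbitrary, $f \in \Mt\pare{V,W}$.

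One should also note that $\Mt\pare{V,W}$ is a vector subspace of $F\pare{D,X}$, as observed for every multiplier space $M_{B}$, so the definition of v-ideal applies to it. There is no real obstacle: the only point needing care is the pointwise identity $\norm{v(a)f(a)} = \abs{v(a)}\norm{f(a)}$, which lets the order relation pass through multiplication by the scalar function $v$. This argument does not need $V$ to have any special structure; unlike in Proposition \ref{lola99_0}, assuming $V$ alone is an ideal would not suffice, since $V$ consists of scalar functions and cannot control $\J f$.
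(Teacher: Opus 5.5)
Your proof is correct and is the direct verification the paper intends (it states this proposition without proof, as the analogue of Proposition~\ref{d274_2}): the identity $\norm{v(a)f(a)}=\abs{v(a)}\norm{f(a)}$ turns $\J f\le \J g$ into $\J(v\cdot f)\le \J(v\cdot g)$ for each $v\in V$, and the v-ideal property of $W$ then gives $v\cdot f\in W$. Your closing aside is true only when $\dim X\ge 2$, since for $X=\K$ the notions of order ideal and v-ideal coincide and an ideal $V$ does suffice there; it plays no role in the proof.
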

 
 \begin{definition} 
 	Let $U\subseteq F\pare{D,X}$ be a normed space which is a v-ideal. Then  $U$ is a \emph{normed v-ideal} if for each $f,g\in U$ such that $\J f\leq \J g$, it follows that $\norm{f}\leq \norm{g}.$ If, in addition, $U$ is complete, we say that $U$ is a \emph{Banach v-ideal}.  
 \end{definition}

Note that for $Y \subseteq F(D,\K)$, the notions of order ideal, $b$-ideal, and v-ideal are equivalent. The same holds for normed ideals, normed $b$-ideals, and normed v-ideals. 

The proof of the following result can be obtained along the same lines as that of Lemma \ref{d230_1} .
 
 \begin{proposition}  If $U\subseteq F\pare{D,X}$ is a  normed  v-ideal, then  $U$ is a normed $b$-ideal.
 \end{proposition}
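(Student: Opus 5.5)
Let $U\subseteq F\pare{D,X}$ be a normed v-ideal. We must check the two parts of Definition \ref{lola108_new}: first that $U$ is a $b$-ideal, and then the norm inequality $\norm{b\cdot f}_U\leq \norm{b}_{B\pare{D,\K}}\norm{f}_U$ for all $f\in U$ and $b\in B\pare{D,\K}$.

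For the algebraic part we simply invoke Lemma \ref{d230_1}. For completeness, the argument runs as follows. Given $b\in B\pare{D,\K}$ and $f\in U$, take first $b\neq 0$ and set $\lambda:=\norm{b}_{B\pare{D,\K}}>0$. Then
\[
\J\pare{b\cdot f}(a)=\abs{b(a)}\norm{f(a)}\leq \lambda\norm{f(a)}=\J\pare{\lambda f}(a),\qquad \forall a\in D.
\]
Since $\lambda f\in U$ and $U$ is a v-ideal, we get $b\cdot f\in U$. If $b=0$, then $b\cdot f=0\in U$ trivially.

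For the norm inequality we reuse the same pointwise comparison. Now both $b\cdot f$ and $\lambda f$ lie in $U$, and $\J\pare{b\cdot f}\leq \J\pare{\lambda f}$. The defining property of a normed v-ideal then gives
\[
\norm{b\cdot f}_U\leq \norm{\lambda f}_U=\norm{b}_{B\pare{D,\K}}\norm{f}_U.
\]
When $b=0$ both sides vanish, so the inequality holds in that case as well.

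There is no real obstacle in this argument. The only point needing care is that the monotonicity of the norm is stated for pairs of elements of $U$. So we must first secure $b\cdot f\in U$, which is the ideal step above, before comparing norms. Once $\lambda f$ is used as the dominating element, the homogeneity of the norm produces exactly the factor $\norm{b}_{B\pare{D,\K}}$.
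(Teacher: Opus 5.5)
Your proof is correct and follows the route the paper indicates. The paper only says the result follows ``along the same lines as'' Lemma~\ref{d230_1}, namely via the pointwise comparison $\J(b\cdot f)\leq \J\big(\norm{b}_{B(D,\K)}f\big)$: the v-ideal property then gives $b\cdot f\in U$, and monotonicity together with homogeneity of the norm gives the bound. Your separate treatment of $b=0$ is harmless but not needed, since $\lambda=0$ already works in the same comparison.
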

 
 In Corollary~\ref{yaya4}, we provide an example of a Banach $b$-ideal that is not a $v$-ideal.\vs

The following properties of $v$-ideals correspond to properties of $b$-ideals. 
We only prove Proposition \ref{final7} , which is related to Proposition \ref{final8}.
 
 \begin{corollary}\label{final6}
 	Let $V,W\subseteq F\pare{D,X}$ be a normed spaces.  If    $V$ or $W$ is a normed v-ideal, then  $M\pare{V,W}$ is a normed ideal.  
 \end{corollary}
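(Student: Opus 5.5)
The claim is that $M\pare{V,W}$ is an order ideal and that its operator norm is monotone. I would reduce both parts to the $b$-ideal facts already proved, using that for scalar functions on $D$ the notions of order ideal and $b$-ideal coincide.

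\emph{Step 1 (order ideal).} By the proposition preceding Corollary~\ref{final6}, a normed v-ideal is a normed $b$-ideal. So whichever of $V$ or $W$ is a normed v-ideal is in particular a $b$-ideal, and Proposition~\ref{d274_2} shows that $M\pare{V,W}$ is an order ideal. It remains to prove the norm monotonicity.

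\emph{Step 2 (norm monotonicity).} Take $f,g\in M\pare{V,W}$ with $0\le\abs{g}\le\abs{f}$. Define $b\in B\pare{D,\K}$ by $b(a):=g(a)/f(a)$ when $f(a)\neq 0$ and $b(a):=0$ otherwise. Then $\abs{b}\le 1$ and $g=b\cdot f$ pointwise. There are two cases.

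If $W$ is a normed v-ideal, then for every $v\in \B_V$ we have $\J(g\cdot v)=\abs{g}\,\J v\le \abs{f}\,\J v=\J(f\cdot v)$. Both $g\cdot v$ and $f\cdot v$ lie in $W$, so the normed v-ideal property gives $\norm{g\cdot v}_W\le\norm{f\cdot v}_W\le \norm{f}_{M}$. Taking the supremum over $v\in\B_V$ yields $\norm{g}_M\le\norm{f}_M$.

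If $V$ is a normed v-ideal, write $g\cdot v=f\cdot(b\cdot v)$. Since $V$ is a normed $b$-ideal, $b\cdot v\in V$ and $\norm{b\cdot v}_V\le\norm{b}_{B\pare{D,\K}}\norm{v}_V\le 1$. The $M$-inequality (Proposition~\ref{piz2_2}) then gives $\norm{g\cdot v}_W\le\norm{f}_M\norm{b\cdot v}_V\le\norm{f}_M$. Taking the supremum over $v\in\B_V$ again gives $\norm{g}_M\le\norm{f}_M$.

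\emph{Where care is needed.} The main subtlety is the normedness of $M\pare{V,W}$. Without extra hypotheses $\norm{\cdot}_M$ might be infinite or only a seminorm, so I would read ``normed ideal'' with respect to the operator norm wherever it is a norm. The monotonicity inequality itself holds in $[0,\infty]$ regardless. The other point to check is measurability-free boundedness of $b$, and this is immediate from $\abs{g}\le\abs{f}$.
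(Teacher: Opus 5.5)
Your proposal is correct and follows the route the paper intends. The paper leaves this corollary unproved and presents it as the v-ideal counterpart of Proposition~\ref{d274_2}: pass from normed v-ideal to normed $b$-ideal, get the order-ideal property from Proposition~\ref{d274_2}, and get monotonicity of $\norm{\cdot}_{M}$ by writing $g=b\cdot f$ with $\norm{b}_{B\pare{D,\K}}\leq 1$, which is the mechanism behind Lemma~\ref{lol2_1}.

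Your caveat is also warranted. For merely normed $V,W$ the operator norm can be infinite or degenerate; for instance, $V=\llave{0}$ is a normed v-ideal with $M\pare{V,W}=F\pare{D,\K}$ and $\norm{\cdot}_{M}\equiv 0$. So the conclusion is really monotonicity of the operator (semi)norm, which is a genuine norm only under extra hypotheses such as those of Theorem~\ref{q31_3_0_0}.
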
   
 
 \begin{proposition}   Let $V \subseteq F\pare{D, \K}$ and $W \subseteq F\pare{D, X}$ be a normed spaces. If  $W$ is a normed v-ideal, then  $M'\pare{V,W}$ is a normed v-ideal. 
 \end{proposition}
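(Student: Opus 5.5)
We need to prove the last statement. For $V\subseteq F(D,\K)$ and $W\subseteq F(D,X)$ normed, with $W$ a normed v-ideal, we must show that $M'(V,W)$ is a normed v-ideal.

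The plan has two parts. First, the preceding (unproved) proposition already gives that $M'(V,W)$ is a v-ideal as a vector space. To keep the argument self-contained, I would re-derive it. Take $f\in F(D,X)$ and $g\in M'(V,W)$ with $\J f\leq \J g$. For every $v\in V$ we have pointwise
\begin{equation*}
\J(v\cdot f)(a)=\abs{v(a)}\norm{f(a)}\leq \abs{v(a)}\norm{g(a)}=\J(v\cdot g)(a),\qquad \forall a\in D.
\end{equation*}
Since $v\cdot g\in W$ and $W$ is a v-ideal, it follows that $v\cdot f\in W$ for all $v\in V$. Hence $f\in M'(V,W)$.

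Second, the norm monotonicity. Here $M'(V,W)$ carries the operator norm $\norm{h}_{M'}=\sup\llave{\norm{v\cdot h}_W : v\in\B_V}$, which takes values in $[0,\infty]$. Let $f,g\in M'(V,W)$ with $\J f\leq \J g$, and fix $v\in\B_V$. The displayed inequality gives $\J(v\cdot f)\leq \J(v\cdot g)$, and both functions lie in $W$. Because $W$ is a normed v-ideal, this yields $\norm{v\cdot f}_W\leq \norm{v\cdot g}_W\leq \norm{g}_{M'}$. Taking the supremum over $v\in\B_V$ gives $\norm{f}_{M'}\leq\norm{g}_{M'}$.

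The only delicate point is whether $\norm{\cdot}_{M'}$ is actually a norm, since the statement does not assume that $M'(V,W)$ is normed. I would handle this by reading the statement as asserting the lattice-monotonicity of $\norm{\cdot}_{M'}$ (whenever it is a norm, or when finite as a seminorm). If normability is also needed, the plan is to invoke Theorem~\ref{piz27_1_0} in the $\BK$ setting. Apart from this, the argument is routine and poses no real obstacle.
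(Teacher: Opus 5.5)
Your argument is correct and is the direct verification the paper leaves to the reader: it omits the proof and points instead to the analogous $b$-ideal statement, Proposition~\ref{sam1}. The pointwise inequality $\J(v\cdot f)\leq \J(v\cdot g)$ gives the v-ideal property, and taking the supremum over $v\in\B_{V}$ then gives $\norm{f}_{M'(V,W)}\leq\norm{g}_{M'(V,W)}$.

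You are also right that normability of $M'(V,W)$ does not follow from the stated hypotheses: for $V=\{0\}$ the operator ``norm'' vanishes identically on $F(D,X)$. So the statement must be read, as the paper implicitly does, with $M'(V,W)$ assumed normed, e.g.\ when $V$ is a $\BKn$-space and $W$ a $\BK$-space, by Theorem~\ref{piz27_1_0}.
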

 
 \begin{proposition}\label{final7}
 	Let $U \subseteq F(D,X)$ be a Banach space that is also a v-ideal. Then $U$ has an equivalent norm under which $U$ is a Banach v-ideal if, and only if,
 	$k:=\sup \llave{ \frac{ \|f\|}{\|g\|}:   f, g  \in U, J_f \leq Jg } < \infty$.
 \end{proposition}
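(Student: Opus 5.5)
Both directions compare the norm with a new functional built from norm scalarization, and equivalence of norms is all that is needed. Here $J_f$ means $\J f$.

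\emph{Necessity.} Suppose $\norm{\cdot}'$ is a norm on $U$, equivalent to $\norm{\cdot}$, under which $U$ is a Banach v-ideal. Choose constants $\alpha,\beta>0$ with $\alpha\norm{h}\le\norm{h}'\le\beta\norm{h}$ for all $h\in U$. Take $f,g\in U$ with $\J f\le \J g$ and $g\neq 0$. The v-ideal norm property gives $\norm{f}'\le\norm{g}'$. Hence
\begin{equation*}
\norm{f}\le \alpha^{-1}\norm{f}'\le\alpha^{-1}\norm{g}'\le \beta\alpha^{-1}\norm{g},
\end{equation*}
so $k\le\beta/\alpha<\infty$. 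The case $g=0$ forces $f=0$ and is excluded from the quotient. This direction is routine.

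\emph{Sufficiency.} Assume $k<\infty$ and define
\begin{equation*}
\norm{g}':=\sup\llave{\norm{f}: f\in U,\ \J f\le \J g},\qquad g\in U.
\end{equation*}
The steps, in order:
\begin{enumerate}
\item[i.] \emph{Equivalence.} Taking $f=g$ gives $\norm{g}\le\norm{g}'$, and the hypothesis gives $\norm{g}'\le k\norm{g}$. So $\norm{\cdot}'$ is finite and equivalent to $\norm{\cdot}$. In particular $(U,\norm{\cdot}')$ is complete, and $\norm{g}'=0$ implies $g=0$.
\item[ii.] \emph{Monotonicity.} If $\J h\le \J g$, every $f$ with $\J f\le\J h$ also satisfies $\J f\le \J g$, so $\norm{h}'\le\norm{g}'$. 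Together with the fact that $U$ is a v-ideal, this is exactly the Banach v-ideal property.
\item[iii.] \emph{Homogeneity.} $\norm{\lambda g}'=\abs{\lambda}\norm{g}'$, because $f\mapsto\lambda f$ is a bijection between $\llave{f:\J f\le\J g}$ and $\llave{f:\J f\le \J(\lambda g)}$ for $\lambda\neq0$.
\end{enumerate}

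The main obstacle is the triangle inequality. The plan is to use a Riesz-type decomposition. Let $f\in U$ with $\J f\le \J(g_1+g_2)$, so $\norm{f(a)}\le\norm{g_1(a)}+\norm{g_2(a)}$ for every $a\in D$. Define pointwise
\begin{equation*}
f_i(a):=\frac{\norm{g_i(a)}}{\norm{g_1(a)}+\norm{g_2(a)}}\,f(a)\quad\text{when the denominator is nonzero},\qquad f_i(a):=0\ \text{otherwise}.
\end{equation*}
Then $f=f_1+f_2$ and $\J f_i\le \J g_i$. Since $U$ is a v-ideal and $g_i\in U$, each $f_i$ lies in $U$. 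Therefore
\begin{equation*}
\norm{f}\le\norm{f_1}+\norm{f_2}\le\norm{g_1}'+\norm{g_2}'.
\end{equation*}
Taking the supremum over such $f$ gives $\norm{g_1+g_2}'\le\norm{g_1}'+\norm{g_2}'$.

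This decomposition is the one place where the v-ideal hypothesis is used beyond ensuring that the supremum ranges over elements of $U$. It is what prevents the result from holding for mere b-ideals.
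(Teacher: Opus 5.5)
Your proposal is correct and follows the paper's proof essentially step for step. Both use the same necessity argument via the equivalence constants, the same renorming $\|u\|_v=\sup\{\|h\| : \J h\le \J u\}$, the same pointwise splitting $h=h_1+h_2$ with $\J h_i\le \J g_i$ for the triangle inequality, and the same bound $\|u\|\le\|u\|_v\le k\|u\|$ to get equivalence and completeness. You additionally verify homogeneity and the degenerate case $g=0$ explicitly, which the paper leaves implicit.
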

 \begin{proof}
First, suppose that $U$ has a an equivalent norm   $\norm{\cdot}_{v}$ with respect to which $U$ is a Banach v-ideal. Then there exist $c,d\in(0,\infty)$ such that
$
c\norm{u}\leq \norm{u}_{v}\leq d\norm{u},  \forall u\in U.
$
Let $f,g\in U$ be such that $\J f\leq \J g$. Then
$
c\norm{f}\leq \norm{f}_{v}\leq \norm{g}_{v}\leq d\norm{g},
$
and therefore
$
k <\infty.
$ 

Now suppose that $k<\infty$. Consider the function $\norm{\cdot}_{v}:U\tien[0,\infty]$ defined by
\begin{equation}
	\norm{u}_{v}:=\sup\llave{\norm{h}:Jh\leq Ju}.
\end{equation}
To see that $\norm{\cdot}_{v}$ is a norm on $U$, we only prove the triangle inequality. Let $f,g,h\in U$ be such that $Jh\leq J\pare{f+g}$.
Given $d\in D$, it follows that
$
\norm{h\pare{d}}
\leq
\norm{f\pare{d}+g\pare{d}}
\leq
\norm{f\pare{d}}+\norm{g\pare{d}}.
$
Now define
\begin{equation}
	h_{1}\pare{d}:=
	\frac{\norm{f\pare{d}}h\pare{d}}{\norm{f\pare{d}}+\norm{g\pare{d}}},
	\qquad
	h_{2}\pare{d}:=
	\frac{\norm{g\pare{d}}h\pare{d}}{\norm{f\pare{d}}+\norm{g\pare{d}}},
	\qquad
	\text{if }\norm{f\pare{d}}+\norm{g\pare{d}}>0,
\end{equation}
and let $h_{1}(d)=h_{2}(d)=0$ whenever
$
\norm{f\pare{d}}+\norm{g\pare{d}}=0.
$
Consider the functions $h_{1}:D\tien X$ and $h_{2}:D\tien X$. Clearly, $Jh_{1}\leq Jf$, $Jh_{2}\leq Jg$, and $h=h_{1}+h_{2}$. Since $U$ is a $v$-ideal, it follows that $h_{1}, h_{2}\in U$. Therefore,
\begin{equation}
	\norm{h}=\norm{h_{1}+h_{2}}\leq \norm{h_{1}}+\norm{h_{2}}\leq \norm{f}_{v}+\norm{g}_{v},
\end{equation}
and it follows that 
$
\norm{f+g}_{v}\leq \norm{f}_{v}+\norm{g}_{v}.
$
Clearly
\begin{equation}\label{yaya6}
	\norm{u}\leq \norm{u}_{v},\qquad \forall u\in U.
\end{equation}
Moreover, if $f,g\in U$ satisfy $Jf\leq Jg$, then $\norm{f}_{v}\leq \norm{g}_{v}$. To prove that $\norm{\cdot}_{v}$ is complete, it is enough to show that it is equivalent to the original norm on $U$. 
Let $f,g\in U$ be such that $Jg\leq Jf$. Therefore
$
\norm{g}\leq k\norm{f}.
$
Taking the supremum over such  $g$, we obtain
$
\norm{f}_{v}\leq k\norm{f}.
$
Together with (\ref{yaya6}), this shows that the norm on $U$ and $\norm{\cdot}_{v}$ are equivalent.
 \end{proof}

	\subsection{The Strong Vectorialization $Y\pare{X}$}\label{sec8}

	Let $Y\subseteq S(\K)$ be an order ideal  and define
	\begin{equation}\label{d136}
		Y(X):=\llave{s\in S(X):\J s\in Y}.
	\end{equation}   In this situation $Y(X)$ is also a vector space over $\K$, which we will call  the \emph{strong vectorialization of $Y$ (or $X$-strong vectorialization)}.

	The main part of the following result can be obtained as a direct  consequence of Corollary 9 in \cite[p. 811]{V}.  
	
	\begin{theorem}\label{piz24}
		If $Y\subseteq S(\K)$ is a Banach ideal, then $Y(X)$ endowed with the norm
		\begin{equation}
			\norm{s}_{Y(X)}:=\norm{\J s}_Y
		\end{equation}
		is a  Banach v-ideal. Moreover, if $Y$ does not vanish at any point and $X\neq \llave{0}$, then neither does $Y\pare{X}$. 
	\end{theorem}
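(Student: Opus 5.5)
I will check the assertions of Theorem \ref{piz24} one at a time, working directly from the definitions and the ideal properties of $Y$.

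\emph{Vector space, norm and v-ideal properties.} For $s,t\in Y(X)$ and $\alpha\in\K$ we have the pointwise bounds $\J(s+t)\leq \J s+\J t$ and $\J(\alpha s)=\abs{\alpha}\J s$. Since $Y$ is an order ideal and a vector space, $\J s+\J t\in Y$, so $\J(s+t)\in Y$, and moreover $\norm{\J(s+t)}_Y\leq\norm{\J s+\J t}_Y\leq \norm{s}_{Y(X)}+\norm{t}_{Y(X)}$ because $Y$ is a normed ideal. Homogeneity is immediate. Definiteness holds because $\norm{\J s}_Y=0$ forces $\J s=0$, and hence $s=0$. For the v-ideal property, take $f\in S(X)$ and $g\in Y(X)$ with $\J f\leq \J g$. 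Then $0\leq\J f\leq \J g$ in $S(\K)$, so $\J f\in Y$ and $\norm{\J f}_Y\leq\norm{\J g}_Y$. This gives both that $Y(X)$ is a v-ideal and that it is a normed v-ideal.

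\emph{Completeness.} This is the main step. Let $\{s_k\}$ be a Cauchy sequence in $Y(X)$. The ideal property is what lets us pass from $Y$ back to pointwise control. Fix $n$ and write $\alpha=\norm{s_k(n)-s_j(n)}$. Then $\J(s_k-s_j)\geq \alpha\, e_n$ pointwise. The sequence $e_n$ lies in $Y$ when $Y$ does not vanish at $n$: pick $y\in Y$ with $y(n)\neq0$; then $\abs{e_n}\leq\abs{y}/\abs{y(n)}$, so $e_n\in Y$. We therefore get $\norm{s_k(n)-s_j(n)}\,\norm{e_n}_Y\leq\norm{s_k-s_j}_{Y(X)}$. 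If $Y$ vanishes at $n$, then every element of $Y(X)$ is $0$ at $n$, so nothing needs to be shown there. Either way each $\{s_k(n)\}_k$ is Cauchy in $X$, and it converges to some $s(n)$.

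Next we show $s\in Y(X)$ and $s_k\to s$. Since $\abs{\J s_k-\J s_j}\leq \J(s_k-s_j)$ pointwise, the sequence $\{\J s_k\}$ is Cauchy in $Y$. By completeness it converges to some $y\in Y$. Convergence in a Banach ideal implies coordinatewise convergence, by the same $e_n$ argument, so $y=\J s$. Hence $s\in Y(X)$. For the norm convergence, fix $\epsilon$ and $K$ such that $\norm{\J(s_k-s_j)}_Y<\epsilon$ for $k,j\geq K$. The sequence $\{\J(s_k-s_j)\}_j$ is Cauchy in $Y$ and converges coordinatewise to $\J(s_k-s)$. Running the same argument, it converges in $Y$ to $\J(s_k-s)$, which gives $\norm{s_k-s}_{Y(X)}\leq\epsilon$. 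I expect this limiting step to be the delicate point. The safest route is as just described: apply the Cauchy-in-$Y$ and coordinatewise-limit identification to $\J(s_k-s_j)$ itself, using the reverse triangle inequality $\abs{\J u-\J v}\leq\J(u-v)$, rather than invoking any Fatou-type property, which a general Banach ideal need not have.

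\emph{Non-vanishing.} Suppose $Y$ does not vanish at any point and $X\neq\{0\}$. Fix $n$, choose $y\in Y$ with $y(n)\neq0$, and take $x\in X$ with $\norm{x}=1$. Then $s:=y\,x$ satisfies $\J s=\abs{y}\in Y$, since $Y$ is an ideal. Thus $s\in Y(X)$, and $s(n)=y(n)x\neq0$.
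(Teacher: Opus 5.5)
Your proof is correct. The paper gives no argument of its own for this theorem: it only says that the main part follows directly from Corollary~9 in V\"ath's work, and leaves the rest, such as the non-vanishing claim, to the reader. Your argument is therefore a self-contained substitute for that citation.

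It rests on two lattice estimates. The first is $\abs{\J u-\J v}\le \J\pare{u-v}$, which together with the monotonicity of $\norm{\cdot}_{Y}$ turns Cauchy sequences in $Y\pare{X}$ into Cauchy sequences in $Y$. The second is $\abs{y\pare{n}}\norm{e_{n}}_{Y}\le\norm{y}_{Y}$, which shows that a Banach ideal automatically has continuous evaluations, so you can identify $Y$-limits with pointwise limits.

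Your route makes explicit that only completeness of $Y$ and the normed-ideal monotonicity are used, with no Fatou-type property. As a by-product it gives $\norm{s\pare{n}}\norm{e_{n}}_{Y}\le\norm{s}_{Y\pare{X}}$, trivially so at points where $Y$ vanishes. Hence $Y\pare{X}$ is a BK-space, which the paper takes for granted elsewhere without separate justification. The citation, in turn, buys brevity and places the result inside V\"ath's general theory of ideal spaces of vector-valued functions.

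One simplification: your second completeness step is redundant. The third step already produces $\J\pare{s_{k}-s}\in Y$ as a $Y$-limit, hence $s=s_{k}-\pare{s_{k}-s}\in Y\pare{X}$.
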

	
	Next we verify that the usual definitions of the norm spaces  $c_{00}(X)$, $c_{0}(X)$ and $\ell_{p}(X)$ $(1\le p\le \infty)$ (For example, see \cite[p. 32]{DJT}). coincide with the vector-valued sequence spaces $Y(X)$ obtained by applying the previous construction to the normed ideals $Y = c_{00} (\K)$, $Y=c_{0}(\K)$ and $Y=\ell_{p}(\K)$, respectively.

	\subsection{Examples: Spaces $c_{00}(X)$, $\ell_p(X)$ and $c_0(X)$}
	
	Taking the order ideal $Y=c_{00}(\K)$ we obtain
	\[
	Y(X)=\llave{\llave{x_n}_n\in S(X):\llave{\norm{x_n}}_n\in c_{00}(\K)}=c_{00}(X).
	\]
	
	Let $1\le p<\infty$ and consider the Banach ideal $Y=\ell_p(\K)$. Then 
	\begin{equation}
		Y(X) =\llave{\llave{x_n}\in S(X):\llave{\norm{x_n}}_n\in\ell_p(\K)} =\llave{\llave{x_n}\in S(X):\sum_{n=1}^{\infty}\norm{x_n}^p<\infty}
		=\ell_p(X).
	\end{equation}
	Moreover,
	\begin{equation}
		\norm{\llave{x_n}_n}_{Y(X)}
		=\norm{\llave{\norm{x_n}}_n}_{Y}
		=\left(\sum_{n=1}^{\infty}\norm{x_n}^p\right)^{1/p}
		=\norm{\llave{x_n}_n}_{\ell_p(X)} .
	\end{equation}
	
	In particular, $\ell_1(X)$ consists of absolutely summable sequences and
	\begin{equation}
		\norm{\llave{x_n}_n}_{\ell_1(X)}=\sum_{n=1}^{\infty}\norm{x_n}.
	\end{equation}
	
	Next, let us take  $Y=\ell_\infty(\K)$, then
	\begin{equation}
		Y(X)=\llave{\llave{x_n}_n\in S(X):\sup_{n\in\N}\norm{x_n}<\infty}
		=\ell_\infty(X),
	\end{equation}
	and
	\begin{equation}
		\norm{\llave{x_n}_n}_{Y(X)}
		=\norm{\llave{\norm{x_n}}_n}_{Y}
		=\sup_{n\in\N}\norm{x_n}
		=\norm{\llave{x_n}_n}_{\ell_\infty(X)} .
	\end{equation}
	
	Finally, if $Y=c_0(\K)$, then
	\begin{equation}
		Y(X)=\llave{\llave{x_n}\in S(X):\llave{\norm{x_n}}_n\in c_0(\K)}
		=c_0(X),
	\end{equation}
	and
	\begin{equation}
		\norm{\llave{x_n}_n}_{Y(X)}
		=\norm{\llave{\norm{x_n}}_n}_{\ell_\infty}
		=\sup_{n\in\N}\norm{x_n}
		=\norm{\llave{x_n}_n}_{\ell_\infty(X)} .
	\end{equation}

	\section{The Spaces $\Sigma \ell_{\infty}(X)$ and $\Sigma c(X)$}\label{sec9}\label{pop9}
	
	Let $X$ be a Banach space and consider a Banach space $E \subseteq S(X)$. Our aim is now to construct a Banach space $\Sigma E$ that will play a relevant role in the definition of the multiplier space $M_{\Sigma}\pare{V,X}$, which will be carried out in the next section.    We start  by considering some general definitions and properties that will be needed. Thereafter, we study the properties of $\Sigma E$ in the cases $E=\ell_{\infty}\pare{X}$ and $E=c\pare{X}$.

	\subsection{Projections and  the Space $V_{0}$}
	 Let $s=\{x_{n}\}_{n}\subseteq X$ and $n\in \N$. Define
	\begin{equation}\label{d126}
		P_{n}(s): = \Sum_{k=1}^{n}e_{k}x_{k}= (x_{1}, \dots, x_{n}, 0, 0, 0, \dots).
	\end{equation}  Then  $P_{n}: S(X) \to S(X)$ is a linear operator such that $P_{n}^{2} = P_{n}$ and $R(P_{n}) \subseteq c_{00}(X)$, where $R(P_{n})$ denotes the range of $P_{n}$. In the case $X = \K$, we will use $\pi_{n}$ instead of $P_{n}$.
	
	Let $V \subseteq S(X)$ be a normed space. We are interested in the case where $P_{n}(V) \subseteq V$ and the operator $P_{n}: V \to V$ is continuous. In this situation, $P_{n}$ is a projection, and we will refer to it as the \emph{$n$-th canonical projection of $V$}. 
	
	The following result is well known.
	\begin{example}\label{lola129}
		If $X \neq \{0\}$ and $c_{00}(X) \subseteq V$, then $\|P_n\| \geq 1, \forall n \in \mathbb{N}$.
	\end{example}  
	
	\begin{example}\label{d293}
		Take $x\in X$ such that $x\neq 0$, and let $s=\llave{x}_{n}$ be the constant sequence. Consider the generated space $V=\G\pare{\llave{s}}$, and observe that $P_{n}s\notin V$, $\forall n\in \N$.
	\end{example}
	
	Let $V\subseteq S\pare{X}$ be a vector space and fix $n\in \N$. Given $s\in V$, as Example~\ref{d293} shows, it may happen that $P_{n}s\notin V$. A property that prevents this situation is $c_{00}\pare{X}\subseteq V$. In this case, $P_{n}:V\tien V$, and we shall see below that $P_{n}$ is bounded.

	\begin{proposition}\label{d268_0_a_0} \label{d285_a_0}
		Let $V\subseteq S\pare{X}$ be a $\BK$-space such that $c_{00}\pare{X}\subseteq V$. Then each canonical projection $P_{n}:V\tien V$ is bounded. 
	\end{proposition}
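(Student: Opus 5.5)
The plan is to apply the closed graph theorem to $P_n:V\tien V$, which is possible because $V$ is a Banach space. Alternatively, and somewhat more concretely, we can write $P_n$ as a finite sum of bounded operators. I would present the second route and keep the first as a check.

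\textbf{Step 1.} Fix $n\in\N$. Since $c_{00}\pare{X}\subseteq V$, we have $P_n s=\Sum_{k=1}^{n}e_k s(k)\in V$ for every $s\in V$, so $P_n$ maps $V$ into $V$.

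\textbf{Step 2.} For each $k\le n$, write $e_k s(k)=Q_k\pare{\delta_k s}$. The evaluation $\delta_k:V\tien X$ is bounded because $V$ is a $\BK$-space. By Lemma \ref{d278_1}, applicable since $c_{00}\pare{X}\subseteq V$, the operator $Q_k:X\tien Q_k\pare{X}\subseteq V$ is a normed isomorphism; in particular $Q_k$ is bounded as a map into $V$. Hence
\begin{equation*}
P_n=\Sum_{k=1}^{n}Q_k\circ\delta_k
\end{equation*}
is a finite sum of compositions of bounded linear operators, and so it is bounded, with $\norm{P_n}\le\Sum_{k=1}^{n}\norm{Q_k}\norm{\delta_k}$.

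\textbf{Closed graph check.} Suppose $s_j\tien s$ and $P_n s_j\tien w$ in $V$. Continuity of the evaluations gives $s_j(m)\tien s(m)$ and $\pare{P_n s_j}(m)\tien w(m)$ for every $m$. For $m\le n$ we have $\pare{P_n s_j}(m)=s_j(m)$, so $w(m)=s(m)$; for $m>n$ we have $\pare{P_n s_j}(m)=0$, so $w(m)=0$. Thus $w=P_n s$, the graph is closed, and $P_n$ is bounded.

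\textbf{Main difficulty.} The only nontrivial point is the boundedness of $Q_k$ into $V$. This is exactly the content of Lemma \ref{d278_1}, which itself rests on the closed graph theorem. Once that lemma is available, the argument is routine.
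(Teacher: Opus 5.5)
Your argument is correct, and your Step 2 is essentially the paper's proof. The paper also expands $P_n s=\sum_{k=1}^{n}e_k s(k)$ and bounds each term using Lemma \ref{d278_1} together with continuity of the evaluations, arriving at $\|P_n\|\le n r^2$, which is your estimate $\sum_{k=1}^{n}\|Q_k\|\,\|\delta_k\|$ with a uniform constant. Your closed-graph check is also valid on its own and has the small advantage of not needing Lemma \ref{d278_1} at all.
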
 
	\begin{proof}
		i. Fix $n\in \N$. By Lemma~\ref{d278_1}, there exists $r'>0$ such that
		\begin{equation}\label{d284}
			\norm{e_{k}x}\leq r'\norm{x},
			\qquad
			\forall k\in \llave{1,\dots,n},
			\qquad
			\forall x\in X.
		\end{equation}
		Since $V$ has continuous evaluations, choose $r\geq r'$ such that
		$
		\norm{s\pare{k}}\leq r\norm{s},
		$
		$\forall k\in \llave{1,\dots,n}$ and $\forall s\in V$. Using this together with (\ref{d284}), we obtain
		$
		\norm{P_{n}\pare{s}}
		=
		\norm{\sum_{k=1}^{n}e_{k}s\pare{k}}
		\leq
		\sum_{k=1}^{n}\norm{e_{k}s\pare{k}}
		\leq
		\sum_{k=1}^{n}r\norm{s\pare{k}}
		\leq
		nr^{2}\norm{s}, 
		\forall s\in V.
		$ 
	\end{proof}

	\begin{definition}\label{naty22}
		Let $V\subseteq S\pare{X}$ be a normed space such that $c_{00}\pare{X}\subseteq V$.
		\begin{enumerate}
			\item The \emph{projection coefficient of $V$} is the value $\nu_{V}:=\sup_{n\in \N}\norm{P_{n}}\in[1,\infty]$, where $P_{n}:V\tien V$ is the $n$-th canonical projection.
			\item The space $V_{0}$ is the closure of $c_{00}(X)$ in $V$.
		\end{enumerate}
	\end{definition}

	\begin{lemma}\label{efren27_2}
		Let $V \subseteq S\pare{X}$ be a $\BK$-space such that $c_{00}\pare{X} \subseteq V$. Then:
		\begin{enumerate}
			\item[i.]  $\nu_V < \infty$ if and only if, for every $v \in V$, there exists $C_v > 0$ such that $\| P_n v \| \leq C_v$, for all $n \in \N$.
			\item[ii.] If $s \in S\pare{X}$ and $\llave{P_{n}s}_{n}$ is Cauchy in $V$, then $s \in V_{0}$ and $P_{n}s \to s$.
			\item[iii.] $P_{n}v \to v$, $\forall v \in V_{0}$ if and only if $\nu_{V_{0}} < \infty$.
		\end{enumerate}  
	\end{lemma}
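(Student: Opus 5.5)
For part i, I will use the uniform boundedness principle. Proposition~\ref{d268_0_a_0} shows that each $P_n\colon V\tien V$ is bounded. If $\nu_V<\infty$, then $\norm{P_nv}\le \nu_V\norm{v}$, so we may take $C_v=\nu_V\norm{v}$. Conversely, if $\sup_n\norm{P_nv}<\infty$ for every $v\in V$, then, since $V$ is a Banach space, the Banach--Steinhaus theorem gives $\sup_n\norm{P_n}<\infty$, that is, $\nu_V<\infty$.

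For part ii, suppose $\llave{P_ns}_n\subseteq c_{00}(X)\subseteq V$ is Cauchy in $V$.
\begin{enumerate}
\item[(a)] Since $V$ is complete, $P_ns\tien w$ for some $w\in V$. Each $P_ns$ lies in $c_{00}(X)$, so $w$ belongs to the closure of $c_{00}(X)$ in $V$, namely $V_0$.
\item[(b)] To identify $w$, fix $k\in\N$. Continuity of $\delta_k$ on $V$ gives $P_ns(k)\tien w(k)$. For $n\ge k$ we have $P_ns(k)=s(k)$, so $w(k)=s(k)$.
\item[(c)] Hence $s=w\in V_0$ and $P_ns\tien s$.
\end{enumerate}
This is exactly the pattern used in Lemma~\ref{d278_1}.

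For part iii, first note that $V_0$ is a closed subspace of $V$, hence Banach. It contains $c_{00}(X)$ and is invariant under each $P_n$, because $P_n(V)\subseteq c_{00}(X)$. So $\nu_{V_0}$ makes sense, with $P_n$ restricted to $V_0$.
\begin{enumerate}
\item[($\Rightarrow$)] If $P_nv\tien v$ for all $v\in V_0$, then each orbit $\llave{P_nv}_n$ is bounded. Part i applied to $V_0$ (a $\BK$-space containing $c_{00}(X)$) then gives $\nu_{V_0}<\infty$.
\item[($\Leftarrow$)] Assume $\nu_{V_0}<\infty$ and let $v\in V_0$ and $\epsilon>0$. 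Choose $u\in c_{00}(X)$ with $\norm{v-u}<\epsilon$. For $n$ beyond the support of $u$ we have $P_nu=u$. Then
\[
\norm{P_nv-v}\le\norm{P_n(v-u)}+\norm{u-v}\le(\nu_{V_0}+1)\epsilon,
\]
so $P_nv\tien v$.
\end{enumerate}

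The only point needing care is confirming that $V_0$ inherits the hypotheses of part i. It does: $V_0$ is closed in $V$, it inherits continuous evaluations, and it contains $c_{00}(X)$. Once this is checked, the rest of the argument is the standard density-plus-uniform-boundedness reasoning.
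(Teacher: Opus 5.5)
Your proposal is correct and follows the paper's proof essentially step by step: for i, Banach--Steinhaus (each $P_n$ being bounded by Proposition~\ref{d268_0_a_0}); for ii, identification of the limit coordinatewise via continuous evaluations; for iii, uniform boundedness plus the density of $c_{00}(X)$ in $V_0$. You are slightly more explicit than the paper in checking that the limit in ii lies in $V_0$ and that $P_n(V_0)\subseteq V_0$, so that $\nu_{V_0}$ is well defined. There is one cosmetic slip: $C_v=\nu_V\norm{v}$ vanishes at $v=0$, so take $C_v=\nu_V\norm{v}+1$ to respect $C_v>0$.
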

	\begin{proof}i. First, suppose that $\nu_V < \infty$. Given $v\in V$, we have
	$
		\norm{P_{n}v}_{V}\leq \nu_{V}\norm{v}_{V}$,  $\forall n\in \N.
	$  The converse implication follows from the Uniform Boundedness Theorem, which yields
	$
	\sup_{n \in \mathbb{N}} \norm{P_{n}} < \infty.
	$
		
		ii. Let $s \in S\pare{X}$ be such that $\llave{P_{n}s}_{n}$ is Cauchy in $V$. Then there exists $z \in V$ such that $P_{n} \to z$. Fix $m \in \N$. Then, using  that $V$ is a $\BK$-space, we have that $s\pare{m} = \lim_{n \to \infty} P_{n}s\pare{m} = z\pare{m}.$
		Thus, $s = z$.
		
		iii. Let us assume that $P_{n} v \tien v, \forall v \in V_{0}$. Then, we have $\sup_{n \in \mathbb{N}} \norm{P_{n}v} < \infty, \forall v \in V_{0}$. Now, applying the Uniform Boundedness Theorem, it follows that $\sup_{n \in \mathbb{N}} \norm{P_{n}} < \infty$.
		
		Now, suppose that $\nu_{V_{0}} < \infty$. Take $v \in V_{0}$. Given $\epsilon > 0$, there exists $c \in c_{00}\pare{X}$ such that
		\begin{equation}
			\norm{v - c} < \frac{\epsilon}{2\pare{1 + \nu_{0}}}.
		\end{equation}
		Since $c \in c_{00}\pare{X}$, choose $N \in \N$ such that $P_{n}c = c$ for all $n \geq N$. Then,
		$$
		\norm{P_{n}v - v} \leq \norm{P_{n}v - P_{n}c} + \norm{c - v} \leq \frac{\epsilon}{2} + \frac{\epsilon}{2} = \epsilon, \quad \forall n \geq N.\qedhere
		$$\qedhere
	\end{proof}
	
		Let $V$ be a Banach space. Recall  a sequence $\llave{v_{n}}_{n}\subseteq V$ is a \emph{Schauder basis} of $V$ if for every $v\in V$, there exists a unique sequence $\llave{a_{n}}_{n}\subseteq \K$ such that $v=\sum_{n=1}^{\infty}a_{n}v_{n}$, that is, $\lim_{n\tien \infty}\norm{v-\sum_{i=1}^{n}a_{i}v_{i}}=0.$ In this case,
	for each $n\in \N$ we define the $n$-th \emph{coefficient functional} $v_{n}^{*}:V\tien \K$ by $\la v,v_{n}^{*}\ra:=a_{n}, \forall n\in \N.$ Recall also that  $\{v_{n}\}_{n}$ is a \emph{basic sequence} of $V$ if $\llave{v_{n}}_{n}$ is a basis of the closure of the vector space generated by the sequence $\llave{v_{n}}_{n}$  in $V$. 
	
	\begin{proposition}\label{d286_0}
		Let $V \subseteq S(\K)$ be a Banach space such that $\{e_{n}\}_{n} \subseteq V$. Then, $\{e_{n}\}_{n}$ is a basis of $V$ if and only if $V$ has continuous evaluations and
		\begin{equation}\label{d279}
			\pi_{n}v \tien v, \qquad \forall v \in V.
		\end{equation}
	\end{proposition}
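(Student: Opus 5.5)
We prove the two implications separately; the direction ($\Leftarrow$) is essentially immediate, so the real content lies in the direction ($\Rightarrow$).

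\emph{($\Leftarrow$)} Assume $V$ has continuous evaluations and (\ref{d279}) holds. Given $v\in V$, we have $\pi_n v=\sum_{k=1}^n v(k)e_k$, so (\ref{d279}) says exactly that $v=\sum_{k=1}^\infty v(k)e_k$ in $V$. Existence of the expansion is therefore immediate.

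For uniqueness, suppose $v=\sum_k a_k e_k$ in $V$ for some scalars $\{a_k\}_k$. Fix $m\in\N$ and evaluate at $m$, using that $\delta_m$ is continuous on $V$. This gives $v(m)=\lim_n \sum_{k=1}^n a_k e_k(m)=a_m$. Hence the coefficients are forced to be $a_k=v(k)$, and $\{e_n\}_n$ is a basis.

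\emph{($\Rightarrow$)} Assume $\{e_n\}_n$ is a Schauder basis of $V$, with coefficient functionals $e_n^*$. We invoke the standard fact that the coefficient functionals of a Schauder basis of a Banach space are continuous. This is proved by renorming $V$ with $|||v|||:=\sup_n\norm{\sum_{k\le n}\la v,e_k^*\ra e_k}$, checking completeness, and applying the bounded inverse theorem to get uniformly bounded partial sum projections.

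The main obstacle is then to identify $\la v,e_n^*\ra$ with $v(n)$. The difficulty is that convergence in $V$ is not yet known to imply coordinatewise convergence, since evaluations are not yet known to be continuous; so one cannot simply evaluate the expansion $v=\sum_k a_ke_k$ at $n$.

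To handle this, consider the space $\widetilde V:=\{\{\la v,e_n^*\ra\}_n: v\in V\}\subseteq S(\K)$, normed by transport from $V$. The map $v\mapsto\{\la v,e_n^*\ra\}_n$ is injective and linear, and $\widetilde V$ is a Banach space with continuous evaluations, because each $e_n^*$ is continuous.

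We then argue that this coefficient map $\Phi$ is the identity. We know $\Phi(e_k)=e_k$, so $\Phi$ is the identity on $c_{00}(\K)=\mathrm{span}\{e_k\}$, which is dense in $V$. The remaining step is to show $\la v,e_n^*\ra=v(n)$ for general $v$.

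I expect this step to be the genuine gap. Without an assumption linking convergence in $V$ to pointwise convergence, a basis $\{e_n\}$ of an abstract Banach space $V\subseteq S(\K)$ need not have $e_n^*=\delta_n$; one could, for instance, transport a norm by a discontinuous linear bijection fixing $c_{00}$. So I would first try to read the intended statement as saying that the basis expansion of $v$ is $\sum v(n)e_n$, which is the natural meaning in this paper. Under that reading $e_n^*=\delta_n|_V$, which is continuous by the standard fact above, and (\ref{d279}) is just the convergence of the expansion.

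If that reading is not accepted, I would instead look for a closed-graph argument comparing $V$ with $\widetilde V$ via Theorem \ref{piz22_1_0}. This, however, requires knowing $V=\widetilde V$ as sets, which is exactly the point at issue.
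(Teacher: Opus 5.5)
Your ($\Leftarrow$) is exactly the paper's argument. (\ref{d279}) gives $v=\sum_n v(n)e_n$, and uniqueness follows by evaluating coordinatewise.

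For ($\Rightarrow$), the paper does no more than what you call the natural reading. It writes $\abs{v(n)}=\abs{\varphi_n v}\le\norm{\varphi_n}\norm{v}$ and says (\ref{d279}) holds ``clearly''. That is, it silently identifies the coefficient functionals with the evaluations. Your suspicion that this identification is not automatic is correct, and you should commit to it rather than hedge: with the paper's own abstract definition of Schauder basis, the forward implication is false.

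Here is a concrete counterexample. Let $\psi$ be a nonzero linear functional on $\ell_1(\K)$ vanishing on $c_{00}(\K)$; it exists by a Hamel basis argument and is necessarily discontinuous. Put $\Phi x:=x+\psi(x)\mathit{1}$. Then:
\begin{itemize}
\item $\Phi$ is linear and $\Phi e_n=e_n$; indeed $\Phi$ is the identity on $c_{00}(\K)$.
\item $\Phi$ is injective: if $\Phi x=0$, then $x=-\psi(x)\mathit{1}\in\ell_1(\K)$ forces $\psi(x)=0$, hence $x=0$.
\end{itemize}
Let $V:=\Phi(\ell_1(\K))\subseteq S(\K)$ with $\norm{\Phi x}_V:=\norm{x}_{\ell_1(\K)}$. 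Then:
\begin{itemize}
\item $V$ is a Banach space and $\{e_n\}_n$ is a Schauder basis of $V$, with coefficient functionals $\Phi x\mapsto x(n)=v(n)-\psi(x)$.
\item $\delta_m(\Phi x)=x(m)+\psi(x)$ is discontinuous in $x$, so $\delta_m$ is discontinuous on $V$.
\item If $\psi(x)\neq0$, then $\norm{\pi_n\Phi x-\pi_m\Phi x}_V=\sum_{m<k\le n}\abs{x(k)+\psi(x)}$. This does not tend to $0$ because $x(k)\tien 0$, so $\{\pi_n\Phi x\}_n$ is not even Cauchy.
\end{itemize}
This is precisely your $V\neq\widetilde V=\ell_1(\K)$. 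Your closed-graph fallback therefore cannot succeed and should be dropped.

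The statement, and the paper's proof, are correct in either of two repaired forms:
\begin{itemize}
\item ``Basis'' is understood to mean $v=\sum_n v(n)e_n$ for all $v\in V$. Under this reading your argument coincides with the paper's.
\item $V$ is assumed to have continuous evaluations. Then evaluating $v=\sum_k a_ke_k$ at $n$ gives $a_n=v(n)$, and (\ref{d279}) follows.
\end{itemize}

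The later uses in Corollary \ref{lola199_0} and Theorem \ref{d272_2} assume $V$ is $\BK$, so they are unaffected. Corollary \ref{naty26} drops that hypothesis and inherits the problem. For the $V$ above and $X=\K$, take $T(\Phi x):=\sum_n x(n)$. Then $\{Te_n\}_n=\mathit{1}$, yet $\mathit{1}\notin M_{\Sigma}(V,\K)$, because $\mathit{1}\cdot\Phi x=\Phi x$ is not summable when $\psi(x)\neq0$.
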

	
	\begin{proof}
		First, suppose that $\{e_{n}\}_{n} \subseteq V$ is a basis. Clearly, condition (\ref{d279}) holds. Fix $n \in \N$. Since the coefficient functional $\var_{n}$ is bounded, we have that $	|v(n)| = |\var_{n}v| \leq \|\var_{n}\|\|v|,   \forall v \in V.$
		Thus, $V$ has continuous evaluations. Now, assume that $V$ has continuous evaluations and that condition (\ref{d279}) holds. Let $v \in V$. From (\ref{d279}), it follows that $v = \sum_{n=1}^{\infty} v(n) e_{n}$. Now, let us show that the sequence $\{v(n)\}_{n} \subseteq \K$ is the unique sequence satisfying the above equality. Suppose that $v = \sum_{n=1}^{\infty} a_{n} e_{n}$, where $\{a_{k}\}_{k} \subseteq \K$. Then, $\sum_{n=1}^{\infty} (a_{n} - v(n)) e_{n} = 0,$
		and since $V$ has continuous evaluations, it follows that
		$
		0 = \sum_{n=1}^{\infty} (a_{n} - v(n)) e_{n}(m) = a_{m} - v(m),   \forall m \in \N.
		$
		Therefore, $a_{m} = v(m),  \forall m \in \N$.
	\end{proof} 
	
		The following corollary is an immediate consequence of Lemma~\ref{efren27_2} and Proposition \ref{d286_0}.
	
	\begin{corollary}\label{lola199_0} \label{lola209_0}
		Let $V \subseteq S(\K)$ be a $\BK$-space such that $\{e_{n}\}_{n} \subseteq V$. Then, $\{e_{n}\}_{n}$ is a basic sequence in $V$ if and only if $\nu_{V_{0}} < \infty$. In particular, this holds if $\nu_{V} < \infty$.
	\end{corollary}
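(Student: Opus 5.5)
The plan is to show that both conditions say the same thing about $V_{0}$, by combining Proposition~\ref{d286_0} (applied to $V_{0}$) with Lemma~\ref{efren27_2}.

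\textbf{Setting up $V_{0}$.} Let $V_{0}$ be the closure of $c_{00}(\K)$ in $V$. Since $\{e_{n}\}_{n}\subseteq V$ and $c_{00}(\K)$ is spanned by the $e_{n}$, we have $c_{00}(\K)\subseteq V$. Hence $V_{0}$ is exactly the closure of $\G(\{e_{n}\}_{n})$ in $V$. So, by the definition of a basic sequence, $\{e_{n}\}_{n}$ is a basic sequence in $V$ if and only if it is a Schauder basis of $V_{0}$.

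Next, $V_{0}$ is a closed subspace of the $\BK$-space $V$. It is therefore a Banach space with continuous evaluations, and it contains $\{e_{n}\}_{n}$ and $c_{00}(\K)$. Moreover, each $\pi_{n}$ maps $V_{0}$ into $c_{00}(\K)\subseteq V_{0}$. By Proposition~\ref{d268_0_a_0}, applied to the $\BK$-space $V_{0}$, these maps are bounded projections on $V_{0}$. So $\nu_{V_{0}}$ makes sense.

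\textbf{The equivalence.} Proposition~\ref{d286_0}, applied to $V_{0}$, says that $\{e_{n}\}_{n}$ is a basis of $V_{0}$ if and only if $\pi_{n}v\tien v$ for every $v\in V_{0}$. The continuous-evaluation part of that proposition holds automatically here.

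Lemma~\ref{efren27_2}(iii), applied to $V$ with $X=\K$, says that $\pi_{n}v\tien v$ for all $v\in V_{0}$ if and only if $\nu_{V_{0}}<\infty$. Chaining these two equivalences gives the first assertion.

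One point needs checking. In Lemma~\ref{efren27_2}(iii), $\nu_{V_{0}}$ is the supremum of the norms of the $\pi_{n}$ restricted to $V_{0}$. This is consistent with the definition of $\nu_{V_{0}}$ above, because $V_{0}$ carries the restricted norm and $V_{0}$ coincides with its own "$(V_{0})_{0}$". So the quantity in the lemma and the projection coefficient of $V_{0}$ agree.

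\textbf{The final sentence.} If $\nu_{V}<\infty$, then for each $n$ the restriction of $\pi_{n}$ to $V_{0}$ has norm at most $\|\pi_{n}\|_{\mathcal{L}(V)}$. Hence $\nu_{V_{0}}\leq\nu_{V}<\infty$, and the first part gives the conclusion.

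The only step requiring any care is the identification of "basic sequence in $V$" with "basis of $V_{0}$". This reduces to noting that the closed span of the $e_{n}$ is the closure of $c_{00}(\K)$. Everything else is direct bookkeeping with the earlier results.
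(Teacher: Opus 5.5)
Your proposal is correct and follows the paper's own route. The paper states the corollary as an immediate consequence of Proposition~\ref{d286_0} (which you apply to $V_{0}$, the closed span of $\{e_{n}\}_{n}$, with continuous evaluations inherited from $V$) together with Lemma~\ref{efren27_2}(iii), and your final step $\nu_{V_{0}}\leq\nu_{V}$ handles the last assertion; your remark about $(V_{0})_{0}$ is unnecessary, since $\nu_{V_{0}}$ is by definition $\sup_{n}\norm{\pi_{n}|_{V_{0}}}$, but it does no harm.
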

	
		\begin{example}\label{again}
		If $V \subseteq S(X)$, $V\neq \llave{0}$, is a Banach b-ideal, then $\nu_{V}=1$.
	\end{example} 
	
The following results deal with spaces of multipliers involving $V_0$ and $W_0$ spaces. We only prove the first one.
		\begin{proposition}\label{chuy18_0_1_0}
		Let  $V \subseteq S\pare{X}$ and $W \subseteq S\pare{X}$ be $\BK$-spaces. 
		If $c_{00}\pare{X} \subseteq V$ and $c_{00}\pare{X} \subseteq W$, then 
		\begin{enumerate}
			\item[i.] $M\pare{V_{0},W_{0}} = M\pare{V_{0},W}$. 
			\item[ii.] $M\pare{V,W}_{0} \subseteq  M\pare{V,W_{0}}$. 
		\end{enumerate} 
	\end{proposition}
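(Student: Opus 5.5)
We prove both items by one closure argument: elements of $c_{00}(X)$, or pointwise products that land in $c_{00}(X)$, approximate the relevant products in the norm of $W$. Since $W_{0}$ is closed in $W$, the limits then stay in $W_{0}$.

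Two preliminary observations. First, $c_{00}(X)\subseteq W$ gives $W_{0}\subseteq W$ and $f\cdot c\in c_{00}(X)\subseteq W_{0}$ for every $f\in F(\N,\K)$ and $c\in c_{00}(X)$. Second, $V_{0}$ and $W_{0}$ are closed subspaces of $\BK$-spaces, hence $\BK$-spaces themselves, and both contain $c_{00}(X)$.

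\textbf{Item i.} The inclusion $M(V_{0},W_{0})\subseteq M(V_{0},W)$ is Lemma \ref{piz26}(i), since $W_{0}\subseteq W$. For the reverse, take $f\in M(V_{0},W)$ and $v\in V_{0}$, and choose $c_{k}\in c_{00}(X)$ with $c_{k}\tien v$ in $V$.
\begin{enumerate}
\item[1.] By Corollary \ref{d327_1} applied to the $\BK$-spaces $V_{0}$ and $W$, the multiplication operator $\BB_{f}:V_{0}\tien W$ is continuous. Hence $f\cdot c_{k}\tien f\cdot v$ in $W$.
\item[2.] Each $f\cdot c_{k}$ lies in $c_{00}(X)\subseteq W_{0}$.
\item[3.] Since $W_{0}$ is closed in $W$, we get $f\cdot v\in W_{0}$, so $f\in M(V_{0},W_{0})$.
\end{enumerate}

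\textbf{Item ii.} Here $M(V,W)$ carries its operator norm (which is at least a seminorm, by Corollary \ref{d327_1}). It contains $c_{00}(\K)$, because $e_{n}\cdot v=e_{n}v(n)\in c_{00}(X)\subseteq W$ for every $v\in V$; so $M(V,W)_{0}$ is its closure of $c_{00}(\K)$. Take $f\in M(V,W)_{0}$ and $f_{k}\in c_{00}(\K)$ with $\norm{f_{k}-f}_{M}\tien 0$, and fix $v\in V$.
\begin{enumerate}
\item[1.] Each $f_{k}\cdot v$ lies in $c_{00}(X)\subseteq W_{0}$.
\item[2.] The $M$-inequality (Proposition \ref{piz2_2}) gives $\norm{f_{k}\cdot v-f\cdot v}_{W}\leq \norm{f_{k}-f}_{M}\norm{v}_{V}\tien 0$.
\item[3.] Closedness of $W_{0}$ yields $f\cdot v\in W_{0}$, so $f\in M(V,W_{0})$.
\end{enumerate}

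The only delicate point is making sense of $M(V,W)_{0}$. Definition \ref{naty22} is phrased for sequence spaces in $S(X)$ containing $c_{00}(X)$, whereas here the multipliers are scalar sequences. We read it with $X=\K$, i.e.\ as the closure of $c_{00}(\K)$ in $M(V,W)$ under $\norm{\cdot}_{M}$, which is justified by the inclusion $c_{00}(\K)\subseteq M(V,W)$ noted above. The argument for ii uses only the $M$-inequality, so it goes through even if $\norm{\cdot}_{M}$ is merely a seminorm.
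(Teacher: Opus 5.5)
Your proof is correct and follows the paper's argument: approximate by finitely supported sequences, pass to the limit using continuity of the multiplication operator (item i) or the $M$-inequality (item ii), and conclude by closedness of $W_{0}$ in $W$. You are slightly more careful than the paper, which leaves the continuity step in item i implicit and writes $c_{00}(X)$ in item ii where, as you observe, $c_{00}(\K)$ is what is meant.
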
 
	\begin{proof}
		i. Clearly, $M\pare{V_{0},W_{0}} \subseteq M\pare{V_{0},W}$. 
		Take $s \in M\pare{V_{0},W}$ and $v \in V_{0}$. 
		Then there exists a sequence $\llave{v_{k}}_{k} \subseteq c_{00}\pare{X}$ such that $v_{k} \to v$. 
		Hence, $s \cdot v_{k} \in c_{00}\pare{X}$ for all $k \in \N$, and $s \cdot v_{k} \to s \cdot v$. 
		It follows that $s \cdot v \in W_{0}$, and therefore $s \in M\pare{V_{0},W_{0}}$.
		
		ii. Let $s\in M\pare{V,W}_{0}$. Then there exists $\llave{s_{n}}_{n}\subseteq c_{00}\pare{X}$ such that $s_{n}\to s$. Taking $v\in V$, we have $s_{n}\cdot v\to s\cdot v$, where $s_{n}\cdot v\in c_{00}\pare{X}, \forall n\in \N$. Hence, $s\in M\pare{V,W_{0}}$.
	\end{proof} 
		\begin{proposition}\label{chuy18_0_1_t_0}
		Let $V \subseteq S\pare{\K}$ and $W \subseteq S\pare{X}$ be $\BK$-spaces. 
		If $c_{00}\pare{\K} \subseteq V$ and $c_{00}\pare{X} \subseteq W$, then 
		\begin{enumerate}
			\item[i.] $	M'\pare{V_{0},W_{0}} = M'\pare{V_{0},W}.$
			\item[ii.]  $M'\pare{V,W}_{0} \subseteq  M'\pare{V,W_{0}}$. 
		\end{enumerate} 
	\end{proposition}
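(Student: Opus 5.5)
The plan is to adapt the proof of Proposition \ref{chuy18_0_1_0} to the reflected operator $\BB'$. Here the multipliers $s\in S\pa re{X}$ are vector sequences and the multiplied elements $v\in V\subseteq S\pare{\K}$ are scalar. Throughout I use two facts. First, by Corollary \ref{d327_1} applied to $\BB'$ (equivalently Proposition \ref{lola11_1} with $P=P_X'$), every multiplication operator $v\mapsto v\cdot s$ from a $\BK$-space into a $\BK$-space is continuous. Second, $W_0$ is closed in $W$ by definition.

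For i, the inclusion $M'\pare{V_{0},W_{0}}\subseteq M'\pare{V_{0},W}$ is Lemma \ref{q10_0_b}(i), since $W_0\subseteq W$. For the reverse inclusion:
\begin{enumerate}
\item[1.] Take $s\in M'\pare{V_{0},W}$ and $v\in V_{0}$, and choose $\llave{v_k}_k\subseteq c_{00}\pare{\K}$ with $v_k\tien v$ in $V$ (hence in $V_0$).
\item[2.] Each $v_k\cdot s$ has finite support, so $v_k\cdot s\in c_{00}\pare{X}\subseteq W_0$.
\item[3.] The space $V_0$ is a closed subspace of the BK-space $V$, so it is itself a BK-space. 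Hence the multiplication operator $V_0\tien W$, $u\mapsto u\cdot s$, is bounded, and $v_k\cdot s\tien v\cdot s$ in $W$.
\item[4.] Since $W_0$ is closed, $v\cdot s\in W_0$, and therefore $s\in M'\pare{V_0,W_0}$.
\end{enumerate}

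For ii, note that $M'\pare{V,W}$ contains $c_{00}\pare{X}$. Indeed, for $t\in c_{00}\pare{X}$ and $v\in V$ we have $v\cdot t\in c_{00}\pare{X}\subseteq W$. So $M'\pare{V,W}_0$ makes sense as a closure taken in the operator norm. Now:
\begin{enumerate}
\item[1.] Take $s\in M'\pare{V,W}_0$ and choose $\llave{s_n}_n\subseteq c_{00}\pare{X}$ with $s_n\tien s$ in $M'\pare{V,W}$.
\item[2.] Fix $v\in V$. The $M$-inequality (Proposition \ref{piz2_2}) for $\BB'$ gives
\begin{equation*}
\norm{v\cdot s_n-v\cdot s}_W\leq \norm{s_n-s}_{M'}\norm{v}_V\tien 0 .
\end{equation*}
\item[3.] Each $v\cdot s_n$ lies in $c_{00}\pare{X}\subseteq W_0$, and $W_0$ is closed, so $v\cdot s\in W_0$. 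Hence $s\in M'\pare{V,W_0}$.
\end{enumerate}

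The main obstacle is making sure the norms involved are genuine norms, so that convergence in $M'\pare{V,W}$ and the $M$-inequality have meaning. If $V$ is not assumed to be a BKN-space, $\norm{\cdot}_{M'}$ might only be a seminorm. The argument above uses only the seminorm version of the $M$-inequality, which still holds because the multiplication operators are bounded by Corollary \ref{d327_1}. So the proof goes through regardless, and it is the continuity of the multiplication operators, not normability, that I will rely on.
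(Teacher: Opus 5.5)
Your proof is correct and is essentially the paper's argument. The paper omits this proof as the analogue of Proposition~\ref{chuy18_0_1_0}: approximate by finitely supported sequences, pass to the limit via continuity of the multiplication operator or the $M$-inequality, and use that $W_{0}$ is closed. You make those continuity steps explicit, including the point that $V_{0}$ is itself a $\BK$-space.

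Your worry about normability is unnecessary. Since $\llave{e_{n}}_{n}\subseteq c_{00}\pare{\K}\subseteq V$, $V$ is a $\BKn$-space, so $M'\pare{V,W}$ is a $\BK$-space by Theorem~\ref{piz27_1_0}.
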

	
	\begin{lemma}
		Let  $U \subseteq S\pare{X}$ be a BK-space such that $c_{00} (X) \subseteq U$. If $U$ is a b-ideal, then
		$U_0$ is also a $b$-ideal.
	\end{lemma}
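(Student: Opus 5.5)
To show $U_0$ is a $b$-ideal, fix $b\in B\pare{\N,\K}=\ell_{\infty}\pare{\K}$ and $u\in U_0$; the goal is $b\cdot u\in U_0$. The plan has three parts: use Proposition \ref{final8} to make multiplication by $b$ a bounded operator on $U$, approximate $u$ by finitely supported sequences, and transfer the approximation through that operator.

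First I would note that $U$ is a $\BK$-space and a $b$-ideal, so Proposition \ref{final8} applies. Its proof only uses Theorem \ref{piz22_1_0} and Corollary \ref{yaya2}, which work for $F\pare{D,X}$ as well as for scalar $U$. Alternatively, directly: the multiplication operator $T_b:U\tien U$, $T_b v:=b\cdot v$, is well defined because $U$ is a $b$-ideal. It is bounded by Corollary \ref{d327_1}, since $b\in M\pare{U,U}$ and $U$ is $\BK$. Either way we get a constant $C_b$ with
\[
\norm{b\cdot v}_U\leq C_b\norm{v}_U,\qquad \forall v\in U.
\]

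Next, since $u\in U_0$, by definition there is a sequence $\llave{c_k}_k\subseteq c_{00}\pare{X}$ with $c_k\tien u$ in $U$. Each $b\cdot c_k$ still has finite support, so $b\cdot c_k\in c_{00}\pare{X}$. The continuity estimate above then gives
\[
\norm{b\cdot c_k-b\cdot u}_U\leq C_b\norm{c_k-u}_U\tien 0 .
\]
Thus $b\cdot u$ lies in the closure of $c_{00}\pare{X}$ in $U$, that is, $b\cdot u\in U_0$. Since $U_0$ is a vector subspace, it is a $b$-ideal.

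The only nontrivial step is the boundedness of $T_b$, and it is already supplied by Corollary \ref{d327_1}. There is no real obstacle. As a side remark, if $U$ is a normed $b$-ideal then the constant can be taken to be $\norm{b}_{\infty}$, and $U_0$ inherits the normed $b$-ideal inequality.
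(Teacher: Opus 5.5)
Your proof is correct and follows the paper's argument: approximate $u$ by finitely supported sequences, then use the boundedness of $v\mapsto b\cdot v$ on $U$ to get $b\cdot c_k\tien b\cdot u$ with $b\cdot c_k\in c_{00}\pare{X}$. Your observation that this boundedness follows directly from Corollary \ref{d327_1} (since $b\in M\pare{U}$) is a worthwhile refinement. The paper cites Proposition \ref{final8}, which is formally stated only for scalar-valued $U$.
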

	\begin{proof}
Let $s\in U_{0}$. Then there exists $\llave{s_{n}}_{n}\subseteq c_{00}\pare{X}$ such that $s_{n}\to s$. Given $b\in \ell_{\infty}\pare{\K}$, Proposition~\ref{final8} implies that $b\cdot s_{n}\to b\cdot s$. Hence, $b\cdot s \in U_{0}$.
	\end{proof}
	
	\subsection{$\Sigma E$ Spaces}
	
	Let $X$ be a Banach space and $E \subseteq S(X)$ a normed space. The concept of the sequence spaces $\Sigma E$ is often used implicitly; however, stating their definition explicitly helps to emphasize their significance. As we will see, these kind of spaces arise from classical spaces through a natural and fairly general construction. \vs
	
	Let $s := \{x_k\}_k \in S(X)$. Then the \emph{$n$-th partial sum} of $\sum_k x_k$ is defined as
	\begin{equation}
		S_n(s) := \sum_{k=1}^n x_k \in X, \quad \forall n \in \mathbb{N}.
	\end{equation}
	Next, we define the sequence of partial sums
 
	\begin{equation}
		\sigma_s := \{S_n(s)\}_n \subseteq X, 
	\end{equation}
	and the function $T_{\sigma}: S(X) \to S(X)$ by
	\vspace{-.2 cm}
	\begin{equation}
		T_{\sigma} s := \sigma_s. 
	\end{equation}
	The following result  can be proved directly.
	\begin{lemma}
		$T_{\sigma}: S(X) \to S(X)$ is an isomorphism.
	\end{lemma}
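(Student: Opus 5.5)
To prove that $T_{\sigma}\colon S(X)\to S(X)$ is an isomorphism, I will show that it is linear and then exhibit an explicit two-sided inverse, namely the \emph{difference operator}. No topology is involved, since $S(X)$ is only regarded here as a vector space.

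\textbf{Linearity.} Let $s=\llave{x_k}_k$, $t=\llave{y_k}_k\in S(X)$ and $\alpha\in\K$. For each $n\in\N$ we have
\begin{equation*}
S_n(\alpha s+t)=\sum_{k=1}^{n}(\alpha x_k+y_k)=\alpha S_n(s)+S_n(t),
\end{equation*}
because finite sums in $X$ are linear. Hence $T_{\sigma}(\alpha s+t)=\alpha T_{\sigma}s+T_{\sigma}t$.

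\textbf{The inverse.} Define $\Delta\colon S(X)\to S(X)$ by
\begin{equation*}
(\Delta u)(1):=u(1),\qquad (\Delta u)(n):=u(n)-u(n-1)\quad (n\geq 2).
\end{equation*}
This is linear by the same reasoning as above.

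\textbf{Composition in both orders.} First, $\Delta T_{\sigma}=I$. Indeed, $(\Delta\sigma_s)(1)=S_1(s)=x_1$, and for $n\geq 2$ we get $S_n(s)-S_{n-1}(s)=x_n$. So $\Delta T_{\sigma}s=s$ for every $s$.

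Second, $T_{\sigma}\Delta=I$. For $u\in S(X)$ the sum telescopes:
\begin{equation*}
S_n(\Delta u)=u(1)+\sum_{k=2}^{n}\bigl(u(k)-u(k-1)\bigr)=u(n),
\end{equation*}
so $T_{\sigma}\Delta u=u$.

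Together these give that $T_{\sigma}$ is bijective with inverse $T_{\sigma}^{-1}=\Delta$, hence a linear isomorphism of $S(X)$. There is no real obstacle; the only step needing care is handling the first index separately in the telescoping sum.
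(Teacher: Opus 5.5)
Your proof is correct and is the direct verification the paper has in mind; the paper gives no proof, saying only that the lemma ``can be proved directly.'' Your inverse $\Delta$ is exactly the variation-sequence map $s\mapsto v_{s}$ that the paper uses later in the section on sequences of bounded variation, where it checks $v_{\sigma_{z}}=z$ by the same telescoping argument.
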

	
	Let $E \subseteq S(X)$ be a normed space, and let us consider the collection
	\begin{eqnarray}\label{f54}
		\Sigma E := T_{\sigma}^{-1} E = \{s \in S(X) : \sigma_s \in E\}.
	\end{eqnarray}
	Thus, $\Sigma E$ consists of the sequences in $X$ whose corresponding sequence of partial sums belongs to $E$. Notice now that $T_E := T_{\sigma} : \Sigma E \to E$ is an isomorphism, and we consider in $\Sigma E$ the norm $\| \cdot \|_{\Sigma E}$ induced by $T_E$, that is,
	\begin{equation}
		\|s\|_{\Sigma E} := \|T_E s\|.
	\end{equation}
	Then $T_E: \Sigma E \to E$ is an isometric isomorphism.\vs
	
	The following results show that $\Sigma E$ inherits several properties from $E$.
	
	\begin{proposition}\label{g1_0_new} Let $E\subseteq S\pare{X}$ be a normed space.
		\begin{enumerate}
			\item[i.]  If $E$ is a Banach space, then $\Sigma E$ is a Banach space.
			\item[ii.]   If $E$ has continuous evaluations, then $\Sigma E$ has continuous evaluations.
			\item[iii.] If $c_{00}\pare{X}\subseteq E$, then $c_{00}\pare{X}\subseteq \Sigma E$.
		\end{enumerate} 
	\end{proposition}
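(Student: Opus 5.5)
Items i and ii will be obtained by transporting structure through the isometric isomorphism $T_E\colon \Sigma E\to E$. Item iii has to be checked against the definition of $\Sigma E$ directly, and that check is where I expect the real difficulty, as explained below.

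\textbf{Item i.} Since $\norm{s}_{\Sigma E}=\norm{T_E s}_E$ and $T_E$ is a linear bijection from $\Sigma E$ onto $E$, a Cauchy sequence $\llave{s_k}_k$ in $\Sigma E$ is mapped to a Cauchy sequence $\llave{\sigma_{s_k}}_k$ in $E$. This sequence converges to some $u\in E$. Then $s:=T_\sigma^{-1}u$ lies in $\Sigma E$ and satisfies $\norm{s_k-s}_{\Sigma E}=\norm{\sigma_{s_k}-u}_E\to 0$. Hence $\Sigma E$ is complete.

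\textbf{Item ii.} I will recover each entry of $s$ from its partial sums: $s(1)=S_1(s)=\delta_1(\sigma_s)$, and $s(n)=S_n(s)-S_{n-1}(s)=\delta_n(\sigma_s)-\delta_{n-1}(\sigma_s)$ for $n\geq 2$. Because $E$ has continuous evaluations, there are constants $c_n$ with $\norm{\delta_n(u)}_X\leq c_n\norm{u}_E$ for all $u\in E$. Therefore
\[
\norm{s(n)}_X\leq (c_n+c_{n-1})\norm{\sigma_s}_E=(c_n+c_{n-1})\norm{s}_{\Sigma E},
\]
with $c_0:=0$. So every $\delta_n$ is continuous on $\Sigma E$ (Lemma~\ref{final11}-type estimate, pointwise in $n$).

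\textbf{Item iii.} By linearity, $c_{00}(X)\subseteq\Sigma E$ is equivalent to $e_kx\in\Sigma E$ for all $k\in\N$ and $x\in X$. This in turn says that $\sigma_{e_kx}\in E$. Computing the partial sums,
\[
\sigma_{e_kx}=(0,\dots,0,x,x,x,\dots),
\]
with $x$ from position $k$ onward. This is a tail-constant sequence, not an element of $c_{00}(X)$ when $x\neq 0$. So the hypothesis $c_{00}(X)\subseteq E$ does not by itself place $\sigma_{e_kx}$ in $E$.

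My first attempt will be to extract membership from $c_{00}(X)\subseteq E$ via the closed-subspace or projection structure of Lemma~\ref{d278_1} and Proposition~\ref{d268_0_a_0}. I expect this to fail. Take $E=c_{00}(X)$, or $E=c_0(X)$, with $X\neq\llave{0}$. Then $\sigma_{e_1x}=\llave{x}_n\notin E$, so $e_1x\notin\Sigma E$. I therefore expect item iii, as worded, to be the main obstacle: it cannot be derived from the stated hypothesis alone.

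What the inclusion $c_{00}(X)\subseteq E$ does give is the correct dual statement. Any $s$ whose partial sums are finitely supported, i.e.\ $s=T_\sigma^{-1}c$ with $c\in c_{00}(X)$, lies in $\Sigma E$. Explicitly, $T_\sigma^{-1}(e_kx)=e_kx-e_{k+1}x$. Hence every difference $e_kx-e_{k+1}x$ belongs to $\Sigma E$. To conclude $e_kx\in\Sigma E$ one needs in addition that $E$ contains the tail-constant sequences, as happens for $E=c(X)$ or $E=\ell_\infty(X)$, which are the cases used later in the section.

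In writing the proof I will present items i and ii as above. For item iii I will give the reduction to $\sigma_{e_kx}\in E$, record the inclusion $T_\sigma^{-1}(c_{00}(X))\subseteq\Sigma E$, and flag that the stated inclusion requires this extra membership: it holds for $\Sigma c(X)$ and $\Sigma\ell_\infty(X)$, but not for $E=c_0(X)$.
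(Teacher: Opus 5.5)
Your arguments for i and ii are the paper's. Completeness is transported through the isometric isomorphism $T_E$. Each entry is recovered as $s(n)=S_n(s)-S_{n-1}(s)$, which gives the constants $C_n+C_{n-1}$ (with $C_0=0$).

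On iii you are right and the paper is not. The paper's proof reads ``$e_nx\in E$, it follows that $e_nx\in\Sigma E$'', which conflates $e_nx\in E$ with $\sigma_{e_nx}\in E$. Since $\sigma_{e_nx}=(0,\dots,0,x,x,\dots)$, the hypothesis $c_{00}(X)\subseteq E$ only yields $T_\sigma^{-1}(e_nx)=e_nx-e_{n+1}x\in\Sigma E$, exactly as you note. Your counterexample settles the matter: $E=c_0(X)$ with $X\neq\{0\}$ is a BK-space containing $c_{00}(X)$, yet $e_1x\notin\Sigma E$ for $x\neq 0$.

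The sharp statement is that $c_{00}(X)\subseteq\Sigma E$ if and only if $T_\sigma(c_{00}(X))\subseteq E$. The space $T_\sigma(c_{00}(X))$ is precisely the space of eventually constant sequences. So, given $c_{00}(X)\subseteq E$, the inclusion holds if and only if $E$ contains the constant sequences. This is satisfied by $c(X)$ and $\ell_\infty(X)$. The paper verifies $c_{00}(X)\subseteq\Sigma c(X)$ and $c_{00}(X)\subseteq\Sigma\ell_\infty(X)$ directly, so the faulty item does not affect anything later.

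Writing up i and ii as planned, and iii in the corrected form with your counterexample, is the right presentation.
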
 
\begin{proof}
	i. This follows directly from the fact that $T_{E}$ is an isometric isomorphism.
	
	ii. Fix $s=\llave{x_{n}}_{n}\in \Sigma E$, and consider $\sigma_{s}=\llave{S_{n}\pare{s}}_{n}\in   E$. Then, for every $n\in \N$, there exists $C_{n}\in(0,\infty)$ such that 
	\begin{equation}
		\norm{\sigma_{s}\pare{n}}_{X}\leq C_{n}\norm{\sigma_{s}}_{E}.
	\end{equation}\label{zip1}
	Notice that
	\begin{equation}
		\norm{x_{1}}_{X}=\norm{s\pare{1}}_{X}=\norm{\sigma_{s}\pare{1}}_{X}\leq C_{1}\norm{\sigma_{s}}_{E}=C_{1}\norm{s}_{\Sigma E}.
	\end{equation}
	For $n>1$, applying (\ref{zip1}), we obtain
	\begin{equation}
		\norm{x_{n}}_{X}=\norm{S_{n}\pare{s}-S_{n-1}\pare{s}}_{X}\leq \norm{\sigma_{s}\pare{n}}_{X}+\norm{\sigma_{s}\pare{n-1}}_{X} \leq  \pare{C_{n}+C_{n-1}}\norm{s}_{\Sigma E}.
	\end{equation}
	Hence, $\Sigma E$ has continuous evaluations.
	
	iii. Let $n \in \N, x \in X$. Then  $e_{n}x\in E$, it follows that $xe_{n}\in \Sigma E$. Therefore, $c_{00}\pare{X}\subseteq \Sigma E$.
\end{proof}

\begin{example}
	Let $X=\K$ and 
	$
	E=\corchete{\llave{1}_{n}}
	$,  the space generated by $\llave{1}_{n}\in S\pare{\K}$.
	Then $E\subseteq S\pare{\K}$ is a vector space that does not vanish at any point. However,
	$
	\Sigma E=\corchete{e_{1}},
	$
	which vanishes at infinitely many points.
\end{example}
\begin{proof}
	Let $s\in \Sigma E$. Then there exists $a\in \K$ such that $\sigma_{s}=a\llave{1}_{n}$. Therefore,
	$
	a=S_{1}(s)=s\pare{1}.
	$
	Moreover, for each $n\in \N\setminus \llave{1}$,
	$
	a=S_{n}\pare{s}=s\pare{1}+s\pare{n}=a+s\pare{n}.
	$
	Hence, $s\pare{n}=0$ for every $n\geq 2$, and consequently $s=ae_{1}$.
\end{proof}
	
%		\begin{lemma}\label{lola155_c} 
%			Let $J:X\to W$ be an isometric isomorphism, $\widetilde{J}:S\pare{X}\to S\pare{W}$ its associated sequence operator, and $E\subseteq S(X)$, $F\subseteq S\pare{W}$ Banach spaces. If $\widetilde{J}:E\to F$ is an isometric isomorphism, then $\widetilde{J}:\Sigma E \to \Sigma F$ is an isometric isomorphism.
%		\end{lemma}
%		
%		\begin{proof}  
%			Clearly $\widetilde{J}$ is injective. Now observe that
%			\begin{equation}\label{lola172}
%				\sigma_{\widetilde{J}\pare{s}} = \{S_{n}\pare{\widetilde{J}\pare{s}}\}_{n} = \{J\pare{S_{n}\pare{s}}\}_{n} = \widetilde{J}\pare{\sigma_{s}} \in F, \qquad \forall s\in \Sigma E.
%			\end{equation}
%			Similarly, we prove that $\sigma_{\widetilde{J}^{-1}\pare{z}} = \widetilde{J}^{-1}\pare{\sigma_{z}} \in E$ for all $z\in \Sigma F$. Fix $z\in \Sigma F$. Then $s:=\widetilde{J}^{-1}\pare{z}\in \Sigma E$, and therefore $\widetilde{J}\pare{s}=z$. Thus, $\widetilde{J}$ is surjective. Let $s\in \Sigma E$. Using (\ref{lola172}), it follows that $\norm{\widetilde{J}\pare{s}}_{\Sigma F} = \norm{\sigma_{\widetilde{J}\pare{s}}}_{F} = \norm{s}_{\Sigma E}$.
%		\end{proof}

	\subsection{The Spaces $\Sigma \ell_{\infty}(X)$ and $\Sigma c(X)$} 
	
	Taking $E = \ell_{\infty}(X)$ in the  construction of $\Sigma E$, it follows that the vector space
	\begin{equation}\label{d264_a} 
		\Sigma \ell_{\infty}(X) = \{s \in S(X): \sigma_s \in \ell_{\infty}(X)\}
	\end{equation} 
	is a Banach space with the norm 
	\begin{equation}
	\|s\|_{\Sigma \ell_{\infty}(X)} = \sup \left\{ \left\| \sum_{k=1}^n x_k \right\| : n \in \mathbb{N} \right\}, \quad s = \{x_{k}\}_{k}. 
	\end{equation}
    Note the elements of $\Sigma \ell_{\infty}(X)$ are those sequences in $X$ with bounded partial sums.	
    
    We also have   $c_{00}(X) \subseteq \Sigma \ell_{\infty}(X) \subseteq \ell_{\infty}(X)$, and
	\begin{equation}
		\|x_n\| \leq 2 \|s\|_{\Sigma \ell_{\infty}(X)}, \quad \forall n \in \mathbb{N}, \ \forall s = \{x_k\}_k \in \Sigma \ell_{\infty}(X).
	\end{equation}
	 In conclusion,  $\Sigma \ell_{\infty}(X)$ is a $\BKn$-space.

	Next, taking $E = c(X)$, we obtain that the vector space
	\begin{equation}\label{d264_b} 
		\Sigma c(X) = \{s \in S(X): \sigma_s \in c(X)\} \subseteq \Sigma \ell_{\infty}(X)
	\end{equation}
	is a Banach space with the norm  $\|s\|_{\Sigma c(X)} = \|s\|_{\Sigma \ell_{\infty}(X)}.$ Thus, $\Sigma c(X)$ is a closed subspace of $\Sigma \ell_{\infty}(X)$.
	Hence $\Sigma c(X)$ has continuous evaluations. Moreover,   $c_{00}(X) \subseteq \Sigma c(X) \subseteq c_0(X)$, and
	\begin{equation}\label{d245}  
		\norm{\sum_{k=1}^{\infty} x_k} \leq \|s\|_{\Sigma c(X)}, \quad \forall s = \{x_k\}_k \in \Sigma c(X).
	\end{equation}
	Thus, $\Sigma c(X)$ is a $\BKn$-space. Note that $\{x_n\}_n \in \Sigma c(X)$ if, and only if, the series $\sum_n x_n$ converges. Therefore,   the elements of $\Sigma c(X)$ are called \emph{summable sequences}. In this case, we define the function $S: \Sigma c(X) \to X$ by
	\begin{equation}\label{d247}
		S(\{x_n\}_n) := \sum_{n=1}^{\infty} x_n.
	\end{equation}
	Clearly, $S$ is a linear operator, which we will call \emph{the sum operator of $X$}. By (\ref{d245}), this operator is continuous and $\|S\| \leq 1$.

	\begin{lemma}\label{d269}
		Let $X$ be a Banach space. Then $e_n x \in \Sigma c(X)$, $\forall x \in X, \forall n \in \mathbb{N}$. Moreover, for  $P_n: \Sigma \ell_{\infty}(X) \to \Sigma \ell_{\infty}(X)$ we have
		\begin{enumerate}  
			\item[i.]   $\|P_n s\|_{\Sigma \ell_{\infty}(X)} \tien \|s\|_{\Sigma \ell_{\infty}(X)},  \forall s \in \Sigma \ell_{\infty}(X)$. Moreover, if $X \neq \{0\}$, then $\|P_n\| = 1$, $\forall n \in \mathbb{N}$. Therefore, $\nu_{\Sigma \ell_{\infty}(\mathbb{K})} = 1$.
			\item[ii.]  $s \in \Sigma c(X)$ if, and only if, $P_n s \to s$. Therefore, $\Sigma c(X) = (\Sigma \ell_{\infty}(X))_0$.
		\end{enumerate}
	\end{lemma}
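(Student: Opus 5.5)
The statement has three parts: membership $e_nx\in\Sigma c(X)$, norm convergence of projections in (i), and the characterization in (ii). I would first dispose of the membership claim directly: for $s=e_nx$ the partial sums are $S_m(s)=0$ for $m<n$ and $S_m(s)=x$ for $m\ge n$. So $\sigma_s$ is eventually constant, hence convergent, and $e_nx\in\Sigma c(X)$. Because $c_{00}(X)$ is spanned by such elements, $c_{00}(X)\subseteq\Sigma c(X)\subseteq \Sigma\ell_\infty(X)$, so each $P_n$ maps $\Sigma\ell_\infty(X)$ into itself.

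For (i), the key observation is a formula for the partial sums of $P_ns$:
\begin{equation*}
S_m(P_ns)=S_{\min(m,n)}(s),\qquad \forall m\in\N .
\end{equation*}
This gives
\begin{equation*}
\norm{P_ns}_{\Sigma\ell_\infty(X)}=\max_{1\le m\le n}\norm{S_m(s)} .
\end{equation*}
The right-hand side is nondecreasing in $n$ and its supremum over $n$ is exactly $\norm{s}_{\Sigma\ell_\infty(X)}$, so the convergence follows. The same formula gives $\norm{P_ns}\le\norm{s}$, hence $\norm{P_n}\le 1$. Example \ref{lola129} supplies the reverse inequality $\norm{P_n}\ge1$ when $X\neq\{0\}$; alternatively, evaluate at $s=e_1x$ with $\norm{x}=1$. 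Therefore $\norm{P_n}=1$, and taking the supremum over $n$ gives $\nu_{\Sigma\ell_\infty(\K)}=1$.

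For (ii), compute the partial sums of the difference. By the formula above,
\begin{equation*}
S_m(s-P_ns)=\begin{cases}0,& m\le n,\\ S_m(s)-S_n(s),& m>n,\end{cases}
\end{equation*}
and so
\begin{equation*}
\norm{s-P_ns}_{\Sigma\ell_\infty(X)}=\sup_{m>n}\norm{S_m(s)-S_n(s)} .
\end{equation*}
Hence $P_ns\to s$ if and only if $\{S_n(s)\}_n$ is Cauchy in $X$. Since $X$ is complete, this happens if and only if $\sigma_s\in c(X)$, that is, $s\in\Sigma c(X)$. For sequences only known to lie in $S(X)$, one could instead appeal to Lemma \ref{efren27_2}(ii), but membership in $\Sigma\ell_\infty(X)$ is given here, so the direct computation suffices.

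It remains to show $\Sigma c(X)=(\Sigma\ell_\infty(X))_0$. If $s\in\Sigma c(X)$, then $P_ns\in c_{00}(X)$ and $P_ns\to s$, so $s$ lies in the closure of $c_{00}(X)$. Conversely, $\Sigma c(X)$ is closed in $\Sigma\ell_\infty(X)$, as already noted, and it contains $c_{00}(X)$, so it contains the closure $(\Sigma\ell_\infty(X))_0$. I expect no real obstacle; the only care needed is the bookkeeping of the partial-sum formulas for $P_ns$ and $s-P_ns$.
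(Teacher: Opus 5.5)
Your proposal is correct and follows essentially the same route as the paper. For (i), monotonicity of $\|P_n s\|_{\Sigma\ell_\infty(X)}$ comes from the partial sums, and Example~\ref{lola129} gives $\|P_n\|\geq 1$; for (ii), $\|s-P_ns\|$ is computed as a supremum of tail partial sums and the Cauchy criterion finishes. Your appeal to the closedness of $\Sigma c(X)$ in $\Sigma\ell_\infty(X)$, to obtain $(\Sigma\ell_\infty(X))_0\subseteq\Sigma c(X)$, spells out a step the paper covers only with ``Thus''.
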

	\begin{proof}
		Clearly, $e_n x \in \Sigma c(X)$, for all $x \in X$, and all $n \in \mathbb{N}$.
		
		i. Let $s \in \Sigma \ell_{\infty}(X)$ and $n \in \mathbb{N}$. Then, $\|P_n s\|_{\Sigma \ell_{\infty}(X)} \leq \|P_{n+1} s\|_{\Sigma \ell_{\infty}(X)} \leq \|s\|_{\Sigma \ell_{\infty}(X)}.$  
		We will now prove that $\sup_{n \in \mathbb{N}} \|P_n s\|_{\Sigma \ell_{\infty}(X)} = \|s\|_{\Sigma \ell_{\infty}(X)}$. First, observe that \begin{equation}
			\norm{\sum_{n=1}^{k} s(n)} \leq \|P_k s\|_{\Sigma \ell_{\infty}(X)} \leq \sup_{n \in \mathbb{N}} \|P_n s\|_{\Sigma \ell_{\infty}(X)} \leq \|s\|_{\Sigma \ell_{\infty}(X)}, \quad \forall k \in \mathbb{N}.
		\end{equation}
		Therefore, $\|s\|_{\Sigma \ell_{\infty}(X)} \leq \sup_{n \in \mathbb{N}} \|P_n s\|_{\Sigma \ell_{\infty}(X)} \leq \|s\|_{\Sigma \ell_{\infty}(X)}$. 
		
		Assume that $X \neq \{0\}$, and fix $m \in \mathbb{N}$. Since $\|P_n s\|_{\Sigma \ell_{\infty}(X)} \uparrow \|s\|_{\Sigma \ell_{\infty}(X)}$, it follows that $\|P_m\| \leq 1$. Now, applying example \ref{lola129}, we get that $\|P_n\| = 1$.
		
		ii. Assume $s \in \Sigma c(X)$, and let $n \in \mathbb{N}$. Notice that $$\norm{s - P_n s}_{\Sigma_{c}\pare{X}} = \norm{(0, \dots, 0, x_{n+1}, x_{n+2}, \dots)}_{\Sigma_{c}\pare{X}}=\sup\llave{\norm{\Sum_{k=1}^{m}x_{n+k}}:m\in \N}\tien 0,\text{when } n\tien \infty.$$ Thus $P_n s \to s$. Conversely, suppose $P_n s \to s$. Then the sequence of partial sums $\llave{\sum_{k=1}^{n}x_{k}}_{n}$ is a Cauchy sequence, and therefore the series $\sum_n x_n$ converges, and hence, $s \in \Sigma c(X)$. Thus, $\Sigma c(X) = (\Sigma \ell_{\infty}(X))_0$.
	\end{proof}
	
		\begin{example}\label{d333}
		If $X \neq \{0\}$, then  $\Sigma c\pare{X}$ and $\Sigma \ell_{\infty}\pare{X}$ are not b-ideals. Therefore,  $M(\Sigma c (X))$ and $M(\Sigma \ell_\infty (X))$ are proper subsets of $\ell_{\infty} (\K)$. Moreover, $\Sigma c\pare{X} \subsetneq \Sigma \ell_{\infty}\pare{X}$. 
	\end{example}
	\begin{proof}
		Fix $x \in S_{X}$ and consider $s = \left\{\frac{(-1)^{n+1}x}{n}\right\}_{n} \in \Sigma c\pare{X}$. Taking $t = \{(-1)^{n+1}\}_{n} \in \ell_{\infty}\pare{\K}$, we obtain $t \cdot s = \left\{\frac{x}{n}\right\}_{n} \notin \Sigma \ell_{\infty}\pare{X}$. This shows that neither $\Sigma c\pare{X}$ nor $\Sigma \ell_{\infty}\pare{X}$ are  $b$-ideals. Now observe that $\{(-1)^n x\}_{n} \in \Sigma \ell_{\infty}\pare{X}$ but $\{(-1)^n x\}_{n} \notin \Sigma c\pare{X}$. Therefore, $\Sigma c\pare{X} \subsetneq \Sigma \ell_{\infty}\pare{X}$.
	\end{proof}

	 \subsection{The Space  $\ell_{u}\pare{X}$ } 
Let $X$ be a Banach space. A sequence $\{x_{n}\}_{n}\subseteq X$ is \emph{unconditionally summable} if there exists $x\in X$ such that for every bijection $h:\N\to \N$, the series 
$\sum_{k}x_{h(k)}$ converges to $x$. This will also be indicated by saying that the series $\sum_{n}x_{n}$ \emph{converges unconditionally (or converges unconditionally to $x$)}. 

Let us define the set
\begin{equation}
	\ell_{u}(X):=\left\{\{x_{n}\}_{n}\subseteq X:\text{ the series }\sum_{n}x_{n}\text{ converges unconditionally}\right\}.
\end{equation}
Note that $\ell_{u}(X)$ is a vector subspace of $S\pare{X}$ and that $c_{00}\pare{X}\subseteq \ell_{u}\pare{X}$.

\begin{example} Let $s \in S(X)$. The  Bounded Multiplier Test (\cite[p. 5]{DJT})  states that 
	$s \in \ell_{u}(X)$ if and only if $t \cdot s$ is summable for every $t \in \ell_{\infty}(\K)$.
	This implies that $\ell_{u}(X)$ is  a $b$-ideal. Since  $\ell_{u}\pare{X} \subseteq \Sigma c\pare{X}$, it follows that $\ell_{u}\pare{X} \subseteq \textup{Ip}\hspace{-2pt}\pare{\Sigma c\pare{X}}$. On the other hand,    the Bounded Multiplier Test,   also implies that $\textup{Ip}\hspace{-2pt}\pare{\Sigma c\pare{X}} \subseteq \ell_{u}\pare{X}$. Hence, the Bounded Multiplier Test  is equivalent to the equality
	\begin{equation}\label{lola165} \ell_{u}\pare{X} = \textup{Ip}\hspace{-2pt}\pare{\Sigma c\pare{X}}. \end{equation}
	Thus we can  equip $\ell_{u}\pare{X}$ with the operator norm corresponding to $\textup{Ip}\hspace{-2pt}\pare{\Sigma c\pare{X}}$, and so we define 
\end{example}

\begin{equation}\label{lola109} \norm{s}_{\ell_{{u}\pare{X}}} := \sup\llave{\norm{\sum_{n=1}^{m} t\pare{n} s\pare{n}}  :\ t \in \B_{\ell_{\infty}\pare{\K}},\ m \in \N}, \qquad \forall s \in \ell_{u}\pare{X}. \end{equation} 
In this way, $\ell_{u}\pare{X}$ is a $\BK$-space. Moreover, if $X \neq \llave{0}$, then $c_{00} (X) \subseteq  \ell_u (X)$. \vs

	 It is well-known that   $\ell_{u}\pare{\K}=\ell_{1}\pare{\K}$ as vector spaces. We now show that they have equal norms.
\begin{example}\label{d316_2}
	$\ell_{u}\pare{\K}\equiv \ell_{1}\pare{\K}$. Therefore, the sequence $\llave{e_{n}}_{n}$ is a basis of $\ell_{u}\pare{\K}$.
\end{example}
\begin{proof}
	Let $s\in \ell_{u}\pare{\K}$. Then 
	\begin{equation}
		\norm{s}_{\ell_{u}\pare{\K}} = \sup\llave{\norm{t\cdot s}_{\Sigma c\pare{\K}}: t \in \B_{\ell_{\infty}\pare{\K}}} 
		\leq \sup\llave{\sum_{n=1}^{\infty}\abs{t\pare{n}s\pare{n}}: t \in \B_{\ell_{\infty}\pare{\K}}} = \norm{s}_{\ell_{1}\pare{\K}}.
	\end{equation}
	
%	To verify the reverse inequality, first assume that $\norm{s}_{\ell_{1}\pare{\K}} \leq 1$. 
	
	Define the sequence $z: \N \tien X$ by $z\pare{n} = \frac{\overline{s\pare{n}}}{\abs{s\pare{n}}}$ if $s\pare{n} \neq 0$, and $z\pare{n} = 0$ if $s\pare{n} = 0$. Then $z \in \B_{\ell_{\infty}\pare{\K}}$, and therefore 
	\begin{equation}
		\sum_{n=1}^{m}\abs{s\pare{n}} = \sum_{n=1}^{m}z\pare{n}s\pare{n} \leq \norm{s}_{\ell_{u}\pare{\K}}, \qquad \forall m \in \N.
	\end{equation}
	Thus, $\norm{s}_{\ell_{1}\pare{\K}} \leq \norm{s}_{\ell_{u}\pare{\K}}$. 
	
	Finally, it is well known that $\llave{e_{n}}$ is a basis of $\ell_{1}\pare{\K} = \ell_{u}\pare{\K}$.
\end{proof}

%For the case $\norm{s}_{\ell_{1}\pare{\K}} > 1$, since $\frac{s}{\norm{s}_{\ell_{1}\pare{\K}}} \in \B_{\ell_{\infty}\pare{\K}}$, by the initial case it follows that 
%$$\norm{\frac{s}{\norm{s}_{\ell_{1}\pare{\K}}}}_{\ell_{1}\pare{\K}} \leq \norm{\frac{s}{\norm{s}_{\ell_{1}\pare{\K}}}}_{\ell_{u}\pare{\K}}.$$ 
%Hence, $\norm{s}_{\ell_{1}\pare{\K}} \leq \norm{s}_{\ell_{u}\pare{\K}}$.

		\section{ The Space $M_{\Sigma}\pare{V,X}$}\label{sec10}\label{pop10}
Let $X$ be a Banach space, and fix a $\BKa$-space $V \subseteq S(\K)$. We are interested in describing $\mathcal{L}(V, X)$ as a multiplier space. This leads us to consider the multiplier space $M' = M'(V, \Sigma c(X))$ determined by the pointwise bilinear operator $\BB': M' \times V \to \Sigma c(X)$, and to replace $\BB'$ with another  bilinear operator $B_{V,X}: M' \times V \to X$. In this way the values of the new representation operator lie in $\mathcal{L}(V, X)$. This change in the bilinear operator translates into a change of norm for $M'$, resulting in a new multiplier space which  will denoted by $M_{\Sigma}(V, X)$. Lemma~\ref{naty32} provides conditions under which $M_{\Sigma}(V, X)$ is a Banach space, in which case the norms on $M_{\Sigma}(V, X)$ and $M'$ are equivalent. In Theorem \ref{d272_2}, we show that for the representation operator $R_{V,X}: M_{\Sigma}(V, X) \to \mathcal{L}(V, X)$ to be an isomorphism, it is necessary and sufficient that $\{e_{n}\}_{n}$ be a basis of $V$. We remark that part of this sufficiency is well-known \cite[Thm. I.17.1, p. 211]{S}.

As a conclusion we see that
$M_\Sigma (V, \K)$  extends the notion of associate space, as defined in the context of Banach function spaces. (For example, see  \cite{CDS}  and consider in $S(\K)=F(\N,\K)$ the counting measure.)   This extension   applies to $\BK$-spaces $V \subseteq S(\K)$ satisfying $\{e_n\}_n \subseteq V$.

 Let start by considering   the sum operator $S: \Sigma c(X) \to X$ introduced in (\ref{d247}). Next, we define  the vector space
 \begin{equation}\label{lola208}
 	M_{\Sigma}(V,X) := \Mt(V, \Sigma c(X)) \subseteq S(X)
 \end{equation}
 and the function $B_{V,X}: M_{\Sigma}(V,X) \times V \to X$ by
 \begin{equation}\label{lola126}
 	B_{V,X}(f, v) := S(v \cdot f), \qquad \forall f \in M_{\Sigma}(V,X), \qquad \forall v \in V.
 \end{equation}
 
 Since $B_{V,X}$ is a bilinear operator, then  $M_{\Sigma}(V,X)$ is a multiplier space, with its operator norm given by
 \begin{equation}
 	\norm{f}_{M_{\Sigma}(V,X)} = \sup\left\{\norm{\sum_{n=1}^{\infty} v(n) f(n)} : v \in \B_{V}\right\}.
 \end{equation}
 Using the representation operator $\widetilde{R}: \Mt(V, \Sigma c(X)) \to \mathcal{L}(V, \Sigma c(X))$, we obtain that
 \begin{align}\label{d297}
 	\norm{f}_{M_{\Sigma}(V,X)} &= \sup\left\{\norm{\sum_{n=1}^{\infty} v(n) f(n)} : v \in \B_{V}\right\} \leq \sup\left\{\norm{f \cdot v}_{\Sigma c(X)} : v \in \B_{V}\right\} 
 	\nonumber \\[.3 cm]
 	&= \norm{\widetilde{R}_{f}}_{\mathcal{L}(V, \Sigma c(X))} = \norm{f}_{\Mt(V, \Sigma c(X))}, \qquad \forall f \in M_{\Sigma}(V,X).
 \end{align}
 It follows that all  $B_{V,X}$ multiplication operators are bounded.
 
 Thus, we obtain the following result.
 
 \begin{corollary}
 	Let $X$ be a Banach space, $V \subseteq S(\K)$ a vector space, and $f \in S(X)$ such that $ \sum_{n} v(n) f(n)$ converges for each $v \in V$. If $V$ is a BKN-space, then
 	\begin{equation}
 		\sup\llave{\norm{\sum_{n=1}^{\infty} v(n) f(n)}:v\in \B_{V}}<\infty.
 	\end{equation}
 \end{corollary}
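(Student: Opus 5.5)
The corollary follows from identifying the hypothesis on $f$ with membership in a multiplier space that is already known to be normed. First, since $\sum_n v(n)f(n)$ converges for every $v\in V$, the sequence $v\cdot f=\llave{v(n)f(n)}_n$ is summable, so $v\cdot f\in\Sigma c(X)$ for all $v\in V$. By definition (\ref{lola208}) this means $f\in \Mt(V,\Sigma c(X))=M_{\Sigma}(V,X)$.

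Next we show $\norm{f}_{\Mt(V,\Sigma c(X))}<\infty$. We established before the corollary that $\Sigma c(X)$ is a $\BKn$-space, in particular a $\BK$-space. Since $V\subseteq S(\K)$ is a $\BKn$-space, Theorem \ref{piz27_1_0} gives that $\Mt(V,\Sigma c(X))$ is a $\BK$-space, so its operator norm is finite on each element. Alternatively, Proposition \ref{lola11_1}, applied to the pointwise operator $\BB'=B_{P_X'}$, shows that the multiplication operator $v\mapsto v\cdot f$ from $V$ into $\Sigma c(X)$ is bounded.

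Finally, inequality (\ref{d297}) gives
\[
\sup\llave{\norm{\sum_{n=1}^{\infty} v(n) f(n)}:v\in \B_{V}}=\norm{f}_{M_{\Sigma}(V,X)}\leq \norm{f}_{\Mt(V,\Sigma c(X))}<\infty .
\]
This inequality is just the bound $\norm{S}\leq 1$ for the sum operator: $\norm{S(v\cdot f)}\leq\norm{v\cdot f}_{\Sigma c(X)}$.

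There is no substantial obstacle. The only points to check are that $f$ really lies in $\Mt(V,\Sigma c(X))$, which is immediate from the definition of $\Sigma c(X)$ as the summable sequences, and that the boundedness result applies. The latter needs only that $V$ and $\Sigma c(X)$ are $\BK$-spaces. The non-vanishing of $V$ is needed only if one wants the full $\BK$ conclusion of Theorem \ref{piz27_1_0}, not for finiteness of the supremum.
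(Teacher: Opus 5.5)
Your proposal is correct and follows essentially the same route as the paper. The hypothesis says exactly that $f\in \Mt(V,\Sigma c(X))=M_{\Sigma}(V,X)$, the $\BB'$-multiplication operators from $V$ into the $\BK$-space $\Sigma c(X)$ are bounded, and inequality (\ref{d297}) (i.e.\ $\norm{S}\leq 1$) transfers that bound to the supremum. Your remark that only the $\BK$ property of $V$, and not its non-vanishing, is needed for finiteness is also accurate.
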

 
 Note  the corresponding representation operator $R_{V,X}: M_{\Sigma}(V,X) \to \mathcal{L}(V,X)$, is given by
 \begin{equation}\label{d311}
 	R_{V,X}f(v) = \sum_{n=1}^{\infty} v(n) f(n), \qquad \forall f \in M_{\Sigma}(V,X), \qquad \forall v \in V.
 \end{equation}

	From the previous equality  we can note that, unlike what happens in $\Mt(V, \Sigma c(X))$, the multipliers in $M_{\Sigma}(V,X)$ do not act pointwise.\vs

	\begin{lemma}\label{lola72}
		Let $V \subseteq S(\K)$ be a $\BK$-space such that $\{e_{n}\}_{n} \subseteq V$. Then $M_{\Sigma}(V,X)$ is normed, has continuous evaluations, and $c_{00}(X) \subseteq M_{\Sigma}(V,X)$. 
	\end{lemma}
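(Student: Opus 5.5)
We prove the three claims in order: normedness, continuous evaluations, and the inclusion of $c_{00}(X)$. For normedness we invoke Proposition~\ref{piz4_2}: all $B_{V,X}$-multiplication operators are already bounded by (\ref{d297}), so only injectivity of $R_{V,X}$ (Property~I) remains.

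\emph{Injectivity.} Let $f\in M_\Sigma(V,X)$ with $S(v\cdot f)=0$ for all $v\in V$. Since $e_n\in V$, take $v=e_n$. Then $e_n\cdot f=e_nf(n)$, so
\[
S(e_n\cdot f)=f(n)=0 .
\]
As this holds for every $n$, we get $f=0$, and $M_\Sigma(V,X)$ is normed.

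\emph{Continuous evaluations.} Fix $n\in\N$. Since $e_n\neq 0$ lies in $V$, put $v_n:=e_n/\norm{e_n}_V\in\B_V$. By the $M$-inequality (Proposition~\ref{piz2_2}) applied to $B_{V,X}$,
\[
\norm{f(n)}_X=\norm{S(e_n\cdot f)}_X=\norm{e_n}_V\,\norm{B_{V,X}(f,v_n)}_X\le \norm{e_n}_V\,\norm{f}_{M_\Sigma(V,X)},
\]
so $\delta_n$ is bounded with norm at most $\norm{e_n}_V$. This is a direct analogue of the estimate in Definition~\ref{d1}, with $c_n=\norm{e_n}_V$ and $v_f=v_n$ independent of $f$.

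\emph{Inclusion of $c_{00}(X)$.} By linearity it suffices to show $e_kx\in \Mt(V,\Sigma c(X))$ for every $k\in\N$ and $x\in X$. For $v\in V$ we have $v\cdot(e_kx)=e_k\,v(k)x\in c_{00}(X)$. By Lemma~\ref{d269}, $c_{00}(X)\subseteq\Sigma c(X)$, so $v\cdot(e_kx)\in\Sigma c(X)$. Hence $e_kx\in M_\Sigma(V,X)$.

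There is no real obstacle here. The only point needing care is to use the $B_{V,X}$ (sum) norm, not the $\Mt$ norm, in the evaluation estimate. Evaluating at $e_n$ collapses the series $S(e_n\cdot f)$ to the single term $f(n)$, and this is what makes both injectivity and the evaluation bound work.
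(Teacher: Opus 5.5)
Your proof is correct and follows essentially the same argument as the paper. Both use the boundedness of the $B_{V,X}$-multiplication operators from (\ref{d297}), then test against $e_n$ to get $f(n)=S(e_n\cdot f)$, which gives both Property~I and the bound $\norm{f(n)}\le\norm{e_n}_V\norm{f}_{M_\Sigma(V,X)}$. The differences are cosmetic: you route normedness through Proposition~\ref{piz4_2}, and you spell out the inclusion $c_{00}(X)\subseteq M_\Sigma(V,X)$ that the paper calls clear.
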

	\begin{proof}
		By Lemma \ref{piz7_0} and (\ref{d297}), we have that $\norm{\cdot}_{M_{\Sigma}(V,X)}$ is a seminorm on $M_{\Sigma}(V,X)$. Let $f \in M_{\Sigma}(V,X)$ be such that $\norm{f}_{M_{\Sigma}(V,X)} = 0$. For each $n \in \N$, we have that $e_{n} \in V$, and therefore $f(n) = S(e_{n} \cdot f) = 0$. Thus, $f = 0$. It follows that $\norm{\cdot}_{M_{\Sigma}(V,X)}$ is a norm.
		
		Let $n \in \N$. For each $f \in M_{\Sigma}(V,X)$, we have that 
		\begin{equation}
			\norm{f(n)} = \norm{\sum_{m=1}^{\infty} e_{n}(m) f(m)}_{X} = \norm{B_{V,X}(f, e_{n})}_{X} \leq \norm{e_{n}}_{V} \norm{f}_{M_{\Sigma}(V,X)}.
		\end{equation}
		Thus $M_{\Sigma}(V,X)$ has continuous evaluations. Clearly, $e_{n}x \in M_{\Sigma}(V,X)$, $\forall x \in X, \forall n\in \N$. 
	\end{proof}
	
	\begin{lemma}\label{naty32}
		Let $V \subseteq S(\K)$ be a $\BK$-space such that $\{e_{n}\}_{n} \subseteq V$. The following statements are equivalent: \qs
		\begin{enumerate}
			\item[i.] $M_{\Sigma}(V,X)$ is a Banach space. 
			\item[ii.] There exists $c \in (0, \infty)$ such that for each $f \in M_{\Sigma}(V,X)$ and $v \in \B_{V}$, the following holds:
			\begin{equation}\label{lola191}
				\norm{\sum_{n=1}^{k} v(n) f(n)} \leq c \norm{f}_{M_{\Sigma}(V,X)}, \qquad \forall k \in \N.
			\end{equation} 
		\end{enumerate}
		This is equivalent to  
		\begin{equation}\label{lola196}
			\frac{1}{c} \norm{f}_{\Mt(V, \Sigma c(X))} \leq \norm{f}_{M_{\Sigma}(V,X)} \leq \norm{f}_{\Mt(V, \Sigma c(X))}, \qquad \forall f \in M_{\Sigma}(V,X).
		\end{equation} 
	\end{lemma}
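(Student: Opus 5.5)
The plan is to compare the two norms on the same vector space $M_{\Sigma}(V,X)=\Mt(V,\Sigma c(X))$ and then apply the equivalence-of-complete-norms result, Theorem \ref{piz22_1_0}. First I would note that, since $\{e_{n}\}_{n}\subseteq V$, the space $V$ does not vanish at any point, so $V$ is a $\BKn$-space. Theorem \ref{piz27_1_0} then shows that $\Mt(V,\Sigma c(X))$, with its operator norm, is a $\BK$-space; in particular it is complete and has continuous evaluations.

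Next I would unpack that operator norm. Since $\norm{s}_{\Sigma c(X)}=\sup_{k}\norm{\sum_{n=1}^{k}s(n)}$, we get
\begin{equation*}
\norm{f}_{\Mt(V,\Sigma c(X))}=\sup\llave{\norm{\sum_{n=1}^{k}v(n)f(n)}: v\in \B_{V},\ k\in\N}.
\end{equation*}
So condition ii holds for a constant $c$ exactly when $\norm{f}_{\Mt(V,\Sigma c(X))}\leq c\norm{f}_{M_{\Sigma}(V,X)}$ for all $f$. Combined with the inequality (\ref{d297}), this gives the reformulation (\ref{lola196}), and conversely (\ref{lola196}) gives ii by taking the supremum apart.

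For ii $\Rightarrow$ i, (\ref{lola196}) says the two norms are equivalent. Since $\Mt(V,\Sigma c(X))$ is complete, so is $M_{\Sigma}(V,X)$.

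For i $\Rightarrow$ ii, assume $M_{\Sigma}(V,X)$ is Banach. By Lemma \ref{lola72} it is normed and has continuous evaluations, so it is a $\BK$-space. The same vector space is also a $\BK$-space under $\norm{\cdot}_{\Mt(V,\Sigma c(X))}$. Theorem \ref{piz22_1_0} therefore makes the two norms equivalent, which yields a $c$ with $\norm{f}_{\Mt(V,\Sigma c(X))}\leq c\norm{f}_{M_{\Sigma}(V,X)}$, and that is ii. Alternatively, (\ref{d297}) shows the identity map from $\Mt(V,\Sigma c(X))$ to $M_{\Sigma}(V,X)$ is a continuous bijection between Banach spaces, and the bounded inverse theorem gives the same constant.

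I do not expect a real obstacle. The only points needing care are checking that the hypotheses of Theorem \ref{piz27_1_0} are met, which comes down to $V$ being $\BKn$, and correctly identifying the $\Sigma c(X)$-norm of $v\cdot f$ with the supremum of the partial sums.
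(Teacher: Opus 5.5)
Your proof is correct and follows essentially the same route as the paper. For ii $\Rightarrow$ i you use norm equivalence via (\ref{d297}) and the completeness of $\Mt(V,\Sigma c(X))$. For i $\Rightarrow$ ii the paper applies the bounded inverse theorem to the identity map, which is exactly your ``alternative''; your primary route through Lemma \ref{lola72} and Theorem \ref{piz22_1_0} is an equally valid repackaging of the same fact, and you also make explicit (via Theorem \ref{piz27_1_0}) the completeness of $\Mt(V,\Sigma c(X))$ that the paper uses implicitly.
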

	\begin{proof} 
		$i \Longrightarrow ii.$ Since $M_{\Sigma}(V,X) = \Mt(V, \Sigma c(X))$ and both are Banach spaces, it follows from (\ref{d297}) and the inverse function theorem that there exists a constant $c \in (0, \infty)$ satisfying (\ref{lola191}).
		
		$ii \Longrightarrow i.$ Using (\ref{d297}), it follows that the norms of the spaces $M_{\Sigma}(V,X)$ and $\Mt(V, \Sigma c(X))$ are equivalent. Since $\Mt(V, \Sigma c(X))$ is complete,  we have that $M_{\Sigma}(V,X)$ is complete.
		
		Finally, observe that from (\ref{d297}) and (\ref{lola191}), we directly obtain (\ref{lola196}). 	
	\end{proof}

\begin{theorem}\label{lola52_0} 
	Let $V \subseteq S(\K)$ be a $\BK$-space such that $\{e_{n}\}_{n} \subseteq V$. If $\nu_V < \infty$, then $M_{\Sigma}(V,X)$ is a $\BK$-space.
	Furthermore,  
	\begin{equation}\label{lola193_0} 
		\frac{1}{\nu_V} \norm{f}_{\Mt(V, \Sigma c(X))} \leq \norm{f}_{M_{\Sigma}(V,X)} \leq \norm{f}_{\Mt(V, \Sigma c(X))}, \quad \forall f \in M_{\Sigma}(V,X).
	\end{equation} 
	Therefore, if $\nu_V = 1$, then $M_{\Sigma}(V,X) \equiv \Mt(V, \Sigma c(X))$.
\end{theorem}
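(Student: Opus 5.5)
We reduce Theorem \ref{lola52_0} to Lemma \ref{naty32}: we verify condition ii of that lemma with the explicit constant $c=\nu_V$. Completeness and the norm comparison (\ref{lola193_0}) then follow from (\ref{lola196}).

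Continuous evaluations come from Lemma \ref{lola72}. So once completeness is established, $M_{\Sigma}(V,X)$ is a $\BK$-space.

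The key estimate is the following. Fix $f\in M_{\Sigma}(V,X)$, $v\in \B_V$ and $k\in\N$. Since $\{e_n\}_n\subseteq V$ and $V$ is a $\BK$-space, Proposition \ref{d268_0_a_0} (with $X=\K$) shows that $\pi_k v\in V$ and $\norm{\pi_k v}_V\le \nu_V\norm{v}_V\le \nu_V$. Because $\pi_k v$ has finite support, it lies in $V$, and we may compute
\begin{equation*}
\sum_{n=1}^{k} v(n)f(n)=\sum_{n=1}^{\infty}(\pi_k v)(n)f(n)=B_{V,X}(f,\pi_k v).
\end{equation*}
By the $M$-inequality (Proposition \ref{piz2_2}) applied to $B_{V,X}$,
\begin{equation*}
\norm{\sum_{n=1}^{k} v(n)f(n)}\le \norm{f}_{M_{\Sigma}(V,X)}\norm{\pi_k v}_V\le \nu_V\norm{f}_{M_{\Sigma}(V,X)}.
\end{equation*}
This is exactly (\ref{lola191}) with $c=\nu_V<\infty$.

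Lemma \ref{naty32} then gives completeness and (\ref{lola196}) with $c=\nu_V$, which is (\ref{lola193_0}). To see this directly, take the supremum over $k$ and $v\in\B_V$ of the left side. This supremum is $\sup_{v\in\B_V}\norm{v\cdot f}_{\Sigma c(X)}=\norm{f}_{\Mt(V,\Sigma c(X))}$. The upper bound in (\ref{lola193_0}) is (\ref{d297}).

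If $\nu_V=1$, the two inequalities in (\ref{lola193_0}) collapse to equality of norms. Since the underlying vector spaces coincide by (\ref{lola208}), this gives $M_{\Sigma}(V,X)\equiv\Mt(V,\Sigma c(X))$.

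The only delicate point is that the truncation $\pi_k v$ belongs to $V$ with the controlled norm bound. This holds by the hypothesis $\{e_n\}_n\subseteq V$ together with the definition of $\nu_V$, so no real obstacle is expected.
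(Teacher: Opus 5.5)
Your proposal is correct and is essentially the paper's proof. Both verify condition (\ref{lola191}) of Lemma~\ref{naty32} with $c=\nu_V$ by truncating $v$ to $\pi_k v$: the paper normalizes $w_k=\pi_k v/\nu_V\in\B_V$, where you apply the $M$-inequality directly, and both then read off completeness and (\ref{lola193_0}) from (\ref{lola196}). Your explicit appeal to Lemma~\ref{lola72} for continuous evaluations supplies a step the paper leaves implicit.
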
  
 \begin{proof}
 	We will show that $M_{\Sigma}(V,\K)$ satisfies (\ref{lola191}) with $c = \nu_V$, which will conclude the proof. Fix $f \in M_{\Sigma}(V,X)$ and $v \in \B_V$. Then, $w_k := \frac{\pi_k v}{\nu_V} \in \B_V$, for all $k \in \N$. Thus, 
 	\begin{equation}
 		 \norm{\sum_{n=1}^{k} v(n) f(n)} = \nu_V \norm{\sum_{n=1}^{\infty} w_k(n) f(n)} \leq \nu_V \norm{f}_{M_{\Sigma}(V,X)}, \quad \forall k \in \N.
 	\end{equation}
 	This proves (\ref{lola191}). Finally, (\ref{lola193_0}) follows directly from (\ref{lola196}).
 \end{proof}

The following result is obtained directly from Theorem~\ref{lola52_0} and corresponds to \cite[Thm.~10.5.1]{Wil}, although it is stated and proved differently there.

\begin{proposition}\label{naty27}
	Let $V \subseteq S(\K)$ be a $\BK$-space with $\{e_n\}_{n} \subseteq V$ and $X \neq \{0\}$ a Banach space. Then $\nu_{M_{\Sigma}(V,X)} \leq \nu_V$.  
\end{proposition}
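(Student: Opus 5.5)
First, the trivial case $\nu_V=\infty$ needs nothing. So assume $\nu_V<\infty$. By Theorem~\ref{lola52_0}, $M_{\Sigma}(V,X)$ is then a $\BK$-space, and by Lemma~\ref{lola72} it contains $c_{00}(X)$. Hence the canonical projections $P_k$ on $M_{\Sigma}(V,X)$ are well defined and bounded (Proposition~\ref{d268_0_a_0}), and $\nu_{M_{\Sigma}(V,X)}$ makes sense. The goal is the uniform estimate
\begin{equation*}
\norm{P_k f}_{M_{\Sigma}(V,X)}\leq \nu_V\norm{f}_{M_{\Sigma}(V,X)},\qquad \forall f\in M_{\Sigma}(V,X),\quad \forall k\in\N,
\end{equation*}
which gives $\norm{P_k}\leq\nu_V$ for every $k$ and hence $\nu_{M_{\Sigma}(V,X)}\leq\nu_V$.

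The key step is to move the projection from the multiplier onto the element of $V$, using the duality built into $B_{V,X}$. For $f\in M_{\Sigma}(V,X)$, $v\in \B_V$ and $k\in\N$, the series defining $B_{V,X}(P_kf,v)$ is a finite sum, and
\begin{equation*}
B_{V,X}(P_kf,v)=\sum_{n=1}^{k}v(n)f(n)=\sum_{n=1}^{\infty}(\pi_k v)(n)f(n)=B_{V,X}(f,\pi_k v).
\end{equation*}
Since $\pi_k v\in V$ and $\norm{\pi_k v}_V\leq \nu_V$, the $M$-inequality (Proposition~\ref{piz2_2}) yields
\begin{equation*}
\norm{B_{V,X}(P_kf,v)}_X\leq \nu_V\norm{f}_{M_{\Sigma}(V,X)}.
\end{equation*}
Taking the supremum over $v\in\B_V$ gives the desired bound. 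This is the same device used in the proof of Theorem~\ref{lola52_0}.

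There is no real obstacle. The only points that need care are checking that $P_k f$ indeed lies in $M_{\Sigma}(V,X)$, which holds because it belongs to $c_{00}(X)$, and noting that the hypothesis $X\neq\{0\}$ only serves to make $\norm{P_k}\geq 1$ (Example~\ref{lola129}), so that the projection coefficient lies in $[1,\infty]$ as in Definition~\ref{naty22}.
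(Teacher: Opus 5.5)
Your proof is correct and is essentially the paper's argument. The paper bounds $\norm{P_k f}_{M_{\Sigma}(V,X)}=\sup_{v\in\B_V}\norm{\sum_{n=1}^{k}v(n)f(n)}$ first by $\norm{f}_{\Mt(V,\Sigma c(X))}$, and then by $\nu_V\norm{f}_{M_{\Sigma}(V,X)}$ via (\ref{lola193_0}); the proof of (\ref{lola193_0}) is exactly your identity $B_{V,X}(P_kf,v)=B_{V,X}(f,\pi_k v)$ combined with $\norm{\pi_k v}_V\leq\nu_V$.

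You inline that step and skip the intermediate $\Mt(V,\Sigma c(X))$-norm. This also makes explicit that the case $\nu_V=\infty$ is trivial, and that completeness of $M_{\Sigma}(V,X)$ is not needed for the estimate.
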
 
\begin{proof}
	Fix $X \neq \{0\}$. Let $f \in M_{\Sigma}(V,X)$ and $k \in \N$. From (\ref{lola193_0}), we have that
	\begin{equation}
		\norm{P_k f}_{M_{\Sigma}(V,X)} = \sup \left\{ \norm{\sum_{n=1}^{k} v(n) f(n)} : v \in \B_V \right\} \leq \norm{f}_{M'(V,\Sigma c(X))} \leq \nu_V \norm{f}_{M_{\Sigma}(V,X)}.
	\end{equation}
	From this, we conclude that $\nu_{M_{\Sigma}(V,X)} \leq \nu_V$.
\end{proof}

	The following result is obtained by observing that $\nu_{c(\K)} = \nu_{\Sigma \ell_{\infty}(\K)} = 1$.
	\begin{example} Let $X$ be a Banach space. Then:
		\begin{enumerate}
			\item[i.] The multiplier space $M_{\Sigma}(c(\K), X)$ is a Banach space, with its operator norm   given by 
			\begin{equation}
				\norm{f}_{M_{\Sigma}(c(\K), X)} = \sup \left\{ \norm{\sum_{n=1}^{\infty} v(n) f(n)} : v \in \B_{c(\K)} \right\}.
			\end{equation}
			\item[ii.] The multiplier space $M_{\Sigma}(\Sigma \ell_{\infty}(\K), X)$ is a Banach space. Note that in this case $\Sigma \ell_{\infty}(\K)$ does not have $\{e_{n}\}_{n}$ as a basis.
		\end{enumerate}
	\end{example}

The implication $i \Longrightarrow ii$ in the following result can be proven using \cite[Thm. I.17.1, p. 211]{S}, where $\Mt(V, \Sigma c(X))$ is implicitly used instead of $M_{\Sigma}(V, X)$. Next, we present an alternative proof using our development.

\begin{theorem}\label{d272_2} \label{d250_4} \label{d236_4}\label{efren14_z} \label{efren15_0_z}
	Let $V \subseteq S(\K)$ be a $\BK$-space such that $\{e_{n}\}_{n} \subseteq V$. The following are equivalent:
	\begin{enumerate}
		\item[i.] $\{e_{n}\}_{n}$ is a basis of $V$.
		\item[ii.] The representation operator $R_{V,X}: M_{\Sigma}(V, X) \to \mathcal{L}(V, X)$ is an isometric isomorphism for every $X \neq \{0\}$.
		\item[iii.] The representation operator $R_{V,X}: M_{\Sigma}(V, X) \to \mathcal{L}(V, X)$ is an isometric isomorphism for some $X \neq \{0\}$.
	\end{enumerate}  
\end{theorem}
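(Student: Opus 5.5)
We prove the cycle $i\Rightarrow ii\Rightarrow iii\Rightarrow i$. The middle step needs nothing, since $X\neq\{0\}$ exists (take $X=\K$).

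\emph{Proof of $i\Rightarrow ii$.} Assume $\{e_n\}_n$ is a basis of $V$ and fix $X\neq\{0\}$. By Proposition~\ref{d286_0}, $\pi_k v\to v$ for every $v\in V$. By the Uniform Boundedness Theorem (Lemma~\ref{efren27_2}\,i), $\nu_V<\infty$. Theorem~\ref{lola52_0} then shows that $M_{\Sigma}(V,X)$ is a Banach space. Proposition~\ref{piz4_2} shows that $R_{V,X}$ is an isometry, since the norm is by definition the operator norm and $M_\Sigma(V,X)$ is normed by Lemma~\ref{lola72}. It remains to prove surjectivity.

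Given $T\in\mathcal{L}(V,X)$, define $f(n):=Te_n$. For $v\in V$ we have $\pi_k v=\sum_{n=1}^k v(n)e_n\to v$, so by continuity
\[
\sum_{n=1}^k v(n)f(n)=T\pi_k v\to Tv .
\]
Hence $v\cdot f\in\Sigma c(X)$ for all $v\in V$, that is, $f\in \Mt(V,\Sigma c(X))=M_\Sigma(V,X)$. Moreover $R_{V,X}f(v)=S(v\cdot f)=Tv$. So $R_{V,X}f=T$, and $R_{V,X}$ is onto.

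\emph{Proof of $iii\Rightarrow i$.} Suppose $R_{V,X}$ is an isometric isomorphism for some $X\neq\{0\}$. By Proposition~\ref{d286_0}, and since $V$ already has continuous evaluations, it suffices to show $\pi_k v\to v$ for each $v\in V$. Our plan is to argue by contradiction via Hahn–Banach, using functionals of the form $\varphi x_0$.

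Fix $x_0\in X$ with $\norm{x_0}=1$. For each $\varphi\in V^*$, the operator $T_\varphi v:=\varphi(v)x_0$ lies in $\mathcal{L}(V,X)$. By surjectivity there is $f\in M_\Sigma(V,X)$ with $R_{V,X}f=T_\varphi$. Evaluating at $e_n$ gives $f(n)=\varphi(e_n)x_0$. Therefore, for every $v\in V$,
\[
\varphi(v)x_0=\sum_{n}v(n)\varphi(e_n)x_0=\lim_k \varphi(\pi_k v)\,x_0 ,
\]
that is, $\varphi(\pi_k v)\to\varphi(v)$ for all $\varphi\in V^*$. So $\pi_k v\to v$ weakly.

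Note that $\pi_k v\in V_0$, the closure of the span of $\{e_n\}$. Since $V_0$ is closed and convex, it is weakly closed by Mazur's theorem, so $v\in V_0$ and hence $V=V_0$. Weak convergence also makes $\{\pi_k v\}_k$ bounded for each $v$. The Uniform Boundedness Theorem together with Lemma~\ref{efren27_2}\,i then gives $\nu_V<\infty$, so $\nu_{V_0}<\infty$. Lemma~\ref{efren27_2}\,iii now yields $\pi_k v\to v$ for all $v\in V_0=V$, and Proposition~\ref{d286_0} concludes that $\{e_n\}_n$ is a basis.

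The main obstacle is upgrading weak convergence of $\pi_k v$ to norm convergence. The route above handles it by combining Mazur's theorem with the uniform boundedness of the projections, so the isometric part of the hypothesis is not even needed.
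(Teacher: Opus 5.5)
Your proof is correct and is essentially the paper's argument. The direction $i\Rightarrow ii$ is identical (coefficients $Te_n$, isometry via Proposition~\ref{piz4_2}). For $iii\Rightarrow i$, both proofs apply surjectivity of $R_{V,X}$ to the rank-one operators $v\mapsto\varphi(v)x_0$ and then use uniform boundedness to get $\nu_V<\infty$ and $V=V_0$. The difference is minor: you extract full weak convergence $\pi_k v\rightharpoonup v$ and get density from Mazur's theorem, whereas the paper obtains density by a separate Hahn--Banach contradiction (a functional vanishing on $V_0$ forces the zero multiplier) and uses only boundedness of $\{\langle \pi_n v, v^*\rangle\}_n$.
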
 
\begin{proof} Let $R := R_{V,X}$.
	$i \Longrightarrow ii$. Let $X \neq \{0\}$. By Proposition \ref{d286_0}, $V$ has continuous evaluations. Using Theorem \ref{lola52_0} and Proposition \ref{piz4_2}, we have that $R$ is a linear isometry. Thus, it only remains to verify that $R$ is surjective. Let $T \in \mathcal{L}(V, X)$. Since $\{e_{n}\}_{n}$ is a basis of $V$, we have $v = \sum_{n=1}^{\infty} v(n) e_{n}$ for all $v \in V$. Then, 
	\begin{equation}
			T(v) = \sum_{n=1}^{\infty} v(n) T(e_{n}), \quad \forall v \in V.
	\end{equation}
	Let $f := \{T(e_{n})\}_{n} \in S(X)$. It follows that $f \in \Mt(V, \Sigma c(X))$ and 
	\begin{equation}
		Rf(v) = \sum_{n=1}^{\infty} f(n) v(n) = \sum_{n=1}^{\infty} v(n) T(e_{n}) = T(v).
	\end{equation}
	Hence, $R$ is surjective.
	
	 Clearly, $ii \Longrightarrow iii$ holds.

	$iii \Longrightarrow i$. Now, suppose there exists a Banach space $X \neq \{0\}$ such that $R: M_{\Sigma}(V, X) \to \mathcal{L}(V, X)$ is an isometric isomorphism. 
	
Proceeding by contradiction, we first prove that $V = V_{0}$. Assume that $c_{00}(\K)$ is not dense in $V$. Let $v \in V \setminus \overline{c_{00}(\K)}$ and $x \in X \setminus \{0\}$. Using the Hahn-Banach theorem, choose $v^{*} \in V^{*}$ such that $\langle v, v^{*} \rangle = 1$ and $\langle z, v^{*} \rangle = 0$ for all $z \in \overline{c_{00}(\K)}$. Next, consider the operator $x \oplus v^{*} \in \mathcal{L}(V, X)$, defined by  $x \oplus v^{*}(v) = x$ and $x \oplus v^{*}(z) = 0$ for all $z \in \overline{c_{00}(\K)}$. Since $R$ is an isomorphism, there exists $f \in M_{\Sigma}(V, X)$ such that $Rf = x \oplus v^{*}$. For each $n \in \N$, we have $e_{n} \in V$, and thus 
	$
	f(n) = Rf(e_{n}) = x \oplus v^{*}(e_{n}) = 0.
	$
	Therefore, $f = 0$, which contradicts $Rf(v) = x \neq 0$. Hence, $c_{00}(\K)$ is dense in $V$.
	
	Next, we will show that $\nu_{V} < \infty$, that is, we need to prove that the sequence of canonical projections $\{\pi_{n}\}_{n} \subseteq \mathcal{L}(V, V)$ is bounded. Applying the uniform boundedness principle, it suffices to show that for each $v \in V$, the sequence $\{\pi_{n}v\}_{n}$ is bounded. Fix $v \in V$. By the equivalence between boundedness and weak boundedness (see \cite[Thm. 3.18]{R}), we only need to verify that for each $v^{*} \in V^{*}$, the sequence $\{\langle \pi_{n}v, v^{*} \rangle\}_{n}$ is bounded. Fix $v^{*} \in V^{*}$. Since $X \neq \{0\}$, let us take $x \in X$ with $\norm{x}=1$. Observe that $\{\langle \pi_{n}v, v^{*} \rangle\}_{n}$ is bounded if and only if $\{\langle \pi_{n}v, v^{*}\rangle x\}_{n} = \{x \oplus v^{*} (\pi_{n}v)\}_{n}$ is bounded. Since $x \oplus v^{*} \in \mathcal{L}(V, X)$, by hypothesis there exists $c \in M_{\Sigma}(V, X)$ such that $Rc = x \oplus v^{*}$. Nothing that $c \in M_{\Sigma}(V, X) = M'(V, \Sigma c(X))$, it follows that 
	\begin{equation}
		\norm{x \oplus v^{*}(\pi_{n}v)} = \norm{R_{c}(\pi_{n}v)} = \norm{\sum_{k=1}^{n} c(k) v(k)} \leq \norm{c \cdot v}_{\Sigma c(X)} < \infty, \quad \forall n \in \N.
	\end{equation}
	Thus, $\nu_{V} < \infty$. Finally, from Corollary \ref{lola209_0}, we obtain that $\{e_{n}\}_{n}$ is a basis of $V_{0} = V$.   
\end{proof}

\begin{example}
	Since $\{e_{n}\}_{n}$ is not a basis of $c(\K)$, Theorem \ref{d250_4} allows us to conclude that the representation operator $R_{c(\K), X}: M_{\Sigma}(c(\K), X) \to \mathcal{L}(c(\K), X)$ is not surjective. 
\end{example}

\begin{corollary}\label{naty26}
	Let $V \subseteq S(\K)$ be a Banach space. If $\{e_{n}\}_{n}$ is a basis of $V$, then
	\begin{equation}
		M_{\Sigma}(V, X) = \left\{ \{Te_{n}\}_{n} : T \in \mathcal{L}(V, X) \right\}.
	\end{equation}
\end{corollary}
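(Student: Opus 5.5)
Two inclusions are needed; both come out of the proof of Theorem \ref{d272_2}, $i\Longrightarrow ii$. The hypothesis that $\{e_{n}\}_{n}$ is a basis of $V$ includes $\{e_n\}_n\subseteq V$. By Proposition \ref{d286_0}, $V$ then has continuous evaluations, so $V$ is a $\BK$-space. Hence Theorem \ref{d272_2} is available, and so are the definition $M_{\Sigma}(V,X)=\Mt(V,\Sigma c(X))$ from (\ref{lola208}) and Lemma \ref{lola72}.

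For ``$\supseteq$'', fix $T\in\mathcal{L}(V,X)$ and put $f:=\{Te_n\}_n\in S(X)$. Given $v\in V$, the basis expansion $v=\sum_n v(n)e_n$ converges in $V$. Continuity and linearity of $T$ then give $T(\pi_k v)=\sum_{n=1}^{k}v(n)Te_n\to Tv$. So the series $\sum_n v(n)f(n)$ converges, that is, $v\cdot f\in\Sigma c(X)$. Since $v$ was arbitrary, $f\in\Mt(V,\Sigma c(X))=M_{\Sigma}(V,X)$. No norm estimates are needed here; only pointwise convergence of the series for each $v$ is used.

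For ``$\subseteq$'', fix $f\in M_{\Sigma}(V,X)$ and let $T:=R_{V,X}f$, i.e. $Tv=\sum_n v(n)f(n)$ as in (\ref{d311}). The norm bound (\ref{d297}) shows all $B_{V,X}$-multiplication operators are bounded, so $T\in\mathcal{L}(V,X)$. Evaluating at $e_n$ gives $Te_n=\sum_m e_n(m)f(m)=f(n)$, exactly as in the proof of Lemma \ref{lola72}. Hence $f=\{Te_n\}_n$.

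Alternatively, one can cite Theorem \ref{d272_2} directly: $R_{V,X}$ is surjective, and $(R_{V,X}f)(e_n)=f(n)$. The only point that needs care is the preliminary observation that a Schauder basis of canonical vectors forces continuous evaluations. This is Proposition \ref{d286_0}, which lets us treat $V$ as a $\BK$-space. Nothing else is an obstacle; the case $X=\{0\}$ is trivial, since then both sides reduce to the zero sequence.
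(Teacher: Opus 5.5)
Your proposal is correct and is essentially the paper's proof. The inclusion $\subseteq$ is identical, and for $\supseteq$ you inline the surjectivity step from the proof of Theorem~\ref{d272_2} instead of citing the theorem; you also make explicit the reduction to the $\BK$ case via Proposition~\ref{d286_0}, which the paper's proof needs in order to invoke Theorem~\ref{d250_4} but leaves implicit.

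One caveat, which applies equally to the paper: that reduction, and your expansion $v=\sum_n v(n)e_n$, both presuppose that the coefficient functionals of the basis are the evaluations $\delta_n$. This is not automatic for a Schauder basis. Let $\psi\neq 0$ be a linear functional on $\ell_2(\K)$ vanishing on $c_{00}(\K)$, and set $V:=\{w+\psi(w)\{1\}_n : w\in \ell_2(\K)\}$, normed by $\|w\|_{\ell_2(\K)}$; this is well defined since $\{1\}_n\notin\ell_2(\K)$. Then $\{e_n\}_n$ is a basis of $V$. However, $T(w+\psi(w)\{1\}_n):=\sum_n w(n)/n$ defines $T\in\mathcal{L}(V,\K)$ with $\{Te_n\}_n=\{1/n\}_n\notin M_{\Sigma}(V,\K)$, because $\sum_n v(n)/n$ diverges whenever $\psi(w)=1$. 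So the statement has to be read with that convention, i.e.\ with $V$ a $\BK$-space.
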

\begin{proof} Let $M := \left\{ \{Te_{n}\}_{n} : T \in \mathcal{L}(V, X) \right\}$.
	
	$\subseteq)$ Let $f \in M_{\Sigma}(V, X)$. Then, $(R_{V,X})_{f} \in \mathcal{L}(V, X)$ and
\begin{equation}
	(R_{V,X})_{f}(e_{n}) = \sum_{k=1}^{\infty} f(k) e_{n}(k) = f(n), \qquad \forall n \in \N.
\end{equation}
	Thus, $f = \{(R_{V,X})_{f}(e_{n})\}_{n} \in M$.
	
	$\supseteq)$ Let $T \in \mathcal{L}\pare{V,X}$. By Theorem \ref{d250_4}, there exists $f \in M_{\Sigma}(V, X)$ such that $(R_{V,X})_{f} = T$. Then, 
\begin{equation}
		Te_{n} = (R_{V,X})_{f}(e_{n}) = f(n), \qquad \forall n \in \N.
\end{equation}
	Hence, $\{Te_{n}\}_{n} = f \in M_{\Sigma}(V, X)$.
\end{proof}

\begin{example}\label{lola212} 
	Let $V \subseteq S(X)$ be a $\BKa$-space. Since $\Sigma \ell_{\infty}(X)$ is a $\BK$-space, it follows that 
	\begin{equation}
		\text{$M(V,\Sigma \ell_{\infty}(X))$ is a $\BK$-space.}
	\end{equation}
	Moreover, $c_{00}(\K) \subseteq M(V, \Sigma \ell_{\infty}(X))$. Note that a sequence $\{a_n\}_n$ belongs to $M(V,\Sigma \ell_{\infty}(X))$ if and only if the partial sums of $\sum_n a_n v_n$ are bounded in $V$, for each $\{v_n\}_n \in V$. Additionally, $M(V,\Sigma \ell_{\infty}(X))$ is a b-ideal.    Applying Corollary \ref{lola199_0}, we conclude that:
	\begin{equation}
		\{e_n\}_n \text{ is a basic sequence in } M(V,\Sigma \ell_{\infty}(X)).
	\end{equation}
\end{example}

\subsection{The Space $M_{\Sigma}(V)$ as a Banach Algebra}
Let $V \subseteq S(\K)$ be a $\BK$-space such that $c_{00}(\K) \subseteq V$ and $\nu_{V} < \infty$. Then, the space 
\begin{equation}
	M_{\Sigma}(V) := M_{\Sigma}(V, V) \subseteq S(V)
\end{equation}
is a Banach space with its corresponding representation operator $R := R_{V, V}: M_{\Sigma}(V) \to \mathcal{L}(V)$, given by
\begin{equation}\label{naty37}
	Rg(v) = \sum_{n=1}^{\infty} v(n) g(n), \qquad \forall g \in M_{\Sigma}(V), \qquad \forall v \in V.
\end{equation}

Fix $f, g \in M_{\Sigma}(V)$ and $v \in V$. Since $Rg$ is continuous, from (\ref{naty37}) we have that
\begin{equation}\label{naty35}
	Rf \circ Rg(v) = Rf(Rg(v)) = \sum_{n=1}^{\infty} v(n) Rf(g(n)) = \sum_{n=1}^{\infty} v(n) \left( \sum_{k=1}^{\infty} (g(n)(k)) f(k) \right) \in V.
\end{equation}
Define $h \in S(V)$ by
\begin{equation}
	h(n) := Rf(Rg(e_{n})) = \sum_{k=1}^{\infty} (g(n)(k)) f(k) \in V.
\end{equation}
From (\ref{naty35}), it follows that $h \in M_{\Sigma}(V)$ and that $Rf \circ Rg = Rh$. Using Proposition \ref{naty39}, we conclude that $M_{\Sigma}(V)$ equipped with the product $\#$ given by
\begin{equation}
	f \# g (n) := \sum_{k=1}^{\infty} (g(n)(k)) f(k) \in V, \qquad \forall n \in \N,
\end{equation}
is a Banach algebra.

\subsection{The Associate Space $V^{\times}$}\label{subsec}
In \cite[p. 423]{K}, the $\beta$ and $\gamma$ Köthe-Toeplitz duals of a $\BK$-space $V\subseteq S\pare{\K}$   with $\llave{e_{n}}_{n}\subseteq V$ are studied. These dual spaces are defined respectively (as normed spaces) by
\begin{equation}  
	V^{\beta}=M\pare{V,\Sigma c\pare{\K}},\qquad V^{\gamma}=M\pare{V,\Sigma \ell_{\infty}\pare{\K}}. 
\end{equation}

\begin{example}
	Let $V\subseteq S(\K)$ be a $\BK$-space such that $\{e_{n}\}_{n}\subseteq V$ and $V=V_{0}$. From Proposition \ref{chuy18_0_1_0}, it follows that 
	\begin{equation}
		V^{\gamma}=M\pare{V_{0}, \Sigma \ell_{\infty}\pare{\K}_{0}}= V^{\beta},
	\end{equation} 
	as established in \cite[Thm. 7.2.7]{Wil}.
\end{example}

%We recall below the definition of a Banach ideal, as it will be needed to recall   the well-known concept of  associate space.   	Let $D$ be a non-empty set. For each function $f \in F(D, \K)$ we define
%	\[
%	|f|(a) := |f(a)|, \qquad \overline{f}(a) := \overline{f(a)}, \quad a \in D.
%	\]
%	Here, $|f(a)|$ and $\overline{f(a)}$ denote the modulus and complex conjugate of $f(a)$, respectively.

 	The following corollary is obtained directly by using  example \ref{again}  together with Theorem~ \ref{lola52_0} .

	\begin{corollary}\label{efren8}
	If $Y \subseteq S(\K)$ is a Banach ideal that does not vanish at any point, then
	$M_{\Sigma}(Y,X) \equiv \Mt(Y,\Sigma c(X))$. Hence, $M_{\Sigma}(Y,X)$ is a Banach space.
\end{corollary}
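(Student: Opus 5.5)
The plan is to apply Theorem~\ref{lola52_0} with $V=Y$. That reduces everything to showing $\nu_Y=1$, which will come from Example~\ref{again}.

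First we check the hypotheses of Theorem~\ref{lola52_0}: $Y$ must be a $\BK$-space with $\llave{e_n}_n\subseteq Y$.
\begin{enumerate}
	\item[(a)] Since $Y$ does not vanish at any point, for each $n\in\N$ there is $y\in Y$ with $y(n)\neq 0$.
	\item[(b)] We have $\abs{e_n}\leq \abs{y}/\abs{y(n)}$, and $Y$ is an order ideal, so $e_n\in Y$.
	\item[(c)] For continuity of evaluations, take $y\in Y$ and $n\in\N$. Then $\abs{y(n)e_n}\leq\abs{y}$, and the normed-ideal property gives $\abs{y(n)}\,\norm{e_n}_Y\leq\norm{y}_Y$.
	\item[(d)] Since $e_n\neq 0$, this means $\abs{y(n)}\leq \norm{e_n}_Y^{-1}\norm{y}_Y$. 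So $Y$ has continuous evaluations and, being complete, is a $\BK$-space. Alternatively, this is exactly Lemma~\ref{final11}-type reasoning.
\end{enumerate}

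Second, $c_{00}(\K)\subseteq Y$, and $Y$ is a Banach $b$-ideal, because for scalar spaces Banach ideals and Banach $b$-ideals coincide (Lemma~\ref{lol2_1}). Also $Y\neq\llave{0}$. Hence Example~\ref{again} gives $\nu_Y=1$. If one wants this directly: $\abs{\pi_n y}\leq \abs{y}$, so $\norm{\pi_n y}_Y\leq\norm{y}_Y$, giving $\norm{\pi_n}\leq 1$, while Example~\ref{lola129} gives $\norm{\pi_n}\geq 1$.

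Finally, Theorem~\ref{lola52_0} yields that $M_{\Sigma}(Y,X)$ is a $\BK$-space, in particular a Banach space. With $\nu_Y=1$, inequality (\ref{lola193_0}) collapses to $\norm{f}_{M_\Sigma(Y,X)}=\norm{f}_{\Mt(Y,\Sigma c(X))}$. Since the two spaces coincide as sets by (\ref{lola208}), we get $M_{\Sigma}(Y,X)\equiv\Mt(Y,\Sigma c(X))$.

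The only step needing care is the first one: verifying that $Y$ contains all $e_n$ and has continuous evaluations. That is where the non-vanishing hypothesis enters; everything else is immediate.
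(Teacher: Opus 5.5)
Your proposal is correct and is exactly the paper's argument: the paper obtains the corollary directly from Example~\ref{again} (which gives $\nu_Y=1$) combined with Theorem~\ref{lola52_0}. Your checks that $e_n\in Y$ and that $Y$ is a $\BK$-space make explicit hypotheses the paper leaves implicit. The one quibble is your aside citing Lemma~\ref{final11}: that lemma concerns a uniform bound (equivalently $Y\subseteq \ell_\infty(\K)$, which need not hold for a Banach ideal), whereas your estimate $\abs{y(n)}\leq \norm{e_n}_Y^{-1}\norm{y}_Y$ has an $n$-dependent constant. Your direct argument in (c)--(d) is the right one and needs no such lemma.
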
 

Let $Y \subseteq S(\K)$ be a Banach ideal that does not vanish at some point, which is equivalent to  $\llave{e_n:n\in \N} \subseteq Y$.  
The (normed) space of multipliers $M(Y, \ell_{1}(\K))$ is called the \emph{associate  space of $Y$} (or the \emph{Köthe dual space} of $Y$), and is denoted by $Y^{\times}$ (see \cite[§12]{B}, for example).  
Its norm is the operator norm and is given by
\begin{equation}
	\norm{s}_{Y^{\times}} = \sup\llave{ \sum_{n=1}^{\infty} \abs{s(n)y(n)} : y \in \B_{Y} }, 
	\qquad \forall s \in Y^{\times}.
\end{equation}
In this case, the $M$-inequality takes the following form:
\begin{equation}
	\sum_{n=1}^{\infty} \abs{s(n)y(n)}
	= \norm{s \cdot y}_{\ell_{1}(\K)}
	\leq \norm{s}_{Y^{\times}} \norm{y}_{Y},
	\qquad \forall s \in Y^{\times}, \quad \forall y \in Y.
\end{equation}

Applying Example~\ref{d316_2} and Theorem~\ref{lola166}, we obtain the following.
 \begin{example}\label{lola66}\label{d282_3} 
	Let $Y \subseteq S(\K)$ be a Banach ideal that does not vanish at some point. Then
	\begin{equation}\label{lola97}
		Y^{\times} \equiv M(Y,\ell_{1}(\K)) \equiv M(Y,\ell_{u}(\K)) \equiv M(Y,\Sigma c(\K)).
	\end{equation}  
\end{example}

Now, from (\ref{lola97}) and Corollary \ref{efren8} , we obtain
\begin{equation}\label{naty31}
	Y^{\times}\equiv M\pare{Y,\Sigma c\pare{\K}}\equiv M_{\Sigma}\pare{Y,\K}. 
\end{equation}
This equality and Lemma \ref{lola72} allow us to extend the concept of the associate space to spaces $V\subseteq S\pare{\K}$ that are $\BK$ and satisfy $\llave{e_{n}}_{n}\subseteq V$, but are not necessarily Banach ideals. In this case, we define the \emph{associate space of $V$} as the space  
\begin{equation}\label{lola158}
	V^{\times}:=M_{\Sigma}\pare{V,\K}, 
\end{equation}
equipped with its operator norm
\begin{equation}\label{naty21}
	\norm{s}_{V^{\times}}=\sup\llave{\abs{\Sum_{n=1}^{\infty}v\pare{n}s\pare{n}}:v\in \B_{V}},\qquad \forall s\in V^{\times}.
	\end{equation}
	In this case, the representation operator $R_{V,\K}:V^{\times}\tien V^{*}$ satisfies
	\begin{equation}
\la v,  R_{V,\K}s\ra=\Sum_{n=1}^{\infty}v\pare{n}s\pare{n},\qquad \forall s\in V^{\times},\qquad  \forall v\in V.
\end{equation}

The following result is obtained directly from Theorem~\ref{d272_2} . In \cite[Thm. 10.5.1]{Wil}, this same result was proved by defining another norm for $V^\beta$. In our case, however, we describe this norm naturally as the induced operator norm of the bilinear operator $B_{V,\K}$.

\begin{corollary}\label{lola201_z}
	Let $V \subseteq S(\K)$ be a $\BK$-space such that $\{e_{n}\}_{n} \subseteq V$. Then $\{e_{n}\}_{n} \subseteq V$ is a basis of $V$ if and only if the representation operator $R_{V,\K} : V^{\times} \to V^{*}$ given in \eqref{d311} is an isometric isomorphism.
\end{corollary}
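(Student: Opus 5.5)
This is the special case $X=\K$ of Theorem~\ref{d272_2}. By \eqref{lola158}, $V^{\times}=M_{\Sigma}(V,\K)$ and $R_{V,\K}$ is the representation operator of the bilinear operator $B_{V,\K}$. Also, $\mathcal{L}(V,\K)=V^{*}$ with the same norm. So I would not build anything new; I would specialize the chain of equivalences i $\Leftrightarrow$ ii $\Leftrightarrow$ iii of Theorem~\ref{d272_2} to $X=\K$.

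For the direct implication, assume $\{e_n\}_n$ is a basis of $V$. Since $\K\neq\{0\}$, statement ii of Theorem~\ref{d272_2} applies with $X=\K$. It says $R_{V,\K}\colon M_{\Sigma}(V,\K)\to\mathcal{L}(V,\K)$ is an isometric isomorphism, which is exactly the map $V^{\times}\to V^{*}$ with $\langle v,R_{V,\K}s\rangle=\sum_n v(n)s(n)$.

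For the converse, assume $R_{V,\K}\colon V^{\times}\to V^{*}$ is an isometric isomorphism. Then statement iii of Theorem~\ref{d272_2} holds with the choice $X=\K\neq\{0\}$. The implication iii $\Rightarrow$ i of that theorem gives that $\{e_n\}_n$ is a basis of $V$.

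The only point needing care is the bookkeeping. One must check that the operator norm $\|\cdot\|_{V^{\times}}$ in \eqref{naty21} is precisely the operator norm of $M_{\Sigma}(V,\K)$ from Section~\ref{pop10}, with $|\cdot|$ replacing $\|\cdot\|_X$. One must also check that the formula for $R_{V,\K}$ agrees with \eqref{d311}. Both are immediate from the definitions, so I expect no real obstacle.
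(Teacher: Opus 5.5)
Your proposal is correct and follows the paper's own route. The paper states this corollary as a direct consequence of Theorem~\ref{d272_2} with $X=\K$: implication i $\Rightarrow$ ii gives the forward direction, and iii $\Rightarrow$ i gives the converse, using $V^{\times}=M_{\Sigma}(V,\K)$ and $\mathcal{L}(V,\K)=V^{*}$.
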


As mentioned at the beginning of this section, the multiplier spaces $V^{\times}$ and $V^{\beta}$ are always equal as vector spaces, except that the involved bilinear operators are different. Therefore, the use of $V^{\times}$ can be directly addressed through $V^{\beta}$ and the representation operator $R_{V,\K}$. 

The above approach is the one employed in \cite[Ch. 4-10]{Wil}. It follows from (\ref{lola196}) that the inclusion of $V^\beta \hookrightarrow V^\times$ is always continuous. Therefore, when $V^{\times}$ is complete, the corresponding norms are equivalent, and no significant difficulties arise. On the other hand, in \cite[Ex. 10.1.3]{Wil}, although without using our terminology, an example is presented of a $\BKn$-space $V\subseteq S\pare{\K}$  such that the range of the representation operator $R_{V,\K}:V^{\times}\tien V^{*}$ is not closed. Thus, in this case, the associate space $V^{\times}$ is not a Banach space, although the space $V^{\beta}$ is.

			\section{The Weak Vectorialization  $U_{w}\pare{D,X}$}\label{Holamundo2}\label{pop11}
	
	In this section we show that a family of well-known Banach spaces, which will be introduced in Example \ref{holamundo3b}, can be constructed as $B$-multiplier spaces through an appropriate choice of a bilinear operator $B$.\vs 
	
	Let $D$ be a non-empty set and $X$ a Banach space. The \emph{scalarization} of $f \in F\pare{D,X}$ with respect to a functional $x^{*} \in X^{*}$ is the function $\langle f, x^{*} \rangle : D \to \K$ defined by
	\begin{equation}
		\langle f, x^{*} \rangle \pare{a} := \langle f(a), x^{*} \rangle,
	\end{equation}
	where $\langle \cdot, \cdot \rangle : X \times X^{*} \to \K$ is the dual pairing. Defining now $B_{*} : F\pare{D,X} \times X^{*} \to F\pare{D,\K}$   by
	\begin{equation}
		B_{*}\pare{f, x^{*}} := \langle f, x^{*} \rangle,
	\end{equation}
	we obtain a bilinear operator.\vs
	
	Let $U \subseteq F\pare{D,\K}$ be a vector space and consider the $B_{*}$-multiplier space
	\begin{equation}\label{chuy19}
		U_{w}\pare{D,X} := M_{B_{*}}\pare{X^{*}, U} = \left\{ f \in F\pare{D,X} : \langle f, x^{*} \rangle \in U, \forall x^{*} \in X^{*} \right\}.
	\end{equation}
Define  $\norm{\cdot}_{w}:U_{w}\pare{D,X}\to [0,\infty]$ by
\begin{equation}\label{lola202_x}
	\norm{f}_{w}:=\norm{f}_{M_{B_{*}}\pare{X^{*},U}}=\sup\left\{\norm{\langle f,x^{*}\rangle}_{U}:x^{*}\in \B_{X^{*}}\right\}.
\end{equation}

		\begin{theorem}\label{d190_0} 
			If $U \subseteq F\pare{D,\K}$ is a $\BK$-space, then $U_{w}\pare{D,X}$ with   the norm given by~\eqref{lola202_x} (its operator norm)  is a $\BK$-space. Moreover:
			\begin{enumerate}
				\item[i.] If $U$ is an order ideal, then $U_{w}\pare{D,X}$ is a  $b$-ideal.
				\item[ii.] If $U$ is a Banach ideal, then $U_{w}\pare{D,X}$ is a $\BK$ $b$-ideal.
				\item[iii.] If $U$ does not vanish at some point, then $U_{w}\pare{D,X}$ does not vanish at some point.
				\item[iv.] If $\llave{e_{a}}_{a\in D}\subseteq U$, then $c_{00}\pare{X}\subseteq U_{w}\pare{D,X}$.
				\item[v.] If $U \subseteq B\pare{D,\K}$, then
				$U_{w}\pare{D,X} \subseteq B\pare{D,X}.$
			\end{enumerate}   
		\end{theorem}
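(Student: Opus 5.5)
The plan is to apply the general completeness machinery of Section~\ref{pop3} to the bilinear operator $B_{*}:F\pare{D,X}\times X^{*}\tien F\pare{D,\K}$, with $V=X^{*}$ and $W=U$.

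\textbf{Boundedness and normability.} First I will show that every $B_{*}$-multiplication operator is bounded. For this I verify \ref{lola86_0} and invoke Theorem~\ref{bobo}. If $x_k^{*}\tien x^{*}$ in $X^{*}$, then for each $a\in D$ we have $\la f(a),x_k^{*}\ra\tien\la f(a),x^{*}\ra$, since $\abs{\la f(a),x_k^{*}-x^{*}\ra}\le\norm{f(a)}\norm{x_k^{*}-x^{*}}$. Hence $\norm{\cdot}_{w}$ is a seminorm.

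\textbf{Continuous evaluations.} Fix $a\in D$ and let $d_a$ be the evaluation constant of $U$. By Hahn--Banach there is $x^{*}\in\B_{X^{*}}$ with $\la f(a),x^{*}\ra=\norm{f(a)}$. Then
\[
\norm{f(a)}=\abs{\la f,x^{*}\ra(a)}\le d_a\norm{\la f,x^{*}\ra}_U\le d_a\norm{f}_w .
\]
This gives continuous evaluations, and in particular definiteness, so $U_{w}\pare{D,X}$ is normed. Note that this is exactly the "does not vanish" condition of Definition~\ref{d1} with $c_a=1$, $v_f=x^{*}$, read through the evaluation constant. I will argue directly rather than through Lemma~\ref{d339_0}, because $D_1=D_3$ holds but $X_1=X$ while $X_3=\K$, and the direct argument avoids any mismatch.

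\textbf{Completeness.} I will use Corollary~\ref{d189_4}, since $\F=F\pare{D,X}$. Let $\{f_k\}$ be Cauchy and $f_k\tien f$ pointwise. For each $x^{*}$ and each $a$, $\la f_k(a),x^{*}\ra\tien\la f(a),x^{*}\ra$. This is exactly condition~(\ref{lola9}), so $U_{w}\pare{D,X}$ is Banach. I expect this whole main part to be routine; the only delicate point is the Hahn--Banach step above.

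\textbf{Items i--v.}
\begin{enumerate}
\item[i.] For $b\in B\pare{D,\K}$ we have $\la b\cdot f,x^{*}\ra=b\cdot\la f,x^{*}\ra\in U$ by Lemma~\ref{d231_1*_1}.
\item[ii.] If $U$ is a Banach ideal, Lemma~\ref{lol2_1} gives $\norm{b\cdot\la f,x^{*}\ra}_U\le\norm{b}_\infty\norm{\la f,x^{*}\ra}_U$. Taking the supremum over $x^{*}\in\B_{X^{*}}$ yields $\norm{b\cdot f}_w\le\norm{b}_\infty\norm{f}_w$.
\item[iii.] Given $a$, pick $u\in U$ with $u(a)\ne0$ and $x\ne0$ (assuming $X\ne\{0\}$, which is implicit). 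Then $ux\in U_{w}\pare{D,X}$, since $\la ux,x^{*}\ra=\la x,x^{*}\ra u\in U$, and $ux(a)\neq 0$.
\item[iv.] This holds similarly, because $\la e_a x,x^{*}\ra=\la x,x^{*}\ra e_a\in U$.
\item[v.] Apply Example~\ref{lola189_0}: the inclusion $U\hookrightarrow B\pare{D,\K}$ is continuous with some constant $C$. Choosing $x^{*}$ norming $f(a)$ as above gives $\norm{f(a)}\le C\norm{f}_w$ uniformly in $a$, so $f\in B\pare{D,X}$.
\end{enumerate}
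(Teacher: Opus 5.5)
Your proof is correct and follows essentially the paper's route: Property~II with Theorem~\ref{bobo} for boundedness, a norming functional for continuous evaluations (hence definiteness), Corollary~\ref{d189_4} for completeness, and direct checks of i--iv; you are also right to flag that iii tacitly needs $X\neq\{0\}$. Two small remarks. First, Lemma~\ref{d339_0} would in fact have applied verbatim, since Definition~\ref{d1} only requires $D_{1}=D_{3}$ and allows $X_{1}\neq X_{3}$. Second, for v the paper uses weak boundedness of $\{f(a)\}_{a\in D}$ and the uniform boundedness principle, whereas your route through the continuous inclusion $U\hookrightarrow B(D,\K)$ of Example~\ref{lola189_0} also gives the quantitative bound $\sup_{a}\norm{f(a)}\le C\norm{f}_{w}$.
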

		
		\begin{proof}
			We will first  verify  \ref{lola76_0} and \ref{lola86_0}.
			
			I. Let $f \in U_{w}\pare{D,X}$ be such  that $B_{*}\pare{f,x^{*}} = 0$ for all $x^{*} \in X^{*}$. Then for each $a \in D$, we have $\langle f(a), x^{*} \rangle = 0$ for all $x^{*} \in X^{*}$, and consequently $f(a) = 0$. Thus, $f = 0$.
			
			II. Let $\{x_{n}^{*}\}_{n} \subseteq X^{*}$ and $x^{*} \in X^{*}$ be such that $x_{n}^{*} \to x^{*}$, and let $f \in U_{w}\pare{D,X}$. Then 
			\begin{equation}
				B_{*}\pare{f,x_{n}^{*}}(a) = \langle f(a), x_{n}^{*} \rangle \to \langle f(a), x^{*} \rangle = B_{*}\pare{f,x^{*}}(a), \quad \forall a \in D.
			\end{equation}
			In view of the above, Corollary \ref{piz11_b_0} implies that $U_{w}\pare{D,X}$ is a normed space.
			
			Now observe that $\norm{f(a)}_{X} = \sup \left\{ \abs{\langle f,x^{*} \rangle(a)} : x^{*} \in \B_{X^{*}} \right\},  \forall a \in D.$
			Thus $B_{*}$ satisfies condition (\ref{piz17}), and so by Lemma \ref{d339_0} it follows that $U_{w}\pare{D,X}$ has continuous evaluations.
			
			To conclude that $U_{w}\pare{D,X}$ is a Banach space, according to Corollary \ref{d189_4} we only need to verify that $B_{*}$ satisfies condition (\ref{lola9}). Let $\{f_{k}\}_{n} \subseteq U_{w}\pare{D,X}$  and $f \in F\pare{D,X}$ be such that $f_{n} \to f$ pointwise. Then for each $x^{*} \in X^{*}$ and $a \in D$, we have
			$ B_{*}\pare{f_{n},x^{*}}(a) = \langle f_{n}(a), x^{*} \rangle \to \langle f(a), x^{*} \rangle = B_{*}\pare{f,x^{*}}(a). $
			Therefore, it follows that $U_{w}\pare{D,X}$ is a Banach space.
			
			i. Assume now that $U$ is an order ideal. Let $g\in B\pare{D,\K}$, $f\in U_{w}\pare{D,X}$, and fix $x^{*}\in X^{*}$. Since
			$\abs{\langle g\cdot f,x^{*}\rangle}
			=
			\abs{g}\cdot \abs{\langle f,x^{*}\rangle}$,
			and $\langle f,x^{*}\rangle\in U$, it follows from the ideal property of $U$ that $\langle g\cdot f,x^{*}\rangle\in U$. Hence, $g\cdot f\in U_{w}\pare{D,X}$. 
			
			ii. Assuming in addition that $U$ is a Banach $b$-ideal, it follows that \begin{equation}
				\norm{\langle g \cdot f, x^{*} \rangle}_{U} = \norm{\abs{g} \cdot \abs{\langle f, x^{*} \rangle}}_{U}\leq \|g\|_{B\pare{D,\K}} \norm{ \langle f, x^{*} \rangle}_{U}\leq \|g\|_{B\pare{D,\K}} \norm{f}_{U_{w}(D,X)}
			\end{equation}  Taking the supremum over $x^{*}\in \B_{X^{*}}$ yields
			\begin{equation}
				\|g\cdot f\|_{U_{w}(D,X)}
				\leq
				\|g\|_{B\pare{D,\K}}
				\norm{f}_{U_{w}(D,X)}. 
			\end{equation} 
			
			Statements iii and iv follow directly.
			
			v.  Let $f\in U_{w}\pare{D,X}$ and $x^{*}\in X^{*}$. Then $\la f, x^{*}\ra=\llave{\la f\pare{a},x^{*}\ra}_{a\in D}\in U\subseteq B\pare{D,\K}$. Hence, the set $\llave{f\pare{a}}_{a\in D}$ is weakly bounded, which implies that $f\in B\pare{D,\K}$.
		\end{proof}

		From (ii) above, we obtain the following result.
		
	\begin{corollary}
		Let $X$ be a Banach space, $U \subseteq S(\K)$ a vector space, and $f \in S\pare{X}$. If $U$ is a BK-space and $\langle f, x^{*} \rangle \in U$ for every $x^{*} \in X^{*}$, then
		\begin{equation}
			\sup \llave{\norm{\langle f, x^{*} \rangle}_{U} : x^{*} \in \B_{X^{*}}}<\infty.
		\end{equation}
	\end{corollary}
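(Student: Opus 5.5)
The corollary follows from the fact, proved in Theorem \ref{d190_0}, that $U_{w}\pare{D,X}$ with $D=\N$ is a normed multiplier space whose norm is finite. Equivalently, it follows from the seminorm criterion of Lemma \ref{piz7_0} together with Theorem \ref{bobo}. I will not use the ideal property of item ii; only the BK hypothesis on $U$ is needed.

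First, the hypothesis says exactly that $f\in M_{B_{*}}\pare{X^{*},U}=U_{w}\pare{\N,X}$, where $B_{*}\pare{f,x^{*}}=\la f,x^{*}\ra$. The operator norm of $f$ in this multiplier space is
\begin{equation*}
\norm{f}_{w}=\sup\llave{\norm{\la f,x^{*}\ra}_{U}:x^{*}\in \B_{X^{*}}},
\end{equation*}
so it suffices to show that the multiplication operator $\pare{B_{*}}_{f}:X^{*}\tien U$, $x^{*}\mapsto \la f,x^{*}\ra$, is bounded. Then (\ref{piz9}) gives finiteness of the supremum.

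For boundedness, I apply Theorem \ref{bobo} with $V=X^{*}$ and $W=U$. Both are Banach spaces, and $U$ is a $\BK$-space by hypothesis, so it remains to check \ref{lola86_0}. If $x_{k}^{*}\tien x^{*}$ in $X^{*}$, then for every $n$,
\begin{equation*}
\abs{\la f(n),x_{k}^{*}\ra-\la f(n),x^{*}\ra}\leq \norm{f(n)}\norm{x_{k}^{*}-x^{*}}\tien 0,
\end{equation*}
which is pointwise convergence. This is precisely step II of the proof of Theorem \ref{d190_0}. Alternatively, one can verify directly that the graph of $x^{*}\mapsto\la f,x^{*}\ra$ is closed, using the continuous evaluations of $U$, and then apply the closed graph theorem.

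There is no real obstacle here. The only point needing care is that the conclusion uses completeness of both $X^{*}$ and $U$, which the closed graph theorem inside Theorem \ref{bobo} requires. Both hold: $X^{*}$ is always a Banach space, and $U$ is complete because it is a $\BK$-space.
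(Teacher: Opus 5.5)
Your argument is correct and is essentially the paper's own. The corollary is read off from Theorem~\ref{d190_0}, whose proof checks \ref{lola86_0} for $B_{*}$ (step II) exactly as you do, and then uses Theorem~\ref{bobo} (via Corollary~\ref{piz11_b_0}) to conclude that every multiplication operator $x^{*}\mapsto \la f,x^{*}\ra$ from $X^{*}$ into $U$ is bounded, so the supremum is finite by (\ref{piz9}). You are also right that only the $\BK$ property of $U$ is used, not any ideal property.
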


		\subsection{The Weak Vectorialization  $Y_{w}\pare{X}$}

		Let $X$ be a Banach space and $U=Y\subseteq S\pare{\K}$  a Banach ideal. Then, instead of $Y_{w}\pare{\N,X}=M_{B_{*}}\pare{X^{*},Y}$ we will simply write $Y_{w}\pare{X}$. Since $Y$ has continuous evaluations, Theorem~\ref{d190_0} implies that the space $Y_{w}\pare{X}$ with its operator norm $\norm{\cdot}_{w}:Y_{w}\pare{X}\to [0,\infty)$, given by
		\begin{equation}
			\norm{s}_{w} := \sup\left\{\norm{\la s,x^{*}\ra}_{Y} : x^{*} \in \B_{X^{*}}\right\},
		\end{equation}
		is a $\BK$-space. Henceforth, we will call $Y_{w}\pare{X}$ the \emph{weak vectorialization of $Y$ (or weak $X$-vectorialization)} (see \cite[p. 32]{DJT}).  It follows directly from Theorem~\ref{d190_0} that
			\begin{equation}
				\|b \cdot s\|_{Y_{w}(X)} \leq \|b\|_{\ell_{\infty}\pare{\K}} \norm{s}_{Y_{w}(X)}, \qquad \forall s\in Y_{w}(X),\qquad \forall b\in \ell_{\infty}\pare{\K}.
			\end{equation} 
			This means that $Y_{w}(X)$ is a   $BK$ $b$-space, as is $Y(X)$.  Furthermore,
			\begin{equation}
				Y(X) \subseteq Y_w(X) \quad \text{and}\quad 	\|s\|_{Y_w(X)} \leq \|s\|_{Y(X)}, \qquad s\in Y(X). 
			\end{equation} 
		Note that if $Y$ does not vanish at any point and $X\neq \llave{0}$, then neither does $Y_{w}\pare{X}$.

		\begin{example} The spaces $\ell_{p,w}\pare{X}$ and $c_{0,w}\pare{X}$ \label{holamundo3b}

			Let $X$ be a Banach space, $1 \leq p \leq \infty$, and $Y = \ell_{p}\pare{\K}$. In this case, instead of $Y_{w}\pare{X}$, we use the notation $\ell_{p,w}\pare{X}$(See \cite[p. 32]{DJT}). Thus,
			\begin{equation}
				\ell_{p,w}\pare{X} := \left\{s \in S\pare{X} : \langle s, x^{*}\rangle \in \ell_{p}\pare{\K}, \forall x^{*} \in X^{*}\right\}
			\end{equation}
			is a $BK$-space with its corresponding operator norm 
			\begin{align}
				\norm{s}_{p,w}: 
				& = \left \{ \begin{matrix}  \sup\llave{\pare{\Sum_{n=1}^{\infty}\abs{\la s\pare{n},x^{*}\ra}^{p}}^{1/p}:x^{*}\in \B_{X^{*}}}, \quad   &\mbox{if }1\leq p<\infty,
					\vspace{.5 cm}\\ 
					\sup\llave{\abs{\la s\pare{n},x^{*}\ra}:n\in \N,\  x^{*}\in \B_{X^{*}}},\quad   \quad \quad&\mbox{if } p=\infty\end{matrix}\right..
			\end{align}

			Clearly, $c_{00}\pare{X} \subseteq \ell_{p,w}\pare{X}$ for $1 \leq p \leq \infty$. The elements of $\ell_{1,w}\pare{X}$ are called \emph{weakly absolutely summable sequences}.

			Similarly, when $Y = c_{0}\pare{\K}$, instead of $Y_{w}\pare{X}$ we will use $c_{0,w}\pare{X}$. Thus,
			\begin{equation}
				c_{0,w}\pare{X} = \left\{s \in S\pare{X} : \langle s, x^{*}\rangle \in c_{0}\pare{\K}, \forall x^{*} \in X^{*}\right\}
			\end{equation}
			is a $BK$-space with its corresponding operator norm
			\begin{equation}
				\norm{s}_{w} := \sup\left\{\norm{\langle s, x^{*}\rangle}_{\ell_{\infty}\pare{\K}} : x^{*} \in \B_{X^{*}}\right\}.
			\end{equation}
			Clearly, $c_{00}\pare{X} \subseteq c_{0,w}\pare{X}$. 
		\end{example}  \vs

		 Since $Y(X) \subseteq Y_w(X)$, the question arises as to whether $Y(X)$ and $Y_w(X)$ can be equal. For example, we know that in general $\ell_{\infty,w}\pare{X}=\ell_{\infty}\pare{X}$ (see \cite[p.~33]{DJT}). On the other hand, from the work of Dvoretzky-Rogers and A. Pietsch we have the classical result that, for $1 \leq p < \infty$, $\ell_{p,w}(X) = \ell_p(X)$ if and only if $X$ has finite dimension (see \cite[p.~50]{DJT}). Next, we show that when $X$ has finite dimension, we always have $Y(X) = Y_w(X)$.
		 
%		 	The following classical result is well known. The case $p=1$ is due to Dvoretzky--Rogers, while the general case follows from the work of Pietsch (see \cite[p.~50]{DJT}).
		 
%		 \begin{theorem}\label{final4}
%		 	Let $1\leq p<\infty$. Then $\ell_{p,w}\pare{X}= \ell_{p}\pare{X}$ if and only if $X$ is finite-dimensional.
%		 \end{theorem}
		 
		 \begin{proposition}\label{efren4}
		 	Let $Y\subseteq S\pare{\K}$ be a Banach ideal. If $X$ is a finite-dimensional Banach space, then
		 	\begin{equation}
		 		Y_{w}\pare{X}=Y\pare{X}.
		 	\end{equation}
		 	In this case,
		 	\begin{equation}\label{final5}
		 		\norm{s}_{Y_{w}\pare{X}}\leq \norm{s}_{Y\pare{X}}\leq \pare{\dim X}\norm{s}_{Y_{w}\pare{X}},\qquad \forall s\in Y_{w}\pare{X}.
		 	\end{equation}
		 \end{proposition}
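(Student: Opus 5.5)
We already have $Y(X)\subseteq Y_w(X)$ together with the left inequality $\norm{s}_{Y_w(X)}\leq \norm{s}_{Y(X)}$ (stated right after the definition of $Y_w(X)$). It therefore remains to show that every $s\in Y_w(X)$ lies in $Y(X)$, i.e.\ that $\J s\in Y$, with $\norm{\J s}_Y\leq (\dim X)\norm{s}_{Y_w(X)}$.

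Let $m=\dim X$; the case $m=0$ is trivial. The plan is to use an Auerbach basis of $X$: vectors $x_1,\dots,x_m\in X$ and functionals $x_1^*,\dots,x_m^*\in X^*$ with $\norm{x_i}=\norm{x_i^*}=1$ and $\la x_j,x_i^*\ra=\delta_{ij}$. Such a biorthogonal system exists in every finite-dimensional normed space, for instance by maximizing the determinant.

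For each $n$ we expand $s(n)=\sum_{i=1}^m \la s(n),x_i^*\ra x_i$, which gives the pointwise bound
\begin{equation*}
\J s(n)=\norm{s(n)}\leq \sum_{i=1}^m \abs{\la s(n),x_i^*\ra}=\sum_{i=1}^m \abs{\la s,x_i^*\ra}(n).
\end{equation*}
Since $s\in Y_w(X)$, each $\la s,x_i^*\ra$ belongs to $Y$. Because $Y$ is an order ideal, $\abs{\la s,x_i^*\ra}$ is also in $Y$, and its $Y$-norm equals that of $\la s,x_i^*\ra$ by the normed-ideal property applied in both directions. Consequently the function $g:=\sum_i\abs{\la s,x_i^*\ra}$ lies in $Y$. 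From $0\leq \J s\leq g$ and the ideal property we conclude $\J s\in Y$, hence $s\in Y(X)$.

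For the norm estimate, monotonicity of the Banach ideal norm and the triangle inequality give
\begin{equation*}
\norm{s}_{Y(X)}=\norm{\J s}_Y\leq \norm{g}_Y\leq\sum_{i=1}^m\norm{\la s,x_i^*\ra}_Y\leq m\sup_{x^*\in\B_{X^*}}\norm{\la s,x^*\ra}_Y=(\dim X)\norm{s}_{Y_w(X)},
\end{equation*}
where the last inequality uses $x_i^*\in\B_{X^*}$. The only nontrivial ingredient is the existence of an Auerbach basis, which is what produces the sharp constant $\dim X$; everything else follows directly from the ideal axioms. If we preferred to avoid citing Auerbach's lemma, any basis with its dual functionals would still give the equality of the two spaces, but only with some constant depending on the basis.
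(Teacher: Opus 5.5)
Your proof is correct and follows the paper's argument. In both, an Auerbach basis gives the pointwise bound $\norm{s(n)}\leq\sum_{i}\abs{\la s(n),x_i^*\ra}$, and the ideal axioms together with the triangle inequality then give $s\in Y(X)$ with constant $\dim X$. The only difference is cosmetic: the paper gets the pointwise bound by expanding a norming functional $x^*$ in the dual basis (with coefficients $\abs{a_j}=\abs{\la x_j,x^*\ra}\leq 1$), whereas you expand $s(n)$ itself in the primal basis, and both use exactly the normalizations $\norm{x_i}=\norm{x_i^*}=1$.
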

		 
		 \begin{proof}
		 	Let $n:=\dim X$. If $X=\llave{0}$, then clearly $Y_{w}\pare{X}=\llave{0}=Y\pare{X}$. Assume therefore that $X\neq \llave{0}$. We have already seen that $Y\pare{X}\subseteq Y_{w}\pare{X}$ and the first inequality in (\ref{final5}) is valid.
		 	
		 	By a well known theorem of H.~Auerbach (see, for example, \cite[Thm.~II.2.2]{S}), there is a basis $\llave{x_{1},\dots,x_{n}}$ of $X$ and $\llave{x_{1}^{*},\dots,x_{n}^{*}}$ a basis of $X^{*}$ satisfying $\la x_{i},x_{j}^{*}\ra=1$ if $i=j$ and $\la x_{i},x_{j}^{*}\ra=0$ if $i\neq j$. Moreover, $\norm{x_{i}}=\norm{x_{i}^{*}}=1$ for all $i\in\llave{1,\dots,n}$.
		 	
		 	Fix $s\in Y_{w}\pare{X}$ and $k\in\N$. Choose $x^{*}\in\B_{X^{*}}$ such that $\norm{s\pare{k}}=\abs{\la s\pare{k},x^{*}\ra}$. Write $x^{*}=a_{1}x_{1}^{*}+\cdots+a_{n}x_{n}^{*}$ with $a_{1},\dots,a_{n}\in\K$. Then $\abs{a_{j}}=\abs{\la x_{j},x^{*}\ra}\leq \norm{x_{j}}\norm{x^{*}}\leq 1$ for all $j\in\llave{1,\dots,n}$. Hence
		 	\begin{equation}\label{d213}
		 		\norm{s\pare{k}}=\abs{\sum_{j=1}^{n}a_{j}\la s\pare{k},x_{j}^{*}\ra}\leq \sum_{j=1}^{n}\abs{\la s\pare{k},x_{j}^{*}\ra}.
		 	\end{equation}
		 	
		 	Fix $m\in\llave{1,\dots,n}$. Since $Y$ is an ideal, $\abs{\llave{\la s\pare{k},x_{m}^{*}\ra}_{k}}=\llave{\abs{\la s\pare{k},x_{m}^{*}\ra}}_{k}\in Y$. Therefore,
		 	$
		 	z:=\llave{\abs{\la s\pare{k},x_{1}^{*}\ra}+\cdots+\abs{\la s\pare{k},x_{n}^{*}\ra}}_{k}\in Y.
		 	$
		 	Using (\ref{d213}), we obtain $\llave{\norm{s\pare{k}}}_{k}\leq z$, and hence $\llave{\norm{s\pare{k}}}_{k}\in Y$. Therefore, $s\in Y\pare{X}$, which proves that $Y_{w}\pare{X}=Y\pare{X}$.
		 	
		 	Applying (\ref{d213}) once again, for every $s\in Y\pare{X}$ we obtain
		 	\begin{equation}
		 		\norm{s}_{Y\pare{X}}
		 		 \leq \norm{\abs{\la s,x_{1}^{*}\ra}+\cdots+\abs{\la s,x_{n}^{*}\ra}}_{Y} \leq \norm{x_{1}^{*}}\norm{s}_{Y_{w}\pare{X}}+\cdots+\norm{x_{n}^{*}}\norm{s}_{Y_{w}\pare{X}} = n\norm{s}_{Y_{w}\pare{X}}. 
		 	\end{equation}
		 \end{proof}
		
	 \begin{example}
	 	Let $Y\subseteq S\pare{\K}$ be a Banach ideal. Then $M(Y(X),Y_w (X))=\ell_{\infty}\pare{\K}$.
	 \end{example}

 \subsection{$(Z,Y)$-Summing Operator Spaces}
 
 Let $Y,Z\subseteq S(\K)$ be Banach ideals such that $Y$ does not vanish at any point, and let $V$ and $W$ be Banach spaces. Since $Y_w(V)$ and $Z(W)$ are $\BK$-spaces and $c_{00}(V)\subseteq Y_w(V)$, Theorem~\ref{efren10} yields that
 \begin{equation}\label{naty19}
 	M_{\mathcal L}(Y_w(V),Z(W))
 	=\llave{T\in\mathcal L(V,W):\widetilde{T}v\in Z(W),\ \forall v\in Y_w(V)},
 \end{equation}
 endowed with the (operator) norm
 \begin{equation}
 	\norm{T}_{M_{\mathcal L}(Y_w(V),Z(W))}
 	=\sup\llave{\norm{\llave{\norm{Tv(n)}_W}_n}_{Z}:v\in\B_{Y_w(V)}},
 \end{equation}
 is a $\BK$-space.
 
 The elements of $M_{\mathcal L}(Y_w(V),Z(W))$ will be called \emph{$(Z,Y)$-summing operators}. When $Y=Z$, they are simply called \emph{$Y$-summing operators} (see \cite[Sec.~6]{HG}).

 \begin{example} \textbf{$(q,p)$-Summing Operators}
 	
 	Let $1\le p,q<\infty$ and let $X,W$ be Banach spaces. Taking $Z=\ell_{p,w}(X)$, $Y=\ell_q(W)$, and since $c_{00}(X)\subseteq \ell_{p,w}(X)$, by Theorem~\ref{efren10} we have that
 	\begin{equation}
 		M_{\mathcal L}(\ell_{p,w}(X),\ell_q(W))
 		=\llave{T\in\mathcal L(X,W):\widetilde{T}s\in\ell_q(W),\ \forall s\in\ell_{p,w}(X)},
 	\end{equation}
 	with norm
 	\begin{equation}
 		\norm{T}_{M_{\mathcal L}(\ell_{p,w}(X),\ell_q(W))}
 		=\sup\llave{\left(\sum_{n=1}^{\infty}\norm{T(s(n))}^{q}\right)^{1/q}:s\in\B_{\ell_{p,w}(X)}},
 	\end{equation}
 	is a $\BK$-space. The space $M_{\mathcal L}(\ell_{p,w}(X),\ell_q(W))$ is usually denoted by $\Pi_{q,p}(X,W)$, and its elements are known as \emph{$(q,p)$-summing linear operators}  (see, e.g., \cite{P}, \cite[Ch.~10]{DJT}).
 \end{example}

\subsection{b-ideal  Part of $\Sigma \ell_{\infty}(X)$} 

In \eqref{d264_a} and Example~\ref{d333} we saw that $\Sigma \ell_{\infty}(X)$ is a $\BK$-space and that it is not a  $b$-ideal. Then, by Theorem~\ref{d334} , 
\begin{equation}
	\textup{Ip}\big(\Sigma \ell_{\infty}(X)\big)
	=
	\Big\{ s \in \Sigma \ell_{\infty}(X) : t \cdot s \in \Sigma \ell_{\infty}(X),\ \forall t \in \ell_{\infty}(\K) \Big\}
	\subsetneq \Sigma \ell_{\infty}(X)
\end{equation}
is a $\BKa$-space, endowed with the (operator) norm  given by
\begin{equation}
	\|s\|_{\textup{Ip}(\Sigma \ell_{\infty}(X))}
	=
	\sup\llave{ \norm{ \sum_{n=1}^{m} t(n)s(n) } : t \in \B_{\ell_{\infty}(\K)},\ m \in \N }.
\end{equation}

The following result was proved by Bessaga and Pełczyński \cite[Lemma~2]{BP}.

\begin{theorem}
	Let $s \in S(X)$. Then $s \in \ell_{1,w}(X)$ if and only if there exists a constant $c \in (0,\infty)$ such that
\begin{equation}
	\sup\llave{ \norm{ \sum_{n=1}^{m} t(n)s(n) } : m \in \N }
	\le c \|t\|_{\infty},
	\qquad \forall t \in \ell_{\infty}(\K).
\end{equation}
\end{theorem}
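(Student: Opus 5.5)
We prove the two implications separately, reducing everything to estimates on finite sums. For $t\in\ell_\infty(\K)$ and $m\in\N$ set $x_{t,m}:=\sum_{n=1}^{m}t(n)s(n)\in X$. Since $\norm{x}=\sup\llave{\abs{\la x,x^{*}\ra}:x^{*}\in\B_{X^{*}}}$, we have
\begin{equation}
\norm{x_{t,m}}=\sup\llave{\abs{\sum_{n=1}^{m}t(n)\la s(n),x^{*}\ra}:x^{*}\in\B_{X^{*}}},
\end{equation}
and both directions come down to comparing this quantity with $\sum_{n}\abs{\la s(n),x^{*}\ra}$.

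\emph{Necessity.} Let $s\in\ell_{1,w}(X)$. By Theorem~\ref{d190_0}, $\ell_{1,w}(X)$ is a $\BK$-space with norm $\norm{s}_{1,w}=\sup\llave{\sum_{n}\abs{\la s(n),x^{*}\ra}:x^{*}\in\B_{X^{*}}}$, which is finite because it is the operator norm of a multiplier. For $x^{*}\in\B_{X^{*}}$ the triangle inequality gives
\begin{equation}
\abs{\sum_{n=1}^{m}t(n)\la s(n),x^{*}\ra}\le\norm{t}_{\infty}\sum_{n=1}^{\infty}\abs{\la s(n),x^{*}\ra}\le\norm{t}_{\infty}\norm{s}_{1,w}.
\end{equation}
Taking the supremum over $x^{*}$ and then over $m$ yields the stated bound with $c=\norm{s}_{1,w}$ (any $c>0$ works if $s=0$).

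\emph{Sufficiency.} Assume the inequality holds with constant $c$, and fix $x^{*}\in\B_{X^{*}}$ and $m\in\N$. Choose unimodular scalars $t(n)$ for $n\le m$ with $t(n)\la s(n),x^{*}\ra=\abs{\la s(n),x^{*}\ra}$; this is the sign sequence used in Example~\ref{d316_2}. Set $t(n)=0$ for $n>m$, so that $\norm{t}_{\infty}\le1$. Then
\begin{equation}
\sum_{n=1}^{m}\abs{\la s(n),x^{*}\ra}=\la x_{t,m},x^{*}\ra\le\norm{x_{t,m}}\le c.
\end{equation}
Letting $m\to\infty$ shows $\la s,x^{*}\ra\in\ell_{1}(\K)$, with norm at most $c\norm{x^{*}}$ after rescaling an arbitrary $x^{*}\in X^{*}$. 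Hence $s\in\ell_{1,w}(X)$ and $\norm{s}_{1,w}\le c$.

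The only delicate point is that $t$ must depend on $x^{*}$. This is harmless because the hypothesis holds uniformly in $t$, so choosing $t$ after $x^{*}$ is allowed. In the complex case one takes $t(n)=\overline{\la s(n),x^{*}\ra}/\abs{\la s(n),x^{*}\ra}$ when the denominator is nonzero and $t(n)=0$ otherwise.

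Finally, the statement can be recast in the paper's framework. The displayed condition says exactly that $s\in\textup{Ip}(\Sigma\ell_{\infty}(X))$; note that a bounded $\sup_{m}$ for every $t\in\ell_\infty(\K)$ means $t\cdot s\in\Sigma\ell_\infty(X)$, and uniformity in $t$ is then automatic by Theorem~\ref{d334}. The argument above also shows the norms agree. So the theorem is equivalent to $\textup{Ip}(\Sigma\ell_{\infty}(X))\equiv\ell_{1,w}(X)$.
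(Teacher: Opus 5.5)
Your proof is correct, and it is more self-contained than the paper's treatment. The paper does not prove this statement at all. It quotes it from Bessaga and Pełczyński \cite[Lemma~2]{BP} and uses it to obtain the set equality $\textup{Ip}(\Sigma\ell_\infty(X))=\ell_{1,w}(X)$. Only afterwards, in Proposition~\ref{naty3}, does it verify that the two norms coincide, using exactly your two estimates: the sign sequence $t(n)=\overline{\langle s(n),x^*\rangle}/|\langle s(n),x^*\rangle|$ for one inequality, and Hahn--Banach with the triangle inequality for the other.

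What you add is the observation that these same estimates already prove the theorem itself. The one extra ingredient is that $\sup\{\sum_n|\langle s(n),x^*\rangle| : \|x^*\|\le 1\}<\infty$ for every $s\in\ell_{1,w}(X)$. Theorem~\ref{d190_0} supplies this through the closed graph argument of Theorem~\ref{bobo}, and this uniform bound is the only non-elementary content of the Bessaga--Pełczyński lemma. Your route therefore makes the external citation unnecessary within the paper's framework, and it yields the theorem and Proposition~\ref{naty3} at the same time. The paper's route is shorter only because it outsources that boundedness step.

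A small aside: the sufficiency direction does not need the uniform constant $c$. Fix the sign sequence $t$ on all of $\mathbb{N}$, depending only on $x^*$. Then the mere membership $t\cdot s\in\Sigma\ell_\infty(X)$ already bounds $\sum_{n\le m}|\langle s(n),x^*\rangle|$ uniformly in $m$.
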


In terms of multipliers,  this is equivalent to
\begin{equation}\label{final10}
	M'\big(\ell_{\infty}(\K), \Sigma \ell_{\infty}(X)\big)
	=
	\ell_{1,w}(X),
\end{equation}
and hence
\begin{equation}
	\textup{Ip}\big(\Sigma \ell_{\infty}(X)\big)
	=
	\ell_{1,w}(X).
\end{equation}
Next, we show that these spaces actually have the same norm.

\begin{proposition}\label{naty3}\label{lola215_0}
	\[
	\textup{Ip}\big(\Sigma \ell_{\infty}(X)\big) \equiv \ell_{1,w}(X).
	\]
\end{proposition}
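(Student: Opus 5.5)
Since the equality of the two spaces as sets is already given by (\ref{final10}) together with the Bessaga--Pełczyński theorem, we only need to prove that the two norms coincide. So we fix $s \in \ell_{1,w}(X) = \textup{Ip}(\Sigma \ell_{\infty}(X))$ and prove the two inequalities
\[
\norm{s}_{1,w} \le \norm{s}_{\textup{Ip}(\Sigma \ell_{\infty}(X))}
\quad\text{and}\quad
\norm{s}_{\textup{Ip}(\Sigma \ell_{\infty}(X))} \le \norm{s}_{1,w}.
\]
For both, the basic tool is the duality $\norm{x}=\sup\llave{\abs{\la x,x^{*}\ra}:x^{*}\in\B_{X^{*}}}$ in $X$. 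This lets us interchange the two suprema, over $x^{*}\in\B_{X^{*}}$ and over $t\in\B_{\ell_{\infty}(\K)}$, $m\in\N$.

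\emph{Upper bound for the $\textup{Ip}$ norm.} Fix $t\in\B_{\ell_{\infty}(\K)}$, $m\in\N$ and $x^{*}\in\B_{X^{*}}$. Then
\[
\abs{\la \textstyle\sum_{n=1}^{m} t(n)s(n),x^{*}\ra}\le \sum_{n=1}^{m}\abs{\la s(n),x^{*}\ra}\le \norm{\la s,x^{*}\ra}_{\ell_{1}(\K)}\le\norm{s}_{1,w}.
\]
Taking the supremum over $x^{*}$ gives $\norm{\sum_{n=1}^m t(n)s(n)}\le\norm{s}_{1,w}$. Taking the supremum over $t$ and $m$ then gives the second inequality.

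\emph{Lower bound.} Fix $x^{*}\in\B_{X^{*}}$ and $m\in\N$. Mimicking the proof of Example~\ref{d316_2}, we define $t(n):=\overline{\la s(n),x^{*}\ra}/\abs{\la s(n),x^{*}\ra}$ when $\la s(n),x^{*}\ra\neq0$ and $t(n):=0$ otherwise. Then $t\in\B_{\ell_{\infty}(\K)}$, and
\[
\sum_{n=1}^{m}\abs{\la s(n),x^{*}\ra}=\la \textstyle\sum_{n=1}^{m}t(n)s(n),x^{*}\ra\le\norm{\sum_{n=1}^{m}t(n)s(n)}\le\norm{s}_{\textup{Ip}(\Sigma\ell_{\infty}(X))}.
\]
We let $m\to\infty$ and then take the supremum over $x^{*}$; this yields the first inequality.

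The only delicate point is the choice of the unimodular sequence $t$ in the lower bound: $t$ has to depend on $x^{*}$, so the order of the suprema matters. We take the supremum over $t$ inside, for each fixed $x^{*}$, which is legitimate because the $\textup{Ip}$ norm is a supremum over all $t$ simultaneously. Apart from this, and from equality (\ref{final10}) as sets, which we take as given, the argument is a direct computation.
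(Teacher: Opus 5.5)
Your proposal is correct and follows the paper's proof essentially line by line. The lower bound uses the same unimodular sequence $t$: your $\overline{\la s(n),x^{*}\ra}/\abs{\la s(n),x^{*}\ra}$ equals the paper's $\abs{\la s(n),x^{*}\ra}/\la s(n),x^{*}\ra$. For the upper bound, your use of the dual-norm formula $\norm{x}=\sup\llave{\abs{\la x,x^{*}\ra}:x^{*}\in\B_{X^{*}}}$ is equivalent to the paper's choice of a norming functional via Hahn--Banach.
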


\begin{proof}
	Let $s \in \textup{Ip}\big(\Sigma \ell_{\infty}(X)\big) = \ell_{1,w}(X)$. Fix $x^{*} \in \B_{X^{*}}$, and define $t \in S(\K)$ by
	\begin{equation}
	t(n) :=
	\begin{cases}
		\dfrac{|\langle s(n), x^{*} \rangle|}{\langle s(n), x^{*} \rangle},
		& \text{if } \langle s(n), x^{*} \rangle \neq 0, \\[.2cm]
		0, & \text{if } \langle s(n), x^{*} \rangle = 0,
	\end{cases}
	\qquad \forall n \in \N. 
\end{equation}
	Note that $t \in \B_{\ell_{\infty}(\K)}$. For $m \in \N$, we have
	\begin{equation}
		\sum_{n=1}^{m} |\langle s(n), x^{*} \rangle|
	 =
		\sum_{n=1}^{m} t(n)\langle s(n), x^{*} \rangle \le
		\norm{\sum_{n=1}^{m} t(n)s(n) }
		\le
		\|s\|_{\textup{Ip}(\Sigma \ell_{\infty}(X))}.
	\end{equation}
	Letting $m \to \infty$ and then taking the supremum over $x^{*} \in \B_{X^{*}}$, we obtain
\begin{equation}
	\|s\|_{\ell_{1,w}(X)}
	\le
	\|s\|_{\textup{Ip}(\Sigma \ell_{\infty}(X))}.
\end{equation}
	
	Conversely, for $t \in \B_{\ell_{\infty}(\K)}$ and $m \in \N$, by the Hahn--Banach theorem there exists $x^{*} \in \B_{X^{*}}$ such that
	\begin{equation}
		\norm{ \sum_{n=1}^{m} t(n)s(n)}
		 =
		  \abs{ \ral{  \sum_{n=1}^{m} t(n)s(n), x^{*} } } \le
		\sum_{n=1}^{\infty} |\langle s(n), x^{*} \rangle|
		\le
		\|s\|_{\ell_{1,w}(X)}.
	\end{equation}
	Taking the supremum over $m \in \N$ and $t \in \B_{\ell_{\infty}(\K)}$, we conclude that
	\[
	\|s\|_{\textup{Ip}(\Sigma \ell_{\infty}(X))}
	\le
	\|s\|_{\ell_{1,w}(X)}.\qedhere
	\]
\end{proof}

Observe that $\ell_{u}(X)\subseteq \ell_{1,w}(X)$ and $\ell_{u}\pare{X}$ originally carries the ``$b$-ideal  part'' norm defined in \eqref{lola109}. In view of the previous proposition, this norm coincides with the subspace norm inherited from $\ell_{1,w}(X)$.  A well-known and important result states that $\ell_{1,w} (X) = \ell_u (X)$ if  and only if $X$ does not contain a copy of $c_0$  (\cite[Thm. 5]{BP}).
Next, we establish a general relationship between $\ell_{1,w} (X)$ and $\ell_u (X)$.

\begin{lemma}\label{final14_0}
Let $\llave{x_{n}}_{n}\subseteq X$. Then $\llave{x_{n}}_{n}\in \ell_{u}\pare{X}$ if and only if, given $\epsilon>0$, there exists a finite set  $A\subseteq \N$ such that
	\begin{equation}\label{final13}
		\norm{\Sum_{n\in N}b_{n}x_{n}}\leq \epsilon,\qquad \forall \llave{b_{n}}_{n}\in \B_{\ell_{\infty}\pare{\K}},\qquad  \text{for each finite set } N\subseteq \N\setminus A.
	\end{equation}
\end{lemma}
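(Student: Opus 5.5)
Both directions go through the characterization $\ell_{u}(X)=\textup{Ip}(\Sigma c(X))$ from the Bounded Multiplier Test (\ref{lola165}), together with the Cauchy criterion for convergence of series in the Banach space $X$. For brevity, write (\ref{final13}) as the \emph{uniform Cauchy condition}.

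\emph{Sufficiency.} Assume the uniform Cauchy condition holds, and fix $b=\{b_n\}_n\in\ell_\infty(\K)$. We show that $\sum_n b_nx_n$ converges. If $b=0$ there is nothing to prove; otherwise normalize to $b\in\B_{\ell_\infty(\K)}$. Given $\epsilon>0$, choose the finite set $A$ from the hypothesis and let $N_0:=\max A$. For $m>n\geq N_0$, the set $N=\{n+1,\dots,m\}$ is finite and disjoint from $A$, so
\[
\norm{\sum_{k=n+1}^{m}b_kx_k}\leq\epsilon .
\]
Hence the partial sums of $\sum_n b_nx_n$ form a Cauchy sequence and the series converges. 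Thus $b\cdot\{x_n\}_n\in\Sigma c(X)$ for every $b\in\ell_\infty(\K)$, and the Bounded Multiplier Test (equivalently (\ref{lola165})) gives $\{x_n\}_n\in\ell_u(X)$.

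\emph{Necessity.} This is the main step, and I would argue by contradiction. Suppose $\{x_n\}_n\in\ell_u(X)$ but the uniform Cauchy condition fails for some $\epsilon>0$. Then for every finite $A\subseteq\N$ there exist a finite $N\subseteq\N\setminus A$ and $b\in\B_{\ell_\infty(\K)}$ with $\norm{\sum_{n\in N}b_nx_n}>\epsilon$. Apply this inductively: take $A_1=\emptyset$, get $N_1$ and $b^{(1)}$; take $A_2=\{1,\dots,\max N_1\}$, get $N_2$ and $b^{(2)}$; and so on. This produces finite sets $N_1,N_2,\dots$ with $\max N_j<\min N_{j+1}$, and unit-ball sequences $b^{(j)}$, such that
\[
\norm{\sum_{n\in N_j}b^{(j)}_nx_n}>\epsilon\qquad\text{for all }j.
\]
Because the blocks are pairwise disjoint, we can glue them into a single $t\in\B_{\ell_\infty(\K)}$ by setting $t(n)=b^{(j)}_n$ for $n\in N_j$ and $t(n)=0$ off $\bigcup_jN_j$.

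The series $\sum_n t(n)x_n$ should converge, since $t\cdot\{x_n\}_n\in\Sigma c(X)$ by the Bounded Multiplier Test. However, its block of terms indexed by $[\min N_j,\max N_j]$ is exactly $\sum_{n\in N_j}b^{(j)}_nx_n$, because $t$ vanishes on the gaps inside that interval. That difference of partial sums therefore has norm $>\epsilon$ for every $j$, and since $\min N_j\to\infty$ this contradicts the Cauchy criterion. So the uniform Cauchy condition holds.

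The only delicate point is making the blocks disjoint and increasing, so that the gluing is legitimate and each block is a difference of partial sums; the choice $A_{j+1}=\{1,\dots,\max N_j\}$ is what guarantees this.
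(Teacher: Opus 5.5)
Your proof is correct, but it takes a different route from the paper's, which does not use the Bounded Multiplier Test here.

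\textbf{The paper's route.} For necessity it starts from the finite-subset Cauchy criterion for unconditional convergence: there is a finite $A$ with $\norm{\sum_{n\in N}x_n}<\epsilon/4$ for every finite $N\subseteq\N\setminus A$. It then norms $\sum_{n\in N}b_nx_n$ by a Hahn--Banach functional $x^*$, and splits $\sum_{n\in N}b_n\langle x_n,x^*\rangle$ according to the signs of the real and imaginary parts of $\langle x_n,x^*\rangle$. This bounds it by four subset sums of the $x_n$. For sufficiency it takes only $b\equiv 1$, shows that every subseries is Cauchy, and then uses ``subseries convergence implies unconditional convergence.''

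\textbf{Your route.} You put all the weight on the Bounded Multiplier Test, i.e.\ on (\ref{lola165}). The implication from convergence of every $\sum_n t(n)x_n$ to unconditional summability, together with the Cauchy criterion, gives sufficiency. The converse implication, together with a gliding-hump argument, gives necessity by contradiction: you glue the bad blocks $N_j$ and their multipliers $b^{(j)}$ into one $t$ in the unit ball of $\ell_\infty(\K)$. The gluing is sound. The condition $N_{j+1}\cap\{1,\dots,\max N_j\}=\emptyset$ makes each block exactly a difference of partial sums of $\sum_n t(n)x_n$, and $\min N_j\to\infty$.

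\textbf{Comparison.} Your version is shorter and needs no Hahn--Banach or real/imaginary bookkeeping. It works verbatim for $\K=\R$ or $\C$, and it shows the lemma is a formal consequence of (\ref{lola165}). On the other hand, it is non-quantitative and leans on the full Bounded Multiplier Test, whose usual proof rests on an estimate of exactly the paper's kind. The paper's argument is more self-contained and yields the explicit bound $\norm{\sum_{n\in N}b_nx_n}\leq 4\max_{M\subseteq N}\norm{\sum_{n\in M}x_n}$ whenever $\sup_n\abs{b_n}\leq 1$. It could therefore be used to prove the harder half of the Bounded Multiplier Test rather than depend on it.

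One trivial edge case: if the set $A$ in your sufficiency part is empty, put $N_0:=0$ instead of $\max A$.
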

 \begin{proof} Suppose first that $\llave{x_{n}}_{n}\in \ell_{u}\pare{X}$.	Let $\epsilon>0$. From \cite[p. 959]{Hi}, there exists a finite set $A\subseteq \N$ such that
 	\begin{equation}\label{final12}
 		\norm{\Sum_{n\in N}x_{n}}<\frac{\epsilon}{4},\qquad \text{for each finite set } N\subseteq \N\setminus A.
 	\end{equation}
 	Fix $b\in \B_{\ell_{\infty}\pare{\K}}$, a finite set $N\subseteq \N\setminus A$ and choose $x^{*}\in \B_{X^{*}}$ such that
 	\begin{equation}
 		\norm{\Sum_{n\in N} b_{n}x_{n}}=\la \Sum_{n\in N} b_{n}x_{n}, x^{*}\ra =\re{\Sum_{n\in N} b_{n}\la x_{n},x^{*}\ra}.
 	\end{equation}
 	Define the following sets
 	\begin{align*}
 		P_{r}:=\llave{n\in N:\re{\la x_{n},x^{*}\ra}\geq 0}&,\qquad P_{i}:=\llave{n\in N:\im{\la x_{n},x^{*}\ra}\geq 0},\\
 		N_{r}:=\llave{n\in N:\re{\la x_{n},x^{*}\ra}< 0}&,\qquad N_{i}:=\llave{n\in N:\im{\la x_{n},x^{*}\ra}< 0}.
 	\end{align*}  Then
 	\begin{align*}
 		\re{\Sum_{n\in N} b_{n}\la x_{n},x^{*}\ra}&=\Sum_{n\in N} \re{b_{n}\la x_{n},x^{*}\ra}=\Sum_{n\in N}\re{b_{n}}\re{\la x_{n},x^{*}\ra}-\Sum_{n\in N}\im{b_{n}}\im{\la x_{n},x^{*}\ra}\\
 		&\leq \Sum_{n\in N} \abs{\re{\la x_{n},x^{*}\ra}}+\Sum_{n\in N} \abs{\im{\la x_{n},x^{*}\ra}}\\
 		&= \Sum_{n\in P_{r}}\re{\la x_{n},x^{*}\ra} - \Sum_{n\in N_{r}}\re{\la x_{n},x^{*}\ra} + \Sum_{n\in P_{i}}\im{\la x_{n},x^{*}\ra} - \Sum_{n\in N_{i}}\im{\la x_{n},x^{*}\ra}\\
 		&\leq \norm{\Sum_{n\in P_{r}}x_{n}} + \norm{\Sum_{n\in N_{r}}x_{n}}+\norm{\Sum_{n\in P_{i}}x_{n}} + \norm{\Sum_{n\in N_{i}}x_{n}}.
 	\end{align*}
 	Applying \eqref{final12}, we obtain  that $\abs{\re{\Sum_{n\in N} b_{n}\la x_{n},x^{*}\ra}}\leq \epsilon$.  
 	
 	Furthermore, let $\{x_{n_k}\}_{k}$ be a subsequence of $\llave{x_{n}}_{n}$, and fix $\epsilon>0$. By hypothesis, there exists a finite set $A\subseteq \N$ satisfying \eqref{final13}. Taking $b=\llave{1}_{n}$ and $K=\max\llave{n_{k}:k\in A}$, we obtain
 	\begin{equation}
 		\norm{\Sum_{k= \ell}^{m}x_{n_{k}}}\leq \epsilon,\qquad \forall \ell \geq m>K.
 	\end{equation}
 	Letting $m\tien \infty$, it follows that $\norm{\Sum_{k= \ell}^{\infty}x_{n_{k}}}\leq \epsilon, \forall \ell \geq m.$
 	
 	Thus, each subsequence $\{x_{n_k}\}_{k}\subseteq \llave{x_{n}}_{n}$ is summable. By \cite[p. 959]{Hi}, it follows that $\llave{x_{n}}_{n}$ is unconditionally summable.
 \end{proof}

\begin{proposition}
	$\ell_{1,w}\pare{X}_{0}= \ell_{u}\pare{X}$.
\end{proposition}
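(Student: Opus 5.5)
We prove the two inclusions separately. Throughout, $\ell_{1,w}(X)$ carries the norm $\norm{\cdot}_{1,w}$, which by Proposition \ref{naty3} coincides with the norm of $\textup{Ip}(\Sigma \ell_{\infty}(X))$:
\[
\norm{s}_{1,w}=\sup\llave{\norm{\sum_{n=1}^{m}t(n)s(n)}: t\in \B_{\ell_\infty(\K)},\ m\in\N}.
\]
Since $\ell_{1,w}(X)$ is a $\BK$ $b$-ideal containing $c_{00}(X)$ (Theorem \ref{d190_0}), the space $\ell_{1,w}(X)_0$ is well defined. A key observation is that for $s\in \ell_{1,w}(X)$ and $n\in\N$,
\[
\norm{s-P_n s}_{1,w}=\sup\llave{\norm{\sum_{k=n+1}^{m}t(k)s(k)}: t\in \B_{\ell_\infty(\K)},\ m>n}.
\]

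\emph{Inclusion $\ell_{u}(X)\subseteq \ell_{1,w}(X)_0$.} Let $s\in\ell_u(X)$ and $\epsilon>0$. Lemma \ref{final14_0} gives a finite set $A\subseteq\N$ such that $\norm{\sum_{n\in N}b_nx_n}\le\epsilon$ for every finite $N\subseteq\N\setminus A$ and every $b\in\B_{\ell_\infty(\K)}$. Take $n_0\ge\max A$. For $n\ge n_0$, each set $\llave{n+1,\dots,m}$ is disjoint from $A$, so the displayed formula gives $\norm{s-P_ns}_{1,w}\le\epsilon$. Hence $P_ns\to s$ in $\ell_{1,w}(X)$. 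Since $P_ns\in c_{00}(X)$, we get $s\in\ell_{1,w}(X)_0$.

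\emph{Inclusion $\ell_{1,w}(X)_0\subseteq\ell_u(X)$.} By Example \ref{again}, $\nu_{\ell_{1,w}(X)}=1$, so $\nu_{\ell_{1,w}(X)_0}\le 1<\infty$. Lemma \ref{efren27_2}(iii) then gives $P_ns\to s$ in $\ell_{1,w}(X)$ for every $s\in\ell_{1,w}(X)_0$. Fix such an $s$ and $\epsilon>0$, and choose $n_0$ with $\norm{s-P_{n_0}s}_{1,w}\le\epsilon$. Put $A=\llave{1,\dots,n_0}$. For a finite $N\subseteq\N\setminus A$ and $b\in\B_{\ell_\infty(\K)}$, set $t=b\cdot\chi_N$ and $m=\max N$. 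Then
\[
\norm{\sum_{n\in N}b_ns(n)}=\norm{\sum_{k=n_0+1}^{m}t(k)s(k)}\le\norm{s-P_{n_0}s}_{1,w}\le\epsilon.
\]
Lemma \ref{final14_0} therefore yields $s\in\ell_u(X)$.

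The main point needing care is the identification of $\norm{s-P_ns}_{1,w}$ with the tail supremum. This follows from Proposition \ref{naty3}, together with the fact that multiplying $s$ by the indicator of $\llave{n+1,n+2,\dots}$ can be absorbed into $t$. Everything else reduces to the Cauchy-type criterion of Lemma \ref{final14_0}.
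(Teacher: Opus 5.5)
Your proof is correct. One inclusion matches the paper's argument; the other follows a genuinely different route.

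\textbf{The inclusion $\ell_u(X)\subseteq\ell_{1,w}(X)_0$.} This is essentially the paper's argument. Both use Lemma \ref{final14_0} to bound the tails beyond $\max A$. The paper fixes $x^*$ and picks the unimodular multipliers $b_n=\overline{\langle x_n,x^*\rangle}/\abs{\langle x_n,x^*\rangle}$ to bound $\sum_{n=k+1}^{m}\abs{\langle x_n,x^*\rangle}$ directly in the $\ell_{1,w}$ norm. You instead absorb that choice into $t$ by using the $\textup{Ip}(\Sigma\ell_\infty(X))$ form of the norm from Proposition \ref{naty3}.

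\textbf{The inclusion $\ell_{1,w}(X)_0\subseteq\ell_u(X)$.} Here the routes differ. The paper stays inside the multiplier framework:
\begin{itemize}
\item it writes $\ell_{1,w}(X)=M'(\ell_\infty(\K),\Sigma\ell_\infty(X))$ isometrically;
\item it applies Proposition \ref{chuy18_0_1_t_0}(ii), namely $M'(V,W)_0\subseteq M'(V,W_0)$;
\item it uses $(\Sigma\ell_\infty(X))_0=\Sigma c(X)$ from Lemma \ref{d269};
\item it concludes with the Bounded Multiplier Test identification $M'(\ell_\infty(\K),\Sigma c(X))=\ell_u(X)$ from (\ref{lola165}).
\end{itemize}
You instead use $\nu_{\ell_{1,w}(X)}=1$ and Lemma \ref{efren27_2}(iii) to get $P_ns\to s$. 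You then feed the same tail formula into the converse direction of Lemma \ref{final14_0}.

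\textbf{What each route buys.} The paper's route is short and shows how passing to $V_0$ interacts with $M'$ and with $b$-ideal parts. Its cost is invoking the Bounded Multiplier Test as a black box. Your route makes the two inclusions mirror images of one computation, the tail formula for $\norm{s-P_ns}_{1,w}$. It needs nothing beyond the Cauchy-type criterion of Lemma \ref{final14_0}, which is already proved in the paper.

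\textbf{A possible shortcut.} You could drop the projection-coefficient step entirely. Approximate $s$ by some $c\in c_{00}(X)$ supported in $\{1,\dots,n_0\}$ with $\norm{s-c}_{1,w}<\epsilon$. For finite $N$ disjoint from $\{1,\dots,n_0\}$ you have $\sum_{n\in N}b_ns(n)=\sum_{n\in N}b_n(s-c)(n)$, and this is bounded by $\norm{s-c}_{1,w}$ exactly as in your argument.
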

\begin{proof}
	By Propositions \ref{naty3} and \ref{chuy18_0_1_t_0}, it follows that
	\begin{equation}
		\ell_{1,w}\pare{X}_{0}= M'\pare{\ell_{\infty}\pare{\K},\Sigma \ell_{\infty}\pare{X}}_{0}\subseteq M'\pare{\ell_{\infty}\pare{\K},\Sigma c\pare{X}}=\ell_{u}\pare{X}.
	\end{equation}
	
	Let $s=\llave{x_{n}}_{n}\in \ell_{u}\pare{X}$ and $\epsilon>0$. Using Lemma \ref{final14_0} , there exists $A\subseteq \N$ such that
	\begin{equation}
		\norm{\Sum_{n\in N}b_{n}x_{n}}\leq\epsilon,\qquad \forall N\subseteq \N\setminus A,\qquad \forall \llave{b_{n}}_{n}\in \B_{\ell_{\infty}\pare{\K}}.
	\end{equation}
	
	Now take $x^{*}\in \B_{X^{*}}$ and define $b_{n}:=0$ if $\la x_{n},x^{*}\ra=0$ and $b_{n}:=\frac{\overline{\la x_{n},x^{*}\ra}}{\abs{\la x_{n},x^{*}\ra}}$ if $\la x_{n},x^{*}\ra\neq 0$, $\forall n\in \N$. Clearly $b:=\llave{b_{n}}_{n}\in \B_{\ell_{\infty}\pare{\K}}$, and taking $M:=\max\llave{n:n\in A}$, it follows that
	\begin{equation}
		\Sum_{n=k+1}^{m}\abs{\la x_{n},x^{*}\ra}=\Sum_{n=k+1}^{m}b_{n}\la x_{n},x^{*}\ra=\la\Sum_{n=k+1}^{m} b_{n}x_{n},x^{*}\ra \leq \norm{\Sum_{n=k+1}^{m} b_{n}x_{n}}\leq \epsilon,\qquad \forall m> k> M.
	\end{equation}
	Letting $m\to \infty$ and taking the supremum over $x^{*}\in \B_{X^{*}}$, we obtain
	\begin{equation}
		\norm{P_{k}\pare{s}-s}_{\ell_{1,w}\pare{X}} \leq \epsilon,\qquad \forall k>M.
	\end{equation}
\end{proof}
 
 \begin{corollary}\label{yaya4}
 	$\ell_{1,w}\pare{c_{0}\pare{\K}}$ is a  $b$-ideal that is not a  v-ideal.  
 \end{corollary}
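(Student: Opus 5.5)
The first claim, that $\ell_{1,w}\pare{c_{0}\pare{\K}}$ is a $b$-ideal, needs no new work. Since $\ell_{1}\pare{\K}$ is a Banach ideal, Theorem \ref{d190_0} (ii) shows that $Y_{w}\pare{X}$ is a BK $b$-ideal for $Y=\ell_{1}\pare{\K}$ and $X=c_{0}\pare{\K}$. Alternatively, Proposition \ref{naty3} gives $\ell_{1,w}\pare{X}\equiv\textup{Ip}\pare{\Sigma \ell_{\infty}(X)}$, and the $b$-ideal part is always a $b$-ideal.

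For the second claim, that it is not a v-ideal, I will produce $g\in \ell_{1,w}\pare{c_{0}}$ and $f\in S\pare{c_{0}}$ with $\J f\le \J g$ but $f\notin \ell_{1,w}\pare{c_{0}}$. The natural candidate is the canonical basis. Take $g(n):=e_{n}\in c_{0}$. For $x^{*}=\llave{a_{k}}_{k}\in c_{0}^{*}=\ell_{1}$ we have $\sum_{n}\abs{\la e_{n},x^{*}\ra}=\sum_n\abs{a_{n}}=\norm{x^{*}}_{\ell_1}<\infty$, so $g\in \ell_{1,w}\pare{c_{0}}$, and $\J g=\llave{1}_{n}$.

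Next I need $f$ with $\norm{f(n)}_{\infty}\le 1$ and $\sum_{n}\abs{\la f(n),x^{*}\ra}=\infty$ for some $x^{*}\in\ell_1$. The simplest choice is $f(n):=e_{1}$ for all $n$. Then $\J f=\J g$, while $x^{*}=e_{1}\in\ell_{1}$ gives $\sum_{n}1=\infty$. Hence $f\notin\ell_{1,w}\pare{c_{0}}$, and the v-ideal property fails.

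The only point that needs checking is the identification $c_{0}^{*}=\ell_{1}$ with the pairing $\la e_{n},x^{*}\ra=a_{n}$; everything else is immediate. In fact the same example shows that $\ell_{1,w}(X)$ fails to be a v-ideal for any $X\neq\llave{0}$ containing a weakly absolutely summable normalized sequence, which is exactly why $c_{0}$ is used here.
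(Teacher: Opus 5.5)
Your proof is correct and is essentially the paper's argument. The $b$-ideal property comes from an earlier result (the paper cites $\ell_{1,w}(X)\equiv\textup{Ip}(\Sigma\ell_{\infty}(X))$), and the counterexample pairs the canonical basis $\{e_{n}\}_{n}$ of $c_{0}$, as the dominating element, with a norm-dominated sequence concentrated on $e_{1}$. The differences are cosmetic: you check membership and non-membership directly via $c_{0}^{*}=\ell_{1}$ and the single functional $e_{1}$, while the paper goes through $\textup{Ip}(\Sigma\ell_{\infty}(c_{0}))$ and divergent partial sums, and your constant sequence $e_{1}$ works just as well as the paper's $\frac{1}{n}e_{1}$.
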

 \begin{proof}
 	For simplicity, we use the notation $c_{0}= c_{0}\pare{\K}$ and $\ell_{\infty}=\ell_{\infty}\pare{\K}$. Theorem~\ref{naty3} shows that $\ell_{1,w}\pare{c_{0}}$ is a $b$-ideal. Consider the sequences  $s:=\llave{\frac{1}{n}e_{1}}_{n}$ and $z:=\llave{e_{n}}_{n}$ which are elements in $S\pare{c_{0}}$. 
 	Now observe that
 	$
 	\norm{\sum_{n=1}^{m}b\pare{n}e_{n}}_{c_{0}}\leq \norm{b}_{c_{0}},
 	$
 	for every $m\in \N$ and $b\in \ell_{\infty}$. Since
 	$
 	\text{Ip}\pare{\Sigma\ell_{\infty}\pare{c_{0}}}=\ell_{1,w}\pare{c_{0}}  ,
 	$
 	it follows that $z\in \ell_{1,w}\pare{c_{0}}$. 
 	Next, observe that
 	$
 	\J s\pare{n}=\norm{\frac{1}{n}e_{1}}_{c_{0}}=\frac{1}{n}\leq 1=\norm{e_{n}}_{c_{0}}=\J z\pare{n}.
 	$
 	Hence, $\J s \leq \J z$. However,
 	$
 	\norm{\sum_{n=1}^{m} s\pare{n}}_{c_{0}}
 	=
 	\norm{\sum_{n=1}^{m} \frac{1}{n}e_{1}}_{c_{0}}
 	=
 	\sum_{n=1}^{m}\frac{1}{n}\tien \infty.
 	$
 	This implies that $s\notin \ell_{1,w}\pare{c_{0}}$, and therefore $\ell_{1,w}\pare{c_{0}}$ is not a v-ideal. 
 \end{proof}

 \section{Spaces of Sequences of Bounded Variation}
 
Next, we study the space $bv\pare{X}$ consisting of vector sequences of bounded variation and present the weak vectorialization of $bv\pare{\K}$. We will see that these spaces describe familiar multiplier spaces.\vs
 
Let $s=\llave{x_{n}}_{n}\subseteq X$. The \emph{variation sequence of $s$} is $v_s\in S\pare{X}$ defined by
	\begin{equation}
	v_{s}(n) :=
	\begin{cases}
		x_{1},
		& \text{if } n=1, \\[.2cm]
		x_{n}-x_{n-1}, & \text{if } n>1.
	\end{cases} 
\end{equation} 

 \begin{definition}
 	Let $s=\llave{x_{n}}_{n}\subseteq X$. We say that $s$ has \emph{bounded variation} if $v_{s} \in \ell_1(X)$. That is, if
 	\begin{equation}
 			\sum_{n=1}^{\infty}\norm{x_{n+1}-x_{n}}<\infty.
 	\end{equation} 
 \end{definition}
 	The set of sequences of bounded variation will be denoted by $bv(X)$.  Clearly, $bv\pare{X}$ is a vector space.
 
 \begin{lemma}\label{final5_0}
 	$bv\pare{X}\subseteq c\pare{X}\subseteq  \ell_{\infty}(X)$.  
 \end{lemma}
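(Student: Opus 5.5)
The second inclusion $c\pare{X}\subseteq \ell_{\infty}(X)$ was already recorded in Example~\ref{lup1}, since every convergent sequence in $X$ is bounded. So the whole content of the lemma is the first inclusion $bv\pare{X}\subseteq c\pare{X}$. I will obtain it by rewriting $s$ as the sequence of partial sums of its variation sequence, and then using completeness of $X$.

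Fix $s=\llave{x_{n}}_{n}\in bv\pare{X}$. The definition of $v_{s}$ telescopes: $\sum_{k=1}^{n}v_{s}(k)=x_{1}+\sum_{k=2}^{n}\pare{x_{k}-x_{k-1}}=x_{n}$ for every $n\in\N$. In the notation of Section~\ref{pop9} this says $\sigma_{v_{s}}=s$, that is, $s=T_{\sigma}v_{s}$. Therefore $s\in c\pare{X}$ if and only if $v_{s}\in \Sigma c\pare{X}$, i.e.\ the series $\sum_{n}v_{s}(n)$ converges in $X$.

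By hypothesis $v_{s}\in\ell_{1}(X)$. For $m>n$ the triangle inequality gives
\begin{equation*}
\norm{x_{m}-x_{n}}=\norm{\sum_{k=n+1}^{m}v_{s}(k)}\leq\sum_{k=n+1}^{\infty}\norm{v_{s}(k)},
\end{equation*}
and the right-hand side tends to $0$ as $n\tien\infty$. Hence $\llave{x_{n}}_{n}$ is Cauchy, and it converges because $X$ is a Banach space. Equivalently, an absolutely summable series in a Banach space converges, so $\ell_{1}(X)\subseteq\Sigma c\pare{X}$.

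There is no real obstacle here. The only point needing care is the index bookkeeping in the telescoping identity $\sigma_{v_{s}}=s$, in particular the first term $v_{s}(1)=x_{1}$. As a by-product, the same estimates yield $\norm{s}_{\ell_{\infty}(X)}\leq\norm{v_{s}}_{\ell_{1}(X)}$, which will be useful when $bv\pare{X}$ is normed by $\norm{v_{s}}_{\ell_{1}(X)}$.
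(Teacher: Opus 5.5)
Your proof is correct and matches the paper's: the paper also notes that $v_s\in\ell_{1}(X)\subseteq\Sigma c(X)$ and that the partial sums $S_n(v_s)$ telescope to $x_n$, so $\lim_n x_n$ exists, and it treats $c(X)\subseteq\ell_\infty(X)$ as evident. The only difference is that you write out the Cauchy estimate behind $\ell_{1}(X)\subseteq\Sigma c(X)$, which the paper uses without comment.
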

 \begin{proof}
 	Let $s=\llave{x_{n}}_{n}\in bv\pare{X}$. Then $v_{s}\in \ell_{1}\pare{X}\subseteq \Sigma c\pare{X}$ and $y:=\lim_{n\tien \infty}S_{n}\pare{v_{s}}=\lim_{n\tien \infty}x_{n}$.  
 \end{proof}

	\begin{example}
		If $X\neq \llave{0}$ is a Banach space, then $bv (X)$ is not a b-ideal.
	\end{example}
	\begin{proof}
Take $b:=\llave{\sum_{k=1}^{n}\frac{(-1)^{k}}{k}}_{n}\in \ell_{\infty}\pare{\K}$ and $s:=\llave{x}_{n}\in bv\pare{X}$, where $\norm{x}=1$. Consider $z:=b\cdot s=\llave{\sum_{k=1}^{n}\frac{(-1)^{k}}{k}x}_{n}$. Then
\begin{equation}
	\norm{z\pare{n+1}-z\pare{n}} = \abs{\frac{(-1)^{n+1}}{n+1}} = \frac{1}{n+1}.
\end{equation}
Hence $b\cdot s\notin bv\pare{X}$.
	\end{proof}
	
	 Define next the operator
	$J:bv\pare{X}\to \ell_{1}\pare{X}$ by
	\begin{equation}
		J(s):=v_{s}.
	\end{equation}
	Proceeding directly, we conclude that $J$ is an injective operator.
	
	Take $z\in \ell_{1}\pare{X}$, and define $s:=\sigma_{{z}}$. Observe that
	\begin{equation}
		\Sum_{n=1}^{\infty}\norm{s\pare{n+1}-s\pare{n}}=\Sum_{n=1}^{\infty}\norm{S_{n+1}\pare{s}-S_{n}\pare{s}}= \Sum_{n=1}^{\infty}\norm{z\pare{n+1}}<\infty.
	\end{equation}
	Hence, $s\in bv\pare{X}$. Moreover, $J\pare{s}=v_{s}=v\pare{\sigma_{z}}=z$. Thus, $J$ is surjective. 
 
Since $J$ is an isomorphism, we now consider on $bv\pare{X}$ the norm $\| \cdot \|_{bv\pare{X}}$ induced by $J$, that is 
\begin{equation}
	\|s\|_{bv\pare{X}} := \|J s\|_{\ell_{1}\pare{X}}=\norm{x_{1}}+\Sum_{n=1}^{\infty}\norm{x_{n+1}-x_{n}},\qquad    \forall s=\llave{x_{n}}_{n}\in bv\pare{X}.
\end{equation}
In this way, $J: bv\pare{X} \to \ell_{1}\pare{X}$ is an isometric isomorphism and thus $bv (X)$ is a Banach space. Let $s=\llave{x_{n}}_{n}\in bv\pare{X}$. Then $\norm{x_{1}}\leq \norm{s}_{bv\pare{X}}$, and
\begin{equation}\label{fin2}
	\norm{x_{n}}=\norm{x_{1}+\Sum_{k=1}^{n-1}\pare{x_{k+1}-x_{k}}}\leq \norm{x_{1}}+\Sum_{k=1}^{n-1}\norm{x_{k+1}-x_{k}}\leq \norm{s}_{bv\pare{X}},\qquad \forall n\in \N\setminus \llave{1}.
\end{equation}
Hence, $bv\pare{X}$ is a BK-space.  

 Note that   $e_{n}x\in bv\pare{X}, \forall n\in \N, \forall x\in X$. Therefore,  if $X\neq \llave{0}$, then $bv\pare{X}$ is a BKN-space.

\vs

  Taking $U = bv(\K)$, we now define 
	\begin{equation}
		bv_{w}\pare{X}:=U_{w}\pare{\N,X}.
	\end{equation} 
	In this case, its operator norm is
	\begin{equation}
		\norm{s}_{bv_{w}\pare{X}}=\sup\llave{\abs{\la s\pare{1},x^{*}\ra}+ \Sum_{n=1}^{\infty}\abs{\la s\pare{n}-s\pare{n+1},x^{*}\ra}  :x^{*}\in \B_{X^{*}}},\qquad \forall s \in bv_{w}\pare{X}.
	\end{equation}
	 Applying Theorem \ref{d190_0}, we obtain that $bv_{w}\pare{X}$ is a BK-space. We also have $c_{00} (X) \subseteq  bv_w (X)$. Thus, if $X\neq \llave{0}$, then $bv_{w}\pare{X}$ is a BKN-space. Clearly $bv (X)\subseteq bv_w (X)$.

\vs

Proposition \ref{efren4} does not apply to $bv(\K)$ since this BK-space is not an ideal.
However, the next result   holds.

\begin{proposition}
Let $X$ be a Banach space. Then $bv_w\pare{X}=bv\pare{X}$ if and only if $X$ is finite-dimensional.
\end{proposition}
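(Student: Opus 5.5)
The plan is to transport the question, via the variation map, to the classical comparison between $\ell_{1}\pare{X}$ and $\ell_{1,w}\pare{X}$. Let $\Delta:S\pare{X}\tien S\pare{X}$ be given by $\Delta s:=v_{s}$. This map is a linear bijection whose inverse is $T_{\sigma}$, since $v_{\sigma_{z}}=z$ and $\sigma_{v_{s}}=s$; this is the same computation used above to show that $J$ is surjective.

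\textbf{Step 1: $\Delta$ commutes with scalarization.} For each $x^{*}\in X^{*}$ and $s\in S\pare{X}$ we have
$$\la v_{s},x^{*}\ra=v_{\la s,x^{*}\ra},$$
because $\la s\pare{n}-s\pare{n-1},x^{*}\ra=\la s\pare{n},x^{*}\ra-\la s\pare{n-1},x^{*}\ra$, and the first terms agree as well. By definition of $bv\pare{\K}$, the sequence $\la s,x^{*}\ra$ belongs to $bv\pare{\K}$ if and only if $v_{\la s,x^{*}\ra}\in\ell_{1}\pare{\K}$. Hence $s\in bv_{w}\pare{X}$ if and only if $\la v_{s},x^{*}\ra\in\ell_{1}\pare{\K}$ for all $x^{*}\in X^{*}$, that is, if and only if $v_{s}\in\ell_{1,w}\pare{X}$. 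In short, $bv_{w}\pare{X}=\Delta^{-1}\pare{\ell_{1,w}\pare{X}}$. The displayed formula for $\norm{\cdot}_{bv_{w}\pare{X}}$ then shows $\norm{s}_{bv_{w}\pare{X}}=\norm{v_{s}}_{1,w}$, although only the set equality is needed here. On the strong side, $bv\pare{X}=\Delta^{-1}\pare{\ell_{1}\pare{X}}$ by the definition of bounded variation.

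\textbf{Step 2: reduce to $\ell_{1}$.} Since $\Delta$ is a bijection of $S\pare{X}$, Step 1 gives
$$bv_{w}\pare{X}=bv\pare{X}\iff \ell_{1,w}\pare{X}=\ell_{1}\pare{X}.$$

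\textbf{Step 3: conclude.} If $X$ is finite-dimensional, Proposition \ref{efren4} with $Y=\ell_{1}\pare{\K}$ gives $\ell_{1,w}\pare{X}=\ell_{1}\pare{X}$. Conversely, if $X$ is infinite-dimensional, the Dvoretzky--Rogers theorem (the case $p=1$ of the result recalled before Proposition \ref{efren4}, see \cite[p.~50]{DJT}) provides $s\in\ell_{u}\pare{X}\subseteq\ell_{1,w}\pare{X}$ with $s\notin\ell_{1}\pare{X}$. Then $\sigma_{s}\in bv_{w}\pare{X}\setminus bv\pare{X}$.

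The main obstacle is essentially nil once Step 1 is in place: the substantive content is the cited Dvoretzky--Rogers theorem. The only care required is verifying that scalarization commutes with $\Delta$, including the first coordinate $n=1$, and that $\Delta$ is bijective on all of $S\pare{X}$, so that equality of the preimages is equivalent to equality of the images.
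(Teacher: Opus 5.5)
Your proof is correct and follows essentially the same route as the paper: both use the partial-sum/difference correspondence to reduce the question to whether $\ell_{1,w}\pare{X}=\ell_{1}\pare{X}$, and then invoke the Dvoretzky--Rogers theorem. The paper proves necessity with the shifted sequence $u_{1}=0$, $u_{n+1}=u_{n}-x_{n}$ and only says the converse is ``proved similarly'', whereas your identities $bv_{w}\pare{X}=T_{\sigma}\pare{\ell_{1,w}\pare{X}}$ and $bv\pare{X}=T_{\sigma}\pare{\ell_{1}\pare{X}}$ give both directions at once.
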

\begin{proof}
	First suppose that $bv_w\pare{X}=bv\pare{X}$. To prove that $X$ is finite-dimensional,  it is enough to show that $\ell_{1,w}\pare{X}\subseteq \ell_{1}\pare{X}$. Let $s =\llave{x_{n}}_{n}\in \ell_{1,w}\pare{X}$.  
	Define $u_{1}:=0$ and $u_{n+1}:=u_{n}-x_{n}$, $\forall n\in \N$. Then
	\begin{equation}
		\sum_{n=1}^{\infty} |\langle u_{n}-u_{n+1},x^{*}\rangle|
		=
		\sum_{n=1}^{\infty} |\langle x_{n},x^{*}\rangle|,
		\qquad
		\forall x^{*}\in \B_{X^{*}}.
	\end{equation}
	Hence, $u\in bv_w\pare{X}=bv\pare{X}$. Therefore,
	\begin{equation}
		\sum_{n=1}^{\infty}\norm{x_{n}}
		=
		\sum_{n=1}^{\infty}\norm{u_{n}-u_{n+1}}
		<
		\infty.
	\end{equation}
	Thus, $s\in \ell_{1}\pare{X}$.
	
	The converse implication is proved similarly.
\end{proof}
 
\begin{example}
	$bv_w(\K^{n})=bv(\K^{n})$.
\end{example}

For scalar sequences $\{x_k\}_{k}\subseteq \K$ and $\{y_k\}_{k}\subseteq \K$, Abel's identity (\cite[(1.2.9)]{Wil}) states that
\begin{equation}\label{final3}
	y_{1}x_{1}+\Sum_{k=2}^{n}\pare{y_{k}-y_{k-1}}x_{k}
	=
	\Sum_{k=1}^{n-1}y_{k}\pare{x_{k}-x_{k+1}} + y_{n}x_{n}.
\end{equation}
The same operations used in its proof shows that (\ref{final3}) remains valid when $\{x_k\}_{k}\subseteq X$. In this context, we shall employ this identity repeatedly throughout the sequel.

\begin{lemma} 
Let $X$ be a Banach space. Then: \begin{enumerate} \item[i.] $bv_w\pare{X}\subseteq M'\pare{\Sigma \ell_{\infty}\pare{\K},\Sigma \ell_{\infty}\pare{X}}\subseteq M'\pare{\Sigma c\pare{\K},\Sigma \ell_{\infty}\pare{X}}$. \item[ii.] $bv\pare{\K}\subseteq M\pare{\Sigma \ell_{\infty}\pare{X},\Sigma \ell_{\infty}\pare{X}}\subseteq M\pare{\Sigma c\pare{X},\Sigma \ell_{\infty}\pare{X}}$. \end{enumerate} \end{lemma}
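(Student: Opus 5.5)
The second inclusion in both i and ii is immediate from Lemma~\ref{q10_0_b}(ii). Indeed, $\Sigma c(\K)\subseteq \Sigma\ell_{\infty}(\K)$, and $\Sigma c(X)\subseteq \Sigma\ell_{\infty}(X)$ by (\ref{d264_b}). Enlarging the domain space can only shrink the multiplier space, for both $\BB$ and $\BB'$. So the real content is the first inclusion in each item, and for both I will use Abel's identity (\ref{final3}).

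\textbf{Item i.} Let $f=\llave{x_k}_k\in bv_w(X)$ and $v=\llave{y_k}_k\in \Sigma\ell_{\infty}(\K)$. Put $\sigma_k:=S_k(v)=\sum_{j=1}^k y_j$, so that $|\sigma_k|\leq \|v\|_{\Sigma\ell_\infty(\K)}$ for all $k$. We must show that the partial sums $\sum_{k=1}^n y_k x_k$ are bounded in $X$. Apply (\ref{final3}) with the scalar sequence $\sigma$ in the role of $y$ (so that $y_1=\sigma_1$ and $y_k=\sigma_k-\sigma_{k-1}$). This gives
\begin{equation*}
\sum_{k=1}^{n} y_k x_k=\sum_{k=1}^{n-1}\sigma_k\pare{x_k-x_{k+1}}+\sigma_n x_n .
\end{equation*}
Now fix $x^{*}\in\B_{X^{*}}$ and pair with it. Then
\begin{equation*}
\abs{\ral{\sum_{k=1}^n y_kx_k,x^*}}\leq \|v\|_{\Sigma\ell_\infty(\K)}\Big(\sum_{k\ge1}\abs{\la x_k-x_{k+1},x^*\ra}+\abs{\la x_n,x^*\ra}\Big).
\end{equation*}
The term $\abs{\la x_n,x^*\ra}$ is at most the $bv(\K)$-norm of $\la f,x^*\ra$, by (\ref{fin2}) applied to $bv(\K)$. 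Hence the right-hand side is at most $2\|v\|_{\Sigma\ell_\infty(\K)}\|f\|_{bv_w(X)}$. Taking the supremum over $x^*\in\B_{X^*}$ (Hahn--Banach) gives
\begin{equation*}
\Big\|\sum_{k=1}^n y_kx_k\Big\|\leq 2\|v\|_{\Sigma\ell_{\infty}(\K)}\|f\|_{bv_w(X)}
\end{equation*}
uniformly in $n$. Therefore $v\cdot f\in\Sigma\ell_\infty(X)$, that is, $f\in M'(\Sigma\ell_\infty(\K),\Sigma\ell_\infty(X))$.

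\textbf{Item ii.} Here the roles are swapped: the multiplier $b=\llave{b_k}_k\in bv(\K)$ is scalar, and $s=\llave{x_k}_k\in\Sigma\ell_\infty(X)$ has partial sums $\sigma_k:=S_k(s)\in X$ with $\|\sigma_k\|\leq\|s\|_{\Sigma\ell_{\infty}(X)}$. The same summation by parts, now valid for vector sequences as remarked after (\ref{final3}), gives
\begin{equation*}
\sum_{k=1}^n b_kx_k=\sum_{k=1}^{n-1}\pare{b_k-b_{k+1}}\sigma_k+b_n\sigma_n .
\end{equation*}
Estimate directly in norm, using $|b_n|\leq\|b\|_{bv(\K)}$ from (\ref{fin2}). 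This yields the bound $2\|b\|_{bv(\K)}\|s\|_{\Sigma\ell_\infty(X)}$, uniformly in $n$. Hence $b\cdot s\in\Sigma\ell_\infty(X)$.

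\textbf{Main obstacle.} The only delicate point is item i, because $f$ is merely \emph{weakly} of bounded variation. The norm $\sum_k\|x_k-x_{k+1}\|$ may be infinite, so the summation by parts cannot be estimated in norm directly. That is why the estimate in item i must first be scalarized with a fixed $x^*\in\B_{X^*}$ and the supremum over $x^*$ taken only at the end.
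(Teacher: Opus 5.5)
Your proof is correct and follows the paper's argument: the second inclusions come from Lemma~\ref{q10_0_b}, and the first ones from Abel's identity (\ref{final3}), scalarized with a functional $x^{*}\in\B_{X^{*}}$ via Hahn--Banach in item i and estimated directly in norm in item ii. The only harmless difference is the boundary term. You bound $\abs{\la x_n,x^{*}\ra}$ by $\norm{\la f,x^{*}\ra}_{bv(\K)}$, whereas the paper uses $\norm{x_n}\leq \norm{s}_{\ell_{\infty}(X)}$, which implicitly relies on $bv_w(X)\subseteq \ell_{\infty}(X)$.
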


\begin{proof}
	Observe that the second inclusion  in both parts follow directly from Lemma~\ref{q10_0_b} .
	
	i. Let $s=\llave{x_{k}}_{k}\in bv_w\pare{X}$ and $t=\llave{t_{k}}_{k}\in \Sigma \ell_{\infty}\pare{\K}$. Consider $y=\sigma_{t}\in\ell_{\infty}\pare{\K}$. Then $t_{1}=y\pare{1}$ and $t_{k}=y\pare{k}-y\pare{k-1}$, $\forall k\in \N$. Take $n \in \N$. Using (\ref{final3}) and the Hahn--Banach theorem, there exists $x^{*}\in \B_{X^{*}}$ such that
	\begin{align*}
		\norm{\Sum_{k=1}^{n}t_{k}x_{k}}
		&=
		\norm{y\pare{1}x_{1}+\Sum_{k=2}^{n}\pare{y\pare{k}-y\pare{k-1}}x_{k}}
		=
		\la
		\Sum_{k=1}^{n-1} y\pare{k}\pare{x_{k}-x_{k+1}}
		+y\pare{n}x_{n},
		x^{*}
		\ra
		\nonumber\\
		&\leq
		\Sum_{k=1}^{n-1}
		\abs{\la x_{k}-x_{k+1},x^{*}\ra}
		\abs{\sigma_{t}\pare{k}}
		+
		\norm{x_{n}}
		\abs{\sigma_{t}\pare{n}}
		\leq
		\norm{s}_{bv\pare{X}}\norm{\sigma_{t}}_{\ell_{\infty}\pare{\K}}
		+
		\norm{s}_{\ell_{\infty}\pare{X}}
		\norm{\sigma_{t}}_{\ell_{\infty}\pare{\K}}.
	\end{align*}
	Thus, $t\cdot s\in\Sigma \ell_{\infty}\pare{X}$, and therefore $s\in M'\pare{\Sigma \ell_{\infty}\pare{\K},\Sigma \ell_{\infty}\pare{X}}$.
	
	ii. The proof is analogous to that of~i.
\end{proof}

	\begin{lemma}
		 Let $X$ be a Banach space. Then:
		\begin{enumerate}
			\item[i.] $M'\pare{\Sigma c\pare{\K},\Sigma \ell_{\infty}\pare{X}}
			\subseteq 
			bv_w(X).$
			\item[ii.] $M\pare{\Sigma c\pare{X},\Sigma \ell_{\infty}\pare{X}}
			\subseteq 
			bv\pare{\K}$.
		\end{enumerate} 
	\end{lemma}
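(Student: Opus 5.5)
The plan is to test each multiplier against the differences of canonical sequences. Part ii is the scalar version and part i follows by scalarizing with $x^{*}\in X^{*}$.

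\emph{Part ii.} Fix $f\in M\pare{\Sigma c\pare{X},\Sigma \ell_{\infty}\pare{X}}$ and $x\in X$ with $\norm{x}=1$; we may assume $X\neq\llave{0}$. By Corollary~\ref{d327_1}, the multiplication operator $\BB_{f}:\Sigma c\pare{X}\tien \Sigma \ell_{\infty}\pare{X}$ is bounded. Let $C:=\norm{f}_{M}$.

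For a finite choice of scalars $\varepsilon_{1},\dots,\varepsilon_{N}$ with $\abs{\varepsilon_{k}}\leq 1$, set $y(k):=\varepsilon_k$ for $k\le N$ and $y(k):=0$ otherwise. Then put $t:=v_{y}$, the variation sequence of $y$, and $v:=t\,x\in c_{00}\pare{X}\subseteq \Sigma c\pare{X}$. The partial sums of $v$ are $y(k)x$, so $\norm{v}_{\Sigma c\pare{X}}\leq 1$. The $n$-th partial sum of $f\cdot v$ is $\pare{\sum_{k=1}^{n} f(k)\pare{y(k)-y(k-1)}}x$, with $y(0):=0$. Applying Abel's identity (\ref{final3}) with $n=N+1$ (so that $y(N+1)=0$) gives
\begin{equation*}
\Big\|\sum_{k=1}^{N+1} f(k)\pare{y(k)-y(k-1)}\Big\| = \Big|\sum_{k=1}^{N}\varepsilon_k\pare{f(k)-f(k+1)}\Big| \leq C.
\end{equation*}
Now choose $\varepsilon_k$ to be the conjugate sign of $f(k)-f(k+1)$. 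This yields $\sum_{k=1}^{N}\abs{f(k)-f(k+1)}\le C$ for every $N$.

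The remaining term is controlled by testing with $v=e_{1}x$, which gives $\abs{f(1)}\le C\norm{e_1x}_{\Sigma c(X)}=C$. Hence $f\in bv\pare{\K}$, and in fact $\norm{f}_{bv}\le 2C$.

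\emph{Part i.} Fix $f\in M'\pare{\Sigma c\pare{\K},\Sigma \ell_{\infty}\pare{X}}$. By Theorem~\ref{piz27_1_0} this space is normed, so $C:=\norm{f}_{M'}<\infty$. Repeat the construction with scalar $t=v_{y}$, so that $\norm{t}_{\Sigma c(\K)}\le 1$ and the partial sum at $N+1$ equals $\sum_{k=1}^{N}\varepsilon_k\pare{f(k)-f(k+1)}\in X$.

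Fix $x^{*}\in\B_{X^{*}}$ and choose $\varepsilon_k$ so that $\varepsilon_k\la f(k)-f(k+1),x^{*}\ra=\abs{\la f(k)-f(k+1),x^{*}\ra}$. Then
\begin{equation*}
\sum_{k=1}^{N}\abs{\la f(k)-f(k+1),x^{*}\ra}\le \Big\|\sum_{k=1}^{N}\varepsilon_k\pare{f(k)-f(k+1)}\Big\|\le C.
\end{equation*}
Testing with $e_1$ gives $\abs{\la f(1),x^*\ra}\le C$. Therefore $\la f,x^{*}\ra\in bv\pare{\K}$ for every $x^{*}$, that is, $f\in bv_{w}(X)$.

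The main point of care is the bookkeeping in the Abel summation: checking that the boundary term vanishes because $y(N+1)=0$, and that $t=v_y$ really has partial sums $y$, which gives $\norm{t}\le1$. Boundedness of the multiplication operator, via Corollary~\ref{d327_1} or Theorem~\ref{piz27_1_0}, is what allows us to use the single constant $C$.
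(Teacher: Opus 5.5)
Your proof is correct, and it takes a more direct route than the paper's.

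For part i, the paper first shows $s\in\ell_{\infty}(X)$ via Propositions~\ref{chuy18_0_1_t_0} and \ref{final2}. This lets it control the boundary term $y_nx_n$ in Abel's identity (\ref{final3}) for an arbitrary $y\in c_{0}(\K)$, which places the difference sequence $u=\{x_k-x_{k+1}\}_k$ in $M'(c_{0}(\K),\Sigma\ell_{\infty}(X))$. It then invokes Corollary~\ref{d332_3} and the Bessaga--Pe{\l}czy\'nski identification $\mathrm{Ip}(\Sigma\ell_{\infty}(X))=\ell_{1,w}(X)$ (Proposition~\ref{naty3}) to get $\sum_k\abs{\langle x_k-x_{k+1},x^{*}\rangle}<\infty$. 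Part ii is handled the same way, using $\ell_{1,w}(\K)=\ell_{1}(\K)$.

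You bypass both detours. Testing only with finitely supported $y$ and reading the partial sum at index $N+1$ kills the boundary term, so you need no a priori boundedness of the multiplier. Choosing the unimodular $\varepsilon_k$ adapted to $x^{*}$ inlines exactly the elementary inclusion $\mathrm{Ip}(\Sigma\ell_{\infty}(X))\subseteq\ell_{1,w}(X)$, which is the only part of Bessaga--Pe{\l}czy\'nski the paper actually uses. The uniform constant comes, as it must, from boundedness of the multiplication operator.

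The paper's route shows how the lemma fits its $b$-ideal-part machinery. Yours is self-contained and yields the explicit estimates $\norm{f}_{bv_w(X)}\le 2\norm{f}_{M'}$ and $\norm{f}_{bv(\K)}\le 2\norm{f}_{M}$, which the paper does not record. Your test sequence $t=v_y$ is also the right one for (\ref{final3}). The paper's $z(k)=y_k-y_{k+1}$ does not match the coefficient $y_k-y_{k-1}$ appearing there and should be replaced by $v_y$.

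One small correction concerns part ii. The condition $X\neq\{0\}$ is not a harmless reduction but a necessary hypothesis. If $X=\{0\}$, then $\Sigma c(X)=\{0\}$, so every scalar sequence lies in $M(\Sigma c(X),\Sigma\ell_{\infty}(X))$ and ii fails. The paper assumes $X\neq\{0\}$ implicitly; part i is trivially true in that case.
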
 
	
	\begin{proof}
		i. Let $s=\llave{x_{k}}_{k}\in M'\pare{\Sigma c\pare{\K},\Sigma \ell_{\infty}\pare{X}}$. Since $\pare{\Sigma \ell_{\infty}\pare{X}}_{0}=\Sigma c\pare{\K}$ and $\frac{\norm{x}_{X}\norm{e_{n}}_{\Sigma c\pare{\K}}}{\norm{e_{n}x}_{\Sigma \ell_{\infty}\pare{X}}}=1$, $\forall n\in \N$ and $\forall x\in X\setminus\llave{0}$, Propositions~\ref{chuy18_0_1_t_0} and \ref{final2} imply that $s\in \ell_{\infty}\pare{X}$. Given $y=\llave{y_{n}}_{n}\in c_{0}\pare{\K}$, define $z\pare{k}:=y_{k}-y_{k+1}$, $\forall k\in \N$. Clearly, $z\in \Sigma c\pare{\K}$, and therefore $z\cdot s\in \Sigma \ell_{\infty}\pare{X}$. Now let $u\in S\pare{X}$ be defined by $u\pare{k}:=x_{k}-x_{k+1}$, $\forall k\in \N$.  Using Abel's inequality, for $n\geq 2$, we obtain
		\begin{align}
			\norm{\sum_{k=1}^{n-1}y_{k}u\pare{k}}
			&=
			\norm{\sum_{k=1}^{n-1}y_{k}\pare{x_{k}-x_{k+1}}}
			=
			\norm{y_{1}x_{1}+\sum_{k=2}^{n}\pare{y_{k}-y_{k+1}}x_{k}-y_{n}x_{n}}
			\nonumber\\
			&\leq
			\norm{y_{1}x_{1}}
			+
			\norm{\sum_{k=2}^{n}z\pare{k}x_{k}}
			+
			\norm{y_{n}x_{n}}
			\leq
			\norm{z\cdot s}_{\Sigma \ell_{\infty}\pare{X}}
			+
			2\norm{y}_{\ell_{\infty}\pare{\K}}\norm{s}_{\ell_{\infty}\pare{X}}.
		\end{align}
		Then $y\cdot u\in \Sigma \ell_{\infty}\pare{X}$ for every $y\in c_{0}\pare{\K}$. Since $c_{0}\pare{\K}$ is a Banach ideal, Corollary~\ref{d332_3} implies now that
		\begin{equation}
		u\in M'\pare{c_{0}\pare{\K},\Sigma \ell_{\infty}\pare{X}}
			= M'\pare{c_{0}\pare{\K},\text{Ip}\pare{\Sigma \ell_{\infty}\pare{X}}}
			= M'\pare{c_{0}\pare{\K},\ell_{1,w}\pare{X}}.
		\end{equation}
		Therefore, for $m\in \N$, we have 
		\begin{equation}
			\Sum_{k=1}^{m}\abs{\la x_{k}-x_{k+1},x^{*}\ra}
			\leq
			\sup\llave{\Sum_{k=1}^{\infty}\abs{\la t(k)u\pare{k},z^{*}\ra}: t\in \B_{c_{0}\pare{\K}},\, z^{*}\in \B_{X^{*}}}
			<
			\infty,
			\qquad \forall x^{*}\in \B_{X^{*}}.
		\end{equation}
		Hence, $s\in bv_w\pare{X}$.
		
		ii. The proof is analogous to that of~i, using that
		$
		\ell_{1,w}\pare{\K}=\ell_{1}\pare{\K}.
		$
	\end{proof}

	The following result is obtained directly from the two previous lemmas. The case $X = \K$ is already known (\cite[Thm. 7.3.5]{Wil}).
	\begin{proposition}\label{final9}
		Let $X$ be a Banach space. Then:
		\begin{enumerate}
			\item[i.] $M'\pare{\Sigma \ell_{\infty}\pare{\K},\Sigma \ell_{\infty}\pare{X}}
			=
			bv_w(X)=M'\pare{\Sigma c\pare{\K},\Sigma c\pare{X}}.$
			\item[ii.] $M\pare{\Sigma \ell_{\infty}\pare{X},\Sigma \ell_{\infty}\pare{X}}
			=
			bv\pare{\K}=M\pare{\Sigma c\pare{X},\Sigma c\pare{X}}$.  
		\end{enumerate} 
	\end{proposition}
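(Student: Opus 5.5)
Both parts follow the same pattern: sandwich each of the four multiplier spaces between $bv_w(X)$ (respectively $bv(\K)$) using the two preceding lemmas and the monotonicity in Lemma~\ref{q10_0_b}. The only genuinely new inclusion is $bv_w(X)\subseteq M'\pare{\Sigma c\pare{\K},\Sigma c\pare{X}}$ (respectively $bv(\K)\subseteq M\pare{\Sigma c\pare{X},\Sigma c\pare{X}}$), and I expect it to be the main obstacle.

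For i, the first of the two lemmas above gives
\[
bv_w(X)\subseteq M'\pare{\Sigma \ell_{\infty}\pare{\K},\Sigma \ell_{\infty}\pare{X}}\subseteq M'\pare{\Sigma c\pare{\K},\Sigma \ell_{\infty}\pare{X}},
\]
and the second lemma closes the loop back into $bv_w(X)$. So all three spaces coincide, which yields the first equality in i. For the last space, Lemma~\ref{q10_0_b} i with $\Sigma c(X)\subseteq\Sigma\ell_\infty(X)$ gives $M'\pare{\Sigma c\pare{\K},\Sigma c\pare{X}}\subseteq M'\pare{\Sigma c\pare{\K},\Sigma \ell_{\infty}\pare{X}}=bv_w(X)$. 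The same reasoning applies verbatim to ii with $M$ in place of $M'$.

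It remains to prove the reverse inclusion $bv_w(X)\subseteq M'\pare{\Sigma c\pare{\K},\Sigma c\pare{X}}$. Fix $s=\llave{x_k}_k\in bv_w(X)$ and $t\in\Sigma c(\K)$, and put $y=\sigma_t\in c(\K)$ with limit $L$. By Abel's identity (\ref{final3}),
\[
\sum_{k=1}^n t_kx_k=\sum_{k=1}^{n-1}y(k)(x_k-x_{k+1})+y(n)x_n .
\]
The plan is to show that both terms on the right converge. For the first, I will write $y(k)=L+(y(k)-L)$, so that
\[
\sum_{k=1}^{n-1}y(k)(x_k-x_{k+1})=L(x_1-x_n)+\sum_{k=1}^{n-1}(y(k)-L)(x_k-x_{k+1}).
\]
Put $u(k):=x_k-x_{k+1}$. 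Since $s\in bv_w(X)$, we have $u\in\ell_{1,w}(X)$. The sequence $\llave{y(k)-L}_k$ lies in $c_0(\K)$, and by Proposition~\ref{chuy18_0_1_t_0} ii,
\[
M'\pare{c_0(\K),\ell_{1,w}(X)}=M'\pare{(c_0)_0,(\ell_{1,w})}
\]
maps into $\ell_{1,w}(X)_0=\ell_u(X)$ (the proposition preceding Corollary~\ref{yaya4}). Concretely, for $y'\in c_{00}$ the product $y'\cdot u$ lies in $c_{00}(X)$, and by continuity of multiplication $c_0\to\ell_{1,w}$ the limit of these products lies in the closure. Hence $(y-L)\cdot u\in\ell_u(X)\subseteq\Sigma c(X)$, and the second piece converges.

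What remains is to show that $L x_n$ and $y(n)x_n$ converge. Together these reduce to the claim that $\llave{x_n}$ converges in norm. This is the delicate point, because for $s\in bv_w(X)$ the sequence $x_n=x_1-\sum_{k<n}u(k)$ converges only when $u$ is summable, and $u\in\ell_{1,w}(X)$ need not be summable if $c_0\subseteq X$. I will therefore look at the combination directly:
\[
L(x_1-x_n)+y(n)x_n=Lx_1+(y(n)-L)x_n .
\]
Since $\llave{x_n}$ is bounded (as $bv_w(X)\subseteq\ell_\infty(X)$ by Theorem~\ref{d190_0} v) and $y(n)-L\to0$, this tends to $Lx_1$. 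So no convergence of $\llave{x_n}$ is needed, and $t\cdot s\in\Sigma c(X)$.

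For ii the same computation works with $s\in bv(\K)$ and $t\in\Sigma c(X)$ playing the vector role, using $\ell_{1,w}(\K)=\ell_1(\K)$. Alternatively, it follows directly because $\sum\abs{s_k-s_{k+1}}\,\norm{y(k)-L}$ converges absolutely.
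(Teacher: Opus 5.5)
Your proof is correct, and the sandwich through the two preceding lemmas plus Lemma~\ref{q10_0_b} is the same skeleton the paper uses. The difference is in the inclusion you single out as the main obstacle, $bv_w(X)\subseteq M'\pare{\Sigma c\pare{\K},\Sigma c\pare{X}}$ (resp.\ $bv\pare{\K}\subseteq M\pare{\Sigma c\pare{X},\Sigma c\pare{X}}$). You verify it by hand: Abel summation, subtracting the limit $L$ of $\sigma_t$, the identification $\ell_{1,w}\pare{X}_0=\ell_u\pare{X}$ for the $c_0$-weighted part, and boundedness of $bv_w(X)$ for the boundary term.

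In the paper this step comes for free. By Lemma~\ref{d269} ii, $\Sigma c\pare{\K}=\pare{\Sigma\ell_\infty\pare{\K}}_0$ and $\Sigma c\pare{X}=\pare{\Sigma\ell_\infty\pare{X}}_0$. So Proposition~\ref{chuy18_0_1_t_0} i (resp.\ \ref{chuy18_0_1_0} i) gives $M'\pare{\Sigma c\pare{\K},\Sigma\ell_\infty\pare{X}}=M'\pare{\Sigma c\pare{\K},\Sigma c\pare{X}}$ outright: a multiplier maps $c_{00}\pare{\K}$ into $c_{00}\pare{X}$ and, being continuous, maps the closure into the closure. That is exactly the density argument you run one level down for $c_0\pare{\K}\to\ell_{1,w}\pare{X}$. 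Note that there you are really using part i of that proposition, not part ii, together with the fact that $\ell_{1,w}\pare{X}$ is a $b$-ideal.

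What your route buys is the explicit value $\sum_k t_kx_k=Lx_1+\sum_k\pare{y(k)-L}\pare{x_k-x_{k+1}}$. Its heavy ingredient $\ell_{1,w}\pare{X}_0=\ell_u\pare{X}$ can even be replaced by the elementary Cauchy estimate $\norm{\sum_{k=n}^m\pare{y(k)-L}u(k)}\le\sup_{k\ge n}\abs{y(k)-L}\,\norm{u}_{\ell_{1,w}\pare{X}}$. The paper's route is shorter and uses nothing specific to $bv$.

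Both arguments tacitly need $X\neq\llave{0}$ in ii. For $X=\llave{0}$, every scalar sequence multiplies $\Sigma c\pare{X}=\llave{0}$ into itself, so part ii of the second lemma and statement ii both fail.
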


		\begin{corollary}
		If $X$ is finite-dimensional, then $M'\pare{\Sigma \ell_{\infty}\pare{\K},\Sigma \ell_{\infty}\pare{X}}
		=
		bv(X)=M'\pare{\Sigma c\pare{\K},\Sigma c\pare{X}}.$
	\end{corollary}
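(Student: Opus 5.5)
The plan is to combine Proposition~\ref{final9}(i) with Proposition~\ref{efren4}. By Proposition~\ref{final9}(i), for every Banach space $X$ we already know
\[
M'\pare{\Sigma \ell_{\infty}\pare{\K},\Sigma \ell_{\infty}\pare{X}} = bv_w(X) = M'\pare{\Sigma c\pare{\K},\Sigma c\pare{X}}.
\]
So it suffices to show $bv_w(X)=bv(X)$ when $X$ is finite-dimensional. This is the implication ``$X$ finite-dimensional $\Rightarrow bv_w(X)=bv(X)$'' in the proposition preceding the example $bv_w(\K^n)=bv(\K^n)$. Its proof there is only sketched (``the converse implication is proved similarly''), so I would write it out.

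The inclusion $bv(X)\subseteq bv_w(X)$ is already noted in the text. For the reverse inclusion, take $s=\llave{x_n}_n\in bv_w(X)$ and consider its variation sequence $v_s$, given by $v_s(1)=x_1$ and $v_s(n)=x_n-x_{n-1}$. By the definition of $bv_w(X)=bv(\K)_w(\N,X)$, for each $x^*\in X^*$ we have $\langle s,x^*\rangle\in bv(\K)$. This says exactly that $\langle v_s,x^*\rangle = v_{\langle s,x^*\rangle}\in\ell_1(\K)$, since scalarization commutes with taking differences. Hence $v_s\in\ell_{1,w}(X)$.

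Because $X$ is finite-dimensional and $\ell_1(\K)$ is a Banach ideal, Proposition~\ref{efren4} gives $\ell_{1,w}(X)=\ell_1(X)$. Thus $v_s\in\ell_1(X)$, which is the definition of $s\in bv(X)$.

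The only point needing care is the identity $\langle v_s,x^*\rangle=v_{\langle s,x^*\rangle}$ together with the matching of norms: $\norm{\langle s,x^*\rangle}_{bv(\K)}=\norm{\langle v_s,x^*\rangle}_{\ell_1(\K)}$. Both follow from linearity of $x^*$. Substituting $bv_w(X)=bv(X)$ into Proposition~\ref{final9}(i) then gives the stated equalities.
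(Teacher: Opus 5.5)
Your proof is correct and follows the paper's route: you substitute $bv_w(X)=bv(X)$, the finite-dimensional direction of the proposition just before the example $bv_w(\K^n)=bv(\K^n)$, into Proposition~\ref{final9}(i). Your explicit argument for that direction is exactly the ``proved similarly'' step the paper leaves unwritten: the variation sequence $v_s$ lies in $\ell_{1,w}(X)$, and Proposition~\ref{efren4} with $Y=\ell_1(\K)$ gives $\ell_{1,w}(X)=\ell_1(X)$.
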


	\begin{proposition} 
		Let $X$ be a Banach space. Then:
		\begin{enumerate}
			\item[i.] $\textnormal{Ip}\hspace{-2pt}\pare{bv\pare{X}}=\ell_{1}\pare{X}$.
			\item[ii.] $\textnormal{Ip}\hspace{-2pt}\pare{bv_{w}\pare{X}}=\ell_{1,w}\pare{X}$.  
		\end{enumerate} 
	\end{proposition}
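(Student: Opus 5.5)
The plan is to prove both equalities directly from the definition $\text{Ip}\hspace{-2pt}\pare{U}=M'\pare{B\pare{\N,\K},U}$, that is, $s\in\text{Ip}\hspace{-2pt}\pare{U}$ if and only if $b\cdot s\in U$ for every $b\in\ell_{\infty}\pare{\K}$. Each part has an easy inclusion, which uses that the candidate space is a $b$-ideal sitting inside $U$, and a reverse inclusion, which uses a well-chosen bounded multiplier that destroys the cancellation in the differences $x_{n+1}-x_{n}$.

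\emph{Part i.} For $\ell_{1}\pare{X}\subseteq \text{Ip}\hspace{-2pt}\pare{bv\pare{X}}$, first observe that $\ell_{1}\pare{X}\subseteq bv\pare{X}$, because $\norm{x_{n+1}-x_{n}}\leq\norm{x_{n+1}}+\norm{x_{n}}$. Next, $\ell_{1}\pare{X}$ is a $b$-ideal, since it is a v-ideal by Theorem~\ref{piz24} and hence a $b$-ideal by Lemma~\ref{d230_1}. Thus $b\cdot s\in\ell_{1}\pare{X}\subseteq bv\pare{X}$ for every $b\in\ell_{\infty}\pare{\K}$. Alternatively, one can simply recall that $\text{Ip}\hspace{-2pt}\pare{bv\pare{X}}$ is the largest $b$-ideal contained in $bv\pare{X}$.

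For the reverse inclusion, take $s=\llave{x_{n}}_{n}\in\text{Ip}\hspace{-2pt}\pare{bv\pare{X}}$ and let $b$ be the indicator of the even integers, $b(n)=1$ if $n$ is even and $b(n)=0$ otherwise. Then $b\cdot s=(0,x_{2},0,x_{4},\dots)$, and its variation sequence contains each term $\pm x_{2k}$ twice. Hence
\[
2\sum_{k}\norm{x_{2k}}\leq\norm{b\cdot s}_{bv\pare{X}}<\infty .
\]
Using the indicator of the odd integers in the same way gives $\sum_{k}\norm{x_{2k-1}}<\infty$, so $s\in\ell_{1}\pare{X}$.

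\emph{Part ii.} The same scheme is run after scalarization. First, $\ell_{1,w}\pare{X}\subseteq bv_{w}\pare{X}$, because for each $x^{*}\in X^{*}$ the scalar sequence $\la s,x^{*}\ra$ lies in $\ell_{1}\pare{\K}\subseteq bv\pare{\K}$ by Part~i with $X=\K$. Moreover, $\ell_{1,w}\pare{X}$ is a $b$-ideal by Theorem~\ref{d190_0}(i), so $b\cdot s\in\ell_{1,w}\pare{X}\subseteq bv_{w}\pare{X}$ for every $b\in\ell_{\infty}\pare{\K}$.

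Conversely, let $s\in\text{Ip}\hspace{-2pt}\pare{bv_{w}\pare{X}}$, and let $b$ again be the indicator of the even (respectively odd) integers. Then $b\cdot s\in bv_{w}\pare{X}$, so $\la b\cdot s,x^{*}\ra=b\cdot\la s,x^{*}\ra\in bv\pare{\K}$ for each $x^{*}\in X^{*}$. The computation of Part~i in the scalar case then yields
\[
\sum_{n}\abs{\la x_{n},x^{*}\ra}<\infty\qquad\text{for every }x^{*}\in X^{*},
\]
that is, $s\in\ell_{1,w}\pare{X}$.

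No step looks like a serious obstacle. The only point that needs care is the bookkeeping showing that the variation of $b\cdot s$ dominates twice the $\ell_{1}$-sum over the selected indices; the first entry is counted in $v_{s}(1)$, which only helps. If equality of norms is also wanted, one could either repeat the estimates with constants, or invoke Theorem~\ref{piz22_1_0}, since both sides are $\BK$-spaces (Theorem~\ref{d334}).
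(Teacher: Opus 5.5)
Your proof is correct and follows essentially the same route as the paper. The paper also gets the reverse inclusion from the indicators of the odd and even integers, which make each term appear twice in the variation, and it proves only part ii, calling part i analogous; the one cosmetic difference is that for $\ell_{1,w}(X)\subseteq\textnormal{Ip}(bv_w(X))$ it uses the direct estimate $\sum_n\abs{\la b_nx_n-b_{n+1}x_{n+1},x^*\ra}\le 2\norm{s}_{\ell_{1,w}(X)}$ rather than the $b$-ideal property.

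One caveat on your closing remark: the norms are equivalent but not equal. The paper's estimates give $\norm{s}_{\ell_{1,w}(X)}\le\norm{s}_{\textnormal{Ip}(bv_w(X))}\le 2\norm{s}_{\ell_{1,w}(X)}$, and already $e_1$ has norm $2$ in $\textnormal{Ip}(bv(\K))$ but norm $1$ in $\ell_1(\K)$; equivalence is also all that Theorem~\ref{piz22_1_0} could give.
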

	\begin{proof} Since the proofs of i and ii are analogous, we will only prove ii.
		
  Let $s = \llave{x_{n}}_{n}\in \ell_{1,w}\pare{X}$ and $b=\llave{b_{n}}_{n}\in \B_{\ell_{\infty}\pare{\K}}$. Clearly,
\begin{equation}
	\Sum_{n=1}^{\infty}\abs{\la b_{n}x_{n}-b_{n+1}x_{n+1},x^{*}\ra}\leq \Sum_{n=1}^{\infty}\abs{\la x_{n},x^{*}\ra}+\Sum_{n=1}^{\infty}\abs{\la x_{n+1},x^{*}\ra}\leq 2 \norm{s}_{\ell_{1,w}\pare{X}},\qquad \forall x^{*}\in \B_{X^{*}}.
\end{equation}
Hence, $b\cdot s\in bv_{w}\pare{X}$ and
\begin{equation}
	\|s\|_{\textnormal{Ip}  \pare{bv_w (X)}} \leq 2 \|s\|_{\ell_{1,w} (X)}.
\end{equation}

Now let $s=\llave{x_{n}}_{n}\in \textnormal{Ip}\hspace{-2pt}\pare{bv_w\pare{X}}$. Consider the sequence $b_{1}\in \ell_{\infty}\pare{\K}$ defined by $b_{1}(n)=1$ if $n = 2(k-1)+1$ and $b_{1}(n)=0$ if $n = 2k$, for each $k\in \N$. Also, let $b_{-1}\in \ell_{\infty}\pare{\K}$ be the sequence defined by $b_{-1}(n)=1$ if $n = 2k$ and $b_{-1}(n)=0$ if $n = 2(k-1)+1$, for each $k\in \N$. Clearly, $b_{1},b_{-1}\in \B_{\ell_{\infty}\pare{\K}}$, and therefore $b_{1}\cdot s, b_{-1}\cdot s \in bv_w\pare{X}$. Consequently, for every $x^{*}\in \B_{X^{*}}$, we have
\begin{align*}
	2\Sum_{n=1}^{\infty}\abs{\la   x_{2\pare{n-1}+1},x^{*}\ra}&=\abs{\la b_{1}\pare{1}x_{1},x^{*}\ra}+ \Sum_{n=1}^{\infty}\abs{\la b_{1}\pare{n}x_{n}-b_{1}\pare{n+1}x_{n+1},x^{*}\ra}\leq \norm{s}_{\textnormal{Ip} \pare{bv_{w}\pare{X}}} \qquad \text{ and }\\
	2\Sum_{n=1}^{\infty}\abs{\la   x_{2n},x^{*}\ra} &=\abs{\la b_{-1}\pare{1}x_{1},x^{*}\ra}+ \Sum_{n=1}^{\infty}\abs{\la b_{-1}\pare{n}x_{n}-b_{-1}\pare{n+1}x_{n+1},x^{*}\ra}\leq \norm{s}_{\textnormal{Ip} \pare{bv_{w}\pare{X}}}.
\end{align*}
It follows that
\begin{equation}
	 \Sum_{n=1}^{\infty}\abs{\la   x_{n},x^{*}\ra}  \leq  \norm{s}_{\textnormal{Ip} \pare{bv_{w}\pare{X}}},\qquad \forall x^{*}\in \B_{X^{*}}.
\end{equation}
Thus, $s\in \ell_{1,w}\pare{X}$ and
\begin{equation}
	\|s\|_{\ell_{1,w} (X)}\leq	\|s\|_{\textnormal{Ip}  \pare{bv_w (X)}}.
\end{equation} 
	\end{proof}

Moreover, by Proposition \ref{final9} and Theorem \ref{sam1} together with \eqref{lola165} and \eqref{final10}, we obtain the following.
\begin{corollary}
	Let $X$ be a Banach space. Then
	\begin{enumerate}
		\item[i.] $\ell_{1,w}\pare{X}=M'\pare{\Sigma \ell_{\infty}\pare{\K},\ell_{1,w}\pare{X}}=M'\pare{\Sigma c\pare{\K},\ell_{u}\pare{X}}$.
		\item[ii.] $\ell_{1}\pare{\K}=M'\pare{\Sigma \ell_{\infty}\pare{X},\ell_{1,w}\pare{X}}=M'\pare{\Sigma c\pare{X},\ell_{u}\pare{X}}$.
	\end{enumerate}
\end{corollary}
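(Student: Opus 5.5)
We must prove the final corollary: (i) $\ell_{1,w}(X)=M'(\Sigma\ell_\infty(\K),\ell_{1,w}(X))=M'(\Sigma c(\K),\ell_u(X))$ and (ii) $\ell_1(\K)=M'(\Sigma\ell_\infty(X),\ell_{1,w}(X))=M'(\Sigma c(X),\ell_u(X))$. In (ii) the multipliers are scalar, so the middle and right spaces should be read as $M(\cdot,\cdot)$.

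The plan is to commute the $b$-ideal part with the multiplier construction, using Theorem~\ref{lola216} for (i) and Theorem~\ref{lola166} for (ii). For (i), take $V=\Sigma\ell_\infty(\K)$, which is a $\BKn$-space, and $W=\Sigma\ell_\infty(X)$. Theorem~\ref{lola216} gives
\[
\textnormal{Ip}\bigl(M'(\Sigma\ell_\infty(\K),\Sigma\ell_\infty(X))\bigr)=M'\bigl(\Sigma\ell_\infty(\K),\textnormal{Ip}(\Sigma\ell_\infty(X))\bigr).
\]
By Proposition~\ref{final9}(i) the left-hand space is $\textnormal{Ip}(bv_w(X))$. The preceding proposition identifies this with $\ell_{1,w}(X)$. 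By \eqref{final10} and Proposition~\ref{naty3}, the right-hand space is $M'(\Sigma\ell_\infty(\K),\ell_{1,w}(X))$.

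The same argument with $V=\Sigma c(\K)$ and $W=\Sigma c(X)$ gives
\[
\textnormal{Ip}\bigl(M'(\Sigma c(\K),\Sigma c(X))\bigr)=M'\bigl(\Sigma c(\K),\textnormal{Ip}(\Sigma c(X))\bigr).
\]
Proposition~\ref{final9}(i) again turns the left side into $\textnormal{Ip}(bv_w(X))=\ell_{1,w}(X)$. By \eqref{lola165} the right side is $M'(\Sigma c(\K),\ell_u(X))$. This proves (i).

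For (ii), apply Theorem~\ref{lola166} with $V=W=\Sigma\ell_\infty(X)$, which is a $\BKn$-space when $X\neq\{0\}$. This gives
\[
\textnormal{Ip}\bigl(M(\Sigma\ell_\infty(X))\bigr)=M\bigl(\Sigma\ell_\infty(X),\ell_{1,w}(X)\bigr).
\]
Proposition~\ref{final9}(ii) gives $M(\Sigma\ell_\infty(X))=bv(\K)$, and the preceding proposition with $X=\K$ gives $\textnormal{Ip}(bv(\K))=\ell_1(\K)$. The $\Sigma c(X)$ version is identical, with $\ell_u(X)=\textnormal{Ip}(\Sigma c(X))$ from \eqref{lola165}.

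All hypotheses (BK, non-vanishing) have already been checked earlier in the paper for $\Sigma\ell_\infty$ and $\Sigma c$. The degenerate case $X=\{0\}$ needs a separate trivial remark and is best excluded.

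The main obstacle is the bookkeeping: which $\textnormal{Ip}$ is taken with respect to scalar versus vector multipliers, and whether equality holds only as sets or isometrically. The cited theorems give isometric equality of the multiplier sides. The preceding proposition, however, only gives $\textnormal{Ip}(bv_w(X))=\ell_{1,w}(X)$ with norm constants $1$ and $2$. I would therefore claim equality as sets, with equivalent norms, rather than isometric equality.
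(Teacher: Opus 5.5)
Your proof is correct and follows essentially the paper's own route: pass to $b$-ideal parts in Proposition~\ref{final9}, commute $\textnormal{Ip}$ with the multiplier construction (Theorems~\ref{lola216} and~\ref{lola166}), and then insert $\textnormal{Ip}(bv_w(X))=\ell_{1,w}(X)$, $\textnormal{Ip}(bv(\K))=\ell_1(\K)$, \eqref{lola165} and \eqref{final10}. Reading (ii) with scalar multipliers $M$ is right, and excluding $X=\{0\}$ is genuinely necessary there rather than a trivial aside: for $X=\{0\}$ every scalar sequence multiplies $\{0\}$ into $\{0\}$, so (ii) fails, while (i) holds trivially.
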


\end{document}